\declaretheoremstyle[spaceabove=3pt, spacebelow=3pt, headfont=\normalfont\bfseries, notefont=\mdseries, notebraces={(}{)},
bodyfont=\normalfont,
postheadspace=0.5em]{mystyle}
\declaretheoremstyle[spaceabove=3pt, spacebelow=3pt, headfont=\normalfont\bfseries, notefont=\mdseries, notebraces={(}{)},
bodyfont=\normalfont\itshape,
postheadspace=0.5em]{mytheorem}
\declaretheorem[style=mystyle,qed=$\lhd$,numberwithin=section]{definition}
\declaretheorem[style=mystyle,qed=$\lhd$,sibling=definition]{remark}
\declaretheorem[style=mytheorem,sibling=definition]{theorem}
\theoremstyle{mystyle}
\newtheorem{questionx}[theorem]{Question}
\newtheorem{conjecturex}[theorem]{Conjecture}
\theoremstyle{mytheorem}
\newtheorem{main}{Theorem}
\newtheorem{proposition}[theorem]{Proposition}
\newtheorem{lemma}[theorem]{Lemma}
\newtheorem{corollary}[theorem]{Corollary}
\newtheorem{claim}[theorem]{Claim}
\theoremstyle{definition}
\newtheorem{xample}[theorem]{Example}
\newtheorem{sublemma}[theorem]{Sub-lemma}
\newcommand{\nc}{\newcommand}
\nc{\dmo}{\DeclareMathOperator}
\nc{\I}{\mathcal{I}}
\nc{\K}{\mathcal{K}}
\nc{\Q}{\mathbb{Q}}
\nc{\Ad}{\mathbb{A}}
\nc{\R}{\mathbb{R}}
\nc{\Z}{\mathbb{Z}}
\nc{\C}{\mathbb{C}}
\nc{\N}{\mathbb{N}}
\nc{\F}{\mathbb{F}}
\nc{\cR}{\mathcal{R}}
\nc{\cM}{\mathcal{M}}
\nc{\cC}{\mathcal{C}}
\nc{\fS}{\mathfrak{S}}
\nc{\cS}{\mathcal{S}}
\nc{\bk}{\mathbb{K}}
\nc{\ck}{\mathcal{k}}
\nc{\XyX}{\mathcal{S}}
\nc{\Schur}{\mathbb{S}}
\nc{\rohit}[1]{{\color{blue}[\text{Rohit: #1}]}}
\dmo{\GL}{GL}
\dmo{\Mat}{Mat}
\dmo{\PSL}{PSL}
\nc{\gin}{i}
\nc{\ga}{\Gamma}
\dmo{\Out}{Out}
\dmo{\Aut}{Aut}
\dmo{\Stab}{Stab}
\dmo{\wt}{weight}
\nc{\ddH}{\Delta H}
\dmo\zMod{\Z-Mod}
\dmo{\adj}{Ad}
\dmo\im{im}
\dmo{\Rep}{Rep}
\dmo\id{id}
\dmo\SL{SL}
\dmo\Sp{Sp}
\dmo\Mod{Mod}
\dmo \range{range}
\dmo\PMod{PMod}
\dmo\fd{fd}
\dmo\IA{IA}
\dmo\IAut{IA}
\nc{\IAn}{\IA_n}
\dmo\Sym{Sym}
\dmo\Ind{Ind}
\dmo\Res{Res}
\dmo\res{res}
\dmo\tr{tr}
\dmo\gr{gr}
\dmo\Free{Free}
\dmo\spn{span}
\dmo\End{End}
\dmo\Conf{Conf}
\dmo\conf{conf}
\dmo\op{op}
\dmo\coker{coker}
\dmo\Homeo{Homeo}
\dmo\Inj{Inj}
\dmo\Teich{Teich}
\dmo\rk{rank}
\dmo\ab{ab}
\dmo\rank{rank}
\dmo\Emb{Emb}
\dmo\Map{Map}
\dmo\chr{char}
\dmo\SQ{SQ}
\dmo\SD{SD}
\dmo\TE{TE}
\dmo\upTE{{_{\uparrow}TE}}
\dmo\length{length}
\dmo\CW{CW}
\dmo\Ext{Ext}
\dmo\Tor{\mathcal{T}}
\dmo\Torsion{Tor}
\nc{\bwedge}{\textstyle{\bigwedge}}
\dmo\FI{FI}
\dmo\Tr{Tr}
\dmo\pr{pr}
\dmo\FIMod{FI-Mod}
\dmo\FIModgen{FI-Mod^{gen}}
\dmo\coFIMod{co-FI-Mod}
\dmo\dMod{-Mod}
\dmo\Vect{Vect}
\dmo\dVect{-Vect}
\dmo\dRep{-Rep}
\dmo\alg{alg}
\nc\kT{k[T]}
\def\FIsharp{\FI\sharp}
\dmo\coFI{co-FI}
\dmo\hTop{hTop}
\dmo\Range{Range}
\dmo\lcm{lcm}
\nc\y[1]{{\tiny\yng(#1)}}
\nc\x{\hspace{0.1em}}
\newcommand{\ra}{\rightarrow}
\newcommand{\beq}{\begin{displaymath}}
\newcommand{\eeq}{\end{displaymath}}
\newcommand{\beqn}{\begin{equation}}
\newcommand{\eeqn}{\end{equation}}
\newcommand{\inj}{\hookrightarrow}
\newcommand{\surj}{\twoheadrightarrow}
\DeclareMathOperator{\Hom}{Hom}
\renewcommand{\epsilon}{\varepsilon}
\nc{\coloneq}{\mathrel{\mathop:}\mkern-1.2mu=}
\nc{\margin}[1]{\marginpar{\scriptsize #1}}
\nc{\para}[1]{\medskip\noindent\textbf{#1.}}
\nc{\hide}[1]{#1}
\nc{\arXiv}[1]{\href{http://arxiv.org/abs/#1}{\nolinkurl{arXiv:#1}}}
\nc{\arXivV}[2]{\href{http://arxiv.org/abs/#1}{\nolinkurl{arXiv:#1v#2}}} 
\nc\p{\mathfrak{p}}
\nc\X{\mathbf{X}}
\let\OLDthebibliography\thebibliography
\renewcommand\thebibliography[1]{
  \OLDthebibliography{#1}
  \setlength{\parskip}{0pt}
  \setlength{\itemsep}{0.3ex}
}
\title{$\FI$-modules and the cohomology of modular representations of symmetric groups}
\author{Rohit Nagpal }
\date{}
\begin{document}

\maketitle

\begin{abstract}
An $\FI$-module $V$ over a commutative ring $\bk$ encodes a sequence $(V_n)_{n \geq 0}$ of representations of the symmetric groups $(\fS_n)_{n \geq 0}$ over $\bk$. In this paper, we show that for a "finitely generated" $\FI$-module $V$ over a field of characteristic $p$, the cohomology groups $H^t(\fS_n, V_n)$ are eventually periodic in $n$. We describe a recursive way to calculate the period and the periodicity range and show that the period is always a power of $p$. As an application, we show that if $\mathcal{M}$ is a compact, connected, oriented manifold of dimension $\geq 2$ and $\conf_n(\mathcal{M})$ is the configuration space of unordered $n$-tuples of distinct points in $\mathcal{M}$ then the mod-$p$ cohomology groups $H^{t}(\conf_n(\mathcal{M}),\bk)$ are eventually periodic in $n$ with period a power of $p$.
\end{abstract}

\tableofcontents

\section{Introduction} 
\label{section:introduction}

Nakaoka showed in his 1960 paper  \cite{N} that the cohomology groups $H^t(\fS_n , V)$ stablizes in $n$ when $V$ is a finitely generated abelian group with a trivial action of $\fS_n$. More precisely he showed that
\begin{equation} H^t(\fS_n , V) \cong H^t(\fS_{n-1}, V), \qquad \text{if } n>2 t.\label{eqn:nakaoka} \end{equation} 
Immediately afterwards, A. Dold published his paper~\cite{D}  significantly simplifying Nakaoka's arguments. Over the next two decades the cohomology groups $H^t(\fS_n , \mathbb{Z}/p \mathbb{Z})$ were of great research interest. Nakaoka and others studied theses groups extensively determining the structure of the groups and the cohomology operations on them; see \cite{N1}, \cite{N2}, \cite{Mg}, \cite{May}, \cite{M}.

It is natural to ask what happens if we allow $V$ to vary with $n$ or let $\fS_n$ act nontrivially on $V$. But, to make sense out of this we need coherent sequences of $\fS_n$-representations. The coherent sequences of $\fS_n$-representations were first observed as a ubiquitously occurring phenomena by Church and Farb in \cite{CF}. Soon after, the theory of $\FI$-modules was introduced and developed by Church, Ellenberg and Farb in \cite{CEF} to study such sequences. The notion of coherence of such a sequence was made precise by the natural notion of finite generation of an $\FI$-module.

In \cite{CEF}, it was shown that the finitely generated $\FI$-modules over a characteristic $0$ field form an abelian category and several new theorems in topology, algebra, combinatorics and algebraic geometry were proved as an application. Almost at the same time, Sam and Snowden studied a category equivalent to the category of finitely generated $\FI$-modules in \cite{SS} and provided a detailed analysis of the algebraic structure of the category in characteristic $0$.   

In \cite{CEFN}, Church-Ellenberg-Farb and the current author studied the category of finitely generated $\FI$-modules over a general Noetherian ring showing that the category is abelian and that there is an inductive description of a finitely generated $\FI$-module $V=(V_n)_{n \geq 0}$: there exists some $N \geq 0$ such that if $n \geq N$ then $V_n$ can be described in terms of $V_j$, $j \leq N$. Such an inductive description had appeared before in characteristic $0$ (see Putman's paper \cite{P}) and has several applications. In this paper, the techniques introduced in \cite{CEFN} are used extensively.

Recently, $\FI$-modules and the related theories have been very fruitful. Church-Ellenberg-Farb has related it to the point counts on varieties defined over finite fields via Grothendieck-Lefschetz fixed point theorem (\cite{CEF2}). Related theory of $\FI$-groups has been developed by Church and Putman in \cite{CP} to obtain results about the Johnson filtration on the mapping class groups. The theory has also been generalized to coherent sequences of representations of classical Weyl groups by Wilson which yields another set of applications (\cite{Wil}). 

In this paper, we generalize Nakaoka's stability theorem and show that the cohomology groups $H^t(\fS_n, V_n)$, where the sequence of $\fS_n$-representations $(V_n)_{n\geq 0}$ is given by a finitely generated $\FI$-module $V$, are eventually periodic in $n$. A map $f: U \ra V$ of finitely generated FI-modules determine $\fS_n$-equivariant maps $f_n: U_n \ra V_n$ for each $n \geq 0$. We show that kernel and image of $f_{\star, n}: H^t(\fS_n, U_n) \ra H^t(\fS_n, V_n)$ are eventually periodic in $n$. Similarly, if \[0 \ra U \ra V \ra W\ra 0\] is an exact sequence of finitely generated $\FI$-modules then the kernel and the image of the connecting homomorphisms in the cohomology long exact sequence are eventually periodic in $n$. 

The simplest example where periodicity can be observed is constructed as follows. Working over $\mathbb{F}_2$, let $(V_n)_{n\geq 0}$ be the sequence consisting of permutation representations  ($V_n = (\mathbb{F}_2)^n$)  and  $(U_n)_{n \geq 0}$ be the sequence consisting of trivial representations ($U_n = \mathbb{F}_2$). These sequences form finitely generated $\FI$-modules $V$ and $U$ respectively and the map $\phi: V \ra U$ given by \[\phi_n(a_1, a_2, \ldots, a_n) = \sum_{i=1}^n a_i \qquad \forall n\] is an $\FI$-module homomorphism. Now $W \coloneq \ker{\phi} = (\ker{\phi_n})_{n \geq 0}$ forms an $\FI$-module with $H^0(\fS_n, W_n)= W_n^{\fS_n}$ which is nontrivial only when $n\geq 2$ and $2 \mid n$.

We remark that our results do not provide another proof of Nakaoka's theorem. Rather we build on Nakaoka's theorem addressing a seemingly orthogonal set of difficulties. Unlike Nakaoka's stability theorem our periodicity result is nontrivial even in the case when $t=0$ (see example~\ref{example:1}).

As a direct application to our results, we show that the mod-$p$ cohomology of the unordered configuration space of a compact, connected and oriented  manifold of dimension $\geq 2$ is periodic. The precise statement and a short history of related works on configuration spaces is provided in \S\ref{subsection:applicationintro}.

\para{Related work on other categories} Parallel theories for the twisted cohomology (or homology) of several different sequences of groups exist or are being worked out. Some examples include: cohomology of free groups via polynomial functors \cite{DPV}; rational cohomology of classical groups via polynomial functors \cite{Touze}, \cite{Kuhn}; homology groups $H_t(\GL_n(R), T(\adj_n(R))$ where $T\colon \zMod \ra \zMod$ is a functor of finite degree \cite{dwyer}; homology groups $H_t(\fS_n, V_n)$ where $V$ is a finitely generated $\FI$-module \cite{Wah} (note here that the universal coefficient theorem cannot be used to deduce the cohomology version from the homology version and vice-versa because the coefficients are nontrivial representations of $\fS_n$); stability of twisted homology groups $H_t(\SL_n(R), M_n)$ and $H_t(\Sp_{2 n}(R), M_n)$ where $R$ is a finite ring \cite{PS}. 

A general twisted homological stability result for sequences of representations of wreath product groups $G\wr \fS_n$, where $G$ is a polycyclic-by-finite group, is proved in \cite{gmaps}. Andrew Snowden and the current author are developing a cohomological version for sequences of representations of the groups $G \wr \fS_n$ \cite{periodicity}. In particular, when $G$ is the trivial group, \cite{periodicity} provides an alternate proof of periodicity of the cohomology groups $H^t(\fS_n ,V_n)$ with better bounds on the periodicity range but without optimal bounds on the period.
%
%
%
%

\para{Outline of the paper} We start with an overview of $\FI$-modules in \S\ref{subsection:overview} and the rest of the \S\ref{section:introduction} presents precise statements of our results and an outline of the proofs. In \S\ref{section:filteredFI}, we prove results about the structure of finitely generated $\FI$-modules over a Noetherian ring. Our main theorem is proved in \S\ref{section:periodicityofinv} and the application to configuration spaces is given in \S\ref{section:application}. In \S \ref{section:questions-comments}, we provide some related open questions.

\para{Acknowledgements}
First, I thank my advisor Jordan Ellenberg for  introducing me to the problem. For his precisely formulated questions and innumerous suggestions that encouraged me throughout and helped me prove the most general results. For sharing his vision that made me investigate the spectral sequence for the covering map from the configuration spaces of ordered points to the configuration spaces of unordered points which lead to the main application. Next, I thank Thomas Church and Benson Farb for inviting me to present this work at the Geometry and Topology seminar at University of Chicago in Fall 2013 when the work was only in its rudimentary form. I thank Benson Farb for inviting me again to present this work at University of Arkansas in Spring 2015 and for his helpful conversations on configuration spaces. I am very grateful to Thomas Church for showing me a direct short proof of Lemma~\ref{lemma:homotopy} and for several hours of crucial discussions we had in coffee shops of Chicago that greatly influenced the paper as it is now. I thank Eric Ramos and Jenny Wilson for going through parts of the paper and for their comments. This work is a part of my thesis at University of Wisconsin Madison and I thank the institution for the support throughout.
\subsection{An overview of Church-Ellenberg-Farb's theory of \texorpdfstring{$\FI$}{FI}-modules} \label{subsection:overview}

Let $\FI$ be the category whose objects are finite sets and a morphism between finite sets $A$ and $B$ is an injection $f \colon A \to B$. The category $\FI$ is equivalent to its full subcategory whose objects are the sets $\{1,2, \ldots, n\}$, $n \geq 0$. For simplicity we denote $\{1,2, \ldots, n\}$ by $[n]$ and  $\{-1,-2, \ldots, -n\}$ by $[-n]$. The empty set is denoted by $[0]$.

An $\FI$-module over a commutative ring $\bk$ is a covariant functor from the category $\FI$ to the category of $\bk$-modules. The category of $\FI$-modules over $\bk$ is denoted by $\FIMod_\bk$. For an $\FI$-modules $V$, we denote the $\bk$-module $V([n])$ by $V_n$. Since the group $\End_{FI}([n])$ of endomorphism of $[n]$ is naturally isomorphic to the symmetric group $\fS_n$, any $\FI$-module $V$ determines a sequence of $\bk[\fS_n]$-modules $(V_n)_{n \geq 0}$ with linear
maps between them respecting the group actions. For an $\FI$-morphism $f \colon [m] \ra [n]$, we denote the map $V(f) : V_m \ra V_n$ by $f_{\star}$.

It is known that the category of functors from any small category to an abelian category is abelian (\cite[Remark~2.1.2]{CEF}, \cite[A.4.3]{Wei}), so $\FI$-modules form an abelian category. Moreover, notions such as kernel,
cokernel, subobject, quotient object, injection, or surjection are all defined "pointwise", meaning that
a property holds for an $\FI$-module $V$ if and only if it holds for each $V_n$. For example, we say a map
$V \ra W$ of $\FI$-modules is an injection if and only if the maps $V_n \ra W_n$ are injections for all $n$.

An $\FI$-module $V$ is \emph{finitely generated} if there is a finite set $S$ of elements in $\bigsqcup_i V_i$ so that no
proper sub-$\FI$-module of $V$ contains $S$.

\begin{definition}[{\bf The  $\FI$-module $M(m)$ }{\cite[Definition~2.2.3]{CEF}, \cite[Definition~2.2]{CEFN}}]
\label{def:Mm}
For any $m\geq 0$, the $\FI$-module $M(m)$ takes a finite set $S$ to the free $\bk$-module $M(m)_S$ on the set of injections $[m]\inj S$. In other words, $M(m)=\bk[\Hom_{\FI}([m],{-})]$; by the Yoneda lemma, $M(m)$ is uniquely determined by the natural identification
\[\Hom_{\FIMod}(M(m),V)\cong V_m.\]


An $\FI$-module V is said to be {\bf free} if \[V = \bigoplus_{i\in I} M(m_i). \qedhere \]
\end{definition}

\begin{definition}[{\bf The  $\FI$-module $M(W)$ }{\cite[Definition~2.2.2]{CEF}}]
\label{def:MW}
For any $\bk[\fS_m]$-module $W$, the $\FI$-module $M(W)$ takes a finite set $S$ of size $n$ to the $\bk[\fS_n]$-module $M(W)_S$ given by $M(W)_S \coloneq \bk[\Hom_{\FI}([m],{S})] \otimes_{\bk[\fS_m]} W$. In other words, \[M(W)_n = \Ind_{\fS_m \times \fS_{n-m}}^{\fS_n} W \boxtimes \bk = \bk[\fS_n] \otimes_{\bk[\fS_m] \times \bk[\fS_{n-m}]} (W \boxtimes \bk).\] By the Yoneda lemma, $M(W)$ is uniquely determined by the natural identification
\[\Hom_{\FIMod}(M(W),V)\cong \Hom_{\fS_m}(W, V_m).\]
Note here that $M(\bk[\fS_m])$ is same as $M(m)$.
\end{definition}

\begin{remark}
It is shown in \cite[Theorem~4.1.5]{CEF} that $\FI$-modules  of the form \[V = \bigoplus_{i\in I} M(W_i)\] for some $\bk[\fS_{m_i}]$-modules $W_i$ are precisely the $\FI$-modules with $\FIsharp$  (see \cite[Definition~4.1.1]{CEF}) structure on them. For our purpose, an $\FIsharp$-module is an $\FI$-module admitting a decomposition as above.

If an $\FIsharp$-module $V$ is finitely generated as an $\FI$-module. Then it follows (for example, from the argument in Proposition~\ref{prop:boundedness}) that the cohomology groups $H^t(\fS_n, V_n)$ stabilizes and hence are periodic with period $1$.
\end{remark}

We have the following characterization of finite generation in terms of $\FI$-modules $M(m)$.

\begin{definition}[{\bf Finitely generated $\FI$-modules} {\cite[Proposition~2.3.4]{CEF}, \cite[Definition~2.1]{CEFN}}]
\label{def:finitegeneration}

Let $V$ be an $\FI$-module. We say that $V$ is {\bf finitely generated} if there exists a surjection
\[\bigoplus_{i=1}^d M(m_i)\surj V\] for some integers $m_i\geq 0$.
We say that $V$ is {\bf generated in degree $\leq m$} if there exists a surjection
\[\Pi \colon \bigoplus_{i\in I} M(m_i)\surj V\qquad\text{ with all }m_i\leq m.\] Here the sum may be infinite. The data of the integers $m_i$, $i \in I$ is the {\bf degree structure} of $\Pi$ and $\max_{i \in I} m_i$ is the {\bf degree} of $\Pi$.
\end{definition}

We need the following Noetherian property of finitely generated $\FI$-modules.

\begin{theorem}[{\bf Noetherian property} {\cite[Theorem~A]{CEFN}}]
\label{thm:noetherianProp}
If $V$ is a finitely-generated $\FI$-module over a Noetherian ring $\bk$, and $W$ is a sub-$\FI$-module of $V$, then $W$ is finitely generated.
\end{theorem}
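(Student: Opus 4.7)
The plan is to reduce to showing that each $M(m)$ is Noetherian and then induct on $m$. Since $V$ is finitely generated, fix a surjection $\pi\colon F\coloneq \bigoplus_{i=1}^d M(m_i)\twoheadrightarrow V$; the preimage $\pi^{-1}(W)\subset F$ is a sub-$\FI$-module, and a quotient of a finitely generated $\FI$-module is finitely generated, so it suffices to prove $F$ is Noetherian. Noetherianity is preserved under extensions (and hence under finite direct sums), reducing the problem to each $M(m)$ individually. The base case $m=0$ is immediate: every $\FI$-morphism acts as the identity on $M(0)_n=\bk$, so sub-$\FI$-modules of $M(0)$ correspond bijectively to ideals of $\bk$, which are finitely generated by hypothesis.

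For the inductive step, the key tool is the shift functor $S_+\colon \FIMod_\bk \to \FIMod_\bk$ defined by $(S_+V)_n=V_{n+1}$, where $\fS_n$ acts via $\fS_n\inj \fS_{n+1}$ and an $\FI$-morphism $g\colon [n]\to[n']$ acts as its extension $g_+\colon [n+1]\to[n'+1]$ sending $n+1\mapsto n'+1$. The canonical inclusion $V\hookrightarrow S_+V$ (induced by $[n]\inj[n+1]$) gives a short exact sequence $0\to V\to S_+V\to D_+V\to 0$. The central structural computation I would carry out is that $D_+M(m)\cong M(m-1)^{\oplus m}$ as $\FI$-modules: a basis element of $(D_+M(m))_n$ is an injection $f\colon[m]\inj[n+1]$ with $n+1\in\im(f)$, and the index $i=f^{-1}(n+1)\in[m]$ labels one of the $m$ summands while $f|_{[m]\setminus\{i\}}\colon[m-1]\inj[n]$ identifies it with a basis element of $M(m-1)_n$; both the $\fS_n$-action and the $\FI$-morphisms preserve $i$ (since $g_+$ fixes the top point), so the decomposition is $\FI$-equivariant. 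By the inductive hypothesis, $D_+M(m)$ is Noetherian.

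Given a sub-$\FI$-module $W\subset M(m)$, exactness of $S_+$ gives $D_+W\subset D_+M(m)$, hence $D_+W$ is finitely generated, say in degree $\leq N$. I would then conclude that $W$ is generated in degree $\leq N+1$ by induction on $n$: for $x\in W_n$ with $n>N+1$, the projection $\bar x \in (D_+W)_{n-1}$ is an $\FI$-combination of classes represented by elements $\tilde y \in W_{k+1}$ for $k\leq N$, so $x$ equals such an $\FI$-combination plus an element $\iota(z)$ with $z\in W_{n-1}$; by the inductive hypothesis $z$ is generated by degree-$\leq N+1$ elements, and hence so is $x$.

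The main obstacle is checking the structural identification $D_+M(m)\cong M(m-1)^{\oplus m}$ carefully and then executing the final degree-bounding step cleanly across the short exact sequence; this is where the techniques of \cite{CEFN} extend the characteristic-zero picture of $\FI$-modules to an arbitrary Noetherian base ring.
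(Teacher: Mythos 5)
Your overall strategy — reduce to $M(m)$, induct on $m$ via the shift functor, and identify $D_+M(m)\cong M(m-1)^{\oplus m}$ — is the right one (it matches the splitting $\cS_{+1}M(m)=M(m)\oplus M(m-1)^{\oplus m}$ used in \cite{CEFN}, cf.\ Remark~\ref{remark:coFIMod}), and your final degree-bounding induction on $n$ is correct \emph{granted} that $D_+W$ is generated in bounded degree. But there is a genuine gap in getting that hypothesis: the assertion that ``exactness of $S_+$ gives $D_+W\subset D_+M(m)$'' is false. Exactness of $S_+$ yields $S_+W\hookrightarrow S_+M(m)$, but the induced map on cokernels of $\iota$ has kernel at degree $n$ equal to $(W_{n+1}\cap M(m)_n)/W_n$, i.e.\ the torsion of $M(m)/W$. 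For instance, with $W\subset M(1)$ the sub-$\FI$-module with $W_0=W_1=0$ and $W_n=M(1)_n$ for $n\geq 2$, one has $(D_+W)_1\cong\bk^2$ while $(D_+M(1))_1\cong\bk$, so the map is not injective. Without the inclusion you cannot invoke Noetherianity of $D_+M(m)$ to deduce that $D_+W$ is finitely generated — and trying to show the kernel is bounded amounts to bounding the torsion of $M(m)/W$, which is essentially what Noetherianity of $M(m)$ would give you, so the argument is circular as written.

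The repair requires an extra layer that is precisely where \cite{CEFN} do the real work. One must first pass to the saturation $W^{\infty}$ defined by $W^{\infty}_n=\bigcup_{a\geq 0}\bigl(W_{n+a}\cap M(m)_n\bigr)$: Noetherianity of $\bk$ applied to the finitely generated $\bk$-module $M(m)_n$ makes each union stabilize, $W^\infty$ is a sub-$\FI$-module with $M(m)/W^\infty$ torsion-free, and for \emph{it} one genuinely has $D_+W^\infty\hookrightarrow D_+M(m)$, so your degree-bounding step applies to give $W^\infty$ generated in some degree $\leq D$. One then still has to descend from $W^\infty$ to $W$: picking $A$ large enough that $W^\infty_k=W_{k+A}\cap M(m)_k$ for all $k\leq D$, an element $g_\star x$ of $W^\infty_n$ with $x\in W^\infty_k$ and $n\geq k+A$ can be factored through $W_{k+A}$ and hence lies in $W_n$, giving $W_n=W^\infty_n$ for $n>D+A$ and finite generation of $W$. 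This is, in substance, the stabilizing chain of submodules that CEFN track using the full direct sum decomposition $\cS_{+a}M(m)$ rather than only the short exact sequence. (As a minor point, your base case is also slightly misstated: sub-$\FI$-modules of $M(0)$ correspond to ascending chains of ideals of $\bk$, not single ideals, though the Noetherian conclusion still holds.)
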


The positive shift functors were used extensively to prove the above Noetherian property. Shift functors are essential for our purpose as well.

\begin{definition}[{\bf Positive shift functor $\cS_{+a}$}  {\cite[Definition~2.8]{CEFN}}]
\label{def:shiftpos}
Given an $\FI$-module $V$ and an integer $a\geq 1$, the functor $\cS_{+a}\colon \FIMod \to \FIMod$ is defined by \[(\cS_{+a}V)_S \coloneq V_{S \sqcup [a]} \]
It follows that as a $\bk[\fS_n]$-module $\cS_{+a}V_n \cong \Res_{\fS_{n}}^{\fS_{n+a}} V_{n+a}$ Since kernels and cokernels are computed pointwise, $\cS_{+a}$ is an exact functor. Also note that there is a natural isomorphism \[\cS_{+(a+b)}V \cong \cS_{+a} \cS_{+b} V. \qedhere \]
\end{definition}

\begin{definition} 
\label{def:torsionsubmodule}
 The natural inclusion $S \hookrightarrow S \sqcup [a]$ is a morphism in the category $\FI$, which induces a map $X_a(V): V \ra \cS_{+a} V$ of $\FI$-modules. The {\bf torsion submodule} of $V$ denoted $T(V)$ is a sub-$\FI$-module of $V$ given by:\[T(V) = \bigcup_{a \geq 0} \ker X_a(V).\] Working over a Noetherian ring $\bk$, $T(V)$ is finitely generated by Theorem~\ref{thm:noetherianProp} and hence for each $a \geq 0$, $(X_a(V))_n$ is injective for large enough $n$ (see \cite[Lemma~2.15]{CEFN} for more details). 
\end{definition}

In some fortunate situations we can define a meaningful map in the opposite direction; $\cS_{+a}V \to V$. An example where this holds is when $V$ is free, which can be seen from the following lemma.

\begin{lemma}[{\cite[Proposition~2.12]{CEFN}}]
\label{lemma:SaMd}
For any $a\geq 0$ and any $d\geq 0$, there is a natural decomposition
\begin{equation}
\label{eq:Mderivative}
\cS_{+a}M(d)=M(d)\oplus Q_a
\end{equation}
where $Q_a$ is a free $\FI$-module finitely generated in degree $\leq d-1$. In particular, if $V$ is a free $\FI$-module then $V$ is a direct summand of $\cS_{+a} V$.
\end{lemma}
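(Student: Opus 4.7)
The plan is to unwind the definitions and decompose injections according to which ``new'' elements they hit. Writing $M(d)_S=\bk[\Hom_{\FI}([d],S)]$ and applying the definition of $\cS_{+a}$, we have
\[
(\cS_{+a}M(d))_S \;=\; \bk\bigl[\Hom_{\FI}([d],\,S\sqcup[a])\bigr].
\]
Given an injection $f\colon [d]\hookrightarrow S\sqcup[a]$, the preimage $T=f^{-1}([a])\subseteq[d]$ is an invariant of $f$. The first step is to partition the basis injections by the subset $T$:
\[
\Hom_{\FI}([d],S\sqcup[a]) \;=\; \bigsqcup_{T\subseteq[d]}\bigl\{f : f^{-1}([a])=T\bigr\},
\]
and to note that once $T$ is fixed, such an $f$ is determined by an injection $[d]\setminus T\hookrightarrow S$ together with an injection $T\hookrightarrow[a]$. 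Thus
\[
(\cS_{+a}M(d))_S \;\cong\; \bigoplus_{T\subseteq[d]} \bk[\Inj(T,[a])]\,\otimes\, \bk[\Inj([d]\setminus T,S)].
\]

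Next I would check naturality in $S$: an $\FI$-morphism $g\colon S\to S'$ acts via $g\sqcup\id_{[a]}$ on $S\sqcup[a]$, which preserves each preimage $T$ and only post-composes the $[d]\setminus T\to S$ factor with $g$. So the above is a decomposition of $\FI$-modules, where the $T$-summand is $\bk[\Inj(T,[a])]\otimes M([d]\setminus T)$, and $M([d]\setminus T)\cong M(d-|T|)$ after choosing a bijection $[d]\setminus T\cong[d-|T|]$. The summand $T=\emptyset$ is canonically identified with $M(d)$. Setting
\[
Q_a \;\coloneq\; \bigoplus_{\substack{T\subseteq[d]\\ T\neq\emptyset}} \bk[\Inj(T,[a])]\otimes M([d]\setminus T),
\]
we get $\cS_{+a}M(d)=M(d)\oplus Q_a$. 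Since $\Inj(T,[a])=\emptyset$ for $|T|>a$ and is finite otherwise, $Q_a$ is a finite direct sum of copies of $M(d-k)$ for $1\leq k\leq\min(a,d)$, hence free and finitely generated in degree $\leq d-1$.

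For the final assertion, if $V=\bigoplus_{i\in I}M(m_i)$ is free, then $\cS_{+a}$ commutes with direct sums (being defined pointwise), so $\cS_{+a}V=\bigoplus_i(M(m_i)\oplus Q_a^{(i)})=V\oplus\bigl(\bigoplus_i Q_a^{(i)}\bigr)$, realizing $V$ as a direct summand.

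There is no serious obstacle: the content is combinatorial bookkeeping. The only point that deserves care is verifying that the splitting $T=f^{-1}([a])$ is preserved by $\FI$-morphisms of $S$ (so that the decomposition is a decomposition of $\FI$-modules, not merely of $\bk$-modules pointwise), and that the splitting $\cS_{+a}M(d)=M(d)\oplus Q_a$ we obtain is the one induced by the canonical inclusion $X_a(M(d))\colon M(d)\to\cS_{+a}M(d)$ of Definition~\ref{def:torsionsubmodule} — both of which follow immediately from the fact that the inclusion $S\hookrightarrow S\sqcup[a]$ sends basis injections $[d]\hookrightarrow S$ to exactly those $f$ with $T=\emptyset$.
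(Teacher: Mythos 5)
Your proof is correct, and the underlying combinatorial decomposition — sorting injections $[d]\hookrightarrow S\sqcup[a]$ by which of the $a$ new points they hit — is the same one the paper (and \cite{CEFN}) relies on. The only difference is organizational: the paper records the case $a=1$, namely $\cS_{+1}M(d)=M(d)\oplus M(d-1)^{\oplus d}$ (Remark~\ref{remark:coFIMod}; compare part~2 of Lemma~\ref{lemma:zero}, whose proof splits $D_{m,n+1}=D_{m,n}\sqcup D_{m-1,n}$), and then obtains general $a$ by iterating $\cS_{+(a+b)}\cong\cS_{+a}\cS_{+b}$, whereas you carry out the full decomposition indexed by $T=f^{-1}([a])\subseteq[d]$ in one step. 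Your one-shot version is marginally cleaner in that it exhibits $Q_a$ explicitly as $\bigoplus_{\emptyset\neq T\subseteq[d]}\bk[\Inj(T,[a])]\otimes M([d]\setminus T)$ rather than tracking how summands proliferate under iterated shifts; the iterative version has the small advantage of reducing everything to the self-evident $a=1$ case.
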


\begin{remark} \label{remark:coFIMod}
 When $a=1$ we have $\cS_{+ 1} M(m) = M(m) \oplus M(m-1)^{\oplus m}$. The decomposition of $\cS_{+a} M(m)$ can be computed using the case $a=1$ and the natural isomorphism $\cS_{+(a+b)}V \cong \cS_{+a} \cS_{+b} V$. We generalize the above decomposition in Lemma~\ref{lemma:zero} which shows in particular that if $V$ is an $\FIsharp$-module and $a\geq 0$, then $V$ is a direct summand of $\cS_{+a} V$.
\end{remark}

\subsection{Introduction to \texorpdfstring{$\sharp$}{sharp}-filtered \texorpdfstring{$\FI$}{FI}-modules}
In \S\ref{section:filteredFI}, we show that if $V$ is a finitely generated $\FI$-module over a Noetherian ring $\bk$ then $V$ is built out of $\FI$-modules of the form $M(W)$ which, by Remark~\ref{remark:coFIMod} is a member of a strictly smaller category. To make it precise we need a couple of definitions.

\begin{definition}[{\bf $\sharp$-filtered $\FI$-modules}]
\label{def:filteredfimod}A {\bf $\sharp$-filtered $\FI$-module} is a surjection \[\Pi : \bigoplus_{i=1}^d M(m_i) \twoheadrightarrow V\] of $\FI$-modules such that the filtration \[0 = V^0 \subset V^1 \subset \ldots \subset V^d=V \] given by \[V^r \coloneq \Pi(\bigoplus_{i=1}^r M(m_i)),\qquad\text{ }0 \leq r \leq d\] has graded pieces of the form $M(W)$, that is, the filtration satisfies $\frac{V^r}{V^{r-1}} \cong M(W_r)$ where $W_r$ are some $\bk[\fS_{m_r}]$-modules. We call the filtration induced by $\Pi$, the {\bf $\sharp$-filtration} of $V$. We use $\Pi$ and $V$ interchangeably if there is an obvious surjection giving $V$ a $\sharp$-filtered $\FI$-module structure. 

We call $d$ the {\bf length} of the $\sharp$-filtration and the pair $\tilde{V}\coloneq (\bigoplus_{i=1}^d M(m_i), (m_i)_{1 \leq i \leq d})$ a {\bf cover} of $V$. The second coordinate just keeps track of the order which yields the desired $\sharp$-filtration. We dispense with the second co-ordinate whenever it is clear from the context and identify the cover with the first co-ordinate.
\end{definition}
The above definition implies that there are surjections $\pi^r: M(m_r) \twoheadrightarrow M(W_r)$ and hence each $W_r$ is a singly generated $\bk[\fS_{m_r}]$-module. But this is just a matter of convenience (see Lemma~\ref{lemma:zero}). Note that NOT every finitely generated $\FI$-module admits a $\sharp$-filtration. But if a finitely generated $\FI$-module $V$ admits a filtration \[0 = V^0 \subset V^1 \subset \ldots \subset V^d=V \] satisfying $\frac{V^r}{V^{r-1}} \cong M(W_r)$ where $W_r$ are some $\bk[\fS_{m_r}]$-modules, then $V$ admits a $\sharp$-filtered $\FI$-module structure. The definition of a $\sharp$-filtered $\FI$-module just contains extra data of a cover (this extra data is useful in producing bounds on the period of the cohomology groups).

\begin{definition}
\label{def:sequential}
Let $\Pi^1: \bigoplus_{i=1}^{d_1} M(m_i) \ra V$ and $\Pi^2: \bigoplus_{k=1}^{d_2} M(n_k) \ra W$ be $\sharp$-filtered $\FI$-modules. An $\FI$-module map $\phi : V \ra W$ is called {\bf sequential} if there are subsets $S \subset [d_1]$, $T \subset [d_2]$ and an isomorphism $f_{\phi} : S \ra T$ such that $n_k = m_{f_{\phi}^{-1}(k)}$ for each $k \in T$ and the map of covers $\tilde{\phi}: \bigoplus_{i=1}^{d_1} M(m_i) \ra \bigoplus_{k=1}^{d_2} M(n_k)$ given by
\[(\tilde{\phi}_n(L))_k =
\begin{cases}
L_{f_{\phi}^{-1}(k)}, & \text{if } k \in T  \\
0, & \text{otherwise } \\
\end{cases}
\] determines $\phi$. In other words, if $l \in V_n$ and $L = (L_i)_{1 \leq i \leq d_1} \in\bigoplus_{i=1}^{d_1} M(m_i)_n$ is any lift of $l$ (that is, $\Pi^1_n(L)=l$) then $\tilde{\phi}_{n}(L)$ is a lift of $\phi(l)$.
\end{definition}

The following theorem is proven in \S\ref{section:filteredFI}.

\begin{main}[{\bf A $\sharp$-filtered resolution of a finitely generated $\FI$-module}]
\label{thm:structure}
Let $V$ be a finitely generated $\FI$-module over a Noetherian ring $\bk$ and is generated in degree $D$, that is, $V$ admits a surjection \[\Pi: \tilde{V}\coloneq \bigoplus_{i=1}^d M(m_i) \ra V\] with $m_i \leq D$ for each $i$. Then, \begin{enumerate}

\item[(A)] for large enough $a$, $\cS_{+a} V$ is $\sharp$-filtered; 

\item[(B)] there exists a commutative diagram (Figure~\ref{Fig:structure}) \begin{figure}[h]
\[\begin{tikzcd}
 {0}\ar{r} & \tilde{V} \ar{r}{\tilde{\iota}} \ar{d}{\Pi} & \tilde{J^0}\ar{r}{\tilde{\phi}^0} \ar{d}{\Pi^0} &
 \tilde{J}^1 \ar{r}{\tilde{\phi}^1} \ar{d}{\Pi^1} & \tilde{J}^2 \ar{d}{\Pi^2} \ar{r}{\tilde{\phi}^2} & \ldots \ar{r}{\tilde{\phi}^{N-1}} & \tilde{J}^N \ar{d}{\Pi^N} \ar{r} & 0\\ 
 {0}\ar{r} & V \ar{r}{\iota} \ar{d} & J^0\ar{r}{\phi^0} \ar{d} &
  J^1 \ar{r}{\phi^1} \ar{d} & J^2 \ar{d} \ar{r}{\phi^2} & \ldots \ar{r}{\phi^{N-1}} & J^N \ar{d} \ar{r} & 0\\
   & 0 & 0 &
    0 & 0 & & 0
 \end{tikzcd}\]
 \caption{} \label{Fig:structure}
 \end{figure} with the columns and the first row exact and the second row exact in high enough degree (say $n \geq C$), that is, the sequence \[0 \ra V_n \ra J^0_n \ra J^1_n \ra \ldots \ra J^N_n \ra 0\] is exact if $n \geq C$. Here for each $0 \leq i \leq N$,   $J^i$  is a $\sharp$-filtered $\FI$-module with cover $\Pi^i:\tilde{J}^i \ra J^i$ and for each $0 \leq i \leq N-1$, the map $\phi^i$ is sequential (sequentialness is witnessed by $\tilde{\phi}^i$). Moreover, $N \leq D$ and $\tilde{J}^i$ is generated in degree at most $D-i$.
 \end{enumerate}
\end{main}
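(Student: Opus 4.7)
The plan is to prove part (A) first and then iterate it to build the resolution in part (B), leaning throughout on Lemma~\ref{lemma:SaMd} (which says $\cS_{+a} M(d) = M(d) \oplus Q_a$ with $Q_a$ free, generated in degree $\leq d-1$) and on Theorem~\ref{thm:noetherianProp} to keep every sub-$\FI$-module finitely generated.

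For part (A), I would induct on $D$, with a secondary induction on the cover length $d$. The base case $D=0$ is easy: $\tilde V = M(0)^{\oplus d}$ is a constant $\FI$-module, and the induced $\FI$-module $V$ will become constant after enough shifts, so $\cS_{+a}V \cong M(W)$ for some $\bk[\fS_0]$-module $W$. For the inductive step, peel off the first summand by setting $V' \coloneq \Pi(M(m_1))$, a cyclic quotient $M(m_1)/K$ with $K$ finitely generated by Theorem~\ref{thm:noetherianProp}. The heart of the argument is showing $\cS_{+a}(M(m_1)/K)$ is $\sharp$-filtered for large $a$: using Lemma~\ref{lemma:SaMd}, decompose $\cS_{+a}M(m_1) = M(m_1) \oplus Q_a$ and analyze the image of $\cS_{+a} K$ in each summand. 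For sufficiently large $a$, the projection of $\cS_{+a} K$ to the $M(m_1)$ summand stabilizes to a sub of the form $M(I)$ for some $\bk[\fS_{m_1}]$-submodule $I \subseteq \bk[\fS_{m_1}]$, contributing a graded piece of the form $M(\bk[\fS_{m_1}]/I)$; the remaining quotient is a quotient of $Q_a$, a free module of degree $\leq m_1 - 1 < D$, which is $\sharp$-filtered by the inductive hypothesis. Since $V/V'$ has a shorter cover, the secondary induction on $d$ gives that $\cS_{+a}(V/V')$ is $\sharp$-filtered, and exactness of $\cS_{+a}$ lets us concatenate the two $\sharp$-filtrations using the short exact sequence $0 \to \cS_{+a}V' \to \cS_{+a}V \to \cS_{+a}(V/V') \to 0$.

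For part (B), I would construct the resolution iteratively. Choose $a_0$ large enough that part (A) applies to $V$ and that $X_{a_0}(V)\colon V\to \cS_{+a_0}V$ is injective above some threshold (possible because $T(V)$ is finitely generated, hence supported in bounded degrees). Set $J^0 \coloneq \cS_{+a_0}V$ with cover $\tilde J^0 \coloneq \cS_{+a_0}\tilde V = \tilde V \oplus \tilde Q_0$ from Lemma~\ref{lemma:SaMd}; then $\tilde\iota$ is the canonical inclusion of the summand $\tilde V$, and $\iota \coloneq X_{a_0}(V)$. The cokernel $C^0$ of $\iota$ (in high degrees) has cover $\tilde Q_0$, which is free and generated in degree $\leq D-1$. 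Now apply the same process to $C^0$: pick $a_1$ large, set $J^1 \coloneq \cS_{+a_1}C^0$ with cover decomposing as $\tilde Q_0 \oplus \tilde Q_1$, and let $\phi^0$ be the composition $J^0 \twoheadrightarrow C^0 \hookrightarrow J^1$; sequentiality of $\tilde\phi^0$ is built in because the cover decompositions respect the summand indexing. Iterate. Since the generation degree of $\tilde Q_i$ strictly drops at each step, the process terminates with $N \leq D$ and $\tilde J^i$ generated in degree $\leq D-i$, as required.

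The main obstacle is clearly part (A), and within it the identification of $\cS_{+a}K$ for a finitely generated sub $K \subseteq M(m)$: showing that for large $a$ its $M(m)$-projection genuinely stabilizes to an $M(I)$ requires a careful stabilization argument about sub-$\FI$-modules of $M(m)$ that is not immediately supplied by Theorem~\ref{thm:noetherianProp} alone and likely needs a separate lemma about how generators of $K$ sit inside $\cS_{+a}M(m)$. A secondary difficulty is ensuring that the cover-level picture in part (B) is truly sequential in the sense of Definition~\ref{def:sequential}: one must be careful that each iteration's cover is indexed so the degree-structure bookkeeping matches up, rather than merely isomorphic up to reordering, and that the degree bound $D-i$ propagates faithfully through all $N$ steps.
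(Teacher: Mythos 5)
Your plan for part (A) takes a genuinely different route to the same destination as the paper. The paper projects $\cS_{+a}\tilde V$ onto the summand $M(m_i)$ with $m_i$ \emph{maximal}, obtaining a quotient $A^a$ of $\cS_{+a}V$ that becomes $M(W)$ once $a$ is large; the kernel then has a cover with one fewer copy of $\max_i m_i$ in its degree structure, and the induction runs on the pair (top degree, its multiplicity). You instead peel off the cyclic \emph{sub}module $V' = \Pi(M(m_1))$, analyze $\cS_{+a}(M(m_1)/K)$ directly, and induct on $(D,d)$, concatenating with the $\sharp$-filtration of $\cS_{+a}(V/V')$ coming from the shorter cover. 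Both routes work, and the stabilization claim you flag at the end --- that the image of $\cS_{+a}K$ under the projection to the $M(m_1)$ summand eventually becomes $M(I)$ --- is exactly the $d=1$ special case of the paper's assertion that $\ker\varphi_a$ becomes generated in degree $m_i$ once $(\ker\varphi_a)_{m_i}$ stabilizes. You are right that this does not follow from Theorem~\ref{thm:noetherianProp} alone; the paper disposes of it by invoking the monotonicity argument from the proof of Theorem~A of \cite{CEFN} and then applies part 1 of Lemma~\ref{lemma:zero} to identify the resulting cokernel with an $M(W)$. So the ``separate lemma'' you anticipate is precisely that monotonicity-and-saturation argument; you located the crux accurately rather than leaving a hidden gap.

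Two refinements to your sketch. First, the inductive hypothesis only gives you that some \emph{further} shift $\cS_{+b}$ of the $Q_a$-quotient is $\sharp$-filtered, not that the quotient itself is; you then need to shift the whole short exact sequence by $b$ and use that $\cS_{+b}M(W)$ stays $\sharp$-filtered (part 3 of Lemma~\ref{lemma:zero}), as the paper does explicitly. Second, your construction for part (B) --- set $J^0 = \cS_{+a_0}V$, pass to the cokernel $C^0$ with cover $\tilde Q_0$, iterate --- is an unrolled version of the paper's recursion on the cokernel $Q$ of $X_a(V)$, and the cover-level bookkeeping $\tilde J^i \cong \tilde Q_{i-1}\oplus \tilde Q_i$ is exactly what makes $\tilde\phi^i$ sequential with $\tilde J^i$ generated in degree at most $D-i$; this part matches the paper in substance.
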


\begin{remark} If $W$ is a projective $\bk[\fS_m]$-module, then $M(W)$ is a projective $\FI$-module (see \cite[Remark~2.2.A]{CEF} or \cite{Wei}). This implies that if $\bk$ is a field of characteristic $0$ or of characteristic $ > D$ then each $J^i$ in Theorem~\ref{thm:structure} is a direct sum of $\FI$-modules of the form $M(W)$. Thus \cite[Theorem ~1.13]{CEF}, which shows that a finitely generated $\FI$-module is uniformly representation stable in the sense of $\cite{CF}$, follows from Theorem~\ref{thm:structure} because by Pieri's rule $\FI$-modules of the form $M(W)$ are uniformly representation stable.
\end{remark}

\begin{remark}
Over a field of characteristic $0$, Sam and Snowden showed in \cite{SS} that $\FI$-modules of the form $M(W)$ are injective in the category of finitely generated $\FI$-modules and that every finitely generated $\FI$-module admits a finite injective resolution. 

Example~\ref{example:1} shows that in positive characteristic, there are $\sharp$-filtered $\FI$-modules which are not  NOT direct sums of $M(W)$'s. Unlike the category of $\bk[\fS_m]$-modules, projective objects in $\FIMod$ may not be injective. We believe that the category of finitely generated $\FI$-modules over a field of positive characteristic does not have enough injectives.
\end{remark}

\begin{remark}
We believe that it is possible to obtain a good bound on the constant $C$ in Theorem~\ref{thm:structure} via upcoming works of Church and Ellenberg (\cite{ChE}) and that of Ramos (\cite{ramos}).
\end{remark}

Note that when $\bk$ is a field and $W$ is a $\bk[\fS_m]$-module, then $\dim_{\bk} M(W)_n = \binom{n}{n-m} \dim_{\bk} W $ is an integer valued polynomial in $n$. It is proven in \cite[Theorem~B]{CEFN} that for a finitely generated $\FI$-module $V$, $\dim_{\bk} V_n$ is eventually a polynomial in $n$. Theorem~\ref{thm:structure} immediately yields a strengthening of this result.

\begin{theorem}[{\bf Polynomiality}]
\label{thm:polynomial}
Suppose $\bk$ is a Noetherian ring. Let $V$ be a finitely-generated $\FI$-module over $\bk$.  Then there exist finitely many integer-valued polynomials $P_i(T) \in \Q[T]$ and nontrivial $\bk$-modules $W_i$ so that for all sufficiently large $n$, \[ [V_n] = \sum_{i} P_i(n) [W_i]\] in the Grothendieck group $K_0(\bk)$ of $\bk$.
\end{theorem}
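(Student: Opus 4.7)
The plan is to bootstrap directly from Theorem A (the $\sharp$-filtered resolution), so almost all the work has already been done; what remains is essentially Grothendieck-group bookkeeping.

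First I would reduce the problem to computing $[M(W)_n]$ in $K_0(\bk)$ for a $\bk[\fS_m]$-module $W$. By Definition~\ref{def:MW} we have
\[M(W)_n = \bk[\fS_n] \otimes_{\bk[\fS_m]\times \bk[\fS_{n-m}]}(W\boxtimes \bk),\]
and after forgetting the $\fS_n$-action this is simply $W^{\oplus \binom{n}{m}}$ as a $\bk$-module (one summand for each coset of $\fS_m\times \fS_{n-m}$ in $\fS_n$, equivalently, one for each injection $[m]\hookrightarrow[n]$ modulo the $\fS_m$-action). Therefore
\[[M(W)_n] = \binom{n}{m}[W] \qquad \text{in }K_0(\bk),\]
and $\binom{n}{m}$ is an integer-valued polynomial in $n$.

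Next I would apply Theorem~\ref{thm:structure}(B) to get, for $n \geq C$, an exact sequence of $\bk$-modules
\[0 \to V_n \to J^0_n \to J^1_n \to \ldots \to J^N_n \to 0,\]
where each $J^i$ is $\sharp$-filtered. Since a short exact sequence of $\bk$-modules contributes an additive relation in $K_0(\bk)$, induction on the length of the $\sharp$-filtration of $J^i$ (using the defining filtration $0=(J^i)^0\subset\cdots\subset (J^i)^{d_i}=J^i$ with graded pieces $M(W^{(i)}_r)$) gives
\[[J^i_n] = \sum_{r=1}^{d_i} \binom{n}{m^{(i)}_r}\bigl[W^{(i)}_r\bigr]\]
for $n\geq C$. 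The long exact sequence above then yields, again for $n\geq C$,
\[[V_n] = \sum_{i=0}^{N}(-1)^i[J^i_n] = \sum_{i,r}(-1)^i \binom{n}{m^{(i)}_r}\bigl[W^{(i)}_r\bigr].\]
Collecting terms with the same underlying $\bk[\fS_{m^{(i)}_r}]$-module $W^{(i)}_r$ (and discarding any $W^{(i)}_r$ which is the zero module) produces the desired finite expression $[V_n]=\sum_i P_i(n)[W_i]$ with $P_i(T)\in\Q[T]$ integer-valued.

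There is no real obstacle here beyond invoking Theorem~A: the only thing to verify is the identification $[M(W)_n]=\binom{n}{m}[W]$ as $\bk$-modules, which is immediate from the induction formula, and the compatibility of $\sharp$-filtrations with the additive structure of $K_0(\bk)$, which is just additivity of $K_0$ along short exact sequences. The mild subtlety is that the expression is only valid for $n \geq C$ (the stable range of the second row in Figure~\ref{Fig:structure}), which is exactly the hypothesis "for all sufficiently large $n$" in the statement.
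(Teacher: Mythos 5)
Your proof is correct and takes precisely the approach the paper intends: the paper states that Theorem~\ref{thm:structure} "immediately yields" the result and gives no further argument, and your write-up fills in exactly the expected steps — the identification $[M(W)_n]=\binom{n}{m}[W]$ in $K_0(\bk)$ (which the paper itself records for the dimension over a field just before the statement), additivity of $K_0$ across the $\sharp$-filtration of each $J^i$, and the alternating sum over the resolution, valid for $n\geq C$.
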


\begin{definition}
\label{def:euler-degree}
For a finitely generated $\FI$-module $V$ over a Noetherian ring $\bk$, we define $\chi(V)$ to be the smallest nonnegative integer $c$ such that for each of the finitely many polynomials $P_i(T)$ in Theorem~\ref{thm:polynomial} the degree of $P_i(T) \leq c$. When $\bk$ is a field, $\chi(V)$ is equal to the degree of the polynomial $P(T)$ which satisfies $\dim_{\bk} V_n = P(n)$ for large enough $n$.

We define $D_V$ to be the least number $c$ such that $V$ is generated in degree $\leq c$.
\end{definition}

\begin{remark}\label{rem:dimension-generation}
Note that for a finitely generated $\FI$-module $V$, we have $D_V \geq \chi(V)$. For a $\sharp$-filtered $\FI$-module $J$ the equality holds: $D_J = \chi(J)$. Moreover, Theorem~\ref{thm:structure} and its proof (see \S \ref{section:filteredFI}) implies that $\chi(V) = \chi(J^0)$ and that $\tilde{J}^i$ are generated in degree $\leq \chi(V) - i$.
\end{remark}

\subsection{Statements of the main theorems for \texorpdfstring{$\sharp$}{sharp}-filtered \texorpdfstring{$\FI$}{FI}-modules} 
\label{subsection:mainfilteredintro}

The goal of \S \ref{subsection:mainfilteredintro} is to state Theorem~\ref{thm:filteredstabilityrange}, which is the special case of our main theorem where V is a $\sharp$-filtered FI-module.  In order to state the theorem, we need to set up some preliminary notations and definitions.

For any $n \geq 0$, let $\mathcal{B}_{t}(\fS_n)$ be the free abelian group on $\fS_n^{t+1}$ with $\fS_n$ acting diagonally. Then $\mathcal{B}_{\bullet}(\fS_n) \to \mathbb{Z} \to 0$ is the bar resolution of $\fS_n$ where the differential $\partial_{t} : \mathcal{B}_{t-1}(\fS_n) \to \mathcal{B}_{t}(\fS_n)$ is given by \[\partial_t(\sigma_0, \sigma_1, \ldots, \sigma_t) = \sum_{i=0}^t (\sigma_0, \sigma_1, \ldots, \hat{\sigma}_i, \ldots, \sigma_t)  .\] We allow $t$ to equal $-1$ and define $\mathcal{B}_{-1}(\fS_n)$ to be the trivial $\bk[\fS_n]$-module. We assume that $\bk$ is a field of characteristic $p$ and fix a $\sharp$-filtered $\FI$-module $\Pi: \tilde{V} \coloneq \bigoplus_{i=1}^{d} M(m_i) \ra V$ over $\bk$ of length $d$. Let \[0 = V^0 \subset V^1 \subset \ldots \subset V^d=V\] be the $\sharp$-filtration induced by $\Pi$. For each $1 \leq r \leq d$, the map $\Pi^r: \tilde{V}^r \coloneq \bigoplus_{i=1}^{r} M(m_i) \ra V^r$ obtained by restricting $\Pi$ to $\bigoplus_{i=1}^{r} M(m_i)$ is a cover of $V^r$ and gives $V^r$ a $\sharp$-filtered $\FI$-module structure. Let $D = \max_{i \in [d]} m_i$ be the degree of $\Pi$.
 
 \begin{remark}
Recall that the definition of a $\sharp$-filtered $\FI$-module $\Pi: \tilde{V} \coloneq \bigoplus_{i=1}^{d} M(m_i) \ra V$ contains the data of the $d$-tuple $(m_1, \ldots, m_d)$. Hence $\Pi$ gives rise to the $\sharp$-filtered $\FI$-modules $\Pi^r$ for $1 \leq r \leq d$ as above in a unique way. 
 \end{remark}
 
  With these notations, we have the following definition.

\begin{definition}
\label{def:twistedcyles}
 Let $z^r \in \Hom_{\fS_n}(\mathcal{B}_{t+1}(\fS_n), \tilde{V}^r_n)$. We call \[l^r \in \Hom_{\fS_{n}}(\mathcal{B}_{t}(\fS_n), V_n^r)\] a {\bf twisted} $z^r$-{\bf cycle} if \[l^r \circ \partial_{t+1} = \Pi_n^r \circ z^r.\] 
 
 We say that two twisted $z^r$-cycles $l^r_1$ and $l^r_2$ are {\bf equivalent} if $l^r_1-l^r_2$ is a boundary (classical coboundary) in the usual sense, that is, there exists a $b \in \Hom_{\fS_n}(\mathcal{B}_{t-1}(\fS_n), \tilde{V}^r_n)$ such that \[l^r_1-l^r_2 = \Pi_n^r \circ b \circ \partial_t.\] This defines an equivalence relation $\equiv$ on twisted $z^r$-cycles. We denote the set formed by these equivalence classes by $H^{t,z^r}(\fS_n,V_n^r)$ and the equivalence class of $l^r$ by $[l^r]$. Note that when $z^r = 0$, $H^t(\fS_n,V_n^r)\coloneq H^{t,z^r}(\fS_n,V_n^r)$ is the classical cohomology group and for any $z^r$, $H^{t,z^r}(\fS_n,V_n^r)$ is a $H^t(\fS_n,V_n^r)$-torsor.
\end{definition}

Since the data of $z^r$ is contained in the data of a twisted $z^r$-cycle, the above definition is NOT independent of the cover. The nice lift construction that we describe in \S \ref{subsection:nicelift} depends on this fact. 

%
%

In Definition~\ref{def:periodicity}, we describe periodic elements of $\Hom_{\fS_n}(\mathcal{B}_{t+1}(\fS_n), \tilde{V}^r_n)$ where a period is a $r$-length sequence of nonnegative which keeps track of periodicity in each of the $r$ component (recall $\tilde{V}^r_n = \bigoplus_{i=1}^{r} M(m_i)$). To be able to state the main theorem for $\sharp$-filtered $\FI$-modules we need certain maps on $d$ length sequences which depend on the tuple $\vec{m} = (m_1, m_2, \ldots, m_d)$ appearing in the cover $\tilde{V}^d$ of $V^d$. We first define the operator $\ddH$. \begin{equation}
\ddH (b_1,b_2) \coloneq
\begin{cases}
v_p ((b_1 - b_2)!) +1, & \text{if }b_1>b_2 \\
0, & \text{otherwise.} \label{def:deltaH}
\end{cases}
\end{equation}

\begin{definition}
\label{def:threeoperators}
Suppose $\Pi: \tilde{V} \coloneq \bigoplus_{i=1}^{d} M(m_i) \ra V$ be a $\sharp$-filtered $\FI$-module. Recall that the $d$-tuple $\vec{m} = (m_1, m_2, \ldots, m_d)$ is a part of the data of the cover $\tilde{V}$. Let $(H^{i,d})_{1 \leq i \leq d} \in \mathbb{Z}^d_{\geq 0}$ be a sequence of nonnegative integers. We define integers $H^{i,r}$ for $i \in [r]$, $r<d$ recursively by 
\begin{equation}
H^{i,r-1} \coloneq
\begin{cases}
\max(H^{i,r}, H^{r,r} + \ddH  (m_r,m_i)), & \text{if }m_r \geq m_i \\
H^{i,r}, & \text{otherwise} \label{def:H}
\end{cases}
\end{equation} This recursive definition clearly depends on $\vec{m}$ but we have suppressed the dependence here for clarity. We define the operators $\mathfrak{D}^r_{\Pi}: \mathbb{Z}^d_{\geq 0} \ra \mathbb{Z}^r_{\geq 0}$, $\mathfrak{D}_{\Pi} : \mathbb{Z}^d_{\geq 0} \ra \mathbb{Z}^d_{\geq 0}$ and $\mathfrak{I}_{\Pi} : \mathbb{Z}^d_{\geq 0} \ra \mathbb{Z}_{\geq 0}$ given by
\begin{eqnarray}
\mathfrak{D}^r_{\Pi}((H^{i,d})_{1\leq i \leq d}) &\coloneq& (H^{i,r})_{1\leq i \leq r}. \label{def:mathfrakDr}\\
\mathfrak{D}_{\Pi}((H^{i,d})_{1\leq i \leq d}) &\coloneq& (H^{i,i})_{1\leq i \leq d}, \qquad \text{  and} \label{def:mathfrakD}\\
\mathfrak{I}_{\Pi}((H^{i,d})_{1\leq i \leq d}) &\coloneq& \max_{1 \leq i \leq d} (H^{i,i} + \ddH(m_i,0)),\label{def:mathfrakI} 
\end{eqnarray} We may replace the subscript $\Pi$ by $V$ when a $\sharp$-filtered structure on $V$ is clear (or equivalently when $\vec{m}$ is clear from the context), or remove the subscript altogether when it is clear which $\sharp$-filtered $\FI$-module is under discussion.
\end{definition}

\begin{xample}
\label{example:1bound}
Suppose $\Pi \colon M(0) \oplus M(m) \to V$ be a $\sharp$-filtered $\FI$-module of length $d=2$ (see Example~\ref{example:1}). Then $\mathfrak{D}^2_{V}((0,0)) = (0,0)$, $\mathfrak{D}^1_{V}((0,0)) = \ddH(m, 0)$, $\mathfrak{D}_{V}((0,0)) = (\ddH(m, 0), 0)$ and $\mathfrak{I}_V ((0,0)) = \ddH(m, 0)$. 
\end{xample}

We also need the following map to state our theorem.

\begin{definition}[{\bf The map $R$}]
\label{def:Rmap}
Let $U$ be an $\FIsharp$-module. By Lemma~\ref{lemma:zero} in \S \ref{section:filteredFI}, $U$ admits a natural projection $\lambda_a : \cS_{+a} U \ra U$ which induces a map \[ \Hom_{\fS_{n-a}}(\mathcal{B}_{t}(\fS_{n-a}), \cS_{+a} U_{n-a}) \ra  \Hom_{\fS_{n-a}}(\mathcal{B}_{t}(\fS_{n-a}), U_{n-a}).\] 
Also we have the natural restriction map \[\Hom_{\fS_{n}}(\mathcal{B}_{t}(\fS_n), U_n) \ra \Hom_{\fS_{n-a}}(\mathcal{B}_{t}(\fS_{n-a}), \cS_{+a} U_{n-a}).\] Let \[R_{t, U}^{n,a} : \Hom_{\fS_{n}}(\mathcal{B}_{t}(\fS_n), U_n) \ra \Hom_{\fS_{n-a}}(\mathcal{B}_{t}(\fS_{n-a}), U_{n-a})\] be the composition of these two maps. We allow ourself to shed some of the subscripts and superscripts when there is no risk of confusion. 
\end{definition}

\begin{remark} For a general $\FI$-module, there is no analog of the map $\lambda_a$ (as in Definition~\ref{def:Rmap}). In fact, \cite[Theorem~4.1.5]{CEF} implies that if there is a natural map $\lambda_a \colon \cS_{+a} V \to V$ for each $a$ then $V$ must admit a $\FIsharp$-structure.
\end{remark}

\begin{remark} \label{remark:RcommuteswithSequential}
Let $\Pi^1: \bigoplus_{i=1}^{d_1} M(m_i) \ra V$ and $\Pi^2: \bigoplus_{k=1}^{d_2} M(n_k) \ra W$ be $\sharp$-filtered $\FI$-modules. And let $\phi : V \ra W$ be a sequential map (see Definition~\ref{def:sequential}). Then $R_t$ commutes with the corresponding map of covers $\tilde{\phi}_n$, that is, if $L \in \Hom_{\fS_{n}}(\mathcal{B}_{t}(\fS_n), \bigoplus_{i=1}^{d_1} M(m_i)_n)$ then we have \[R_t(\tilde{\phi}_{n} \circ L) = \tilde{\phi}_{n-a} \circ R_t(L).\qedhere \]
\end{remark}

With the notations of Definition~\ref{def:twistedcyles}, we show in Claim~\ref{claim:welldefinedness}, that the map \begin{equation}\mathfrak{R}_{t, V^d}^{n, a, z^d}: H^{t,z^d}(\fS_n,V_n^d) \ra H^{t,R_{t+1}(z^d)}(\fS_{n-a},V_{n-a}^d) \end{equation} given by $[l^d] \mapsto [\Pi^d_{n-a} \circ R_t (\tilde{l}^d)]$ (here $\tilde{l}^d$ is any "nice lift" of $l^d$ as defined in \S \ref{subsection:nicelift}) is well-defined if $a$ is divisible by a sufficiently large power of $p$. We now state the main theorem for $\sharp$-filtered $\FI$-modules.

\begin{main}[{\bf The main theorem for $\sharp$-filtered $\FI$-modules}] \label{thm:filteredstabilityrange} Let $\Pi^d: \tilde{V}^d \coloneq \bigoplus_{i=1}^{d} M(m_i) \ra V^d$ be a $\sharp$-filtered $\FI$-module of length $d$ as in Definition~\ref{def:twistedcyles} and let $D = \max_{i \in [d]} m_i$. Let $\SQ$ be the sequence of length $d$ consisting of zeros. If $p^{\mathfrak{I}_{V^d} \circ \mathfrak{D}_{V^d}(\SQ)} \mid a$ and $n-a \geq 2 (t+d-1) + D$, then \[\mathfrak{R}^{n,a,0}_{t,V^d} : H^t(\fS_n, V^d_n) \ra H^t(\fS_{n-a}, V^d_{n-a})\] is an isomorphism.
\end{main}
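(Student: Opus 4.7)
The plan is to proceed by induction on the length $d$ of the $\sharp$-filtration. For $d=1$ we have $V^1 = M(W_1)$, which carries an $\FIsharp$-structure; Lemma~\ref{lemma:zero} (proved in \S\ref{section:filteredFI}) gives a natural projection $\lambda_a\colon \cS_{+a}V^1 \to V^1$ that furnishes a section of the restriction map. Combined with Nakaoka-style stability for $M(W_1)$, this settles the base case with trivial period as soon as $n-a \geq 2t + m_1$.

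For the inductive step, I would use the short exact sequence
\[0 \to V^{d-1} \to V^d \to M(W_d) \to 0\]
coming from the top stage of the $\sharp$-filtration, and pass to the long exact sequence in $\fS_n$-cohomology
\[\cdots \to H^{t-1}(\fS_n, M(W_d)_n) \xrightarrow{\delta} H^t(\fS_n, V^{d-1}_n) \to H^t(\fS_n, V^d_n) \to H^t(\fS_n, M(W_d)_n) \xrightarrow{\delta} H^{t+1}(\fS_n, V^{d-1}_n) \to \cdots\]
together with the parallel one for $\fS_{n-a}$. Classical stability handles the $M(W_d)$ terms (with period $1$), the inductive hypothesis handles the $V^{d-1}$ terms, and a five-lemma argument would close the loop \emph{provided} the restriction map $R$ commutes with the connecting homomorphism $\delta$.

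The main obstacle is that $\delta$ does not commute with $R$ at the level of classical cohomology: computing $\delta[l]$ requires choosing a lift of the cocycle $l$ to $\tilde{V}^d$, and different choices produce different representatives in $V^{d-1}$. This is exactly the motivation for the twisted cohomology groups $H^{t,z^r}(\fS_n, V^r_n)$ of Definition~\ref{def:twistedcyles}: a twisted $z^r$-cycle is a classical cochain together with an explicit lift datum, so that $\delta$ becomes literal post-composition with $\Pi^{r-1}_n$. The nice lift construction of \S\ref{subsection:nicelift} is designed precisely to make $\mathfrak{R}^{n,a,z^r}_{t,V^r}$ well-defined, which is why Claim~\ref{claim:welldefinedness} will demand that $a$ be divisible by a suitable power of $p$. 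Accordingly, I would strengthen the inductive claim to say: for every $r \leq d$ and every twist $z^r$ arising from a twist $z^{r+1}$ via the induction, $\mathfrak{R}^{n,a,z^r}_{t,V^r}$ is an isomorphism whenever $p^{H^{r,r}} \mid a$ (with $H^{i,r}$ defined by the recursion in \eqref{def:H} applied to $(H^{i,d})_i = \SQ$) and $n-a \geq 2(t+r-1) + D$.

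Finally, I would verify that the recursive rule $H^{i,r-1} = \max(H^{i,r},\, H^{r,r} + \ddH(m_r,m_i))$ is forced by the five-lemma bookkeeping at each descent: projecting off the top stage $M(W_r)$ via $\lambda_a$ introduces denominators involving $(m_r - m_i)!$ on each lower filtration component, accounting exactly for the $\ddH(m_r,m_i) = v_p((m_r - m_i)!) + 1$ contribution. The outer operator $\mathfrak{I}_{V^d}$ takes the maximum with $\ddH(m_i,0)$ to convert the accumulated twisted statements into the untwisted statement that is the theorem. The range $n-a \geq 2(t + d - 1) + D$ arises because each of the $d-1$ descents loses one step of Nakaoka-type range, while we simultaneously need stability up through cohomology degree $t + d - 1$ to run the iterated long exact sequences.
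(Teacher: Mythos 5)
Your plan matches the paper's proof of Theorem~\ref{thm:filteredstabilityrange} almost exactly: induction on $d$ via the short exact sequence $0\to V^{d-1}\to V^d\to M(W_d)\to 0$, a five-lemma argument on the long exact cohomology sequences, with the $M(W_d)$-terms handled by Shapiro/K\"unneth/Nakaoka and the commutativity of $\mathfrak{R}$ with the connecting homomorphism delegated to the twisted-cycle/nice-lift machinery (this is precisely Lemmas~\ref{lemma:image} and~\ref{lemma:delta}). Your diagnosis of the central obstruction (no canonical lift, hence $\delta$ does not naively commute with $R$) and of the source of the recursion~\eqref{def:H} is the right one. One small bookkeeping caveat: in your strengthened inductive claim the divisibility condition should not be merely $p^{H^{r,r}}\mid a$ but the full $p^{\mathfrak{I}\circ\mathfrak{D}(\cdot)}\mid a$ coming from Claim~\ref{claim:welldefinedness}, which also folds in the $\ddH(m_i,0)$ contributions; you do acknowledge the role of $\mathfrak{I}$ afterward, so this is a precision issue rather than a gap.
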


We also extend Theorem~\ref{thm:filteredstabilityrange} to $H^t(\fS_n, V^d_n)$-torsors.

\begin{main}[{\bf Periodicity of  torsors}] \label{thm:twistedfilteredstabilityrange}
Let $\SQ \in \mathbb{Z}^d_{\geq 0}$ and let  $z^d$ be $\SQ$-periodic (periodicity is defined later in Definition~\ref{def:periodicity}). If $p^{\mathfrak{I}_{V^d}\circ \mathfrak{D}_{V^d}(\SQ)} \mid a$ and $n-a \geq 2 (t+d-1) + D$, then \[\mathfrak{R}_{t, V^d}^{n, a, z^d}: H^{t,z^d}(\fS_n,V_n^d) \ra H^{t,R_{t+1}(z^d)}(\fS_{n-a},V_{n-a}^d)\] is an isomorphism (as sets) provided $H^{t,z^d}(\fS_n,V_n^d)$ is nonempty.
\end{main}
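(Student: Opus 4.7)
The plan is to reduce the statement to Theorem~\ref{thm:filteredstabilityrange} by exploiting the torsor structure. The set $H^{t,z^d}(\fS_n,V_n^d)$ is a torsor over the abelian group $H^t(\fS_n,V_n^d)$: if $c\in\Hom_{\fS_n}(\mathcal{B}_t(\fS_n),\tilde{V}_n^d)$ satisfies $c\circ\partial_{t+1}=0$, then the translate $l^d+\Pi^d_n\circ c$ is again a twisted $z^d$-cycle and depends only on the cohomology class $[c]$, and this action is free and transitive. An analogous torsor structure exists on the target $H^{t,R_{t+1}(z^d)}(\fS_{n-a},V_{n-a}^d)$. My strategy is therefore to check that $\mathfrak{R}^{n,a,z^d}_{t,V^d}$ is well-defined and equivariant along the isomorphism $\mathfrak{R}^{n,a,0}_{t,V^d}$ supplied by Theorem~\ref{thm:filteredstabilityrange}; bijectivity of an equivariant map of torsors along a group isomorphism is automatic as soon as the source is nonempty, which is exactly the hypothesis.

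Well-definedness is the content of the forthcoming Claim~\ref{claim:welldefinedness} and relies on the nice-lift construction of \S\ref{subsection:nicelift}. One builds a lift $\tilde{l}^d\in\Hom_{\fS_n}(\mathcal{B}_t(\fS_n),\tilde{V}_n^d)$ of $l^d$ inductively along the $\sharp$-filtration $V^1\subset\cdots\subset V^d$, at each stage using the $\FIsharp$-structure on the graded piece $M(W_r)$ (via Lemma~\ref{lemma:zero} and Remark~\ref{remark:coFIMod}) together with the divisibility $p^{\mathfrak{I}_{V^d}\circ\mathfrak{D}_{V^d}(\SQ)}\mid a$ to control the obstructions at each level. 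Once such a lift exists, the composite $\Pi^d_{n-a}\circ R_t(\tilde{l}^d)$ is a twisted $R_{t+1}(z^d)$-cycle by Remark~\ref{remark:RcommuteswithSequential} and the naturality of $R$ with respect to the bar differential, and two different nice lifts of the same class produce outputs that differ by a classical coboundary.

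For equivariance, note that if $c$ is a cocycle in $\Hom_{\fS_n}(\mathcal{B}_t(\fS_n),\tilde{V}_n^d)$ then $c$ itself may be taken as a nice lift of $\Pi^d_n\circ c$ (it corresponds to the trivially $\SQ$-periodic element $z=0$), and nice lifts are additive in their input. Hence $\tilde{l}^d+c$ is a nice lift of $l^d+\Pi^d_n\circ c$, and linearity of $R_t$ and $\Pi^d_{n-a}$ yields
\[
\Pi^d_{n-a}\circ R_t(\tilde{l}^d+c)=\Pi^d_{n-a}\circ R_t(\tilde{l}^d)+\Pi^d_{n-a}\circ R_t(c),
\]
which is precisely equivariance of $\mathfrak{R}^{n,a,z^d}_{t,V^d}$ along $\mathfrak{R}^{n,a,0}_{t,V^d}$. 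Combining well-definedness, equivariance, nonemptiness of the source, and the isomorphism from Theorem~\ref{thm:filteredstabilityrange} completes the argument.

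The main obstacle is the well-definedness step: establishing existence of nice lifts along the $\sharp$-filtration and verifying that the precise power of $p$ encoded in $\mathfrak{I}_{V^d}\circ\mathfrak{D}_{V^d}(\SQ)$ is enough to absorb the error terms produced when switching between different nice lifts. The equivariance and the torsor formalism are essentially bookkeeping once the nice-lift technology is in place; all the arithmetic content of the theorem is in showing that the operators $\mathfrak{D}$ and $\mathfrak{I}$ of Definition~\ref{def:threeoperators} really do compute the correct period.
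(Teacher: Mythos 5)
Your high-level strategy matches the paper's: identify $H^{t,z^d}$ as a torsor over $H^t$, invoke Theorem~\ref{thm:filteredstabilityrange} for the group, and deduce bijectivity from equivariance of $\mathfrak{R}$ (the paper phrases this as commutativity with the untwisting map $\mathfrak{U}(l)=l-l_0$ in Lemma~\ref{lemma:untwistingdiagram}). The observation that $p^{\mathfrak{I}\circ\mathfrak{D}(\SQ)}\geq p^{\mathfrak{I}\circ\mathfrak{D}(\vec{0})}$ so that the hypotheses of Theorem~\ref{thm:filteredstabilityrange} are automatically met is implicitly used and correct.

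However, your justification of equivariance contains a genuine gap. You assert that a cocycle $c\in\Hom_{\fS_n}(\mathcal{B}_t(\fS_n),\tilde{V}_n^d)$ ``may be taken as a nice lift of $\Pi^d_n\circ c$'' and that ``nice lifts are additive in their input.'' Neither is true. A nice lift is not an arbitrary lift: its $r$-th component is required to have the very specific form
\[
\pr_n^r\circ\tilde{l}^r=(a^r\circ\Tr_t^{m_r}+(\pr_n^r\circ z^r)\downarrow_{m_r}\circ\ h_t^{m_r})\uparrow_{m_r},
\]
and the auxiliary data $x^r$ must satisfy the compatibility constraint of item~(4) of Definition~\ref{def:nicelift}, namely $x^r(e_1)=x^r(e_2)$ whenever $w^r(e_1)=w^r(e_2)$ and $\tr^{m_r}(\sigma_{0,1})=\tr^{m_r}(\sigma_{0,2})$. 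An arbitrary cocycle $c$ has no reason to factor through the trace map, so it is not a nice lift of $\Pi^d_n\circ c$. Additivity also fails: if $\tilde{l}_1,\tilde{l}_2$ are nice lifts with data $(w_1^r,x_1^r)$ and $(w_2^r,x_2^r)$, the equality $(w_1^r+w_2^r)(e_1)=(w_1^r+w_2^r)(e_2)$ does not force $w_j^r(e_1)=w_j^r(e_2)$ for $j=1,2$, so $(x_1^r+x_2^r)$ need not satisfy the compatibility constraint relative to $w_1^r+w_2^r$.

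The paper's Lemma~\ref{lemma:untwistingdiagram} circumvents this. It chooses \emph{independent} nice lifts $\tilde{l}$, $\tilde{l}_0$, $\tilde{l}_1$ of $l$, $l_0$, $l-l_0$, and then observes that $\Pi^d_n\circ(\tilde{l}-\tilde{l}_0-\tilde{l}_1)=b\circ\partial_t$ for some $b$, so $b$ is a twisted $(\tilde{l}-\tilde{l}_0-\tilde{l}_1)$-cycle. By Claim~\ref{claim:one} the difference $\tilde{l}-\tilde{l}_0-\tilde{l}_1$ is $\mathfrak{D}(\SQ)$-periodic (periodicity is closed under $\bk$-linear combinations by Remark~\ref{remark:periodgcd}), and Claim~\ref{claim:two} then gives $\Pi^d_{n-a}\circ R_t(\tilde{l}-\tilde{l}_0-\tilde{l}_1)=\Pi^d_{n-a}\circ R_{t-1}(\tilde{b})\circ\partial_t$, a coboundary, whenever $p^{\mathfrak{I}\circ\mathfrak{D}(\SQ)}\mid a$. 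So equivariance is not ``bookkeeping'': it rests on exactly the same periodicity machinery (Claims~\ref{claim:one} and~\ref{claim:two}) that powers the well-definedness step, and the divisibility hypothesis on $a$ is actively used here.
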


In Lemma~\ref{lemma:boundonM}, we give bounds on the period depending only on the degree $D$ and the periodicity of $z^d$. In particular, such a bound on the period in Theorem~\ref{thm:filteredstabilityrange} is $p^{2 D}$. In example~\ref{example:1} we show that the smallest period for the cohomology groups of a $\sharp$-filtered $\FI$-module of length $2$ could be an arbitrarily large power of $p$.

\subsection{Statements of the main theorem and its generalizations}
\label{subsection:themainthmintro}
We keep the assumption that $\bk$ is a field of positive characteristic.

Let $V$ be a finitely generated $\FI$-module over $\bk$ generated in degree $\leq D$. By Theorem~\ref{thm:structure}, there exists a resolution  
\[0 \ra V \ra J^0 \ra J^1 \ra \ldots \ra J^N \ra 0\] of $V$ with $\sharp$-filtered $\FI$-modules $J^i$, which is
exact in high enough degree; say $n \geq C$. Let $\phi^i : J^i \ra J^{i+1}$ and $\iota: V \ra J^0$ be the maps given by the exact sequence above. Note that by Theorem~\ref{thm:structure}, the maps $\phi^i$ are sequential. For each $n \geq 0$, let $E^{\bullet, \bullet}(n)$ be a double complex spectral sequence supported in columns $0 \leq x \leq N$, with the following data on page $0$: 
\begin{align}  \tag{$E$} E^{x,y}(n) &= \Hom_{\fS_{n}}(\mathcal{B}_{y}(\fS_n), J^x_n), \label{eqn:E}\\
_{\ra}d^{x,y}(n) &: E^{x,y}(n) \ra E^{x+1, y}(n),  \qquad \text{induced by } \phi^x \text{ and,}\nonumber \\
{_{\uparrow}d}^{x,y}(n) &: E^{x,y}(n) \ra E^{x, y+1}(n), \qquad \text{induced by } \partial_{y+1}. \nonumber
\end{align} In \S\ref{subsection:proofofmain}, we analyze the spectral sequence \eqref{eqn:E} and define maps \[\mathfrak{R}^{x,y}_{\infty}(n,a): {_{\uparrow}E}^{x,y}_{\infty}(n) \ra {_{\uparrow}E}^{x,y}_{\infty}(n-a)\] where ${_{\uparrow}E}^{x,y}_{\infty}$ is the $\bk$-vector space at the position $(x,y)$ of the page $\infty$ of the vertically oriented spectral sequence (where we take the homology with respect to vertical maps to get the first page) as above. This leads us to our main theorem for finitely generated $\FI$-modules.

\begin{main}[{\bf The main theorem for finitely generated $\FI$-modules}] \label{thm:maintheorem}
Let $V$ be a finitely generated $\FI$-module generated in degree $\leq D$. Then for $n \geq C$, $H^t(\fS_n, V_n)$ admits a filtration of length $N+1$ with $({_{\uparrow}E}^{x,y}_{\infty}(n))_{x+y=t, 0 \leq x \leq N}$ as graded pieces (here $N \leq D$ and $C$ are constants as in Theorem~\ref{thm:structure}). And there are constants $M_{\infty}^t$ and $\SD_{\infty}^t$ such that if $p^{M_{\infty}^t} \mid a$ and $n-a \geq \max\{\SD_{\infty}^t, C\}$ then the map $\mathfrak{R}^{x,y}_{\infty}(n,a): {_{\uparrow}E}^{x,y}_{\infty}(n) \ra {_{\uparrow}E}^{x,y}_{\infty}(n-a)$ is an isomorphism. In particular, $\dim{H^t(\fS_n, V_n)}$ is eventually periodic in $n$ with period $p^{M_{\infty}^t}$.
\end{main}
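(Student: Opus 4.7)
The plan is to build the double complex spectral sequence \eqref{eqn:E} attached to the $\sharp$-filtered resolution provided by Theorem~\ref{thm:structure}, verify its convergence to $H^\bullet(\fS_n, V_n)$, and then use Theorem~\ref{thm:filteredstabilityrange} termwise, together with the sequentialness of the maps $\phi^x$, to propagate periodic isomorphisms through every page.

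First I would check convergence. Fix $n \geq C$. By Theorem~\ref{thm:structure} the sequence $0 \to V_n \to J^0_n \to \cdots \to J^N_n \to 0$ is exact, and since each $\mathcal{B}_y(\fS_n)$ is a free $\bk[\fS_n]$-module the functor $\Hom_{\fS_n}(\mathcal{B}_y(\fS_n),-)$ preserves this exactness. Consequently the horizontally oriented spectral sequence (take $\to d$-cohomology first) degenerates at the first page into the column $\Hom_{\fS_n}(\mathcal{B}_\bullet(\fS_n), V_n)$, so the total cohomology of the double complex is $H^\bullet(\fS_n, V_n)$. The vertically oriented spectral sequence therefore converges to $H^\bullet(\fS_n, V_n)$ and produces the desired filtration of length $N+1$ with graded pieces $({_\uparrow E}^{x,y}_\infty(n))_{x+y=t,\,0 \leq x \leq N}$, proving the first assertion.

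Second, I would apply Theorem~\ref{thm:filteredstabilityrange} termwise on page $1$. By construction ${_\uparrow E}^{x,y}_1(n) = H^y(\fS_n, J^x_n)$ and each $J^x$ is $\sharp$-filtered with cover $\tilde{J}^x$, so Theorem~\ref{thm:filteredstabilityrange} yields an isomorphism $\mathfrak{R}^{n,a,0}_{y, J^x}$ whenever $a$ is divisible by the power of $p$ determined by $\mathfrak{I}_{J^x} \circ \mathfrak{D}_{J^x}(\SQ)$ and $n-a$ lies in the stable range for $J^x$ at cohomological degree $y$. The main obstacle is then to promote these periodic isomorphisms through every page of the vertically oriented spectral sequence. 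Since the differentials on page $1$ are induced by the sequential maps $\phi^x$, Remark~\ref{remark:RcommuteswithSequential} says that $R_y$ commutes with the covering maps $\tilde{\phi}^x$; combined with the nice lift construction of \S\ref{subsection:nicelift} underlying the definition of $\mathfrak{R}$, this should show that the collection $\{\mathfrak{R}^{n,a,0}_{y, J^x}\}_x$ forms a chain map of the complexes $({_\uparrow E}^{\bullet, y}_1(n), {_\to d}_1)$, so that $\mathfrak{R}^{x,y}_2(n,a)$ descends as an isomorphism on page $2$. The delicate point for higher pages is that the differentials $d_r$ for $r \geq 2$ are zig-zags through the double complex rather than single applications of $\tilde{\phi}^x$; one must chase lifts through both the shift projection $\lambda_a$ of Definition~\ref{def:Rmap} and the vertical and horizontal differentials, and verify using the sequentialness of every $\phi^x$ that the choices of nice lifts propagate compatibly. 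An inductive argument on $r$ then defines $\mathfrak{R}^{x,y}_r(n,a)$ as well as its infinite version $\mathfrak{R}^{x,y}_\infty(n,a)$, and passing to subquotients of isomorphic groups with compatible maps keeps it an isomorphism at each stage.

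Finally, I would collect the quantitative bounds. Define $M_\infty^t$ as the maximum of $\mathfrak{I}_{J^x} \circ \mathfrak{D}_{J^x}(\SQ)$ over those $0 \leq x \leq N$ and $y \geq 0$ with $x + y \leq t$, and $\SD_\infty^t$ as the maximum of the corresponding ranges $2(y + d_x - 1) + D_x + p^{M_\infty^t}$ (with $d_x,D_x$ the length and degree of $\tilde{J}^x$), padded to ensure that the zig-zag arguments in the inductive step remain in the stable range after each shift. Then for $p^{M_\infty^t} \mid a$ and $n-a \geq \max\{\SD_\infty^t, C\}$, every ${_\uparrow E}^{x,y}_\infty(n)$ contributing to $H^t(\fS_n, V_n)$ is isomorphic via $\mathfrak{R}^{x,y}_\infty(n,a)$ to its shifted counterpart. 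Since over a field $\dim H^t(\fS_n, V_n)$ equals the sum of the dimensions of the graded pieces of any finite filtration, the eventual periodicity with period $p^{M_\infty^t}$ follows.
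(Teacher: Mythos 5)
Your outline follows the same strategy as the paper: form the double complex from the $\sharp$-filtered resolution of Theorem~\ref{thm:structure}, observe that the rightward spectral sequence collapses to $H^\bullet(\fS_n,V_n)$ because $\Hom_{\fS_n}(\mathcal{B}_y(\fS_n),-)$ preserves exactness, then propagate the maps $\mathfrak{R}$ of Theorem~\ref{thm:filteredstabilityrange} through the upward-oriented spectral sequence by induction on the page $r$. The convergence argument and the identification of the page-$1$ terms ${_\uparrow E}^{x,y}_1(n) = H^y(\fS_n,J^x_n)$ are both correct and match the paper.

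The gap is in your quantitative control of the period. You propose to take $M_\infty^t$ to be the maximum of the page-$1$ constants $\mathfrak{I}_{J^x}\circ\mathfrak{D}_{J^x}(\SQ)$, but this only makes the vertical arrows in Figure~\ref{Fig:20} isomorphisms; it does \emph{not} make the squares commute. To show $\mathfrak{R}^{x,y}_r$ commutes with ${_\uparrow d}^{x,y}_r$ one must, following the zig-zag (Vakil strip) description of $d_r$, choose a chain of nice lifts $\tilde l_0, \tilde l_1, \dots, \tilde l_{r-1}$ related by $\phi^{x+i}_n \circ l_i = l_{i+1}\circ\partial_{y-i}$. Each step degrades the periodicity: $\tilde l_{i+1}$ is only $\mathfrak{D}(\phi^{x+i}_\star(\SQ^{x,y}_{r,i}))$-periodic where $\SQ^{x,y}_{r,0}=\mathfrak{D}(\SQ^{d_x})$. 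The divisibility requirement accumulates through the recursion, giving the paper's $N^{x,y}_r = \max\{\mathfrak{I}(\SQ^{d_x}),\max_{0\le i\le r-1}\mathfrak{I}(\phi^{x+i}_\star(\SQ^{x,y}_{r,i}))\}$ and hence, via Lemma~\ref{lem:boundphi}, a bound of order $(t+3)D$ or $D(D+1)/2$ rather than $2D$. You acknowledge the zig-zag chase is ``delicate'' but then write down a period that implicitly assumes it is free; that is precisely where Claims~\ref{claim:one} and~\ref{claim:two} force the growth. Similarly, the ``$+ p^{M_\infty^t}$'' padding you add to $\SD_\infty^t$ is not what happens — the stable range (equation~\ref{eqn:recursionSD}) only takes maxima over the positions $(x-r,y+r-1)$ and $(x+r,y-r+1)$ appearing in the square, it does not grow additively by a power of $p$. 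With the corrected recursion for $M^{x,y}_{r+1}$ in place of your page-$1$ definition, the argument goes through exactly as you describe.
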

We provide an algorithm to calculate the stable range $\SD_{\infty}^t$ and the period $p^{M_{\infty}^t}$ (see Remark~\ref{remark:stabilityalgomain}). Lemma~\ref{lem:bound-main} provides the following estimates:
\begin{align*}
M_{\infty}^t & \leq \min\{(t+3)D, \max\{2 D, D(D+1)/2 \} \}, \\
\SD_{\infty}^t & \leq 2(t+ \max_x d_x - 1) +D.
\end{align*} where $d_x$ is the length of the $\sharp$-filtered $\FI$-module $J^x$. By Remark~\ref{rem:dimension-generation}, we can replace $D$ in the bounds above by $\chi(V)$ (recall that $\chi(V) \leq D$).

At the expense of increasing the period slightly, we construct isomorphism  $H^t(\fS_n, V_n) \ra H^t(\fS_{n-a}, V_{n-a})$ in Theorem~\ref{thm:vectormain} preserving the filtration as in the Theorem~\ref{thm:maintheorem}.

 In \S\ref{subsection:generalization}, we generalize our main theorem to a complex of finitely generated $\FI$-module. Consider an arbitrary complex of finitely generated $\FI$-modules  \[0 \ra V^0 \ra V^1 \ra \ldots \ra V^x \ra \ldots\] with  differential $\delta$. For each $n \geq 0$, define a double complex spectral sequences $E^{\bullet, \bullet}(n)$ with the following data on page $0$: 
\begin{align}  \tag{$\vec{E}$} E^{x,y}(n) &= \Hom_{\fS_{n}}(\mathcal{B}_{y}(\fS_n), V^x_n), \label{eqn:vecE}\\
_{\ra}d^{x,y}(n) &: E^{x,y}(n) \ra E^{x+1, y}(n),  \qquad \text{induced by } \delta^x \text{ and,}\nonumber \\
{_{\uparrow}d}^{x,y}(n) &: E^{x,y}(n) \ra E^{x, y+1}(n), \qquad \text{induced by } \partial_{y+1}. \nonumber
\end{align} We deduce the generalization to the main theorem by analyzing the spectral sequence \eqref{eqn:vecE}. 

\begin{main} [{\bf Periodicity of the cohomology of the $\FI$-complexes}]
\label{thm:generalizedmain}
Let $V^{\bullet}$ be a complex of finitely generated $\FI$-modules and $E^{\bullet, \bullet}$ be the corresponding spectral sequence as defined above. Let $r \in \mathbb{N} \cup \{\infty\}$. Then, if $p^{\vec{M}_{r}^{x,y}} \mid a$ and $n-a \geq \max(\vec{\SD}_{r}^{x,y}, C^{x,y})$ then the map $\vec{\mathfrak{R}}^{x,y}_{r}(n,a): {_{\uparrow}E}^{x,y}_{r}(n) \ra {_{\uparrow}E}^{x,y}_{r}(n-a)$ is an isomorphism.
\end{main}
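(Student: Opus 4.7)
The plan is to establish Theorem~\ref{thm:generalizedmain} by induction on the page index $r$, using Theorem~\ref{thm:maintheorem} applied column-by-column as the base case, and then propagating periodicity up the spectral sequence via naturality of the differentials $d_r$. The maps $\vec{\mathfrak{R}}^{x,y}_r$ will be defined recursively: at page $0$ they are the chain-level maps induced by the restriction $R^{n,a}_{y,V^x}$ of Definition~\ref{def:Rmap} (constructed via a $\sharp$-filtered resolution of $V^x$ from Theorem~\ref{thm:structure}); at each higher page they are the maps induced by functoriality of passing to homology.

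For the base case, the vertically oriented first page satisfies ${_\uparrow E}^{x,y}_1(n) = H^y(\fS_n, V^x_n)$, with horizontal differential $d_1^{x,y}$ induced by $H^y(\delta^x)$. Theorem~\ref{thm:maintheorem}, applied to the finitely generated $\FI$-module $V^x$ individually for each $x$, produces constants $M_\infty^y(V^x)$, $\SD_\infty^y(V^x)$, $C^{(x)}$ together with a natural isomorphism $H^y(\fS_n,V^x_n)\to H^y(\fS_{n-a},V^x_{n-a})$ in the stated range. Setting $\vec{M}_1^{x,y} := M_\infty^y(V^x)$, $\vec{\SD}_1^{x,y} := \SD_\infty^y(V^x)$ and $C^{x,y} := C^{(x)}$, I take this as the definition of $\vec{\mathfrak{R}}^{x,y}_1$. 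Functoriality of group cohomology in the coefficient module then ensures $d_1^{x+1,y}\circ\vec{\mathfrak{R}}^{x,y}_1 = \vec{\mathfrak{R}}^{x+1,y}_1\circ d_1^{x,y}$, which is what the induction needs to keep moving.

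For the inductive step $r\Rightarrow r+1$, assume $\vec{\mathfrak{R}}^{x,y}_r$ is an isomorphism in the stated range and commutes with $d_r$. Since
\[{_\uparrow E}^{x,y}_{r+1} \;=\; \ker\bigl(d_r^{x,y}\bigr)\,\big/\,\operatorname{im}\bigl(d_r^{x-r,y+r-1}\bigr),\]
the five-lemma produces an induced isomorphism $\vec{\mathfrak{R}}^{x,y}_{r+1}$ provided the divisibility and range hypotheses hold simultaneously at the three bidegrees $(x,y)$, $(x+r,y-r+1)$, $(x-r,y+r-1)$. Accordingly I set
\[\vec{M}^{x,y}_{r+1} \;:=\; \max\bigl\{\vec{M}^{x,y}_r,\ \vec{M}^{x+r,y-r+1}_r,\ \vec{M}^{x-r,y+r-1}_r\bigr\} + c_r,\]
with a small correction $c_r$ (of the form $\ddH(D,0)$ for the relevant generation degree $D$) needed to secure the compatibility of $d_{r+1}$ with $\vec{\mathfrak{R}}_{r+1}$; the analogous adjustments define $\vec{\SD}^{x,y}_{r+1}$ and update $C^{x,y}$. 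For the case $r=\infty$, note that at any fixed $(x,y)$ only finitely many $d_r$'s pass through $(x,y)$, so ${_\uparrow E}^{x,y}_\infty = {_\uparrow E}^{x,y}_{r_0}$ for some finite $r_0$, and the constants at infinity can be taken to be those at page $r_0$.

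The main obstacle is the verification that $\vec{\mathfrak{R}}_r$ commutes with the higher differentials $d_r$ for $r\geq 2$. Such a $d_r$ is defined by a zig-zag of representative cocycles and null-homotopies across $r$ columns, and the chain-level restriction of Definition~\ref{def:Rmap} is strictly natural only on $\FIsharp$-modules. As in the proof of Theorem~\ref{thm:filteredstabilityrange}, one must invoke the nice-lift construction of \S\ref{subsection:nicelift} to choose the intermediate null-homotopies compatibly with the source and target restrictions; this is precisely what consumes the additional factor $c_r$ at each page, and it is the step where the machinery developed for $\sharp$-filtered $\FI$-modules is leveraged in the general setting.
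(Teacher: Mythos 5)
Your plan has the right outline (induct on the page $r$, control three bidegrees at a time, use some version of the machinery from \S\ref{subsection:proofofmain}), but it contains a genuine gap already at the base case $r=1$, and it undersells the difficulty in a way that would make the argument fail. You write that Theorem~\ref{thm:maintheorem} ``produces... a natural isomorphism $H^y(\fS_n,V^x_n)\to H^y(\fS_{n-a},V^x_{n-a})$'' and that ``functoriality of group cohomology in the coefficient module then ensures'' commutativity with $d_1$. Both claims are false as stated. Theorem~\ref{thm:maintheorem} proves only that the \emph{dimensions} of the graded pieces of a filtration on $H^t(\fS_n,V_n)$ stabilize; the paper is explicit that this theorem ``does not give explicit maps $H^t(\fS_n,V_n)\ra H^t(\fS_{n-a},V_{n-a})$,'' and it is exactly Theorem~\ref{thm:vectormain} (at the cost of a possibly larger period $p^{\vec{M}^t_1}$) that supplies an actual map $\vec{\mathfrak{R}}^{n,a,0}_{t,\TE^\bullet}$ on the level of the total complex. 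More fundamentally, none of these $\mathfrak{R}$-maps are produced by ordinary functoriality: they are built through a $\sharp$-filtered resolution, a chosen cover, and the nice-lift construction, and the restriction map $R$ of Definition~\ref{def:Rmap} only exists for $\FIsharp$-modules, not for arbitrary $\FI$-modules. Consequently, commuting $\mathfrak{R}$ with $d_1$ (i.e.\ with the maps $\delta^x$) is not a triviality that one can wave at; it requires that the $\delta^x$ be upgraded to \emph{sequential} maps with respect to compatible covers, and then one must run Claim~\ref{claim:one}, Claim~\ref{claim:two}, and the computation in Lemma~\ref{lemma:sequentialcommutativity} to verify it. You correctly identify this circle of ideas as the obstruction for $r\ge 2$, but it is already the obstruction for $r=1$.

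The paper's actual route sidesteps the problem you would hit: instead of applying Theorem~\ref{thm:maintheorem} to each $V^x$ in isolation, it invokes Remark~\ref{remark:structuregeneralization} to produce a single commuting grid of $\sharp$-filtered resolutions of the whole complex $V^\bullet$, with \emph{all} horizontal and vertical maps sequential from the outset. It then replaces $E^{\bullet,\bullet}(n)$ by the isomorphic-from-page-one spectral sequence $\TE^{\bullet,\bullet}(n)$ built out of the total complexes $\TE^{x,\bullet}(n)$, and uses Theorem~\ref{thm:vectormain} column-by-column as the base case. The recursive constants $\vec{M}^{x,y}_{r+1}$ and $\vec{\SD}^{x,y}_{r+1}$ are not of the form ``previous max plus $\ddH(D,0)$'' as your $c_r$ heuristic suggests; they involve an extra term $\vec{N}^{x,y}_r$ (a maximum of $\vec{\mathfrak{I}}$ applied to iterates of $\vec{\mathfrak{D}}$ and $\vec{\delta}^{x,y}_\star$), as in equations \eqref{equ:vectorrec1}--\eqref{equ:vectorrec2}, because the periodicity of an $(x,y)$-strip degrades multiplicatively along the zig-zag rather than by a fixed additive increment. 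Your proposal could be repaired by adopting these two choices (a global commuting cover making everything sequential, and Theorem~\ref{thm:vectormain} in place of Theorem~\ref{thm:maintheorem}), but as written the base case and the accounting of the period are both unsound.
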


Equations~\ref{equ:vectorrec1} and \ref{equ:vectorrec2} (that are stated in \S \ref{subsection:generalization}) provide a recursive way to calculate the period $p^{\vec{M}_{r}^{x,y}}$ and the stable range $\max(\vec{\SD}_{r}^{x,y}, C^{x,y})$. Remark~\ref{rem:bound-general} gives an estimate on these quantities. The full strength of this theorem is used for the application to configuration spaces. Also, see some of the interesting consequences (Corollary~\ref{corollary:kerperiodicity} and Corollary~\ref{corollary:connectinghomperiodicity}) of Theorem~\ref{thm:generalizedmain}.

\subsection{Statements of the results on unordered configuration spaces}
\label{subsection:applicationintro}
Let $\mathcal{M}$ be a manifold. 
As noted in \cite[\S 6]{CEF} or \cite[\S 4]{CEFN}, the configuration space of $\mathcal{M}$ is a \emph{co-$\FI$-space}, that is, a contravariant functor from $\FI$ to topological spaces: For any finite set $S$, let $\Conf_S(\mathcal{M})$ denote the space $\Inj(S,\mathcal{M})$ of injections $S\hookrightarrow \mathcal{M}$. An inclusion $f\colon S\hookrightarrow T$ induces a restriction map $f^* : \Conf_T(\mathcal{M})\to \Conf_S(\mathcal{M})$ giving  $\Conf(\mathcal{M})$ a \emph{co-$\FI$-space} structure.

When $S=[n]$, the space of injections $[n]\hookrightarrow \mathcal{M}$ can be identified with the classical configuration space $\Conf_n(\mathcal{M})$ of ordered $n$-tuples of distinct points in $\mathcal{M}$: \[\Conf_n(\mathcal{M})\coloneq \{(P_1, P_2, \ldots, P_n) \in \mathcal{M}^n\ \big|\ P_i \neq P_j\}\] Since cohomology is contravariant, the functor taking $S$ to $H^m(\Conf_S(\mathcal{M}),\bk)$ is an $\FI$-module $H^m(\Conf(\mathcal{M}),\bk)$ over $\bk$. Under certain mild conditions on $\mathcal{M}$, this $\FI$-module is finitely generated.

\begin{theorem}[{\cite[Theorem~E]{CEFN}}]
\label{thm:configurationsfg}
Let $\bk$ be a Noetherian ring, and let $\mathcal{M}$ be a connected orientable manifold of dimension $\geq 2$ with the homotopy type of a finite $\CW$ complex (e.g.\ $\mathcal{M}$ compact). Then, for any $m\geq 0$, the $\FI$-module $H^m(\Conf(\mathcal{M}),\bk)$ is finitely generated.
\end{theorem}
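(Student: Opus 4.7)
\medskip\noindent\textbf{Proof plan.}
The plan is to build a spectral sequence converging to $H^m(\Conf_n(\mathcal{M});\bk)$ whose $E_2$-page is manifestly a finitely generated $\FI$-module in each bidegree and whose differentials are $\FI$-module maps, and then to propagate finite generation to the abutment using the Noetherian property (Theorem~\ref{thm:noetherianProp}).

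First I would set up the Leray spectral sequence associated to the open inclusion $\iota_n\colon \Conf_n(\mathcal{M}) \hookrightarrow \mathcal{M}^n$. For each morphism $f\colon [m]\hookrightarrow [n]$ in $\FI$, the projection $\mathcal{M}^n \to \mathcal{M}^m$ picking out the coordinates in $f([m])$ restricts to a continuous map $\Conf_n(\mathcal{M}) \to \Conf_m(\mathcal{M})$, and the whole Leray spectral sequence is contravariantly functorial for these maps. Consequently every page $E_r^{p,q}$, viewed as a sequence in $n$, carries the structure of an $\FI$-module, and each differential $d_r^{p,q}$ is a morphism of $\FI$-modules.

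Next I would identify the $E_2$-page. Because $\mathcal{M}$ is a connected oriented manifold of dimension $d \geq 2$, the higher direct images $R^{q}(\iota_n)_{*}\bk$ are supported on the fat diagonal of $\mathcal{M}^n$ with stalks controlled by local cohomology, and the Cohen--Taylor--Totaro calculation gives a natural decomposition of $E_2^{p,q}$ as a direct sum, indexed by set partitions of subsets of $[n]$, of tensor products built from the cohomology of $\mathcal{M}$. The key point is that this decomposition is natural for injections $[m]\hookrightarrow [n]$ in exactly the way needed to recognise each summand as an $\FI$-module of the form $M(W)$ (Definition~\ref{def:MW}), where $W$ is some $\bk[\fS_k]$-module assembled from finitely many copies of $H^{*}(\mathcal{M};\bk)$. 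Since $\mathcal{M}$ has the homotopy type of a finite $\CW$ complex, each $H^{i}(\mathcal{M};\bk)$ is a finitely generated $\bk$-module, so for each fixed total degree $p+q = m$ only finitely many such summands appear and each is individually finitely generated. Hence $E_2^{p,q}$ is finitely generated.

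Finally, I would propagate finite generation along the spectral sequence: each $E_{r+1}$ is a subquotient of $E_r$ in $\FIMod_{\bk}$, so by Theorem~\ref{thm:noetherianProp} finite generation passes from page to page, and in each total degree the spectral sequence converges after finitely many pages, so $E_\infty^{p,q}$ is finitely generated. The target $H^{m}(\Conf_n(\mathcal{M});\bk)$ admits a finite filtration with associated graded $\bigoplus_{p+q=m} E_\infty^{p,q}$, and since extensions of finitely generated $\FI$-modules are finitely generated, $H^{m}(\Conf(\mathcal{M});\bk)$ is finitely generated. The main obstacle is the second step: verifying that the Cohen--Taylor--Totaro description of the $E_2$-page, originally developed in an algebro-geometric setting, really provides a decomposition by $\FI$-modules of the form $M(W)$ in the full generality of compact oriented manifolds of dimension $\geq 2$. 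This reduces to checking that the local cohomology classes along the diagonals transform correctly under pullback along projections induced by injections $[m]\hookrightarrow[n]$, with orientability of $\mathcal{M}$ ensuring well-defined Thom classes on each diagonal. An alternative route, taken in \cite{CEFN}, is to bypass Totaro and instead induct on $n$ using the Fadell--Neuwirth fibration $\Conf_{n+1}(\mathcal{M}\setminus\{\mathrm{pt}\}) \to \Conf_{n}(\mathcal{M}\setminus\{\mathrm{pt}\})$, handling the closed case by relating $\mathcal{M}$ and $\mathcal{M}\setminus\{\mathrm{pt}\}$ via a Gysin long exact sequence.
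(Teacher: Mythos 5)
This statement is cited in the paper from \cite[Theorem~E]{CEFN} without proof, so the relevant comparison is with the argument given there. Your main plan — the Leray spectral sequence of the co-$\FI$-space inclusion $\Conf_n(\mathcal{M})\hookrightarrow\mathcal{M}^n$, the observation that all pages and differentials are $\FI$-module maps by functoriality for the coordinate projections, the Cohen--Taylor--Totaro identification of the $E_2$-page as a sum of induced $\FI$-modules of the form $M(W)$ built from diagonal local cohomology and $H^*(\mathcal{M};\bk)$ (with orientability furnishing the Thom classes), and finally propagation of finite generation through the pages via the Noetherian property Theorem~\ref{thm:noetherianProp} and through the finite abutment filtration via closure under extensions — is essentially what \cite{CEFN} does, so your primary argument is correct.

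The one thing to fix is the closing hedge: the \emph{alternative route} you attribute to \cite{CEFN} (inducting on $n$ via the Fadell--Neuwirth fibration for $\mathcal{M}\setminus\{\mathrm{pt}\}$ and a Gysin sequence to handle the closed case) is not the route CEFN actually takes; that paper really does go through the Leray spectral sequence and the local-cohomology description of $R^q\iota_{n*}\bk$ over a general Noetherian coefficient ring. The worry you flag — that this description must be checked to be $\FI$-equivariant and valid for arbitrary orientable manifolds of dimension $\geq 2$, not just in an algebro-geometric context — is legitimate, but it is resolved exactly as you suggest in the first part of your proposal (orientability gives compatible Thom classes on the diagonal strata, and naturality of pullback along coordinate projections gives the $\FI$-equivariance); it is not sidestepped.
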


The classical configuration space of unordered $n$-tuples of distinct points in $\mathcal{M}$ is defined as:\[\conf_n(\mathcal{M})\coloneq \{(P_1, P_2, \ldots, P_n) \in M^n \ \big|\ P_i \neq P_j\}/ \fS_n.\] In the past, there have been several investigations on the stability of the cohomology groups $H^t(\conf_n(\mathcal{M}), \bk)$ but all of them required either as assumption on the manifold $\mathcal{M}$ (restricting to an open or a punctured manifold  or an odd-dimensional manifold), or an assumption on the characteristic of the field $\bk$ (restricting to $\mathbb{F}_2$ or a field of characteristic $0$), or an assumption on the boundary (restricting to manifolds with nonempty boundary); see -- \cite{BCT}, \cite{Nap}, \cite{C}, \cite{RW}, \cite{mcduff}, \cite{segal}, \cite{jeremy}. For a more complete history of the subject, see \cite{icm}.

In this paper, we assume that the manifold satisfy the mild assumptions from Theorem~\ref{thm:configurationsfg} and we allow the field to be of arbitrary positive characteristic. In particular, our result is new when the manifold is even-dimensional and the field is different from $\mathbb{F}_2$ and unique in the sense that we show periodicity and not stability (see Remark~\ref{remark:exampleBi}). Our method is similar in spirit to Church's method in \cite{C} (which we later updated in \cite{CEF} and \cite{CEFN}) but we have to deal with nonexactness of the functor $H^0(\fS_n, .)$ and that forces us to make use of the full strength of Theorem~\ref{thm:generalizedmain}. Now we state our result.

\begin{main}[{\bf Periodicity of cohomology of unordered configuration spaces}] \label{thm:confmain}
Let $\bk$ be a field of characteristic $p>0$ and let $\mathcal{M}$ satisfies the hypothesis of Theorem~\ref{thm:configurationsfg}. There exist constants $\vec{M}^t_{\infty}$, $\vec{\SD}^t_{\infty}$ and $C^t$ such that \[\dim_{\bk}{H^{t}(\conf_n(\mathcal{M}),\bk)} = \dim_{\bk}{H^{t}(\conf_{n-a}(\mathcal{M}), \bk)}\] whenever $p^{\vec{M}^t_{\infty}} \mid a$ and $n-a \geq \max(\vec{\SD}^t_{\infty}, C^t)$. (See \eqref{eqn:confperiod}, \eqref{eqn:confstability}and \eqref{eqn:confCt} for the definitions of the constants $\vec{M}^t_{\infty}$, $\vec{\SD}^t_{\infty}$ and $C^t$.)
\end{main}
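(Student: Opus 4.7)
The plan is to apply Theorem~\ref{thm:generalizedmain} to a bounded complex $V^\bullet$ of finitely generated $\FI$-modules whose pointwise realization $V^\bullet_n$ is $\fS_n$-equivariantly quasi-isomorphic to the $\bk$-cochain complex of $\Conf_n(\mathcal{M})$. This converts the computation of $H^t(\conf_n(\mathcal{M}), \bk)$ into the cohomology of an explicit double complex of $\FI$-modules whose $\infty$-page is controlled by Theorem~\ref{thm:generalizedmain}.

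First I would construct the complex $V^\bullet$. Because $\mathcal{M}$ has the homotopy type of a finite CW complex, the argument encoded in Lemma~\ref{lemma:homotopy} (for which Thomas Church's short proof is acknowledged above) lets one replace the co-$\FI$-space $\Conf(\mathcal{M})$ by a finite-dimensional co-$\FI$-CW model whose cellular cochains form a bounded cochain complex of finitely generated $\FI$-modules. This is in the same spirit as the proof of Theorem~\ref{thm:configurationsfg}; the features we need beyond that theorem are boundedness of $V^\bullet$ and finite generation of each individual $V^x$, so that Theorem~\ref{thm:generalizedmain} applies.

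Next I would invoke the hypercohomology interpretation: the total complex of $E^{\bullet,\bullet}(n)$ computes $\mathbb{H}^*(\fS_n, V^\bullet_n)$, which equals $H^*(\conf_n(\mathcal{M}), \bk)$ because $\Conf_n(\mathcal{M}) \to \conf_n(\mathcal{M})$ is a regular cover with deck group $\fS_n$ acting freely. The vertically-oriented spectral sequence therefore furnishes a filtration of $H^t(\conf_n(\mathcal{M}), \bk)$ whose associated graded is $\bigoplus_{x+y=t} {_{\uparrow}E}_\infty^{x,y}(n)$. Applying Theorem~\ref{thm:generalizedmain} at each bidegree $(x,y)$ produces constants $\vec{M}_\infty^{x,y}$, $\vec{\SD}_\infty^{x,y}$, $C^{x,y}$ such that $\vec{\mathfrak{R}}_\infty^{x,y}(n,a)\colon {_{\uparrow}E}_\infty^{x,y}(n) \to {_{\uparrow}E}_\infty^{x,y}(n-a)$ is an isomorphism whenever $p^{\vec{M}_\infty^{x,y}} \mid a$ and $n - a \geq \max(\vec{\SD}_\infty^{x,y}, C^{x,y})$. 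Setting
\[ \vec{M}^t_\infty := \max_{x+y=t} \vec{M}_\infty^{x,y}, \quad \vec{\SD}^t_\infty := \max_{x+y=t} \vec{\SD}_\infty^{x,y}, \quad C^t := \max_{x+y=t} C^{x,y}, \]
with the maxima taken over the finitely many $(x,y) \in \Z_{\geq 0}^2$ with $x+y=t$, yields $\dim_\bk {_{\uparrow}E}_\infty^{x, t-x}(n) = \dim_\bk {_{\uparrow}E}_\infty^{x, t-x}(n-a)$ for every $0 \leq x \leq t$ once $p^{\vec{M}^t_\infty} \mid a$ and $n - a \geq \max(\vec{\SD}^t_\infty, C^t)$. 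Summing the identity over $x$ across the filtration gives the claimed equality $\dim_\bk H^t(\conf_n(\mathcal{M}), \bk) = \dim_\bk H^t(\conf_{n-a}(\mathcal{M}), \bk)$.

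The main obstacle I anticipate is the very first step. Singular cochains on $\Conf_n(\mathcal{M})$ are nowhere near finitely generated as $\FI$-modules, so one genuinely needs a uniform-in-$n$, combinatorial, co-$\FI$-equivariant model of $\Conf(\mathcal{M})$ whose cellular cochains assemble into a bounded complex of finitely generated $\FI$-modules; this is the content of Lemma~\ref{lemma:homotopy} and the key topological input beyond the purely algebraic Theorems~\ref{thm:configurationsfg} and~\ref{thm:generalizedmain}. A secondary subtlety is that the vertical direction of the spectral sequence is unbounded, since the mod-$p$ group cohomology of $\fS_n$ is nontrivial in arbitrarily large degrees; this is precisely why the conclusion is periodicity of the dimensions rather than outright stability, and why we must pass to the $r = \infty$ page before extracting the final bound.
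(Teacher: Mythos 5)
Your high-level architecture matches the paper — build a double complex $E^{x,y}(n) = \Hom_{\fS_n}(\mathcal{B}_y(\fS_n), U_n^x)$ from some complex of finitely generated $\FI$-modules, observe that the vertical spectral sequence abuts to $H^\bullet(\conf_n(\mathcal{M}), \bk)$, apply Theorem~\ref{thm:generalizedmain} at each bidegree, and take maxima of the resulting constants over $x+y = t$. Your final bookkeeping step and your diagnosis of why one only gets periodicity are both fine. But the first step, which you flag as the ``main obstacle'', contains a genuine gap, and the remedy you propose does not exist.

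You attribute to Lemma~\ref{lemma:homotopy} a construction of a finite-dimensional co-$\FI$-CW model of $\Conf(\mathcal{M})$ whose cellular cochains form a bounded complex of finitely generated $\FI$-modules. That lemma is an algebraic statement about the homotopy $h_t = \sum a_i h_{t,\omega_i,\epsilon_i}$ on the bar resolution $\mathcal{B}_\bullet(\fS_n)$; it has nothing to do with cellular models of configuration spaces. More importantly, no such co-$\FI$-CW model is established in the paper or in \cite{CEFN}, and the proof of Theorem~\ref{thm:configurationsfg} there proceeds via a Leray spectral sequence for the inclusion $\Conf(\mathcal{M}) \hookrightarrow \mathcal{M}^{\bullet}$, not by producing a uniformly finite cellular model. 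So the complex you feed into Theorem~\ref{thm:generalizedmain} is never constructed.

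The paper's actual solution sidesteps this. It takes $V^\bullet$ to be the honest singular cochain complex of $\Conf(\mathcal{M})$, which is \emph{not} finitely generated as an $\FI$-module. Freeness of the $\fS_n$-action makes each $C_{x,n}$ projective over $\bk[\fS_n]$, so $V_n^x = \Hom(C_{x,n},\bk)$ is $\fS_n$-cohomologically trivial; hence the vertical spectral sequence of $F^{x,y}(n) := \Hom_{\fS_n}(\mathcal{B}_y(\fS_n), V_n^x)$ collapses to $(V_n^\bullet)^{\fS_n}$ and abuts to $H^\bullet(\conf_n(\mathcal{M}),\bk)$, while the horizontal $E_1$ page is $\Hom_{\fS_n}(\mathcal{B}_y(\fS_n), H^x(V^\bullet)_n)$. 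Then, because Theorem~\ref{thm:configurationsfg} guarantees that each cohomology $\FI$-module $H^x(V^\bullet)$ is finitely generated, the paper inductively constructs a subcomplex $U^\bullet \subseteq V^\bullet$ with each $U^x$ finitely generated and $H^x(U^\bullet) \cong H^x(V^\bullet)$. The inclusion of double complexes $\Psi(n)$ then induces an isomorphism on horizontal $E_1$ pages, hence the same abutment, and Theorem~\ref{thm:generalizedmain} is applied to $U^\bullet$. This ``thicken a finitely generated subcomplex with the right cohomology'' trick is exactly the missing idea in your write-up, and it is what makes the argument go through without any cellular model of $\Conf(\mathcal{M})$.
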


Remark~\ref{rem:bound-config} in \S \ref{section:application} implies that $\vec{M}^t_{\infty} $ above can be taken to be  $(t+3)(2 t +2)$. 

\begin{remark}
\label{rem:palmer}
Recent work of Cantero and Palmer (\cite[Corollary~E]{palmer}) implies that if $p$ is an odd prime and the Euler characteristic $\chi(\cM)$ of $\cM$ is nonzero then, $H^t(\conf_n(\cM), \bk)$ is eventually periodic with period $p^{v_p(\chi(\cM))+1}$ where $v_p({-})$ is the $p$-adic valuation. Note that our results give bounds that are independent of $\chi(\cM)$ but depend only on $t$. 
\end{remark}


\begin{remark}
\label{remark:exampleBi}
The easiest example where period is not $1$ is the $2$-sphere, where we have \cite[Theorem~1.11]{Bi}: \[H_1(\conf_n(S^2), \mathbb{Z}) = \mathbb{Z} / (2n-2) \mathbb{Z}\] and by the universal coefficient theorem \[H^1(\conf_n(S^2), \mathbb{Z} / p \mathbb{Z}) = \begin{cases}
\mathbb{Z} / p \mathbb{Z}, & \text{if } p \mid 2n-2 \\
0, & \text{otherwise}
\end{cases}\] Thus when $p \neq 2$, the smallest period is $p$.
\end{remark}

\section{Theory of \texorpdfstring{$\sharp$}{sharp}-filtered \texorpdfstring{$\FI$}{FI}-modules over Noetherian rings}
\label{section:filteredFI}

The aim of this section is to prove Theorem~\ref{thm:structure} and see some of its immediate consequences. Except in Lemma~\ref{lemma:zero}, which holds over a general commutative ring, $\bk$ is assumed to be a Noetherian ring throughout \S \ref{section:filteredFI}.

\begin{definition}[{\bf The set $D_{m,n}$ and the coset representatives $\gamma_f$}]
\label{def:Dandcosetrepresentatives}

For $m \leq n$, let $D_{m,n}$ be the set of subsets of $[n]$ of size $m$. $\fS_n$ acts naturally on $D_{m,n}$. For each $f \in D_{m,n}$, we have a unique element $\gamma_f \in \fS_n$ such that $\gamma_f|_{[m]}$, $\gamma_f |_{[n] \setminus [m] }$ are order preserving and $\gamma_f([m])=f$. Then $\bk[\fS_n] \cong \bigoplus_{f \in D_{m,n}} \bk[\fS_{m} \times \fS_{n-m}]\gamma_{f}^{-1}$ as a $\bk[\fS_{m} \times \fS_{n-m}]$-module. 

We do not distinguish $D_{m,n}$ from the set of order preserving injections $f: [m] \hookrightarrow [n]$. We adopt the conventions $f(0) \coloneq 0$ and $f(m+1) \coloneq n+1$.
\end{definition}

The definition above is used extensively only in section \S \ref{section:periodicityofinv} but we give it here because of its use in the following lemma.

\begin{lemma} \label{lemma:zero}
Let $m\geq 0$ and $W$ be a $\bk [\fS_m]$-module over a commutative ring $\bk$. Then the following hold:

\begin{enumerate}
\item Let \[0 \ra U^1 \ra M(W) \ra U^2 \ra 0 \] be an exact sequence of $\FI$-modules with $U^1$ generated in degree $m$. Then $U^1$ and $U^2$ are $\FIsharp$-modules. More precisely, \[U^i = M(U^i_m), \qquad \text{for } i \in \{1,2\}. \]

\item \[\cS_{+1} M(W) = M(W) \oplus M(\Res^{\fS_n}_{\fS_{n-1}} W).\]

\item If $W$ is finitely generated as a $\bk[\fS_m]$-module, then for any $a$, $\cS_{+ a} M (W)$ is $\sharp$-filtered with $M(W)$ as a direct summand.
\end{enumerate}
\end{lemma}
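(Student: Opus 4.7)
All three statements follow by direct manipulation of the explicit formula $M(W)_S = \bk[\Hom_{\FI}([m], S)] \otimes_{\bk[\fS_m]} W$, and the natural order is to prove (2) first, then use it to deduce (1) and (3).

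For part (2), identify $\cS_{+1} M(W)_S$ with $M(W)_{S \sqcup [1]}$ and split $\Hom_{\FI}([m], S \sqcup [1])$ into two $\fS_m$-stable pieces: the injections that miss the distinguished point, which form $\Hom_{\FI}([m], S)$, and the injections that hit it at some $i \in [m]$. The second set is isomorphic as an $\fS_m$-set to $\fS_m \times_{\fS_{m-1}} \Hom_{\FI}([m-1], S)$, since the stabilizer of any one fixed choice of $i$ is a conjugate of $\fS_{m-1}$. Taking $\bk$-linearizations and tensoring with $W$ over $\bk[\fS_m]$ yields
\begin{displaymath}
\cS_{+1} M(W)_S \;\cong\; M(W)_S \;\oplus\; M(\Res^{\fS_m}_{\fS_{m-1}} W)_S .
\end{displaymath}
The decomposition is natural in $S$ because the distinguished element of $S \sqcup [1]$ is preserved by every $\FI$-morphism of the first coordinate; hence it is a splitting of $\FI$-modules. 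The $M(W)$ summand coincides with the image of the natural map $X_1(M(W)) \colon M(W) \to \cS_{+1} M(W)$ of Definition~\ref{def:torsionsubmodule}.

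For part (1), set $U^1_m \coloneq U^1 \cap M(W)_m$, a $\bk[\fS_m]$-submodule of $M(W)_m = W$. Using the explicit decomposition $M(W)_n = \bigoplus_{f \in D_{m,n}} \gamma_f(W)$ from Definition~\ref{def:Dandcosetrepresentatives}, any $\FI$-morphism $[m] \to [n]$ factors as $\gamma_f \sigma$ with $\sigma \in \fS_m$ and $f \in D_{m,n}$, so $\fS_m$-invariance of $U^1_m$ gives $\gamma_f \sigma(U^1_m) = \gamma_f(U^1_m)$. Since $U^1$ is generated in degree $m$, its value at $n$ is the $\bk$-span of these orbits, namely $U^1_n = \bigoplus_{f \in D_{m,n}} \gamma_f(U^1_m) = M(U^1_m)_n$, and this identification is compatible with all $\FI$-morphisms. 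Quotienting summand-by-summand gives $U^2 \cong M(W/U^1_m) = M(U^2_m)$.

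For part (3), I would induct on $a$. The case $a = 0$ is trivial and $a = 1$ is part (2). For the inductive step, write $\cS_{+a} M(W) = \cS_{+1}\bigl(\cS_{+(a-1)} M(W)\bigr)$ and apply the exact functor $\cS_{+1}$ to the $\sharp$-filtration of $\cS_{+(a-1)} M(W)$ supplied by the inductive hypothesis. The graded pieces become, by part (2), direct sums of the form $M(W_r) \oplus M(\Res^{\fS_{m_r}}_{\fS_{m_r-1}} W_r)$, and refining each step by inserting the $M(W_r)$-subobject produces a $\sharp$-filtration of $\cS_{+a} M(W)$. The $M(W)$ summand coming from the inductive step persists because the canonical section provided by part (2) commutes with another application of $\cS_{+1}$. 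The genuine subtle point in the whole argument is the naturality of the splitting in part (2) with respect to $\FI$-morphisms, not merely decomposition as $\bk[\fS_n]$-modules for each $n$ separately; once the distinguished element of $S \sqcup [1]$ is recognized as the source of this naturality, parts (1) and (3) reduce to direct verification and an inductive filtration refinement.
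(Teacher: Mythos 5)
Your arguments for parts (1) and (2) are correct and follow essentially the same route as the paper. For (2) your split of $\Hom_{\FI}([m], S\sqcup[1])$ into injections missing the marked point and injections hitting it at some $i\in[m]$ is exactly the paper's decomposition $D_{m,n+1}=D_{m,n}\sqcup(D_{m,n+1}\setminus D_{m,n})\cong D_{m,n}\sqcup D_{m-1,n}$, and the naturality discussion is a welcome addition. For (1), the paper simply invokes exactness of $\Ind$ (a surjection $M(U^1_m)\twoheadrightarrow U^1\hookrightarrow M(W)$ factors through the injection $M(U^1_m)\hookrightarrow M(W)$); your coset-by-coset verification reaches the same conclusion by hand and is fine, though longer.

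Part (3), however, has a genuine gap. You declare the base case $a=0$ trivial, but $\cS_{+0}M(W)=M(W)$, and showing that $M(W)$ is $\sharp$-filtered is precisely where the hypothesis that $W$ is finitely generated must be used: a $\sharp$-filtration requires graded pieces of the form $M(W_r)$ with each $W_r$ \emph{singly} generated, so one must first choose a finite filtration of $W$ by $\bk[\fS_m]$-submodules with cyclic quotients and then apply the exact functor $M(-)$. (The paper makes exactly this move: ``by exactness of $\Ind$ we are reduced to showing $W$ admits a filtration with singly generated graded pieces; which follows because $W$ is finitely generated.'') Your proposal never uses the finite-generation hypothesis at all, which is a signal that something is missing. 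The same issue recurs in your inductive step: after you refine by inserting $M(W_r)\subset \cS_{+1}M(W_r)$, the remaining graded pieces $M(\Res^{\fS_{m_r}}_{\fS_{m_r-1}}W_r)$ still need not have singly generated coefficients (the restriction of a cyclic $\bk[\fS_{m_r}]$-module is generated by $m_r$ elements, not one), so a further refinement by a filtration of $\Res W_r$ with cyclic quotients is required. Both corrections are straightforward, but without them the claimed filtration is not a $\sharp$-filtration in the sense of Definition~\ref{def:filteredfimod}.
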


\begin{proof} \begin{enumerate}
\item Proof follows from the exactness of $\Ind_H^G$.

\item It is enough to show that \[\Res_{\fS_n}^{\fS_{n+1}} \Ind_{\fS_m \times \fS_{n+1-m}}^{\fS_{n+1}} W \boxtimes \bk = (\Ind_{\fS_m \times \fS_{n-m}}^{\fS_{n}} W \boxtimes \bk) \oplus (\Ind_{\fS_{m-1} \times \fS_{n-m+1}}^{\fS_{n}} (\Res_{\fS_{m-1}}^{\fS_{m}} W) \boxtimes \bk).\] Any element of the module on left may be written uniquely as \[\sum_{f \in D_{m,n+1}} \gamma_f \otimes a_f, \qquad a_f \in W \text{ for each } f \in D_{m,n+1}.\] Note that $D_{m, n+1} = D_{m,n} \sqcup (D_{m,n+1} \setminus D_{m,n})$ and $D_{m,n+1} \setminus D_{m,n}$ is naturally isomorphic to $D_{m-1, n}$. This induces the decompostion \[\sum_{f \in D_{m,n+1}} \gamma_f \otimes a_f = \sum_{f \in D_{m,n}} \gamma_f \otimes a_f + \sum_{f \in D_{m-1,n}} \gamma_f \otimes a_f \] finishing the proof.

\item By the exactness of the shift functor and the natural isomorphism $\cS_{+(a+b)} V \cong \cS_{+a} \cS_{+b} V$, it is enough to show part $3$ for the case $a=1$. By part $2$, it is enough to show that $M(W)$ is filtered. Now by exactness of $\Ind_H^G$ we are reduced to showing that $W$ admits a filtration with graded pieces which are singly generated $\bk[\fS_m]$-modules; which follows because $W$ is finitely generated. \qedhere
\end{enumerate}
\end{proof}

\begin{remark}
The above lemma implies that the shift functor $\cS_{+}$ preserves sequentialness (see Definition~\ref{def:sequential}): if $\delta: V \ra W$ is sequential and $a \geq 0$, then $\cS_{+a} \delta$ is sequential. 
\end{remark}

\begin{proof}[{\bf Proof of Theorem~\ref{thm:structure}}]
We first show that $\cS_{+a} V$ is $\sharp$-filtered when $a$ is large enough. Our proof is by induction on $\max_i m_i$ and the number of times $\max_i m_i$ appears in the tuple $(m_1, \ldots, m_d)$. We will refer to this tuple as the degree structure of $\Pi$ (see Definition~\ref{def:finitegeneration}). The base case is trivial because if $\max_i m_i= 0$ and $0$ appears only once in the degree structure of $\Pi$, then for large enough $a$, $\cS_{+a} V$ is a torsion free $\FI$-module singly generated in degree $0$; and such an $\FI$-module must be either $0$ or $M(0)$. 

By Lemma~\ref{lemma:SaMd}, $\cS_{+a} \tilde{V} = {{\bigoplus}_{i=1}^{d} (M(m_{i})\bigoplus Q_{i,a}) }$ where $Q_{i,a} =  \bigoplus_{j=1}^{d_i} M(m_{i,j})$ and $m_{i,j} <m_i$ for each $j$. We pick an $i$ so that $m_i$ is maximum. The natural
projection $\cS_{+ a} \tilde{V} \twoheadrightarrow M (m_i)$ induces
the following natural commutative diagram (Figure~\ref{Fig:2}) with exact rows and columns. \begin{figure}[h]
\centering
\begin{tikzcd}
0 \ar{r} & \tilde{U}^a \ar{r} \ar{d}{\Pi^a} & \cS_{+a} \tilde{V} \ar{r} \ar{d}{\cS_{+a}\Pi} & M(m_i) \ar{r} \ar{d}{\varphi_a} & 0 \\
0 \ar{r} & U^a \ar{r} \ar{d} & \cS_{+a} V \ar{r} \ar{d} & A^a \ar{r} \ar{d} & 0 \\
& 0 & 0 & 0 
\end{tikzcd}
\caption{} \label{Fig:2}
\end{figure} Here $\Pi^a$ is the restriction of $\cS_{+a} \Pi$ to $U^a$. The $\bk$-modules $(\ker \varphi_a)_{m_i} \subset \bk[\fS_{m_i}]$ are increasing in $a$ and hence must stabilizes for $a$ large enough (see Proof of Theorem~A in \cite{CEFN} for a proof that these modules are increasing). Fixing such an $a$, $\ker \varphi_a$ is generated in degree $m_i$ and hence by part 1 of Lemma~\ref{lemma:zero}, $A^a$ is of the
form $M (W)$ for some quotient $W$ of $\bk[\fS_{m_i}]$. So we have an exact sequence: \[ 0 \ra U^a \ra \cS_{+a}V \ra M(W) \ra 0\] with the cardinality of $m_i$ in the degree structure of $\Pi^a$ one less than the cardinality of $m_i$ in the degree structure of $\Pi$ (or $\cS_{+a} \Pi$). Hence by induction, $\cS_{+ b} U^a$ is $\sharp$-filtered for large enough $b$. Shifting
the exact sequence above by $b$ yields:\[0 \ra \cS_{+b} U^a \ra \cS_{+(a+b)}V \ra \cS_{+b} M(W) \ra 0.\] By part $3$ of Lemma~\ref{lemma:zero}, $\cS_{+ b} M (W)$ is $\sharp$-filtered and hence $\cS_{+a} V$ is $\sharp$-filtered for $a$ large, completing the first part of the proof. For the second part we fix such an $a$ and proceed as follows.

Let $Q$ be the cokernel of the map $\iota \coloneq X_a(V)$ (Definition~\ref{def:torsionsubmodule}). Then we have a commutative diagram (Figure~\ref{Fig:3}) with exact columns, exact first row and the second row exact in large enough degree; say $C_1$. Recall, from Definition~\ref{def:torsionsubmodule}, that the failure of exactness in certain finitely many degrees is coming from the torsion submodule of $V$ (which vanishes in high enough degree). 
\begin{figure}[h]
\centering
\begin{tikzcd}
0 \ar{r} & \tilde{V} \ar{r}{\tilde{\iota}} \ar{d}{\Pi} & \cS_{+a} \tilde{V} \ar{r}{\tilde{\phi}'} \ar{d}{\cS_{+a}\Pi} & \tilde{Q} \ar{r} \ar{d}{\Xi} & 0 \\
0 \ar{r} & V \ar{r}{\iota} \ar{d} & \cS_{+a} V \ar{r}{\phi'} \ar{d} & Q \ar{r} \ar{d} & 0 \\
& 0 & 0 & 0 
\end{tikzcd}
\caption{} \label{Fig:3}
\end{figure}
It follows from Lemma~\ref{lemma:SaMd} that $\tilde{Q}$ is generated in degree $< D$. By induction on degree, the theorem is true for $\Xi \colon \tilde{Q} \ra Q$, that is, we have a commutative diagram as in Figure~\ref{Fig:4} which satisfies the assertions of theorem.

\begin{figure}[h]
\centering
\begin{tikzcd}
 {0}\ar{r} & \tilde{Q} \ar{r}{\tilde{\iota}'} \ar{d}{\Xi} & \tilde{K}^0 \ar{r}{\tilde{\psi}^0} \ar{d}{\Xi^0} &
 \tilde{K}^1 \ar{r}{\tilde{\psi}^1} \ar{d}{\Xi^1} & \tilde{K}^2 \ar{d}{\Xi^2} \ar{r}{\tilde{\psi}^2} & \ldots \ar{r}{\tilde{\psi}^{N'-1}} & \tilde{K}^{N'} \ar{d}{\Xi^{N'}} \ar{r} & 0\\ 
 {0}\ar{r} & Q \ar{r}{\iota'} \ar{d} & K^0\ar{r}{\psi^0} \ar{d} &
  K^1 \ar{r}{\psi^1} \ar{d} & K^2 \ar{d} \ar{r}{\psi^2} & \ldots \ar{r}{\psi^{N'-1}} & K^{N'} \ar{d} \ar{r} & 0\\
   & 0 & 0 &
    0 & 0 & & 0
 \end{tikzcd}
 \caption{} \label{Fig:4}
 \end{figure}

The theorem for $\Pi: \tilde{V} \ra V$ then follows by concatenating the two diagrams (Figure~\ref{Fig:3} and \ref{Fig:4}), setting $J^0 \coloneq \cS_{+a}V$, $J^i\coloneq K^{i-1}$, $\phi^i\coloneq \psi^{i-1}$ for $i>0$ and noting that $\phi^0 \coloneq \iota' \circ \phi'$ is sequential.
\end{proof}

\begin{remark}
\label{remark:structuregeneralization} The proof of Theorem~\ref{thm:structure} shows that a resolution with $\sharp$-filtered $\FI$-modules can be constructed for complexes of $\FI$-modules (in \S\ref{subsection:generalization}, we will need a version of Theorem~\ref{thm:structure} for complexes of finitely generated $\FI$-modules). Consider an arbitrary complex \[0 \ra V^0 \ra V^1 \ra \ldots \ra V^x \ra \ldots\] of finitely generated $\FI$-modules with differential $\delta$ and assume that $V^x = 0$ for $x > N$.

Note that we may construct free finitely generated $\FI$-modules $\tilde{V}^x$ and surjections $\Pi^x : \tilde{V}^x  \ra V^x$ such that the diagram in Figure~\ref{Fig:5}  commutes. Moreover, the maps $\tilde{\delta}^x$ may be assumed to be sequential: by induction on $x$ pick finite generating set $G^x$ of $V^x$ containing the image $\delta^{x-1} (G^{x-1})$. These $G^x$ define $\tilde{V}^x$ and make $\tilde{\delta}^x$ sequential. 
\begin{figure}[h]
 \centering
\begin{tikzcd}
  {0}\ar{r} & \tilde{V}^0\ar{r}{\tilde{\delta}^0} \ar{d} &
  \tilde{V}^1 \ar{r}{\tilde{\delta}^1} \ar{d} & \tilde{V}^2 \ar{d} \ar{r}{\tilde{\delta}^2} & \ldots\\ {0}\ar{r} & V^0\ar{r}{\delta^0} \ar{d} &
   V^1 \ar{r}{\delta^1} \ar{d} & V^2 \ar{d} \ar{r}{\delta^2} & \ldots\\
    & 0 &
     0 & 0 & 
     \end{tikzcd}
     \caption{} \label{Fig:5}
\end{figure}
 Let $\tilde{V}^x$ be generated in degree $D_x$. Unlike Theorem~\ref{thm:structure}, we may not assume that $D_x$ is the smallest so that $V^x$ is generated in degree $\leq D_x$. But, if $D$ is large enough so that each $V^x$ is generated in degree $\leq D$ then we may assume that $D_x \leq D$ for each $0 \leq x \leq N$.
 
 The shift functor $\cS_{+}$ preserves the sequentialness. Hence by applying the construction in Theorem~\ref{thm:structure} simultaneously for each $x$ (that is, choosing $a$ large enough so that for each $x$, $\cS_{+a}V^x$ is $\sharp$-filtered and repeating the process with $Q^x \coloneq \frac{\cS_{+a}V^x}{V}$), we obtain resolutions \[0 \ra V^x \ra J^{0,x} \ra J^{1,x} \ra \ldots \ra J^{N_x , x} \ra 0\] exact in high enough degree (say $n \geq C_x$) making the diagram in Figure~\ref{Fig:6} commutes and such that the maps $\delta^{y,x}$, $0 \leq y \leq N_x$, $0 \leq x \leq N$ are sequential.

\begin{figure}[h]
\centering
\begin{tikzcd}
{} & \ldots &
     \ldots & \ldots & \\
 {0}\ar{r} & J^{1,0} \ar{r}{\delta^{1,0}} \ar{u} &
  J^{1,1} \ar{r}{\delta^{1,1}} \ar{u} & J^{1,2} \ar{u} \ar{r}{\delta^{1,2}} & \ldots\\ 
 {0}\ar{r} & J^{0,0}\ar{r}{\delta^{0,0}} \ar{u} &
 J^{0,1} \ar{r}{\delta^{0,1}} \ar{u} & J^{0,2} \ar{u} \ar{r}{\delta^{0,2}} & \ldots\\ {0}\ar{r} & V^0\ar{r}{\delta^0} \ar{u}{\iota^0} &
  V^1 \ar{r}{\delta^1} \ar{u}{\iota^1} & V^2 \ar{u}{\iota^2} \ar{r}{\delta^2} & \ldots\\
   & 0 \ar{u} &
    0 \ar{u} & 0 \ar{u} & 
 \end{tikzcd}
 \caption{} \label{Fig:6}
\end{figure}
As in Theorem~\ref{thm:structure}, the maps $\phi^x : J^{y,x} \ra J^{y+1,x}$, $0 \leq y \leq N_x -1$ $0 \leq x \leq N$ given by the diagram above are sequential, $N_x \leq D_x$ and $J^{y,x}$ is generated in degree $\leq D_x - y$. We also have the corresponding diagram (Figure~\ref{Fig:7}) on covers with exact columns and sequential rows. \qedhere

\begin{figure}[h]
\centering
\begin{tikzcd}
 {} & \ldots &
     \ldots & \ldots & \\
 {0}\ar{r} & \tilde{J}^{1,0}\ar{r}{\tilde{\delta}^{1,0}} \ar{u} &
  \tilde{J}^{1,1} \ar{r}{\tilde{\delta}^{1,1}} \ar{u} & \tilde{J}^{1,2} \ar{u} \ar{r}{\tilde{\delta}^{1,2}} & \ldots\\ 
 {0}\ar{r} & \tilde{J}^{0,0}\ar{r}{\tilde{\delta}^{0,0}} \ar{u} &
 \tilde{J}^{0,1} \ar{r}{\tilde{\delta}^{0,1}} \ar{u} & \tilde{J}^{0,2} \ar{u} \ar{r}{\tilde{\delta}^{0,2}} & \ldots\\ {0}\ar{r} & \tilde{V}^0\ar{r}{\tilde{\delta}^0} \ar{u}{\tilde{\iota}^0} &
  \tilde{V}^1 \ar{r}{\tilde{\delta}^1} \ar{u}{\tilde{\iota}^1} & \tilde{V}^2 \ar{u}{\tilde{\iota}^2} \ar{r}{\tilde{\delta}^2} & \ldots\\
   & 0 \ar{u} &
    0 \ar{u} & 0 \ar{u} & 
 \end{tikzcd}
 \caption{} \label{Fig:7}
\end{figure}
\end{remark}

\begin{proposition}\label{prop:boundedness}
Assume that $\bk$ is a field with positive characteristic and $V$ be a finitely generated $\FI$-module. Then, \[\limsup_{n \ra \infty} \dim{H^t(\fS_n,V_n)} < \infty.\]
\end{proposition}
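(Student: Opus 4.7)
The plan is to combine the structural Theorem~\ref{thm:structure} with Shapiro's lemma and Nakaoka's classical stability theorem, via three successive reductions.

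\textbf{Step 1: Reduce to $\sharp$-filtered $\FI$-modules.} Theorem~\ref{thm:structure} supplies an exact sequence
\[ 0 \to V \to J^0 \to J^1 \to \cdots \to J^N \to 0 \]
of $\FI$-modules which is pointwise exact for $n \geq C$ (for some constant $C$) and whose terms $J^i$ are $\sharp$-filtered. Setting $C^0 = V$ and $C^{i} = \ker(J^{i} \to J^{i+1})$ for $1 \leq i \leq N$, with $C^{N+1} = 0$, I would break the sequence into short exact sequences $0 \to C^i \to J^i \to C^{i+1} \to 0$ valid in degrees $n \geq C$, and iterate the associated long exact sequences in $\fS_n$-cohomology to obtain
\[ \dim_{\bk} H^t(\fS_n, V_n) \leq \sum_{i=0}^{\min(t, N)} \dim_{\bk} H^{t-i}(\fS_n, J^i_n) \qquad (n \geq C). \]
It therefore suffices to bound $\dim_{\bk} H^t(\fS_n, J_n)$ uniformly in $n$ for each $\sharp$-filtered $\FI$-module $J$.

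\textbf{Step 2: Reduce to modules of the form $M(W)$.} By Definition~\ref{def:filteredfimod}, a $\sharp$-filtered $J$ admits a finite filtration whose successive quotients are of the form $M(W_r)$ for some finite-dimensional $\bk[\fS_{m_r}]$-modules $W_r$. An induction on the length of this filtration, again via long exact sequences in cohomology, reduces the problem to bounding $\dim_{\bk} H^t(\fS_n, M(W)_n)$ in $n$ for any fixed $\bk[\fS_m]$-module $W$.

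\textbf{Step 3: Treat $M(W)$ via Shapiro, Künneth and Nakaoka.} By Definition~\ref{def:MW}, $M(W)_n = \Ind_{\fS_m \times \fS_{n-m}}^{\fS_n}(W \boxtimes \bk)$. Since $\fS_m \times \fS_{n-m}$ has finite index in $\fS_n$, Shapiro's lemma identifies
\[ H^t(\fS_n, M(W)_n) \cong H^t(\fS_m \times \fS_{n-m}, W \boxtimes \bk), \]
and the Künneth formula, clean over a field, further identifies this with
\[ \bigoplus_{i+j=t} H^i(\fS_m, W) \otimes_{\bk} H^j(\fS_{n-m}, \bk). \]
Since $m$ and $W$ are fixed, each $H^i(\fS_m, W)$ contributes a dimension independent of $n$, while Nakaoka's theorem (equation~\eqref{eqn:nakaoka}) guarantees that $\dim_{\bk} H^j(\fS_{n-m}, \bk)$ is eventually constant in $n$ and hence bounded. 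Assembling the three steps yields the desired uniform bound.

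I do not anticipate any serious obstacle: once Theorem~\ref{thm:structure} is available, each step is routine. The only point requiring minor care is to remain within the range $n \geq C$ where the resolution is exact, but this is irrelevant for a $\limsup$ as $n \to \infty$.
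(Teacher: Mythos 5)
Your proposal is correct and follows essentially the same three-step reduction as the paper's own proof: reduce to $\sharp$-filtered $\FI$-modules via a resolution, then to $M(W)$ by induction on filtration length, then conclude with Shapiro's lemma, the K\"unneth formula, and Nakaoka's stability theorem. The only (minor) organizational difference is in Step~1: you invoke the full finite $\sharp$-filtered resolution from Theorem~\ref{thm:structure} and iterate through the short exact sequences $0 \to C^i \to J^i \to C^{i+1} \to 0$, whereas the paper uses the single short exact sequence $0 \to V \to \cS_{+a}V \to Q \to 0$ (exact in high degree, with $\cS_{+a}V$ $\sharp$-filtered and $Q$ generated in strictly lower degree) and then runs an induction on degree — which is just the inductive step occurring inside the proof of Theorem~\ref{thm:structure}. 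Both routes are valid and lead to the same base case.
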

\begin{proof}
As in the proof of Theorem~\ref{thm:structure} the sequence, \[0 \to V \to \cS_{+a}V \to Q \to 0
\] is exact in high enough degree, $\cS_{+a} V$ is $\sharp$-filtered and $Q$ is generated in lower degree. Hence by induction on the degree, it is enough to show the result for $\sharp$-filtered $\FI$-modules. By induction on the length of the $\sharp$-filtration, it is enough to show the result for $V=M(W)$, where $W$ is a finitely generated $\bk[\fS_m]$-module. By the Shapiro's lemma and the K{\"u}nneth formula we have, \[
H^t(\fS_n, M(W)_n) = H^t(\fS_m \times \fS_{n-m}, W \boxtimes \bk) = \bigoplus_{a+b=t} H^a(\fS_m, W)\otimes H^b(\fS_{n-m}, \bk).\] Now it is proved in \cite{N} that $H^b(\fS_n , \bk) \cong H^b(\fS_{n-1}, \bk)$ if $n>2 b$. Hence $\dim{H^b(\fS_{n-m}, \bk)}$ is bounded in $n$, completing the proof. \end{proof}


\section{Periodicity of invariants} \label{section:periodicityofinv}

In this section we prove our main results on the periodicity of the cohomology groups $H^t(\fS_n ,V_n)$ for a finitely generated $\FI$-module $V$. By Theorem~\ref{thm:structure}, $V$ admits a resolution with $\sharp$-filtered $\FI$-module. So we can expect that a spectral sequence argument should reduce the problem to showing the periodicity when $V$ is a $\sharp$-filtered $\FI$-module. This is discussed in detail in \S \ref{subsection:themainthmintro}. With the notations of the aforementioned section, let $l \in \Hom_{\bk[\fS_n]}(\mathcal{B}_t(\fS_n), V^d_n)$ be a zero cycle (a classical cocycle) then it admits "a nice lift" $\tilde{l} \in \Hom_{\bk[\fS_n]}(\mathcal{B}_t(\fS_n), \tilde{V}^d_n)$ that is "periodic". \S \ref{subsection:nicelift} provides the nice lift construction, \S \ref{subsection:combinatorics} defines "periodic" and \S \ref{subsection:periodicityofnicelift} proves that "a nice lift" is "periodic". 

Our main technical result is the following: if $\tilde{l} \in \Hom_{\bk[\fS_n]}(\mathcal{B}_t(\fS_n), \tilde{V}^d_n)$ is "periodic" and $\Pi^d_n \circ \tilde{l} = 0$ then $\Pi^d_{n-a} \circ R_{t}^{n,a}(\tilde{l}) =0$ (see Definition~\ref{def:Rmap}) as long as $a$ is divisible by a high enough power of $p$. This is proved in Claim~\ref{claim:kernel} of \S \ref{subsection:periodicityofnicelift}.

 \S \ref{subsection:well-definedness} and \ref{subsection:mainfiltered} use the technical result above to show periodicity of the cohomology groups $H^t(\fS_n ,V_n^d)$ for a  $\sharp$-filtered $\FI$-module $V^d$. \S \ref{subsection:proofofmain} contains a spectral sequence argument that proves our main theorem for finitely generated $\FI$-modules and \S \ref{subsection:generalization} contains its generalization to complex of finitely generated $\FI$-modules.


Throughout $\bk$ is assumed to be a field of characteristic $p$. We start with some preliminary definitions and notations.

\subsection{Preliminaries}
\label{subsection:preliminaries}
 Let $\mathcal{B}(\fS_n)_{\bullet} \ra \mathbb{Z} \ra 0$ be the bar resolution of $\fS_n$ over $\bk$ and for $m \leq n$, $D_{m,n}$ and $\gamma_f$ be as in Definition~\ref{def:Dandcosetrepresentatives}.

\begin{definition}[{\bf The trace maps $\tr$ and $\Tr$}]
\label{def:trace}

We have a $\bk[\fS_m \times \fS_{n-m}]$-module isomorphism $\bk[\fS_n] \cong \bigoplus_{f \in D_{m,n}} \bk[\fS_{m} \times \fS_{n-m}]\gamma_{f}^{-1}$. The map \[\tr^m: \bk[\fS_n] \ra \bk[\fS_m \times \fS_{n-m}] \] is the $\bk[\fS_m \times \fS_{n-m}]$-module homomorphism defined by $\gamma_{f}^{-1} \mapsto 1$, $f \in D_{m,n}$. It induces the natural $\bk[\fS_m \times \fS_{n-m}]$-module homomorphism \[\Tr_{t}^{m}: \mathcal{B}_{t}(\fS_n) \ra \mathcal{B}_{t}(\fS_{m} \times \fS_{n-m})\] defined by $(\sigma_0, \sigma_1, \ldots, \sigma_t) \mapsto (\tr^{m}_n(\sigma_0), \tr^{m}_n(\sigma_1), \ldots, \tr^{m}_n(\sigma_t))$.
\end{definition}

We denote the natural inclusion $\mathcal{B}_{t}(\fS_{m} \times \fS_{n-m}) \ra \mathcal{B}_{t}(\fS_n)$ by $\iota^{m}_{t}$. Now note that we have a commutative ladder as in Figure~\ref{Fig:8} that extends the identity map $\id : \mathbb{Z} \ra \mathbb{Z}$. 
\begin{figure}[h]
\centering
\begin{tikzcd}{\ldots}\ar{r} & {\mathcal{B}_{1}(\fS_n)}\ar{r} \ar{d}{ \iota_1^{m} \circ \Tr_1^{m}} &
{\mathcal{B}_{0}(\fS_n) }\ar{r} \ar{d}{ \iota_0^{m} \circ \Tr_0^{m}} & \mathbb{Z} \ar{d}{\id} \ar{r} & 0\\ {\ldots}\ar{r} & {\mathcal{B}_{1}(\fS_n)}\ar{r} &
{\mathcal{B}_{0}(\fS_n) }\ar{r} & \mathbb{Z} \ar{r} & 0
\end{tikzcd}
\caption{} \label{Fig:8}
\end{figure}
This implies that there is a commutative ladder with $\id_t^{m} - \iota_t^{m} \circ \Tr_t^{m}$ as vertical maps, that extend the zero map $0 : \mathbb{Z} \ra \mathbb{Z}$. Thus there exist $\fS_m \times \fS_{n-m}$-equivariant homotopy maps $h_t^{m} \colon  \mathcal{B}_t(\fS_n) \to \mathcal{B}_{t+1}(\fS_n)$, $t \geq -1$ satisfying $\id_t^{m} - \iota_t^{m} \circ \Tr_t^{m} = h_{t-1}^{m} \circ \partial_t + \partial_{t+1} \circ h_t^{m} $. 

Given $\omega : \{0, 1, \ldots, t+1\} \ra \{0, 1, \ldots, t\}$ and $\epsilon: \{0, 1, \ldots, t+1\} \ra \{0,1\}$, we define an $\fS_m \times \fS_{n-m}$-equivariant map $h_{t,\omega, \epsilon}: \mathcal{B}_{t}(\fS_n) \ra \mathcal{B}_{t+1}(\fS_n)$ by \[(\sigma_0, \sigma_1, \ldots, \sigma_t) \mapsto ((\tr^m)^{\epsilon(0)}(\sigma_{\omega(0)}), (\tr^m)^{\epsilon(1)}(\sigma_{\omega(1)}), \ldots , (\tr^m)^{\epsilon(t+1)}(\sigma_{\omega(t+1)}) )\]  where $(\tr^m)^0$ is the identity map. The proof of the following lemma is classical and pointed out to us by Thomas Church. The homotopy map constructed in the proof is quite explicit  but we still emphasize the description in terms of $h_{t, \omega, \epsilon}$ because it is useful in understanding the proof of Claim~\ref{claim:one} later.

\begin{lemma} \label{lemma:homotopy}
There are $\omega_i$, $\epsilon_i$ and $a_i \in \bk$, $0 \leq i \leq t$ (all the quantities here are independent of $n$) such that $h_t = \sum_{i=0}^{t} a_i h_{t,\omega_i,\epsilon_i}$.
\end{lemma}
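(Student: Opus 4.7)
The plan is to exhibit an explicit chain homotopy of classical ``prism'' type whose summands visibly fit the template $h_{t,\omega,\epsilon}$ with data independent of $n$. Set $U \coloneq \iota\circ\tr^m \colon \bk[\fS_n]\to\bk[\fS_n]$. Since $\tr^m$ is $(\fS_m\times\fS_{n-m})$-equivariant and augmentation-preserving, so is $U$, and the coordinate-wise extension $U_\bullet$ is a chain endomorphism of $\mathcal{B}_\bullet(\fS_n)$ lifting $\id_{\mathbb{Z}}$; by construction it coincides with $\iota^m_\bullet\circ\Tr^m_\bullet$ in each degree. The standard comparison theorem for projective resolutions then already forces the existence of some $(\fS_m\times\fS_{n-m})$-equivariant chain homotopy from $\id_\bullet$ to $U_\bullet$; the work is to produce one of the required form.

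The plan is to take the simplicial prism
\[
h_t^m(\sigma_0,\ldots,\sigma_t) \;\coloneq\; \sum_{i=0}^{t} a_i\, \bigl(U\sigma_0,\,\ldots,\,U\sigma_i,\,\sigma_i,\,\sigma_{i+1},\,\ldots,\,\sigma_t\bigr),
\]
with coefficients $a_i\in\{\pm 1\}$ dictated by the boundary sign convention. Each summand lies in $\mathcal{B}_{t+1}(\fS_n)$ and is precisely $a_i\, h_{t,\omega_i,\epsilon_i}$ for
\[
\omega_i(j) = \begin{cases} j, & 0\le j\le i,\\ i, & j=i+1,\\ j-1, & i+2\le j\le t+1,\end{cases}
\qquad
\epsilon_i(j) = \begin{cases} 1, & 0\le j\le i,\\ 0, & i+1\le j\le t+1.\end{cases}
\]
The triple $(\omega_i,\epsilon_i,a_i)$ depends only on $i$ and $t$, not on $n$, and $(\fS_m\times\fS_{n-m})$-equivariance of each summand is immediate from equivariance of $U$. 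This already realizes $h_t^m$ in the claimed form $\sum_{i=0}^t a_i\, h_{t,\omega_i,\epsilon_i}$.

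What remains is to verify the homotopy identity $\id - \iota^m\circ\Tr^m = h^m_{t-1}\partial_t + \partial_{t+1}h^m_t$. This is the usual prism calculation: expanding $\partial_{t+1}$ on the $i$-th summand splits the resulting deletions into \emph{diagonal} ones, which remove one of the two adjacent copies corresponding to $\sigma_i$ at output-positions $i$ or $i+1$, and \emph{off-diagonal} ones, which remove any other position. The diagonal terms telescope across $i$ to collapse to $\id_t - U_t = \id_t - \iota^m_t\circ\Tr^m_t$. The off-diagonal contributions, after re-indexing with $k=j-1$ or $l=i-1$ as appropriate, match $-h^m_{t-1}\partial_t$ term by term and cancel. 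The only real obstacle is the indexed sign/reindex bookkeeping in this last step; conceptually nothing is at stake, since the prism formula is a well-known explicit witness for chain homotopies between simplicially defined maps, and the whole point of writing it out is to record that each of its summands already belongs to the combinatorial template $h_{t,\omega,\epsilon}$ with $n$-independent data $(\omega_i,\epsilon_i,a_i)$.
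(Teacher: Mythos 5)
Your proposal is correct and is essentially identical to the paper's: the paper exhibits the same explicit prism homotopy $h_t(\sigma_0,\ldots,\sigma_t)=\sum_{i=0}^t(-1)^i(\tr\sigma_0,\ldots,\tr\sigma_i,\sigma_i,\ldots,\sigma_t)$ and leaves the verification that it is a chain homotopy to the reader, exactly as you do (your $U\sigma_j$ is the paper's $\tr\sigma_j$ with the inclusion made explicit, and $a_i=(-1)^i$). You additionally spell out the $(\omega_i,\epsilon_i)$ data and sketch the telescoping, which is more detail than the paper's one-line ``one may check,'' but the underlying argument is the same.
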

\begin{proof}
One may check that \[h_t(\sigma_0, \sigma_1, \ldots, \sigma_t)= \sum_{i=0}^{t} (-1)^i (\tr \sigma_0, \tr \sigma_1, \dots, \tr \sigma_i, \sigma_i, \sigma_{i+1}, \ldots, \sigma_t)\] is an explicit description of the homotopy map.
\end{proof}

\begin{remark} \label{remark:compatibility} When we need to be more precise we add a subscript and write $\tr^m_n$, $\Tr_{t,n}^{m}$, $h_{t,n}^{m}$ and $\iota^m_{t,n}$. These maps satisfy the following compatibility conditions:
\begin{align}
\tr_{n-a}^{m} &= \tr_{n}^{m} \mid _{\bk[\fS_{n-a}]} \nonumber \\
\Tr_{t,n-a}^{m} &= \Tr_{t,n}^{m} \mid _{\mathcal{B}_{t}(\fS_{n-a})} \nonumber \\
h_{t,n-a}^{m} &= h_{t,n}^{m} \mid_{\mathcal{B}_{t}(\fS_{n-a})} \nonumber \\
\iota_{t,n-a}^{m} &= \iota_{t,n}^{m} \mid_{\mathcal{B}_{t}(\fS_{n-a})} \nonumber \qedhere
\end{align}
\end{remark}

The following maps are essential to describe the nice lift construction.

\begin{definition}[{\bf The maps $\downarrow$ and $\uparrow$}]
For any $\bk[\fS_{m}]$-module $W$ and $\bk[\fS_n]$-module $P$, we denote the natural restriction isomorphism $\Hom_{\fS_{n}}(P, M(W)_n) \cong \Hom_{\fS_{m} \times \fS_{n-m}}(P, W \boxtimes \bk)$ with $\downarrow_{m}$ and its inverse (the adjunction) with $\uparrow_m$. A concrete description of these maps is as follows: Recall that $M(W)_n = \Ind_{\fS_m \times \fS_{n-m}}^{\fS_n} W \boxtimes \bk$. Thus any element $x\in M(W)_n$ may be written uniquely as $\sum_{f \in D_{m,n} }\gamma_f \otimes a_f$, $a_f \in W$ (here we identify $W \boxtimes \bk$ with $W$ for convenience). This implies that any $F \in \Hom_{\fS_{n}}(P, M(W)_n)$ has a unique description of the form $F = \sum_{f \in D_{m,n} }\gamma_f \otimes F_f$ for some maps $F_f : P \ra W$. We define the map $\downarrow_m$ by \[F \downarrow_m \coloneq F_{[m]}, \qquad \text{for any } F \in \Hom_{\fS_{n}}(P, M(W)_n).\] We also have a relation $F_f(p)=F_{[m]}(\gamma_f^{-1}p)$ for each $f \in D_{m,n}$ that let us recover $F$ from $ F \downarrow_m$. The map $\uparrow_m$ is therefore given by \[(H \uparrow_m) (p) = \sum_{f \in D_{m,n} }\gamma_f \otimes H(\gamma_f^{-1}p), \qquad \text{for any }H \in \Hom_{\fS_{m} \times \fS_{n-m}}(P, W).\] 
When we need to be more precise we add superscripts and write $\downarrow^n_m$ and $\uparrow^n_m$.
\end{definition}

\begin{definition}[{\bf The transfer map $T$}]
\label{def:transfer}
Let $V$ be an $\FI$-module and $m \leq n$. We define the natural $\fS_n$-equivariant map \[T^{V,m}_n: M(V_{m})_n \ra V_n\] by \[\sum_{f \in D_{m_r, n}} \gamma_f \otimes a_f \mapsto \sum_{f \in D_{m, n}} f_{\star}(a_f).\] This is the degree $n$ piece of the natural map $T^{V,m} \colon M(V_m) \to V_n$ of $\FI$-modules. In particular when $0 \leq m_i \leq m_r \leq n$ the map \[T^{M(m_i),m_r}_n: M(M(m_i)_{m_r})_n \ra M(m_i)_n\] is given by \[\sum_{f \in D_{m_r, n}} (\gamma_f \otimes \sum_{g \in D_{m_i, m_r}} \gamma_g \otimes a_{f,g}) \mapsto \sum_{f \in D_{m_r, n}} \sum_{g \in D_{m_i, m_r}} \gamma_{f \circ g} \otimes a_{f,g}. \qedhere \] 
\end{definition}

\subsection{The nice lift construction} \label{subsection:nicelift}

As in \S\ref{subsection:mainfilteredintro}, consider a $\sharp$-filtered $\FI$-module $\Pi: \tilde{V} \coloneq \bigoplus_{i=1}^{d} M(m_i) \ra V$ over $\bk$ of length $d$. Let \[0 = V^0 \subset V^1 \subset \ldots \subset V^d=V\] be the $\sharp$-filtration induced by $\Pi$. For each $1 \leq r \leq d$, the map $\Pi^r: \tilde{V}^r \coloneq \bigoplus_{i=1}^{r} M(m_i) \ra V^r$ obtained by restricting $\Pi$ to $\bigoplus_{i=1}^{r} M(m_i)$ is a cover of $V^r$ and gives $V^r$ a $\sharp$-filtered $\FI$-module structure. Assume that $\frac{V^r}{V^{r-1}} \cong M(W_r)$ for some singly generated $\bk[\fS_{m_r}]$-module $W_r$. We have natural projection maps $\psi^{r}: V^r \ra M(W_r)$, $\pr^{r}: \bigoplus_{i=1}^{d} M(m_i) \ra M(m_r)$, $\pr^{\leq r}: \bigoplus_{i=1}^{d} M(m_i) \ra \bigoplus_{i=1}^{r} M(m_i)$ and $\pi^{r}: M(m_r) \ra M(W_r)$. Henceforth $T^{i,r}_n$ stands for the transfer map $T^{M(m_i), m_r}_n$ (see Definition~\ref{def:transfer}). We have the following definition.

\begin{definition}
\label{def:nicelift}
Fix an $n \geq 0$. Let $z^d \in \Hom_{\fS_n}(\mathcal{B}_{t+1}(\fS_n), \bigoplus_{i=1}^{d} M(m_i)_n)$ and $l^d \in \Hom_{\fS_{n}}(\mathcal{B}_{t}(\fS_n), V_n^d)$ be a twisted $z^d$-cycle (see Definition~\ref{def:twistedcyles}). A {\bf nice lift} of a twisted $z^d$-cycle $l^d$ is the data of an element \[\tilde{l}^d \in \Hom_{\fS_{n}}(\mathcal{B}_{t}(\fS_n), \tilde{V}_n^d)\] satisfying $ \Pi^d_n \circ \tilde{l}^d \equiv l^d$ (the two elements are equal up to a coboundary) together with the data of the following quantities (see Figure~\ref{Fig:13}) and compatibility conditions:
\begin{enumerate}
\item twists $z^{r} \in \Hom_{\fS_n}(\mathcal{B}_{t+1}(\fS_n), \bigoplus_{i=1}^{r} M(m_i)_n)$ for each $r \in [d-1]$;
\item twisted $z^{r}$-cycles $l^{r} \in \Hom_{\fS_{n}}(\mathcal{B}_{t}(\fS_n), V_n^{r})$ for each $r \in [d-1]$;
\item elements $a^r \in \Hom_{\fS_{m_r} \times \fS_{n-m_r}}(\mathcal{B}_{t}(\fS_{m_r} \times \fS_{n-m_r}), \bk[\fS_{m_r}])$ for each $r \in [d]$ such that \begin{equation}
\pr_n^r \circ \tilde{l}^r\coloneq (a^r \circ \Tr_t^{m_r} + (\pr_{n}^r \circ z^r) \downarrow_{m_r} \circ h_t^{m_r}) \uparrow_{m_r} \label{eqn:lrzr}
\end{equation} where $\tilde{l}^r \coloneq \pr_n^{\leq r} \circ \tilde{l}^d$ (in the special case $t=-1$, $\tilde{l}^d=0$);

\item elements $x^r \in \Hom_{\fS_{m_r} \times \fS_{n-m_r}}(\mathcal{B}_{t+1}(\fS_n),\ker \Pi_{m_r}^r)$ for each $r \in[d]$ such that \[\pr_{m_r}^r \circ x^r = w^r\] (see Figure~\ref{Fig:11}) where $w^r \coloneq (\pr_n^r \circ \tilde{l}^r \circ \partial_{t+1} - \pr^r_n \circ z^r) \downarrow_{m_r}$. We require $x^r$ to satisfy the following compatibility condition: if $e_j\coloneq(\sigma_{0,j},\sigma_{1,j}, \sigma_{2,j}, \ldots, \sigma_{t+1,j}) \in \mathcal{B}_{t+1}(\fS_n)$, $j \in \{1,2\}$ then \begin{equation}
x^r(e_1) = x^r(e_2) \label{eqn:xrwr}
\end{equation} whenever $w^r(e_1) = w^r(e_2)$ and $\tr^{m_r}(\sigma_{0,1}) = \tr^{m_r}(\sigma_{0,2})$;

\item elements $y^r \in \Hom_{\fS_n}(\mathcal{B}_{t+1}(\fS_n), \tilde{V}^r_n)$ for each $r \in [d]$  obtained by composing $x^r \uparrow_{m_r}$ with $T^{\tilde{V}, m_r}_n = (T^{i,r}_n)_{1 \leq i \leq r}$. This implies $\Pi^r_n \circ y^r = 0$ (see Step 3 in \S \ref{subsubsection:nicelift} and Figure~\ref{Fig:12}). Since $T^{r,r}_n$ is the identity map we also have \begin{equation}
\pr_n^r \circ y^r = w^r \uparrow_{m_r} = \pr_n^r \circ \tilde{l}^r \circ \partial_{t+1} - \pr^r_n \circ z^r; \label{eqn:random1}
\end{equation}
\item elements $c^r \in \Hom_{\fS_n} (\mathcal{B}_{t}(\fS_n), \bigoplus_{i=1}^{r-1} M(m_i)_n)$ for each $r \in [d]$ such that
\begin{equation}
(c^r \circ \partial_{t+1} (e), \pr_n^r \circ \tilde{l}^r \circ \partial_{t+1} (e)) - z^r(e) = 0 \mod{\ker \Pi_n^r} \label{eqn:random2}
\end{equation} for any $e \in \mathcal{B}_{t+1}(\fS_n)$ (in the special case $t=-1$, $c^r=0$);
\item \label{con6} $l^{r-1} = \Pi_n^{r-1} \circ c^r$;
\item $z^{r-1} = {'c}^r +  {''c}^r$ where ${'c}^r \coloneq \pr_n^{\leq (r-1)} \circ {y}^r$ and ${''c}^r \coloneq  \pr_n^{\leq (r-1) }\circ {z}^r$. \qedhere
\end{enumerate}
\end{definition}

Every twisted cycle admits a nice lift. Before proving the existence of a nice lift, we record a relation between the parameters $x^r$ and $y^r$ in Definition~\ref{def:nicelift} which will be used to prove the "periodicity" of the nice lift construction in \S\ref{subsection:periodicityofnicelift}.

\begin{lemma}
\label{lemma:xryr}
Let $e \in \mathcal{B}_{t+1}(\fS_n)$ and $i\leq r$. We have,
\[(\pr_n^i \circ y^r \downarrow_{m_i})(e)= \sum_{\substack {g \in D_{m_r,n} \\ [m_i]\subseteq g }} (\pr_{m_r}^i \circ x^r \downarrow_{m_i}^{m_r})(\gamma_{g}^{-1} e)\] In particular, both the expressions vanish if $m_i > m_r$.
\end{lemma}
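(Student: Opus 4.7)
The plan is to unfold the definition of $y^r$ and track coefficients through the chain of maps $\uparrow_{m_r}$, projection, and the transfer $T^{i,r}_n$. By definition $y^r = T^{\tilde V^r, m_r}_n \circ (x^r \uparrow_{m_r})$, and the explicit description of $\uparrow_{m_r}$ gives $(x^r \uparrow_{m_r})(e) = \sum_{f \in D_{m_r, n}} \gamma_f \otimes x^r(\gamma_f^{-1} e)$ in $M(\tilde V^r_{m_r})_n$. Projecting to the $i$-th summand and expanding $\pr^i_{m_r}(x^r(\gamma_f^{-1} e)) \in M(m_i)_{m_r}$ in its standard basis as $\sum_{h \in D_{m_i, m_r}} \gamma_h \otimes b_{f, h}$ with $b_{f, h} \in \bk[\fS_{m_i}]$, the explicit formula for $T^{i,r}_n$ from Definition~\ref{def:transfer} yields
\[\pr^i_n \circ y^r(e) = \sum_{f \in D_{m_r, n}} \sum_{h \in D_{m_i, m_r}} \gamma_{f \circ h} \otimes b_{f, h} \in M(m_i)_n.\]

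Next, I would apply $\downarrow_{m_i}$, which extracts the coefficient of $\gamma_{[m_i]}$ and thus selects exactly those pairs $(f, h)$ with $f \circ h = [m_i]$ as order-preserving injections $[m_i] \hookrightarrow [n]$. Such pairs are parametrized by subsets $f \in D_{m_r, n}$ containing $[m_i]$, with the second factor $h$ uniquely determined by $h(j) = f^{-1}(j)$. The crucial combinatorial observation is that this forces $h = [m_i]$: because $f$ is order-preserving and $\{1, \ldots, m_i\}$ are simultaneously the smallest $m_i$ elements of $[n]$ and contained in $f$, they must be precisely $f(1), \ldots, f(m_i)$, so $f(j) = j$ for $j \leq m_i$ and hence $h(j) = j$.

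With that identification, $b_{f, h} = b_{f, [m_i]}$, which by the definition of $\downarrow^{m_r}_{m_i}$ is exactly $(\pr^i_{m_r} \circ x^r \downarrow^{m_r}_{m_i})(\gamma_f^{-1} e)$, and relabeling the summation index $f$ as $g$ recovers the stated formula. For the ``in particular'' clause, if $m_i > m_r$ then $D_{m_i, m_r}$ is empty, so $M(m_i)_{m_r} = 0$, forcing $\pr^i_{m_r} \circ x^r = 0$; symmetrically no $f \in D_{m_r, n}$ of cardinality $m_r < m_i$ can contain $[m_i]$, so both sides vanish. The argument is routine bookkeeping, and the only step requiring real attention is the order-preservation observation that $[m_i] \subseteq f$ with $f$ order-preserving forces $f$ to fix $[m_i]$ pointwise, which is what collapses a nominally double-indexed sum into the claimed single-indexed one.
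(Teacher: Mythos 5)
Your proof is correct and takes essentially the same route as the paper's: expand $y^r = T^{\tilde V^r,m_r}_n \circ (x^r\uparrow_{m_r})$ via the explicit formulas for $\uparrow_{m_r}$ and the transfer to get a double sum over $D_{m_r,n}\times D_{m_i,m_r}$ indexed by compositions $\gamma_{f\circ h}$, then extract the $\gamma_{[m_i]}$-coefficient using the observation that $f\circ h=[m_i]$ forces $h=[m_i]$ and $[m_i]\subseteq f$. The paper packages the same calculation slightly differently (first writing $(\pr^i_{m_r}\circ x^r)\uparrow_{m_r}$ as an iterated expansion, then applying $T^{i,r}_n$, then extracting the coefficient), but the key combinatorial step and the resulting bookkeeping are identical.
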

\begin{proof}
By definition of the map $\uparrow$ we have, 
\begin{eqnarray}
(\pr_{m_r}^i \circ x^r \uparrow_{m_r})(e) &=& \sum_{g \in D_{m_r, n}} \gamma_g \otimes \big(\pr^i_{m_r} \circ x^r (\gamma_g^{-1}e)\big) \nonumber \\
&=& \sum_{g \in D_{m_r, n}} \gamma_g \otimes \big( \sum_{f \in D_{m_i, m_r}} \gamma_f \otimes (\pr^i_{m_r} \circ x^r \downarrow^{m_r}_{m_i}) (\gamma_f^{-1}\gamma_g^{-1}e)\big) \nonumber
\end{eqnarray} and hence by applying the transfer map we obtain,
\begin{eqnarray}
(\pr_n^i \circ y^r)(e)
&=& (T^{i,r}_n \circ (\pr_{m_r}^i \circ x^r \uparrow_{m_r}))(e) \nonumber \\
&=& \sum_{g \in D_{m_r,n}} \sum_{f \in D_{m_i, m_r}} \gamma_{g \circ f} \otimes (\pr^i_{m_r} \circ x^r \downarrow^{m_r}_{m_i}) (\gamma_f^{-1}\gamma_g^{-1}e)
\end{eqnarray}
The proof now follows by noting that the order preserving maps $g$ and $f$ satisfy $g \circ f = [m_i]$ if and only if $f=[m_i]$ and $[m_i] \subseteq g$.
\end{proof}

Next we describe how to construct a nice lift. This construction can be read after the proof of the main theorem in \S \ref{subsection:proofofmain}

\subsubsection{Construction of a nice lift of a twisted cycle} 
\label{subsubsection:nicelift}
We keep the notations of Definition~\ref{def:nicelift}.

The construction is by downward induction on $r$. Assume that we have constructed a twist $z^r \in \Hom_{\fS_n}(\mathcal{B}_{t+1}(\fS_n), \bigoplus_{i=1}^{r} M(m_i)_n)$ and a $z^r$-cycle $l^r \in \Hom_{\fS_{n}}(\mathcal{B}_{t}(\fS_n), V_n^r)$. The following steps ({\bf Step 1}--{\bf Step 4}) construct the corresponding quantities for $r-1$. The commutative diagram in Figure~\ref{Fig:9} summarizes the situation.

\begin{figure}[h]
\centering
\begin{tikzcd}[row sep=scriptsize, column sep=tiny]
{} &  & \mathcal{B}_{t+1}(\fS_n) \ar{dd}{\partial_{t+1}} \ar{rrdd}{z^r} & & & & & & \\
& & & & & & M(m_r)_n \ar{dd}{\pi^r_n}  & & \\
&  & \mathcal{B}_{t}(\fS_n) \ar{rrdd}{l^r} & & \bigoplus_{i=1}^{r} M(m_i)_n \ar{dd}{\Pi^r_n} \ar{rru}{\pr^r_n} & & & & \\
& & \bigoplus_{i=1}^{r-1} M(m_i)_n \ar{dd} \ar[crossing over]{rru} & & & & M(W_r)_n  & &\\
 & & & & V^r_n \ar{rru}{\psi^r_n} & & & &\\
& & V^{r-1}_n \ar{rru} & & & & & &
\end{tikzcd}

\caption{} \label{Fig:9}
\end{figure}

\begin{enumerate}
\item[{\bf Step 1.}] The first step of the construction is to analyze the projection $\psi^r_n \circ l^r$ of $l^r$. We start by post-composing the relation \[\id_t^{m_r} - \iota_t^{m_r} \circ \Tr_t^{m_r} = h_{t-1}^{m_r} \circ \partial_t + \partial_{t+1} \circ h_t^{m_r}\] with $(\psi^r_n \circ l^r) \downarrow_{m_r}$ to obtain: \[(\psi^r_n \circ l^r) \downarrow_{m_r} = (\psi^r_n \circ l^r) \downarrow_{m_r} \circ \ \iota_t^{m_r} \circ \Tr_t^{m_r} + (\psi^r_n \circ l^r) \downarrow_{m_r} \circ \ h_{t-1}^{m_r} \circ \partial_t + (\psi^r_n \circ l^r) \downarrow_{m_r} \circ \ \partial_{t+1} \circ h_t^{m_r}.\] This breaks $(\psi^r_n \circ l^r) \downarrow_{m_r}$ into three parts; the first factors through the trace map hence is "nice", the second is a boundary that we can get rid of while staying within the equivalence class and the third has a description in terms of $z^r$ because $l^r \circ \partial_{t+1} = \Pi^r_n \circ z^r$. Since the second term $(\psi^r_n \circ l^r) \downarrow_{m_r} \circ \ h_{t-1}^{m_r} \circ \partial_t$ is a boundary, it follows that the extension $((\psi^r_n \circ l^r) \downarrow_{m_r} \circ \ h_{t-1}^{m_r} \circ \partial_t )\uparrow_{m_r}$ is a boundary (because $\uparrow_{m_r}$ commutes with taking $\partial_t$). By projectivity of $\mathcal{B}_{t-1}(\fS_n)$, $((\psi^r_n \circ l^r) \downarrow_{m_r} \circ h_{t-1}^{m_r} \circ \partial_t )\uparrow_{m_r}$ can be lifted to a boundary $b^r \circ \partial_t \in \Hom_{\fS_{n}}(\mathcal{B}_{t}(\fS_n), V_n^r)$ (see Figure~\ref{Fig:10}). 

\begin{figure}[h]
\centering
\begin{tikzcd}[row sep=large]
\mathcal{B}_{t}(\fS_n) \ar{r}{\partial_t} & \mathcal{B}_{t-1}(\fS_n) \ar{d}{((\psi^r_n \circ l^r) \downarrow_{m_r} \circ h_{t-1}^{m_r}) \uparrow_{m_r}} \ar[swap]{dl}{b^r}\\
V^r_n \ar[swap]{r}{\psi^r_n} & M(W_r)_n \ar{r} & 0
\end{tikzcd}
\caption{} \label{Fig:10}
\end{figure}

Let ${'l}^r = l^r - b^r \circ \partial_t$. Clearly $'l^r \equiv l^r$ and we have \[(\psi^r_n \circ ('l^r)) \downarrow_{m_r} = (\psi^r_n \circ l^r) \downarrow_{m_r} \circ \iota_t^{m_r} \circ \Tr_t^{m_r} + (\psi^r_n \circ l^r) \downarrow_{m_r} \circ \partial_{t+1} \circ h_t^{m_r}.\] Let $a^r \in \Hom_{\fS_{m_r} \times \fS_{n-m_r}}(\mathcal{B}_{t}(\fS_{m_r} \times \fS_{n-m_r}), \bk[\fS_{m_r}])$ be a lift of $(\psi^r \circ l^r) \downarrow_{m_r} \circ \iota_t^{m_r}$. Then we have $(\psi^r \circ ('l^r)) \downarrow_{m_r} = \pi_{m_r}^ r \circ a^r \circ \Tr_t^{m_r} + (\pi_{n}^r \circ \pr_{n}^r \circ z^r) \downarrow_{m_r} \circ h_t^{m_r}$. We now define \begin{equation}
\pr_n^r \circ \tilde{l}^r\coloneq (a^r \circ \Tr_t^{m_r} + (\pr_{n}^r \circ z^r) \downarrow_{m_r} \circ h_t^{m_r}) \uparrow_{m_r}. \nonumber
\end{equation} Note that in the special case $t=-1$, $\pr_n^r \circ \tilde{l}^r=0$. 

To complete the description of $\tilde{l}^r$ we still need to define $\pr_n^i \circ \tilde{l}^r$ for $1 \leq i \leq r-1$. We do it by induction on $r$ in the following steps.

\item[{\bf Step 2.}] We construct the parameters $w^r$ and $x^r$ in this step. We define\[w^r \coloneq (\pr_n^r \circ \tilde{l}^r \circ \partial_{t+1} - \pr^r_n \circ z^r) \downarrow_{m_r}.\] It is clear from Step $1$ that $w^r$ maps to $\ker \pi_{m_r}^r$. Hence by projectivity of $\mathcal{B}_{t+1}(\fS_n)$, we may find $\fS_{m_r} \times \fS_{n-m_r}$-equivariant map $x^r: \mathcal{B}_{t+1}(\fS_n) \ra \ker \Pi_{m_r}^r$ such that $\pr_{m_r}^r \circ x^r = w^r$ (see Figure~\ref{Fig:11}).

\begin{figure}[h]
\centering
\begin{tikzcd}
{} & \ker \Pi_{m_r}^r \ar[hookrightarrow]{r} \ar[twoheadrightarrow]{d} & \bigoplus_{i=1}^{r} M(m_i)_{m_r} \ar[twoheadrightarrow]{d}\\
\mathcal{B}_{t+1}(\fS_n) \ar{ru}{x^r} \ar[swap]{r}{w^r} & \ker \pi_{m_r}^r \ar[hookrightarrow]{r} & M(m_r)_{m_r}
\end{tikzcd}
\caption{} \label{Fig:11}
\end{figure}

But for our proofs to go through, we require $x^r$ to be a particularly nice choice of lift of $w^r$. To make it precise, note that $G \coloneq \{(1,\sigma_1, \sigma_2, \ldots, \sigma_{t+1}) : \sigma_i \in \fS_n \forall i \in [t+1] \} $ is a basis of $\mathcal{B}_{t+1}(\fS_n)$ as a free $\fS_n$-module. Hence $\gamma_f^{-1}s$, $f\in D_{m_r,n}$, $s\in G$ forms a basis of $\mathcal{B}_{t+1}(\fS_n)$ as a free $\fS_{m_r} \times \fS_{n-m_r}$-module. So by freeness of $\mathcal{B}_{t+1}(\fS_n)$, we may assume that $x^r(\gamma_{f_1}^{-1}s_1) = x^r(\gamma_{f_2}^{-1}s_2)$ whenever $w^r(\gamma_{f_1}^{-1}s_1) = w^r(\gamma_{f_2}^{-1}s_2)$, $s_1,s_2\in G$. This allows us to say that if $e_j\coloneq(\sigma_{0,j},\sigma_{1,j}, \sigma_{2,j}, \ldots, \sigma_{t+1,j}) \in \mathcal{B}_{t+1}(\fS_n)$, $j \in \{1,2\}$ then \begin{equation}
x^r(e_1) = x^r(e_2) \nonumber
\end{equation} whenever $w^r(e_1) = w^r(e_2)$ and $\tr^{m_r}(\sigma_{0,1}) = \tr^{m_r}(\sigma_{0,2})$.

\item[{\bf Step 3.}] By extending and then  using the transfer we construct the parameter $y^r$ in this step. 

\begin{figure}[h]
\centering
\begin{tikzpicture}[commutative diagrams/every diagram]
\node (P0) at (90:2.8cm) {$\Hom_{\fS_{m_r} \times \fS_{n-m_r}}(\mathcal{B}_{t+1}(\fS_n), \tilde{V}^r_{m_r})$};
\node (P1) at (90+72:3.5cm) {$\Hom_{\fS_n}(\mathcal{B}_{t+1}(\fS_n), M(\tilde{V}^r_{m_r})_{n})$} ;
\node (P2) at (90+2*72:2.5cm) {\makebox[5ex][r]{$\Hom_{\fS_n}(\mathcal{B}_{t+1}(\fS_n), \tilde{V}^r_{n})$}};
\node (P3) at (90+3*72:2.5cm) {\makebox[5ex][l]{$\Hom_{\fS_n}(\mathcal{B}_{t+1}(\fS_n), M(m_r)_{n})$}};
\node (P4) at (90+4*72:3.5cm) {$\Hom_{\fS_{m_r} \times \fS_{n-m_r}}(\mathcal{B}_{t+1}(\fS_n), M(m_r)_{m_r})$};
\path[commutative diagrams/.cd, every arrow, every label]
(P0) edge node[swap] {$\uparrow_{m_r}$} (P1)
(P1) edge node[swap] {$T^{\tilde{V}, m_r}_n = (T^{i,r}_n)_{1\leq i \leq r}$} (P2)
(P2) edge node {$\pr^r_{n}$} (P3)
(P4) edge node {$\uparrow_{m_r}$} (P3)
(P0) edge node {$\pr^r_{m_r}$} (P4);
\end{tikzpicture}
\caption{} \label{Fig:12}
\end{figure}
Note that $x^r \uparrow_{m_r}$ maps to $M(\tilde{V}^r_{m_r})_n = \bigoplus_{i=1}^r M(M(m_i)_{m_r})_n$. Composing $x^r \uparrow_{m_r}$ with $T^{\tilde{V}, m_r}_n = (T^{i,r}_n)_{1 \leq i \leq r}$ we obtain a map $y^r : \mathcal{B}_{t+1}(\fS_n) \ra   \tilde{V}^r_n = \bigoplus_{i=1}^r M(m_i)_n$ (see commutative diagram in Figure~\ref{Fig:12}). We claim that $y^r$ has its image contained in $\ker \Pi_n^r \subset \tilde{V}^r_n$: note that by definition of transfer $y^r(e) = \sum_{g \in D_{m_r,n}} g_{\star}(x^r(\gamma_g^{-1} e))$ for any $e$ in $\mathcal{B}_{t+1}(\fS_n)$ and the claim follows because $x^r$ maps to $\ker \Pi^r_{m_r}$ 

Since $T^{r,r}_n$ is the identity map we have \begin{equation}
\pr_n^r \circ y^r = w^r \uparrow_{m_r} = \pr_n^r \circ \tilde{l}^r \circ \partial_{t+1} - \pr^r_n \circ z^r \nonumber
\end{equation}

\item[{\bf Step 4.}] We define parameters ${''l}^r$, $c^r$, ${'c}^r$, ${''c}^r$, $z^{r-1}$ and $ l^{r-1}$ in this step completing the construction by induction (see Figure~\ref{Fig:13}). By projectivity of $\mathcal{B}_{t}(\fS_n)$, there is a lift $''l^r : \mathcal{B}_{t}(\fS_n) \ra \bigoplus_{i=1}^r M(m_i)_n$ of $'l^r$ such that $\pr_n^r \circ (''l^r) = \pr_n^r \circ \tilde{l}^r$. Thus we have ${''l}^r = (c^r, \pr_n^r \circ \tilde{l}^r)$ where $c^r \in \Hom_{\fS_n} (\mathcal{B}_{t}(\fS_n), \bigoplus_{i=1}^{r-1} M(m_i)_n)$. Since ${'l}^r$ is a $z^r$ cycle, for any $e \in \mathcal{B}_{t+1}(\fS_n)$ we have,\[ (c^r \circ \partial_{t+1} (e), \pr_n^r \circ \tilde{l}^r \circ \partial_{t+1} (e)) - z^r(e) = 0 \mod{\ker \Pi_n^r}.\] Also, there exist $'c^r$,  ${''c}^r \in \Hom_{\fS_n} (\mathcal{B}_{t+1}(\fS_n), \bigoplus_{i=1}^{r-1} M(m_i)_n)$ defined by \[z^r = ({''c}^r, \pr_n^r \circ z^r)\] and \[y^r = ('c^r, \pr_n^r \circ \tilde{l}^r \circ \partial_{t+1} - \pr^r_n \circ z^r).\] It follows from the three equations above  that 
$$c^r \circ \partial_{t+1}(e) - {'c}^r(e) -{''c}^r(e) = 0 \mod{\ker \Pi_n^{r-1}}$$ Hence $l^{r-1} \coloneq \Pi_n^{r-1} \circ c^r$ is a $z^{r-1}$-cycle where $z^{r-1} \coloneq {'c}^r +{''c}^r$. By induction on $r$ there is a nice lift $\tilde{l}^{r-1}$ of $l^{r-1}$. We define $\tilde{l}^r= (\tilde{l}^{r-1}, \pr_n^r \circ \tilde{l}^r)$. This completes the construction of a nice lift $\tilde{l}^r$ of $l^r$.

\begin{figure}[h]
\centering
\begin{tikzcd}[row sep=normal, column sep=normal]
 \mathcal{B}_{t+1}(\fS_n) \ar{dd}{\partial_{t+1}} \ar{rrdd}{({''c}^r, \pr_n^r \circ z^r)}[swap]{z^r} \ar[controls={+(-2,0)and +(-1,0)}]{ddd}[swap]{{''c}^r}[swap, near end]{z^{r-1}}[swap, near start]{{'c}^r} \ar{rr}{{y}^r}[swap]{({'c}^r, w^r \uparrow_{m_r})} & & \ker \Pi^r_n \ar[hookrightarrow]{dd} & & & & \\
& & & & M(m_r)_n \ar{dd}{\pi^r_n}  & & \\
\mathcal{B}_{t}(\fS_n) \ar{rrdd}{{'l}^r} \ar{rr}{{''l}^r}[swap]{(c^r, \pr_n^r \circ \tilde{l}^r)} \ar{d}{c^r} & & \bigoplus_{i=1}^{r} M(m_i)_n \ar{dd}{\Pi^r_n} \ar{rru}{\pr^r_n} & & & & \\
\bigoplus_{i=1}^{r-1} M(m_i)_n \ar{dd} \ar[crossing over]{rru} & & & & M(W_r)_n  & &\\
& & V^r_n \ar{rru}{\psi^r_n} & & & &\\
V^{r-1}_n \ar{rru} & & & & & &
\end{tikzcd}

\caption{: This diagram does not commute.} \label{Fig:13}
\end{figure}
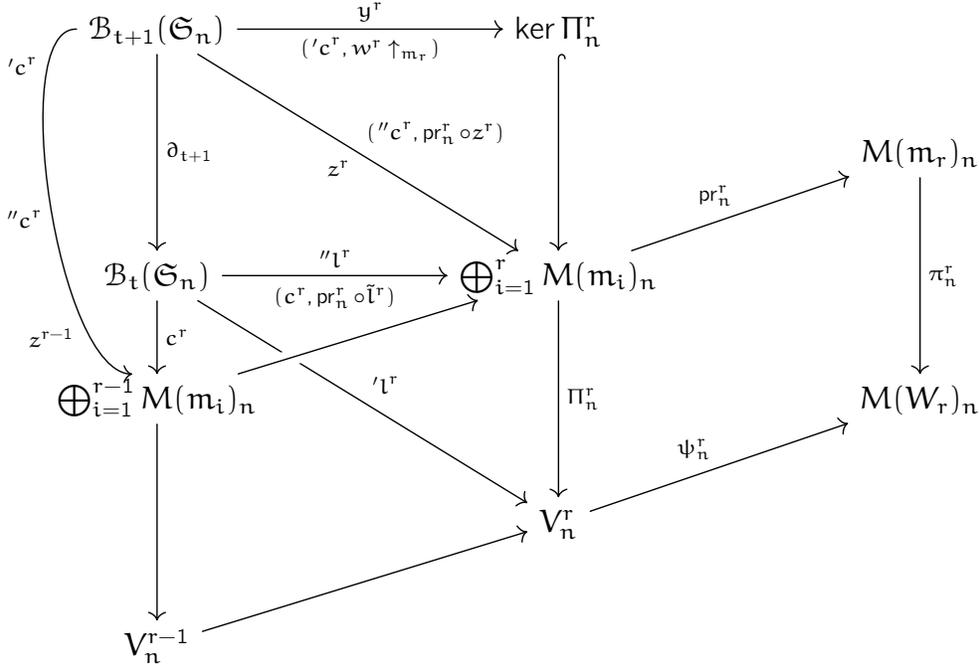
\end{enumerate}

\subsection{Combinatorial lemmas and the definition of periodicity}
\label{subsection:combinatorics}

The aim of \S\ref{subsection:combinatorics} is to provide the definition of  "periodicity" and the combinatorial motivation behind it. We start with some elementary definitions and lemmas.

\begin{definition}[{\bf The map $\beta$}]
We define the map $\beta^{m}: \fS_n \ra D_{m,n}$ by $\beta^{m}(\sigma)=g$ if $\sigma$ can be written as $ \sigma = h \gamma_g^{-1}$, $h \in \fS_{m} \times \fS_{n-m}$ and $g \in D_{m,n}$. In other words, \[\beta^{m} (\sigma) = g \iff \sigma = h \gamma_g^{-1} \iff g = \sigma^{-1} [m] \iff \tr^m(\sigma)^{-1} \sigma =\gamma^{-1}_g.\qedhere \]
\end{definition}

Note that $\beta^m$ satisfies compatibility relation analogous to the one mentioned in Remark~\ref{remark:compatibility} and has the following properties:
\begin{eqnarray}
\tr^m(ab)&=& \tr^m(a) \tr^m(\gamma_{\beta^m(a)}^{-1} b) \label{eqn:traceidentity}\\
\beta^m(ab) = & b^{-1} (\beta^m(a)) & = \beta^m(\gamma_{\beta^m(a)}^{-1} b) \label{eqn:betaproductidentity}\\
\tr^m(\sigma_1) = \tr^m(\sigma_2) &\Longrightarrow& \sigma_2^{-1} \sigma_1: \beta^m(\sigma_1)\ra \beta^m(\sigma_2) \text{ is order preserving.} \label{eqn:orderpreserving}
\end{eqnarray}

\begin{lemma}
\label{lemma:trace0}
Let $0 \leq n_1 \leq n_2 \leq \ldots \leq n_L$ be a finite chain. Assume that $n \geq n_L$ and consider subsets $f_{q,j} \in D_{n_q,n}$, $1 \leq q \leq L$, $j \in \{1,2\}$. Let $g_{q,j} = \beta^{n_q}(\eta_{q,j})$ where, \[\eta_{q,j} = \prod_{\xi = 1}^{q}\gamma_{f_{q-\xi+1,j}}^{-1} = \gamma_{f_{q,j}}^{-1} \ldots \gamma_{f_{1,j}}^{-1} \in \fS_n.\] Then,

\begin{enumerate}
\item For each $2 \leq q \leq L$, \[g_{q-1,j} \subseteq g_{q,j} \iff [n_{q-1}] \subseteq f_{q,j}\] additionally, we have \[\eta_{q-1, j} (g_{q,j} \setminus g_{q-1,j}) = f_{q,j} \setminus [n_{q-1}].\] 

\item Assume that we have filtrations $g_{1,j} \subseteq g_{2,j} \subseteq \ldots \subseteq  g_{L,j}$ for $j \in \{1,2\}$. Augment these by defining $g_{0,j} \coloneq \emptyset$ and $g_{L+1, j} \coloneq [n]$. Then, \[\tau\coloneq \eta_{L,2}^{-1}\eta_{L,1}: g_{L+1,1} \ra g_{L+1,2}\] is filtration preserving, that is, $\tau(g_{q,1}) = \tau(g_{q,2})$ for each $ 0 \leq q \leq L+1$. The restrictions to the successive differences, $\tau: g_{q,1} \setminus g_{q-1,1} \ra g_{q,2} \setminus g_{q-1,2}$ are order preserving for each $1 \leq q \leq L+1$. And $\tr^{n_L}(\eta_{L,j}) = (\alpha_j, \delta_j) \in \fS_{n_L} \times \fS_{n-n_L}$ satisfy \[\delta_j = \id_{\fS_{n-n_L}}, \qquad \text{for } j \in \{1,2\}.\]  Moreover, the following are equivalent.
\begin{enumerate}
\item[A.] $\tr^{n_L} (\eta_{L,1})= \tr^{n_L}( \eta_{L,2})$.
\item[B.] The unique order preserving bijection $\tau': g_{L,1} \ra g_{L,2}$ preserves the filtration, that is, $\tau'(g_{q,1}) = g_{q,2}$ for each $1 \leq q \leq L$.
\item[C.] $\tr^{n_q} (\eta_{q,1})= \tr^{n_q}( \eta_{q,2})$ for each $1 \leq q \leq L$.
\end{enumerate}
\end{enumerate}
\end{lemma}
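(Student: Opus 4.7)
The plan is to dispatch Part (1) by a direct computation and then treat Part (2) via an inductive order-preservation claim that drives both the statements about $\tau$ and the equivalence of (A), (B), (C).

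For Part (1), I apply $\eta_{q-1,j}$ to the defining identity $\eta_{q,j}(g_{q,j})=[n_q]$ and use the factorization $\eta_{q,j}=\gamma_{f_{q,j}}^{-1}\eta_{q-1,j}$ to obtain $\eta_{q-1,j}(g_{q,j})=f_{q,j}$. Combined with $\eta_{q-1,j}(g_{q-1,j})=[n_{q-1}]$ and bijectivity of $\eta_{q-1,j}$, the equivalence $g_{q-1,j}\subseteq g_{q,j}\iff[n_{q-1}]\subseteq f_{q,j}$ and the formula $\eta_{q-1,j}(g_{q,j}\setminus g_{q-1,j})=f_{q,j}\setminus[n_{q-1}]$ follow immediately.

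For Part (2), the first step is the observation that iterating Part (1) along the assumed filtration gives $[n_q]\subseteq f_{\xi,j}$ for all $\xi>q$. The defining order-preservation of $\gamma_{f_{\xi,j}}$ then forces $\gamma_{f_{\xi,j}}^{-1}$ to fix $[n_q]$ pointwise (an order-preserving self-bijection of $[n_q]$ is the identity). Consequently $\eta_{L,j}|_{g_{q,j}}=\eta_{q,j}|_{g_{q,j}}$ and $\eta_{L,j}(g_{q,j})=[n_q]$, which yields $\tau(g_{q,1})=g_{q,2}$ for $0\leq q\leq L+1$. I would then prove by induction on $q$ the strengthened statement that $\eta_{q,j}$ is order-preserving on each $g_{q',j}\setminus g_{q'-1,j}$ for $1\leq q'\leq q$ and on $[n]\setminus g_{q,j}$. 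The inductive step uses $\eta_{q,j}=\gamma_{f_{q,j}}^{-1}\eta_{q-1,j}$ and splits into the three cases $q'<q$, $q'=q$, and $[n]\setminus g_{q,j}$: in each case the inductive hypothesis provides order-preservation of $\eta_{q-1,j}$ on the piece, Part (1) identifies the image, and that image lies in a domain on which $\gamma_{f_{q,j}}^{-1}$ is order-preserving (or fixed pointwise, for $q'<q$). Applying this at $q=L$ and composing with the order-preserving inverse $\eta_{L,2}^{-1}|_{[n_q]\setminus[n_{q-1}]}$ shows $\tau$ is order-preserving on each successive difference. The identity $\delta_j=\mathrm{id}$ then drops out of the factorization $\eta_{L,j}=(\alpha_j,\delta_j)\gamma_{g_{L,j}}^{-1}$ together with the order-preservation of $\eta_{L,j}$ on $[n]\setminus g_{L,j}$.

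For the equivalences, (C)$\Rightarrow$(A) is immediate by taking $q=L$. For (A)$\Leftrightarrow$(B), the strengthened claim of Part (2) characterizes $\alpha_j$ as the unique permutation of $[n_L]$ whose restriction to $\gamma_{g_{L,j}}^{-1}(g_{q,j}\setminus g_{q-1,j})$ is the unique order-preserving bijection onto $[n_q]\setminus[n_{q-1}]$ for every $q$; hence $\alpha_1=\alpha_2$ is equivalent to $\gamma_{g_{L,1}}^{-1}(g_{q,1})=\gamma_{g_{L,2}}^{-1}(g_{q,2})$ for all $q$, which is precisely the filtration-preservation of $\tau'=\gamma_{g_{L,2}}\gamma_{g_{L,1}}^{-1}|_{g_{L,1}}$. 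Finally, for (B)$\Rightarrow$(C), if $\tau'$ preserves the filtration then its restriction to each $g_{q,1}$ is the unique order-preserving bijection onto $g_{q,2}$ and preserves the filtration up to level $q$; applying the already-proved (A)$\Leftrightarrow$(B) at level $q$ gives $\tr^{n_q}(\eta_{q,1})=\tr^{n_q}(\eta_{q,2})$. The main conceptual hurdle is choosing the right strengthened inductive hypothesis in Part (2) — namely, also carrying along order-preservation on the complement $[n]\setminus g_{q,j}$, which is exactly what allows the inductive step to close when the piece $g_{q,j}\setminus g_{q-1,j}$ is carved out of the complement of $g_{q-1,j}$.
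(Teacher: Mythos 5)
Your proof is correct and follows essentially the same route as the paper's: the same direct computation for Part (1) using $\eta_{q-1,j}\eta_{q,j}^{-1}=\gamma_{f_{q,j}}$, the same use of $[n_q]\subseteq f_{\xi,j}$ ($\xi>q$) to get filtration preservation, the same induction for order-preservation on pieces and complement (you package it as one strengthened inductive claim where the paper argues by induction on product length and then substitutes $q-1$ for $L$), the same derivation of $\delta_j=\mathrm{id}$, and equivalent arguments for $A\Leftrightarrow B\Leftrightarrow C$ via the partition characterization of $\alpha_j$. The structured inductive hypothesis is a modest organizational improvement but not a different method.
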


\begin{remark}
We can imagine $\eta_{L,j}$ in the following way: assume that balls labeled $1$ to $n$ are arranged in the increasing order in a row (which we think of as the identity permutation) and that the balls with labels in $g_{q,j} \setminus g_{q-1, j}$ are of color $C_q$ where $1 \leq q \leq (L+1)$. This way each ball gets a unique color. Lift all the balls with color $C_1$ and put them in the front (the leftmost position in the row) without changing the relative order of balls of color $C_1$. Now pick all the balls with color $C_2$ and put them just after all the balls of color $C_1$ keeping the relative order of balls of color $C_2$. Similarly repeat the steps for $q = 3,\ldots, L$. Now this defines a permutation which is $\eta_{L,j}$.

If we repeat the steps of lifting and placing just for the colors $C_1$ to $C_q$, we get the permutation $\eta_{q,j}$. Also note that $f_{q+1,j} \setminus [n_{q}]$ is the set of positions of balls of color $C_{q+1}$ in the row we get after following the steps of lifting and placing for colors $C_1$ to $C_q$.
\end{remark}

\begin{proof}[Proof of Lemma~\ref{lemma:trace0}]
{\bf Part 1.} By definition of $\beta$, we have $g_{q,j} = \eta_{q,j}^{-1} [n_q]$. This implies,
\begin{eqnarray}
g_{q-1,j} \subseteq g_{q,j} &\iff& \eta_{q-1,j}^{-1} [n_{q-1}] \subseteq \eta_{q,j}^{-1} [n_q] \nonumber\\
&\iff& [n_{q-1}] \subseteq \eta_{q-1,j}\ \eta_{q,j}^{-1} [n_q] \nonumber\\
&\iff& [n_{q-q}] \subseteq \gamma_{f_{q,j}} [n_q] \nonumber \\
&\iff& [n_{q-1}] \subseteq f_{q,j}  \nonumber
\end{eqnarray} The second assertion is then immediate.

{\bf Part 2.}  Note that $\eta_{q,j}^{-1}([n_q]) = g_{q,j}$. And by part 1, for each $k>0$, we have $[n_q] \subseteq f_{q+k,j}$ which implies $\gamma_{f_{q+k,j}}^{\pm 1}: [n_q] \ra [n_q]$ is the identity map. Thus,
\begin{eqnarray}
\tau(g_{q,1}) &=& \big(\eta_{q,2}^{-1} \gamma_{f_{q+1,2}}\ldots \gamma_{f_{L,2}}\big)\big(\gamma_{f_{L,1}}^{-1} \ldots \gamma_{f_{q+1,1}}^{-1} \eta_{q,1}\big)(g_{q,1}) \nonumber \\
&=& \big(\eta_{q,2}^{-1} \gamma_{f_{q+1,2}}\ldots \gamma_{f_{L,2}}\big)\big(\gamma_{f_{L,1}}^{-1} \ldots \gamma_{f_{q+1,1}}^{-1}\big)([n_q]) \nonumber \\
&=& \eta_{q,2}^{-1}([n_q]) \nonumber \\
&=& g_{q,2}. \nonumber
\end{eqnarray}
Hence $\tau$ preserves filtration. For the second assertion we first show that $\delta_j = id$. Note that it is equivalent to showing that $\eta_{L,j} : [n] \setminus g_{L,j} \ra [n] \setminus [n_L]$ is order preserving. By equation~\ref{eqn:betaproductidentity}, $\beta^{n_L} (\gamma_{f_{L,j}}^{-1} \ldots \gamma_{f_{2,j}}^{-1}) = \gamma_{f_{1,j}}^{-1} g_{L,j}$. Hence by induction on the length of the product \[\eta_{L,j} \gamma_{f_{1,j}}= \gamma_{f_{L,j}}^{-1} \ldots \gamma_{f_{2,j}}^{-1} : [n] \setminus \gamma_{f_{1,j}}^{-1} g_{L,j} \ra [n] \setminus [n_L]\] is order preserving. The assertion now follows by noting that $\gamma_{f_{1,j}}^{-1}: [n] \setminus g_{L,j} \ra [n] \setminus \gamma_{f_{1,j}}^{-1} g_{L,j}$ is order preserving (which is true because $f_{1,j} = g_{1,j} \subseteq g_{L,j}$).

Replacing $L$ by $q-1$, we see that $\eta_{q-1, j} : [n] \setminus g_{q-1,j} \ra [n] \setminus [n_{q-1}]$ is order preserving. Using part 1, we deduce that $\eta_{q-1, j} : g_{q,j} \setminus g_{q-1,j} \ra f_{q,j} \setminus [n_{q-1}]$ is order preserving. Also, $\gamma_{f_{q,j}} : f_{q,j} \setminus [n_{q-1}] \ra [n_q] \setminus [n_{q-1}]$ is order preserving and $\gamma_{f_{q+k,j}}^{\pm 1}$ is identity on $[n_q] \setminus [n_{q-1}]$ for $k>0$. Now  note that $\tau = \big(\eta_{q-1,2}^{-1} \gamma_{f_{q,2}}\ldots \gamma_{f_{L,2}}\big)\big(\gamma_{f_{L,1}}^{-1} \ldots \gamma_{f_{q,1}}^{-1} \eta_{q-1,1}\big)$ which, when restricted to $g_{q,j} \setminus g_{q-1,j}$, is a composition of order preserving functions. Thus $\tau$ is order preserving on $g_{q,j} \setminus g_{q-1,j}$.

$A \Longrightarrow B$: By equation~\ref{eqn:orderpreserving}, $\tau\coloneq \eta_{L,2}^{-1}\eta_{L,1}: g_{L,1} \ra g_{L,2}$ is order preserving and we have already shown that $\tau$ preserves filtration.

$B \Longrightarrow A$: $\tau'$ must agree with $\tau$ because $\tau$ preserves filtration and is order preserving on each successive difference. This implies that $\tau$ is order preserving. Now note that \[\tau = \gamma_{g_{L,2}} \alpha_2^{-1} \alpha_1 \gamma_{g_{L,1}}^{-1} : g_{L,1} \ra g_{L,2}.\] It follows that $\alpha_2^{-1} \alpha_1: [n_L] \ra [n_L]$ is order preserving. Thus $\alpha_2^{-1} \alpha_1 =id$ which shows that $\tr^{n_L} (\eta_{L,1})= \tr^{n_L}( \eta_{L,2})$.

It is now immediate that $A \iff B \iff C$.
\end{proof}

\begin{remark}
\label{remark:twodecompositions}
Let $n_L \leq m \leq n$. Keeping the notations and hypothesis of Lemma~\ref{lemma:trace0} above, for each $j \in \{1,2\}$ we have two natural decompositions of the set \[D_{m,n}' \coloneq \{f \in D_{m,n} : f \supseteq [n_L] \}.\] The first decomposition is given by \[D_{m,n}' = \prod_{\kappa \in \fS_m \times \fS_{n-m}}\{f \in D_{m,n}' : \tr^m(\gamma_f^{-1} \eta_{L,j}) = \kappa \} .\] Let $\Gamma^j$ be the set of sequences $(m_{\alpha})_{1 \leq \alpha \leq n_L+1}$ satisfying \[0 \leq m_{\alpha} \leq g_{L,j}(\alpha) - g_{L,j}(\alpha -1) -1, \qquad \text{and} \qquad \sum m_{\alpha} = m- n_L.\] Then the second decomposition is given by 
\begin{eqnarray}
D_{m,n}' &=& \prod_{(m_{\alpha}) \in \Gamma^j } \{f \in D_{m,n}' : |\beta^m(\gamma_f^{-1} \eta_{L,j}) \cap (g_{L,j}(\alpha-1), g_{L,j}(\alpha))| = m_{\alpha},\ 1 \leq \alpha \leq n_L+1  \} \nonumber \\
&\cong& \prod_{(m_{\alpha}) \in \Gamma^j } \prod_{1 \leq \alpha \leq n_L+1} D_{m_{\alpha}, n_{\alpha, j}} \nonumber
\end{eqnarray} where $n_{\alpha,j} = |(g_{L,j}(\alpha-1), g_{L,j}(\alpha))|$. The equivalence $A \iff B$ in Lemma~\ref{lemma:trace0} implies that the two decompositions are essentially the same, that is, there is an injection $\kappa_j : \Gamma^j \ra \fS_{m} \times \fS_{n-m}$ such that the set \[\{f \in D_{m,n}' : |\beta^m(\gamma_f^{-1} \eta_{L,j}) \cap (g_{L,j}(\alpha-1), g_{L,j}(\alpha))|=m_{\alpha},\ 1 \leq \alpha \leq n_L+1  \} \] is same as the set \[  \{f \in D_{m,n}' : \tr^m(\gamma_f^{-1} \eta_{L,j}) = \kappa_j((m_{\alpha})) \}\] and $\{g \in D_{m,n}^j : \tr^m(\gamma_g^{-1} \eta_{L,j}) = \kappa \}$ is nonempty if and only if $\kappa$ is in the image of $\kappa_j$. Note that the trace condition $\tr^{n_L} (\eta_{L,1})= \tr^{n_L}( \eta_{L,2})$ implies that $\kappa_1$ agrees with $\kappa_2$ on $\Gamma^1 \cap \Gamma^2$ and induces an isomorphism \[\kappa_1 = \kappa_2 : \Gamma^1 \cap \Gamma^2 \ra \im{\Gamma^1} \cap \im{\Gamma^2}.\qedhere \]
\end{remark}

\begin{definition}[{\bf The equivalence relation $\equiv^u_a$ on $D_{m,n}$}]
Let $u \geq 0$ and let $f_1 , f_2 \in D_{m,n}$, we define the equivalence relation $\equiv^u_a$ by $f_1 \equiv^u_a f_2$  if and only if $f_1 \cap [n-a]= f_2 \cap [n-a]$ and $f_1 (j) \equiv f_2(j) \mod{u}$ for $j \in [m]$. Note that $\equiv^{u_1}_a$ is a refinement of $\equiv^{u_2}_a$ if $u_2 \mid u_1$.
\end{definition}

The following two lemmas are essential in proving the "periodicity" of the nice lift construction. See the proofs of Claim~\ref{claim:one} and Claim~\ref{claim:two} for a justification of the names assigned to them.

\begin{lemma}[{\bf First collision lemma}] \label{lemma:collisiontwo} Assume $n \geq a \geq 0$ and $u >0$.  Let $\delta \in D_{m,n}/\equiv^u_a$ be any equivalence class such that $\delta \nsubseteq D_{m,n-a}$. Then $|\delta| = \binom{\lfloor a/u \rfloor -s + b}{b}$ for some $0 < b \leq m$ and $0 \leq s \leq b$. Moreover, if $u p^{v_p(b!)+1} \mid a $ then $s>0$ and, \[|\delta| \equiv 0 \mod{p}.\] Here $v_p({-})$ is the $p$-adic valuation.
\end{lemma}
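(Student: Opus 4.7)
The plan is to parameterize the class $\delta$ explicitly, reduce the count to a stars-and-bars problem, and then exploit $p$-adic divisibility of a single factor in the resulting binomial coefficient.

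First I would pick any representative $f \in \delta$. Since $\equiv_a^u$ fixes both $f \cap [n-a]$ and each residue $f(j) \bmod u$, the class $\delta$ is completely determined by the bottom part $F_B := f \cap [n-a]$ together with the residues $\rho_j := f(m-b+j) \bmod u$, where $b$ is the number of entries of $f$ lying in $(n-a, n]$. The integer $b$ is a well-defined invariant of $\delta$ and is positive by hypothesis. Because $f$ is order-preserving, the bottom entries are forced to list $F_B$ in order, and the top entries $f(m-b+1) < \cdots < f(m)$ form a strictly increasing sequence in $(n-a, n]$ with the prescribed residues. Hence $|\delta|$ equals the number of such sequences $v_1 < \cdots < v_b$.

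Next I would introduce a change of variables that decouples the constraints. Write each gap as $v_{j+1} - v_j = u g_j + \delta_j$, where $\delta_j \in \{1, \ldots, u\}$ is the unique representative of $(\rho_{j+1} - \rho_j) \bmod u$ in that range; then $v_{j+1} > v_j$ is automatic and equivalent to $g_j \geq 0$. Similarly write $v_1 = (n-a) + uy + \rho'_1$ with $\rho'_1 \in \{1, \ldots, u\}$ the analogous representative of $(\rho_1 - (n-a)) \bmod u$ and $y \geq 0$. The only remaining inequality, $v_b \leq n$, translates to $y + g_1 + \cdots + g_{b-1} \leq Q$, where $Q := \lfloor (a - \rho'_1 - \Delta)/u \rfloor$ and $\Delta := \sum_{j=1}^{b-1} \delta_j$. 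Stars and bars then gives $|\delta| = \binom{Q + b}{b}$. Setting $q := \lfloor a/u \rfloor$ and $s := q - Q$, the elementary bounds $\rho'_1 + \Delta \in [b, bu]$ yield $s \in [0, b]$, which proves the first assertion.

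For the moreover, suppose $u\,p^{v_p(b!) + 1} \mid a$. Dividing by $u$ gives $p^{v_p(b!) + 1} \mid q$. Because $u \mid a$ while $\rho'_1 + \Delta \geq 1$, we have $Q < q$, hence $s \geq 1$. In the numerator $\prod_{j=1}^{b} (q - s + j)$ of $\binom{q - s + b}{b}$, the factor at $j = s$ is exactly $q$, which alone contributes at least $v_p(b!) + 1$ factors of $p$. Dividing by $b!$ therefore leaves at least one factor of $p$ in $|\delta|$, as required.

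The only subtle point in carrying this out is verifying that the parameterization $(y, g_1, \ldots, g_{b-1})$ faithfully enumerates the valid sequences: one must confirm the change of variables is a bijection and that no additional constraint beyond $v_b \leq n$ is hiding (e.g., $v_1 > n - a$ is automatic since $\rho'_1 \geq 1$, and each intermediate strict inequality $v_{j+1} > v_j$ is captured by $g_j \geq 0$).
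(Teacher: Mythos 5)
Your proof is correct and, for the first assertion, follows the same route as the paper: both fix the bottom part $f\cap[n-a]$, parametrize the $b$ top entries by their successive gaps (with residues mod $u$ forced), and reduce $|\delta|$ to a stars-and-bars count of nonnegative solutions to a single linear inequality, landing on the same binomial coefficient $\binom{\lfloor a/u\rfloor - s + b}{b}$. (The paper phrases the parametrization via the lexicographically least representative $f$ and $o := \max f$; your $\rho'_1 + \Delta$ is exactly $o-(n-a)$, and the two definitions of $s$ agree.) Where you genuinely diverge is the ``moreover'' step: the paper invokes the classical fact that $\binom{x}{b}\bmod p$ is periodic in $x$ with period $p^{v_p(b!)+1}$ to reduce to $\binom{b-s}{b}=0$ for $0\le b-s<b$, whereas you argue directly by $p$-adic valuation, observing that the factor at $j=s$ in the numerator $\prod_{j=1}^{b}(q-s+j)$ is $q$ itself and already carries $v_p(b!)+1$ powers of $p$, so dividing by $b!$ leaves $p\mid|\delta|$. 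Your version is more self-contained and doesn't require the periodicity of binomial residues; the paper's is terser and reuses a standard fact that also appears elsewhere in the argument (the same periodicity is cited in Lemma~\ref{lemma:collision}). Both are valid.
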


\begin{proof}
We have $a= \lfloor a/u \rfloor u + t$ for some $0 \leq t \leq u-1$. By hypothesis, there exists a  $b>0$ such that for each $f \in \delta$ we have $|f \cap ([n] \setminus [n-a])| =b$. Note that $\delta$ has a natural lexicographic order on it. Assume that $f$ is the least element under this order and let $o = \max{f}$, that is, $o= \min_{g \in \delta} \max{g}$. Note that $o > n-a$ and by the division algorithm \[(s-1)u +t < o -(n-a) \leq s u +t\] for some $s \geq 0$ (clearly we may assume $s>0$ when $t=0$). Minimality of $f$ implies that $s \leq b$. Now note that $|\delta|$ equals the number of nonnegative integer solutions $(k_1,k_2, \ldots, k_b)$ satisfying the inequality \[(o - (n-a))+ \sum_{q=1}^{b}k_q u \leq a.\] It follows that $|\delta| = \binom{\lfloor a/u \rfloor -s + b}{b}$ and the second assertion now follows by noting that $b > b-s \geq 0$ and applying the classical result that $\binom{x}{b} \mod{p}$ is periodic in $x$ with period $p^{v_p(b!)+1}$.
\end{proof}

\begin{lemma}[{\bf Second collision lemma}] \label{lemma:collision} 
Let $0 \leq n_1 \leq n_2 \leq \ldots \leq n_L$ be a finite chain and $n \geq m \geq n_L$. Let $f_{q,j}$, $g_{q,j}$, and $\eta_{q,j}$ for $1 \leq q \leq L$, $j \in \{1,2\}$ be as defined in Lemma~\ref{lemma:trace0}. Assume that we have filtrations $g_{1,j} \subseteq g_{2,j} \subseteq \ldots \subseteq  g_{L,j}$ for $j \in \{1,2\}$ satisfying $\tr^{n_L} (\eta_{L,1})= \tr^{n_L}( \eta_{L,2})$ and $g_{L,1} \equiv^w_a g_{L,2}$. Let $\delta \in D_{m,n}/ \equiv^u_a$ be any equivalence class and $\kappa$ be any element of $\fS_{m} \times \fS_{n-m}$. Define the sets \[\Gamma_j(\delta, \kappa) = \{ f \in D_{m,n}: \beta^{m}(\gamma_{f}^{-1} \eta_{L,j}) \in \delta, \tr^{m}(\gamma_{f}^{-1} \eta_{L,j})=\kappa,  [n_L] \subseteq f \}, \qquad j \in \{1,2\}.\] If $u p^{\ddH(m, n_L)} \mid w$ (see equation~\ref{def:deltaH}) then, \[|\Gamma_1(\delta, \kappa)| \equiv |\Gamma_2(\delta, \kappa)| \mod{p}.\]
\end{lemma}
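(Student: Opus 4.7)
My plan is to parametrize $\Gamma_j(\delta,\kappa)$ via the target subsets $V := \eta_{L,j}^{-1}(f) = \beta^m(\gamma_f^{-1}\eta_{L,j})$ used in Remark~\ref{remark:twodecompositions}. Under the bijection $f \mapsto V$, membership of $f$ in $\Gamma_j(\delta,\kappa)$ becomes: $V \in \delta$, $V \supseteq g_{L,j}$, and the distribution $(m_\alpha) := (|V \cap (g_{L,j}(\alpha-1), g_{L,j}(\alpha))|)_\alpha$ equals the unique preimage of $\kappa$ under $\kappa_j$ (if one exists). The trace hypothesis and Lemma~\ref{lemma:trace0} give $\kappa_1 = \kappa_2$ on $\Gamma^1 \cap \Gamma^2$, so whenever both $\Gamma_j(\delta,\kappa)$ are nonempty the distribution $(m_\alpha)$ is common to $j = 1, 2$. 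The ``mismatched'' cases, where $(m_\alpha)$ is admissible for exactly one $j$, I would dispose of by applying the First collision lemma (Lemma~\ref{lemma:collisiontwo}) to the interval responsible for the mismatch, showing that the nonempty side has cardinality $\equiv 0 \pmod p$.

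Write $g_{L,j} \cap [n-a] = \{h_1 < \cdots < h_e\}$ (independent of $j$) and $g_{L,j} \setminus [n-a] = \{h^{(j)}_{e+1} < \cdots < h^{(j)}_{n_L}\}$, with $h^{(1)}_i \equiv h^{(2)}_i \pmod w$. The intervals $(g_{L,j}(\alpha-1), g_{L,j}(\alpha))$ split into three types: those contained in $[n-a]$ (identical for both $j$), possibly one straddling $n-a$, and those contained in $(n-a,n]$. The condition $V \cap [n-a] = \delta \cap [n-a] =: T$ forces $V$'s entries in $[n-a]$ completely, pins down the $m_\alpha$ for intervals of the first type, and hence contributes a $j$-independent factor to $|\Gamma_j(\delta,\kappa)|$; any incompatibility between $T$ and the $(m_\alpha)$ kills both sides at once.

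What remains is to count the free part $B_j := V \cap ((n-a, n] \setminus g_{L,j})$, subject to a prescribed interval-local distribution $(m'_\alpha)$ (with $m'_\alpha \leq m - n_L$) and to a residue-mod-$u$ condition on the ordered elements of $V \setminus [n-a] = B_j \sqcup \{h^{(j)}_i\}_{i > e}$ coming from $V \in \delta$. Since $u \mid w$, the residues mod $u$ of the $h^{(j)}_i$'s agree for $j = 1, 2$, and the interleaving of $B_j$ with the $h^{(j)}_i$'s in the sorted ordering is determined by which interval of $[n] \setminus g_{L,j}$ each element lies in --- data already carried by $(m'_\alpha)$. Hence the residue constraint on $B_j$ decouples across intervals and is the same for both $j$. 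It follows that $|\Gamma_j(\delta,\kappa)|$ factors as a product over the affected intervals of counts $N^{(j)}_\alpha$, each expressible through binomial coefficients of the form $\binom{\lfloor (\ell^{(j)}_\alpha - c)/u \rfloor + \varepsilon}{k}$, where $\ell^{(j)}_\alpha$ denotes the length of the $(n-a,n]$-portion of the $\alpha$-th interval and $k \leq m'_\alpha$.

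To conclude, the lengths $\ell^{(j)}_\alpha$ differ between $j = 1, 2$ by multiples of $w$, hence of $u\, p^{\ddH(m,n_L)}$; applying the classical mod-$p$ periodicity $\binom{x + p^{v_p(k!)+1}}{k} \equiv \binom{x}{k} \pmod p$ --- which is precisely the estimate used at the end of the proof of Lemma~\ref{lemma:collisiontwo} --- for each $k \leq m - n_L$ preserves every binomial factor modulo $p$, yielding $|\Gamma_1(\delta,\kappa)| \equiv |\Gamma_2(\delta,\kappa)| \pmod p$. The main technical obstacle I anticipate is verifying the interval-wise decoupling of the residue condition: one needs to show, using the filtration-preserving bijection from Lemma~\ref{lemma:trace0} together with the fact that elements of $B_j$ in distinct intervals of $[n] \setminus g_{L,j}$ are automatically separated in the sorted ordering of $V \setminus [n-a]$, that the ``slot sequence'' assigning each position to either a forced $h^{(j)}_i$ or a free element is the same for $j = 1$ and $j = 2$. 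Once this decoupling is secured, the rest is the binomial periodicity already exploited in the First collision lemma.
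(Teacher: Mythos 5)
Your proposal is correct and follows essentially the same route as the paper: you parametrize $\Gamma_j(\delta,\kappa)$ by $V = \beta^m(\gamma_f^{-1}\eta_{L,j})$, use the trace hypothesis via Remark~\ref{remark:twodecompositions} and Lemma~\ref{lemma:trace0} to see that the interval-local distribution $(m_\alpha)$ is $j$-independent, decompose the count as a product over the intervals of $[n]\setminus g_{L,j}$, apply the First collision lemma on each factor, and conclude with the classical binomial periodicity $\binom{x+p^{v_p(k!)+1}}{k}\equiv\binom{x}{k}\pmod p$ together with $k\leq m-n_L$ to invoke $\ddH(m,n_L)$. The "decoupling" concern you flag is exactly what Remark~\ref{remark:twodecompositions} secures (the second product decomposition there), and your treatment of the case where only one $\Gamma_j$ is nonempty — showing the nonempty side is $\equiv 0\pmod p$ — is equivalent to the paper's convention of extending $b_{\alpha,j},s_{\alpha,j}$ to both $j$ so that the product formula holds universally.
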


\begin{proof}
Assume first that for each $j$, the set $\Gamma_j(\delta, \kappa)$ is nonempty. As in Remark~\ref{remark:twodecompositions}, the trace conditions $\tr^{m}(\gamma_{g}^{-1} \eta_{L,j})=\kappa$, $j \in \{1,2\}$ specify a sequence of numbers $\{m_{\alpha}\}_{1 \leq \alpha \leq n_L+1}$, independent of $j$, such that $g$ satisfies $\tr^{m}(\gamma_{g}^{-1} \eta_{L,j})=\kappa$ if and only if $|\beta^m(\gamma_f^{-1} \eta_{L,j}) \cap (g_{L,j}(\alpha-1), g_{L,j}(\alpha))|=m_{\alpha}$,  $1 \leq \alpha \leq n_L+1$. Let $n_{\alpha, j}$ be as in Remark~\ref{remark:twodecompositions} and  define $a_{\alpha, j} \coloneq \max \{0, \min \{a- (n - g_{L,j}(\alpha) +1), n_{\alpha,j}\}\}$. Then as in Remark~\ref{remark:twodecompositions}, we have the natural decomposition \[\Gamma_j(\delta, \kappa) = \prod_{1 \leq \alpha \leq n_L+1} \delta_{\alpha, j}, \qquad \text{where} \qquad \delta_{\alpha, j} \in D_{m_{\alpha}, n_{\alpha,j}} / \equiv^u_{a_{\alpha,j}}.\]
Since $g_{L,1} \equiv^w_a g_{L,2}$ there exists $1 \leq \alpha_0 \leq n_L+1$ such that $\delta_{\alpha, j} \nsubseteq D_{m_{\alpha}, n_{\alpha,j}- a_{\alpha,j}}$ or $n_{\alpha, j} = 0$ for  each $\alpha > \alpha_0$ and $j \in \{1,2\}$. If $n_{\alpha, j}=0$ or $m_{\alpha} =0$, we define $b_{\alpha, j} \coloneq 0$ and $s_{\alpha , j} \coloneq 0$. Hence by Lemma~\ref{lemma:collisiontwo}, we have 
\begin{equation}
|\Gamma_j(\delta, \kappa)| = \prod_{\alpha_0 < \alpha \leq n_L+1} \binom{\lfloor a_{\alpha,j}/ u \rfloor - s_{\alpha, j} + b_{\alpha, j}}{b_{\alpha, j}} \label{eqn:product}
\end{equation} for some $b_{\alpha, j} \leq m_{\alpha}$ and $s_{\alpha, j}$ as in the proof of Lemma~\ref{lemma:collisiontwo}. Again by $g_{L,1} \equiv^w_a g_{L,2}$, it follows that $b_{\alpha, j}$ and $s_{\alpha, j}$ are independent of $j$. Also $a_{\alpha, 1} \equiv a_{\alpha,2} \mod{w}$ and $s_{\alpha, 1} \equiv s_{\alpha,2} \mod{w/u}$. This implies \[\lfloor a_{\alpha,j}/ u \rfloor - s_{\alpha, j} + b_{\alpha, j} \equiv \lfloor a_{\alpha,j}/ u \rfloor - s_{\alpha, j} + b_{\alpha, j} \mod{p^{\ddH (m,n_L)}}. \] To avoid dealing with the case when exactly one of $\Gamma_j(\delta, \kappa)$ (say $\Gamma_2(\delta, \kappa)$) is empty separately, we extend our observation that $b_{\alpha, j}$ and $s_{\alpha, j}$, if exist, are independent of $j$ and define $b_{\alpha, 2}\coloneq b_{\alpha, 1}$ and $s_{\alpha, 2}\coloneq s_{\alpha, 1}$ for each $\alpha$. It is easy to see that with these conventions the equation~\ref{eqn:product} is always valid when at least one of $\Gamma_j(\delta, \kappa)$ is nonempty.

The result now follows from the classical fact that $\binom{x}{b_{\alpha,j}} \mod{p}$ is periodic in $x$ with period $p^{v_p(b_{\alpha, j}!)+1}$ and noting that $b_{\alpha, j} \leq m_{\alpha} \leq m-n_L$.
\end{proof}

The following lemma together with the collision lemmas provide the motive behind the forthcoming definition of periodicity (Definition~\ref{def:periodicity}).

\begin{lemma} \label{lemma:trace}
Let $\sigma\in \fS_{n-a} \subseteq \fS_n$ and $n_1 \leq n_2 \leq \ldots \leq n_L$ be a finite chain. Let $f_{q,j}$, $g_{q,j}$, and $\eta_{q,j}$ for $1 \leq q \leq L$, $j \in \{1,2\}$ be as defined in Lemma~\ref{lemma:trace0}. Assume that we have filtrations $g_{1,j} \subseteq g_{2,j} \subseteq \ldots \subseteq  g_{L,j}$ for $j \in \{1,2\}$ satisfying $\tr^{n_L} (\eta_{L,1})= \tr^{n_L}( \eta_{L,2})$ and $g_{L,1} \equiv^1_a g_{L,2}$. Consider arbitrary $\epsilon_q \in \{0,1\}$ for $1 \leq q \leq L$. Define $\sigma_{q,j}$ recursively by \[\sigma_{q,j}=(\tr^{n_q})^{\epsilon_q}(\gamma_{f_{q,j}}^{-1}\sigma_{q-1,j})\] where $\sigma_{0,j} = \sigma$ and $(\tr^{n_q})^0$ denotes the identity map. Then we have, \[\tr^{n_L}(\sigma_{L,1}) = \tr^{n_L}(\sigma_{L,2}) \in \fS_{n_L} \times \fS_{n-n_L- a + o_L} \subseteq \fS_{n-a+o_L} \] where $o_q\coloneq |g_{q,j} \cap ([n] \setminus [n-a])|$.
\end{lemma}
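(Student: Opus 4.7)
The plan is to prove the lemma by induction on $q$ from $q = 0$ up to $q = L$ with the strengthened invariant: (i) $\tr^{n_q}(\sigma_{q,j})$ is independent of $j$ and lies in $\fS_{n_q} \times \fS_{n - n_q - a + o_q}$, and (ii) the intersection $\beta^{n_q}(\sigma_{q,j}) \cap [n-a]$ is independent of $j$. The base case $q = 0$ is trivial since $\sigma_{0, j} = \sigma \in \fS_{n-a}$, and the conclusion of the lemma is precisely the $q = L$ instance of (i).

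The main tools are: the product identity $\tr^m(ab) = \tr^m(a) \tr^m(\gamma_{\beta^m(a)}^{-1} b)$ of equation~\eqref{eqn:traceidentity}; the filtration $g_{q,j} \subseteq g_{q+1,j}$ together with Lemma~\ref{lemma:trace0}(1), which yields $[n_q] \subseteq f_{q+1,j}$ so that $\gamma_{f_{q+1,j}}^{-1}$ restricts to the identity on $[n_q]$; Lemma~\ref{lemma:trace0}(2) combined with the hypothesis $g_{L,1} \equiv^1_a g_{L,2}$, which propagates (via the unique order-preserving filtration-preserving bijection $\tau'$ of equivalence B) to $\tr^{n_q}(\eta_{q,1}) = \tr^{n_q}(\eta_{q,2})$ and $g_{q,1} \cap [n-a] = g_{q,2} \cap [n-a]$ at every level $q \leq L$; and the fact that $\sigma \in \fS_{n-a}$ fixes $[n] \setminus [n-a]$ pointwise. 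In the inductive step one writes $\sigma_{q,j} = \pi_q \gamma_{h_{q,j}}^{-1}$ (if $\epsilon_q = 0$) or $\sigma_{q,j} = \pi_q$ (if $\epsilon_q = 1$), where $\pi_q := \tr^{n_q}(\sigma_{q,j})$ is common by induction and $h_{q,j} := \beta^{n_q}(\sigma_{q,j})$ has common $[n-a]$-part, and then expands $\tr^{n_{q+1}}(\sigma_{q+1,j}) = \tr^{n_{q+1}}(\gamma_{f_{q+1,j}}^{-1} \sigma_{q,j})$ by iterating the product identity, so that the result decomposes into factors that are either traces of elements of $\fS_{n_q} \times \fS_{n-n_q}$ (common by induction) or of the form $\tr^{n_{q+1}}(\gamma_{A_j}^{-1} \gamma_{B_j})$ where the $[n-a]$-parts of $A_j, B_j$ are common across $j$.

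The archetypal computation underlying the inductive step is the all-$\epsilon_i = 0$ case: $\tr^{n_L}(\eta_{L,j}\sigma) = \tr^{n_L}(\eta_{L,j}) \cdot \tr^{n_L}(\gamma_{g_{L,j}}^{-1}\sigma)$, where the first factor is common by equivalence (C) of Lemma~\ref{lemma:trace0}(2), and the second by a direct order-preserving analysis in the spirit of Remark~\ref{remark:twodecompositions}: since $g_{L,1} \cap [n-a] = g_{L,2} \cap [n-a]$ and $\sigma$ fixes $[n] \setminus [n-a]$ pointwise, $\gamma_{g_{L,j}}^{-1}\sigma$ splits into a common permutation on the $[n-a]$-portion and an order-preserving reidentification on the $[n] \setminus [n-a]$-portion that the trace turns into the identity on $\{n-a+o_L+1,\ldots,n\}$, placing the result in $\fS_{n_L} \times \fS_{n - n_L - a + o_L}$. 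The main obstacle is the bookkeeping in the general $\epsilon$-pattern: tracking how the common-data invariants (i)--(ii) combine through repeated applications of the product identity, and verifying at each inductive step that the support has grown by exactly $o_{q+1} - o_q$. Once the invariant is formulated sharply and the equivalences of Lemma~\ref{lemma:trace0}(2) are used to match the corresponding subsets on each side, the remaining case analysis on $\epsilon_q$ and $\epsilon_{q+1}$ reduces to mechanical applications of the product identity.
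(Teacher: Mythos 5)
Your proposal takes a genuinely different route from the paper. The paper proves the lemma by \emph{downward} induction on the chain length $L$, splitting on $\epsilon_{L-1}$: when $\epsilon_{L-1}=0$ it uses the product identity~\eqref{eqn:traceidentity} to peel off $\tr^{n_L}(\gamma_{f_{L,j}}^{-1}\gamma_{f_{L-1,j}}^{-1})$ (common by Lemma~\ref{lemma:trace0}(2)) and \emph{collapses} levels $L-1$ and $L$ into one by setting $f'_{L-1,j}\coloneq\beta^{n_L}(\gamma_{f_{L,j}}^{-1}\gamma_{f_{L-1,j}}^{-1})$, reducing to a chain of length $L-1$; when $\epsilon_{L-1}=1$, it invokes the inductive hypothesis to get $\sigma_{L-1,1}=\sigma_{L-1,2}\in\fS_{n-a+o_{L-1}}$ and then reduces to the base case with $a$ replaced by $a-o_{L-1}$. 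You instead propose \emph{forward} induction on $q$, carrying a strengthened invariant that both $\tr^{n_q}(\sigma_{q,j})$ and the $[n-a]$-part of $h_{q,j}\coloneq\beta^{n_q}(\sigma_{q,j})$ are independent of $j$. The two arguments rely on the same three ingredients (the product identity, the propagation of $\tr^{n_q}(\eta_{q,1})=\tr^{n_q}(\eta_{q,2})$ and $g_{q,1}\equiv^1_a g_{q,2}$ down the chain via Lemma~\ref{lemma:trace0}(2), and $\sigma\in\fS_{n-a}$), and your \textquotedblleft archetypal computation\textquotedblright\ for the all-$\epsilon=0$ case is essentially the paper's base case in disguise.

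One point you should examine carefully before trusting the \textquotedblleft mechanical bookkeeping\textquotedblright\ claim: the cutoff in your invariant (ii) is static at $[n-a]$, but after a trace at level $q$ the common data live in $\fS_{n_q}\times\fS_{n-n_q-a+o_q}$, i.e.\ they have support up to $[n-a+o_q]$. In particular, to control $h_{q+1,j}=\sigma_{q,j}^{-1}(f_{q+1,j})$ when $\epsilon_{q-1}=1$, you need that $f_{q+1,j}$ agrees across $j$ on the window $[n-a+o_q]$, not merely on $[n-a]$; this is exactly the refinement $f_{L,1}\equiv^1_{a-o_{L-1}}f_{L,2}$ that the paper derives in the $\epsilon_{L-1}=1$ branch. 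Your (ii) as stated is a consequence of the sharper statement but may be too weak to carry the next inductive step on its own. The paper's collapsing trick avoids this issue entirely: it never decomposes $\sigma_{q,j}$ as $\pi_q\gamma_{h_{q,j}}^{-1}$ and so never has to track how the cutoff drifts with $q$; it keeps everything expressed in the original $f_{q,j}$ and $g_{q,j}$, for which Lemma~\ref{lemma:trace0}(2) gives clean invariance statements. If you pursue your forward induction, strengthen (ii) to the assertion that the common $[n-a+o_q]$-window data are equal, and re-derive the required compatibility of $f_{q+1,j}$ on that window at each step; with that fix the approach should go through.
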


\begin{proof}
By Lemma~\ref{lemma:trace0}, the trace condition $\tr^{n_L} (\eta_{L,1})= \tr^{n_L}( \eta_{L,2})$ says precisely that there is an order preserving bijection $\tau: g_{L,1} \ra g_{L,2}$ that preserves the filtration. Hence the condition $g_{L,1} \equiv^1_a g_{L,2}$ implies that $g_{q,1} \equiv^1_a g_{q,2}$ for each $1 \leq q \leq L$. By Lemma~\ref{lemma:trace0} again, we have \[\tr^{n_L} (\eta_{L,1})= \tr^{n_L}( \eta_{L,2}) \in \fS_{n_L} \subseteq \fS_{n_L} \times \fS_{n-n_L- a + o_L} \subseteq \fS_{n-a+o_L}.\] The assertion of the lemma is easily verified in the base case when $L=1$, because we have $f_{1,1}=g_{1,1} \equiv^1_a g_{1,2} =f_{1,2}$ and $\sigma_{0,1} =\sigma_{0,1} = \sigma \in \fS_{n-a}$.

For the general case $L>1$ assume first that $\epsilon_{L-1} = 0$. Then we have 
\begin{eqnarray}
\tr^{n_L}(\sigma_{L,j}) &=& \tr^{n_L}(\gamma_{f_{L,j}}^{-1} \gamma_{f_{L-1,j}}^{-1} \sigma_{L-2,j}) \nonumber \\
&=& \tr^{n_L}(\gamma_{f_{L,j}}^{-1} \gamma_{f_{L-1,j}}^{-1}) \tr^{n_L} (\gamma_{\beta^{n_L}(\gamma_{f_{L,j}}^{-1} \gamma_{f_{L-1,j}}^{-1})}^{-1} \sigma_{L-2,j}) \label{eqn:trace}
\end{eqnarray} where equation~\ref{eqn:trace} follows from the identity in equation~\ref{eqn:traceidentity}. Now note that we have $f_{L-1,j} \subseteq \beta^{n_L}(\gamma_{f_{L,j}}^{-1} \gamma_{f_{L-1,j}}^{-1})$ and there is a filtration and order preserving bijection $\tau' : \beta^{n_L}(\gamma_{f_{L,1}}^{-1} \gamma_{f_{L-1,1}}^{-1}) \ra \beta^{n_L}(\gamma_{f_{L,2}}^{-1} \gamma_{f_{L-1,2}}^{-1})$ hence \[\tr^{n_L}(\gamma_{f_{L,1}}^{-1} \gamma_{f_{L-1,1}}^{-1}) =\tr^{n_L}(\gamma_{f_{L,2}}^{-1} \gamma_{f_{L-1,2}}^{-1}) \in \fS_{n_L} \subseteq \fS_{n_L} \times \fS_{n-n_L- a + o_L} \subseteq \fS_{n-a+o_L}.\] By induction on $L$ and noting that  \[f'_{q,j} = \begin{cases}
f_{q,j} \in D_{n_q, n} & \text{ if } 1 \leq q \leq L-2 \\
\beta^{n_L}(\gamma_{f_{L,j}}^{-1} \gamma_{f_{L-1,j}}^{-1}) & \text{ if } q= L-1
\end{cases}\] or equivalently (by equation~\ref{eqn:betaproductidentity}), \[g'_{q,j} = \begin{cases}
g_{q,j} \in D_{n_q, n} & \text{ if } 1 \leq q \leq L-2 \\
g_{L,j} & \text{ if } q= L-1
\end{cases}\] satisfy the hypothesis of the lemma with $o'_{L-1} = o_L$ we conclude that 
\[\tr^{n_L} (\gamma_{\beta^{n_L}(\gamma_{f_{L,1}}^{-1} \gamma_{f_{L-1,1}}^{-1})}^{-1} \sigma_{L-2,1})= \tr^{n_L} (\gamma_{\beta^{n_L}(\gamma_{f_{L,2}}^{-1} \gamma_{f_{L-1,2}}^{-1})}^{-1} \sigma_{L-2,2})\in \fS_{n_L} \times \fS_{n-n_L- a + o_L} \subseteq \fS_{n-a+o_L}\] completing the proof of the lemma in the case when $\epsilon_{L-1}=0$.

Now if $\epsilon_{L-1}=1$, then by induction we have $$\sigma_{L-1,1} = \sigma_{L-1,2} \in \fS_{n-a+o_{L-1}}.$$
Note that by Lemma~\ref{lemma:trace0} part 1, it follows that $f_{L,1} = \gamma^{-1}_{g_{L-1, 1}} g_{L,1} \equiv_{a}^1 \gamma^{-1}_{g_{L-1, 2}} g_{L,2} = f_{L,2}$ and in particular, $f_{L,1} \equiv_{a-o_{L-1}}^1 f_{L,2}$. We also have
$ |f_{L,j} \cap ([n] \setminus [n-a+o_{L-1}])| = o_L -o_{L-1}$. Hence from the base case, we have 
$$\tr^{n_L}(\gamma_{f_{L,1}}^{-1} \sigma_{L-1,1}) =\tr^{n_L}(\gamma_{f_{L,2}}^{-1} \sigma_{L-1,2}) \in \fS_{n_L} \times \fS_{n-n_L- a + o_L} \subseteq \fS_{n-a+o_L} $$
completing the proof of the lemma.
\end{proof}

\begin{definition}[{\bf $(n_1,\ldots, n_L)$-structure}] Let $n_1 \leq n_2 \leq \ldots \leq n_L$ be a finite chain of nonnegative integers. An {\bf $(n_1,\ldots, n_L)$-structure}  $\mathfrak{s}$ on $n$ is a collection of  subsets $f_{q} \in D_{n_q,n}$, $1 \leq q \leq L$. 

Let $H$ be a nonnegative integer. Let $\mathfrak{s}_j = (f_{q,j})_{1 \leq q \leq L}$, $j \in \{1,2\}$ be two $(n_1,\ldots, n_L)$-structures on $n$. We say that $\mathfrak{s}_1 \equiv^H_a \mathfrak{s}_2$ if all of the following three conditions hold: \begin{itemize}
   \item  $\tr^{n_L} (\eta_{L,1})= \tr^{n_L}( \eta_{L,2})$ where $\eta_{q,j} = \prod_{\xi = 1}^{q}\gamma_{f_{q-\xi+1,j}}^{-1}$,
  
   \item  $g_{q,1}  \equiv^{u_q}_a g_{q,2}$ with $u_q = p^{H + \sum_{\xi=1}^{L-q} \ddH (n_{L-\xi+1}, n_{L- \xi})} $ for $1 \leq q \leq L$ where $g_{q,j} = \beta^{n_q}(\eta_{q,j})$, and
   
   \item $[n_{q-1}] \subseteq f_{q,j}$  (or equivalently $g_{q-1,j} \subseteq g_{q,j}$) for $2 \leq q \leq L$.
   \end{itemize}
   
Let $\mathfrak{s} = (f_q)_{1 \leq q \leq L}$ be a $(n_1,\ldots, n_L)$-structure. For an element $e =(\sigma_0, \sigma_1, \ldots, \sigma_{t+1}) \in \mathcal{B}_{t+1}(\fS_{n})$ and an arbitrary function $\epsilon : [L] \times \{0, 1, \ldots, t+1\} \ra \{0,1\}$ we define $e_{q, \epsilon, \mathfrak{s}} \coloneq  (\sigma_{0,q}, \sigma_{1,q}, \ldots, \sigma_{t+1,q})$ where $\sigma_{k,q}$ is given recursively by \[\sigma_{k,q}=(\tr^{n_q})^{\epsilon(q, k)}(\gamma_{f_{q}}^{-1}\sigma_{k,q-1})\] and $\sigma_{k,0} = \sigma_{k}$. 
\end{definition}

\begin{definition}[{\bf Periodicity}] \label{def:periodicity} Let $z \in \Hom_{\fS_n}(\mathcal{B}_{t+1}(\fS_n), M(m)_n)$ and $H$ be a nonnegative integer. Then $z$ is $H$-{\bf periodic} if for every chain $n_1 \leq n_2 \leq \ldots \leq n_L = m$ of nonnegative integers and every function $\epsilon \colon [L] \times \{0, 1, \ldots, t+1\} \ra \{0,1\}$, we have \[( z \downarrow_{m})(e_{L,\epsilon, \mathfrak{s}_1}) = ( z \downarrow_{m})(e_{L,\epsilon, \mathfrak{s}_2})\]  whenever $e =(\sigma_0, \sigma_1, \ldots, \sigma_{t+1}) \in \mathcal{B}_{t+1}(\fS_{n-a}) \subseteq \mathcal{B}_{t+1}(\fS_{n})$ and $\mathfrak{s}_1 \equiv^H_a \mathfrak{s}_2$ are two equivalent $(n_1,\ldots, n_L)$-structures.
   
Let  $z \in \Hom_{\fS_n}(\mathcal{B}_{t+1}(\fS_n), \bigoplus_{i=1}^{d} M(m_i)_n)$ and $\pr^i : \bigoplus_{i=1}^{d} M(m_i) \ra M(m_i)$ be the natural projection. Let $\SQ=(H^{i,d})_{1 \leq i \leq d}$ be a sequence of nonnegative integers. Then $z$ is $\SQ$-{\bf periodic} if $\pr^i_n \circ z$ is $H^{i,d}$-periodic for each $1 \leq i \leq d$.
  \end{definition}

\begin{remark} 
\label{remark:periodgcd}
Note that if $z_j \in \Hom_{\fS_n}(\mathcal{B}_{t+1}(\fS_n), \bigoplus_{i=1}^{d} M(m_i)_n)$, $j \in \{0,1\}$ is $(H^{i}_j)_{1 \leq i \leq d}$-periodic then any $\bk$ linear combination $x z_0 +y z_1$ is also periodic with period \[\gcd((H^{i}_0)_{1 \leq i \leq d}, (H^{i}_1)_{1 \leq i \leq d}) \coloneq (\max(H^{i}_0, H^{i}_1))_{1 \leq i \leq d}. \qedhere \]
\end{remark}

\subsection{Periodicity of the nice lift construction}
\label{subsection:periodicityofnicelift}

This section contains all the technical results we need. We keep the notations from \S \ref{subsection:nicelift}.

\begin{claim}[{\bf Inheritability of periodicity}] \label{claim:one}
With the notations of the nice lift construction in \S\ref{subsection:nicelift}, let $z^d$ be $\SQ$-periodic where $\SQ = (H^{i,d})_{1 \leq i \leq d}$ is a sequence of nonnegative integers. Then for any $r \leq d$, $z^r$ is $\mathfrak{D}^r_{V^d}(\SQ)$-periodic and $\tilde{l}^r$ is $\mathfrak{D}_{V^r} \circ \mathfrak{D}^r_{V^d} (\SQ)$-periodic. (See Definitions~\ref{def:mathfrakD} and \ref{def:mathfrakDr}.)
\end{claim}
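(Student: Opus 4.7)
The proof proceeds by downward induction on $r$: starting from the hypothesis that $z^d$ is $\SQ$-periodic, at each step we produce successively the periodicities of $z^r, z^{r-1}, \ldots, z^0$ together with the periodicity of each $\pr^r_n \circ \tilde{l}^r$. Since part (3) of Definition~\ref{def:nicelift} gives $\tilde{l}^r = \pr^{\leq r}_n \circ \tilde{l}^d$, the $i$-th component of $\tilde{l}^r$ coincides with that of $\tilde{l}^i$, i.e.\ with $\pr^i_n \circ \tilde{l}^i$; collecting the component-wise periodicities across all $i \leq r$ then yields that $\tilde{l}^r$ is $(H^{i,i})_{1 \leq i \leq r} = \mathfrak{D}_{V^r} \circ \mathfrak{D}^r_{V^d}(\SQ)$-periodic, as claimed. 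Thus the task reduces to two sub-steps at each level $r$: first, deduce $H^{r,r}$-periodicity of $\pr^r_n \circ \tilde{l}^r$ from the (already known) periodicity of $z^r$; second, combine this with the periodicity of $z^r$ to produce the periodicity of $z^{r-1} = {'c}^r + {''c}^r$.

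\emph{Step 1: $\pr^r_n \circ \tilde{l}^r$ is $H^{r,r}$-periodic.} Formula \eqref{eqn:lrzr} writes $(\pr^r_n \circ \tilde{l}^r)\downarrow_{m_r}$ as $a^r \circ \Tr^{m_r}_t + \bigl((\pr^r_n \circ z^r)\downarrow_{m_r}\bigr) \circ h_t^{m_r}$. The first summand depends on its input only through the traces of the coordinates, and Lemma~\ref{lemma:trace} gives equality of these traces on $\equiv^{H^{r,r}}_a$-equivalent $(n_1,\ldots,m_r)$-structures. For the second, Lemma~\ref{lemma:homotopy} expands $h_t^{m_r}$ as a $\bk$-linear combination of the elementary operators $h_{t,\omega,\epsilon'}$; one checks directly that applying $h_{t,\omega,\epsilon'}$ to a structured simplex $e_{L,\epsilon,\mathfrak{s}}$ yields a $(t+1)$-simplex of the same structured form but for the chain extended by $n_{L+1}=m_r$ (with $f_{L+1}=[m_r]$) and $\epsilon$ extended by $\epsilon'$ after reindexing coordinates via $\omega$. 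Because $\ddH(m_r,m_r)=0$, the equivalence $\mathfrak{s}_1 \equiv^{H^{r,r}}_a \mathfrak{s}_2$ lifts trivially to $\equiv^{H^{r,r}}_a$-equivalence of these extended structures, and the $H^{r,r}$-periodicity of $\pr^r_n \circ z^r$ closes the step.

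\emph{Step 2: $z^{r-1}$ is $\mathfrak{D}^{r-1}_{V^d}(\SQ)$-periodic.} Componentwise $\pr^i_n \circ z^{r-1} = \pr^i_n \circ y^r + \pr^i_n \circ z^r$ for $i \leq r-1$; the second summand has periodicity $H^{i,r}$ by the downward induction. For the first, Lemma~\ref{lemma:xryr} either yields vanishing when $m_i > m_r$ (matching $H^{i,r-1}=H^{i,r}$) or the formula
\[(\pr^i_n \circ y^r)\downarrow_{m_i}(e_{L,\epsilon,\mathfrak{s}}) = \sum_{\substack{g \in D_{m_r,n}\\ [m_i]\subseteq g}} \bigl(\pr^i_{m_r} \circ x^r \downarrow^{m_r}_{m_i}\bigr)(e_{L+1,\epsilon'',\mathfrak{s}'(g)}),\]
where $\mathfrak{s}'(g) = (\mathfrak{s}, g)$ and $\epsilon''$ extends $\epsilon$ by $\epsilon''(L+1,\cdot)=0$. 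The compatibility condition \eqref{eqn:xrwr} on $x^r$, combined with $H^{r,r}$-periodicity of $w^r$ (a consequence of Step 1 and of $\pr^r_n \circ z^r$ being $H^{r,r}$-periodic, together with the fact that $\partial_{t+1}$ takes a structured $(t+2)$-simplex to a $\bk$-linear combination of structured $(t+1)$-simplices with the same outer data $\mathfrak{s}$), shows that $(\pr^i_{m_r} \circ x^r \downarrow^{m_r}_{m_i})(e_{L+1,\epsilon'',\mathfrak{s}'(g)})$ depends on $g$ only through the $\equiv^{p^{H^{r,r}}}_a$-class of $g$ and the trace $\tr^{m_r}(\gamma_g^{-1}\eta_L)$ (cf.\ Remark~\ref{remark:twodecompositions}). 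Finally, Lemma~\ref{lemma:collision} applied with $u = p^{H^{r,r}}$ and $w = p^{H^{r,r}+\ddH(m_r,m_i)}$ guarantees that the cardinality of each such class is periodic modulo $p$ in the outer data $\mathfrak{s}$ with period $p^{H^{r,r}+\ddH(m_r,m_i)}$. Summing and invoking Remark~\ref{remark:periodgcd} gives $\pr^i_n \circ z^{r-1}$ the periodicity $\max(H^{i,r},\,H^{r,r}+\ddH(m_r,m_i)) = H^{i,r-1}$, as required.

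\emph{Expected obstacle.} The crux lies in Step 2's chain-extension bookkeeping: one must verify that whenever $g_1,g_2$ are $\equiv^{p^{H^{r,r}}}_a$-equivalent and the traces $\tr^{m_r}(\gamma_{g_j}^{-1}\eta_L)$ match, the induced $(n_1,\ldots,m_i,m_r)$-structures $(\mathfrak{s}_j,g_j)$ are $\equiv^{H^{r,r}}_a$-equivalent (so Step 1 applied inside $w^r$ produces the same value of $x^r$), and simultaneously that the count of such $g$ per class is periodic modulo $p$ with period $p^{H^{r,r}+\ddH(m_r,m_i)}$. The recursion \eqref{def:H} is calibrated precisely to balance these two constraints: the extra $\ddH(m_r,m_i)$ factor is absorbed by Lemma~\ref{lemma:collision} to control counting, while the residual $H^{r,r}$ is transmitted through $w^r$ to the lift $x^r$ via \eqref{eqn:xrwr}.
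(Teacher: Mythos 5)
Your proposal is correct and follows the same route as the paper: downward induction on $r$, the decomposition $z^{r-1} = {'c}^r + {''c}^r$ with $'c^r$ analyzed via Lemma~\ref{lemma:xryr}, the compatibility~\eqref{eqn:xrwr} between $x^r$ and $w^r$, the trace lemma for the $a^r \circ \Tr$ part, and the collision lemma for the count over $g$. The only organizational difference is cosmetic: you prove the $H^{r,r}$-periodicity of $\pr^r_n\circ\tilde l^r$ first (Step~1) and feed it into the periodicity of $w^r$ in Step~2, whereas the paper checks the three summands of $w^r$ directly (the constants $C_1,C_2,C_3$) and isolates the second assertion at the end — these two orderings express the same computation, since \eqref{eqn:lrzr} makes $\pr^r_n\circ\tilde l^r\circ\partial_{t+1}$ literally the first two of those three terms.
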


\begin{proof}
We first prove the periodicity of $z^r$ by downward induction on $r$. The base case $r=d$ is true by hypothesis. We now prove the assertion for $r-1$: for $r \leq d$, $i \leq r-1$, we show that $\pr_n^i \circ z^{r-1}$ is $H^{i,r-1}$-periodic. For that let $\mathfrak{s}_j = (f_{q,j})_{1 \leq q \leq L}$, $j \in \{1,2\}$ be two $(n_1, \ldots, n_L)$-structures with $n_L = m_i$ and let $e =(\sigma_0, \sigma_1, \ldots, \sigma_{t+1}) \in \mathcal{B}_{t+1}(\fS_{n-a}) \subseteq \mathcal{B}_{t+1}(\fS_{n})$. We let $e_{L,j} \coloneq e_{L, \epsilon, \mathfrak{s}_j}$, $j \in \{1,2\}$ be as defined in Definition~\ref{def:periodicity} with $H \coloneq H^{i, r-1}$ and $\epsilon$ arbitrary. Assume that $\mathfrak{s}_1 \equiv^H_a \mathfrak{s}_2$. By the definition of a nice lift (Definition~\ref{def:nicelift}), we have
\begin{eqnarray}
 (\pr_n^i \circ z^{r-1} \downarrow_{m_i})(e_{L,j}) & = & (\pr_n^i \circ ({'c}^r) \downarrow_{m_i} ) (e_{L,j}) + (\pr_n^i \circ ({''c}^r) \downarrow_{m_i})(e_{L,j}) \nonumber \\
&=& (\pr_n^i \circ z^r \downarrow_{m_i} ) (e_{L,j}) + (\pr_n^i \circ \ y^r \downarrow_{m_i})(e_{L,j}) \nonumber \\
&=& (\pr_n^i \circ z^r \downarrow_{m_i} ) (e_{L,j}) + (T^{i,r}_n \circ (\pr_{m_r}^i \circ x^r \uparrow_{m_r}) \downarrow_{m_i})(e_{L,j}) \nonumber \\
&=& (\pr_n^i \circ z^r \downarrow_{m_i} ) (e_{L,j}) + \sum_{\substack {g \in D_{m_r,n} \\ [m_i]\subseteq g }} (\pr_{m_r}^i \circ x^r \downarrow_{m_i}^{m_r})(\gamma_{g}^{-1} e_{L,j}) \label{eqn:tobesplit}
\end{eqnarray}
where equation~\ref{eqn:tobesplit} follows from Lemma~\ref{lemma:xryr}.
We use a shorthand $\equiv^{i,r}$ for $\equiv^{p^{H^{i,r}}}_a$ and split the summation in (\ref{eqn:tobesplit}) as a double sum to obtain
\begin{align}
 (\pr_n^i \circ z^{r-1} \downarrow_{m_i})&(e_{L,j})  = \nonumber \\& (\pr_n^i \circ z^r \downarrow_{m_i} ) (e_{L,j}) + \! \! \! \! \! \sum_{\substack{\delta \in D_{m_r,n}/ \equiv^{r,r} \\
 \kappa \in  \fS_{m_r} \times \fS_{n-m_r}}} \sum_{\substack {\beta^{m_r}(\gamma_{g}^{-1} \eta_{L,j}) \in \delta  \\ \tr^{m_r}(\gamma_{g}^{-1} \eta_{L,j})=\kappa \\ [m_i]\subseteq g}} \! \! \! \! \! \! (\pr_{m_r}^i \circ x^r \downarrow_{m_i}^{m_r})(\gamma_{g}^{-1} e_{L,j})  \label{eqn:splitted}
\end{align}

Note that $H^{i,r} \leq H^{i,r-1}$ by equation~\ref{def:H}. Hence by downward induction on $r$ we have, $(\pr_n^i \circ z^r \downarrow_{m_i} ) (e_{L,1}) =(\pr_n^i \circ z^r \downarrow_{m_i} ) (e_{L,2})$. This completes the inductive step in the case when $m_i > m_r$ because then $\pr_{m_r}^i \circ x^r \downarrow_{m_i}^{m_r}=0$. So we assume that $m_i \leq m_r$ and our remaining task is to show that the second term (the double sum) in equation~\ref{eqn:splitted} is independent of $j$. We accomplish this task by showing that every term in the second summation of the double summation in equation~\ref{eqn:splitted} is the same irrespective of $j$ and the number of terms for $j=1$ and $j=2$ differ by a multiple of $p$. In a sense, a collision is happening here. We need the following sub-lemmas.

\begin{sublemma}
\label{sublemma:collision}
Let $\Gamma_j(\delta, \kappa) = \{ g \in D_{m_r,n}: \beta^{m_r}(\gamma_{g}^{-1} \eta_{L,j}) \in \delta, \tr^{m_r}(\gamma_{g}^{-1} \eta_{L,j})=\kappa,  [m_i] \subseteq g \}$. Then, \[|\Gamma_1(\delta, \kappa)| \equiv |\Gamma_2(\delta, \kappa)| \mod{p}.\]
\end{sublemma}

\begin{proof}
Note that $m_r \geq n_L$ and $H^{r,r} + \ddH(m_r, n_L) \leq H^{i,r-1}$. The proof now follows from Lemma~\ref{lemma:collision}. \renewcommand{\qedsymbol}{\Coffeecup}
\end{proof}

\begin{sublemma} \label{sublemma:tracetwo}
There are $\kappa'_k \in  \fS_{m_r} \times \fS_{n-m_r}$, $1 \leq k \leq t+1$ such that $\forall g\in \Gamma_j(\delta, \kappa), j \in \{1,2\}$ we have
$$\tr^{m_r}(\gamma_{g}^{-1} \sigma_{k,L,j})=\kappa'_k$$
\end{sublemma}

\begin{proof}
Proof immediately follows from Lemma~\ref{lemma:trace} noting that $\equiv^u_a$ is a refinement of $\equiv^1_a$ if $1 \mid u$. \renewcommand{\qedsymbol}{\Coffeecup}
\end{proof}
 
By Sub-lemma~\ref{sublemma:collision} and \ref{sublemma:tracetwo}, and the equation~\ref{eqn:xrwr} relating $x^r$ and $w^r$, it is enough to show that there is $C \in \bk[\fS_{m_r}]$ such that $w^r(\gamma_g^{-1}e_{L,j}) = C$ for all $g \in \Gamma_j(\delta, \kappa)$, $j \in \{1,2\}$. We also know from Definition~\ref{def:nicelift} (or Step 2 of the nice lift construction) that $w^r$ is a linear combination of the following three functions 
\[ w^r = a^r \circ \Tr_t^{m_r}\circ \ \partial_{t+1} + (\pr_n^r \circ z^r) \downarrow_{m_r} \circ \  h_t^{m_r} \circ \partial_{t+1} - (\pr_n^r \circ z^r) \downarrow_{m_r}.\] By sub-Lemma~\ref{sublemma:tracetwo} again and the commutativity of $\Tr$ and $\partial$, there is $C_1$ such that $a^r \circ \Tr_t^{m_r}\circ \ \partial_{t+1} (\gamma_g^{-1}e_{L,j}) = C_1$ for all $g \in \Gamma_j(\delta, \kappa)$, $j \in \{1,2\}$. Note that $H^{i,r-1} \geq H^{r,r} + \ddH (m_r , m_i)$. Hence by Lemma~\ref{lemma:homotopy} and downward induction on $r$, there are $C_2$, $C_3$ such that 
$(\pr_n^r \circ z^r) \downarrow_{m_r} \circ h_t^{m_r} \circ \partial_{t+1} (\gamma_g^{-1}e_{L,j}) = C_2$ and $(\pr_n^r \circ z^r) \downarrow_{m_r} (\gamma_g^{-1}e_{L,j}) = C_3$ for all $g \in \Gamma_j(\delta, \kappa)$, $j \in \{1,2\}$. Thus we may take $C = C_1 + C_2 - C_3$, completing the proof of the first assertion in claim.

For the second assertion, it is enough to prove the result for $r=d$ (because by the nice lift construction $\pr^i_n\circ \ \tilde{l}^r = \pr^i_n\circ \ \tilde{l}^d$ for any $i \leq r \leq d$). The proof is now immediate by the periodicity of $z^r$, Lemma~\ref{lemma:homotopy} and the equation~\ref{eqn:lrzr} relating $\tilde{l}^r$ to $z^r$.
\end{proof}

The following corollary follows from the proof of the claim above by taking $L=1$, $f_{1,j} =[m_i]$ and $\epsilon_1$ to be identically zero.

\begin{corollary} \label{cor:needed}
Let $e \in \mathcal{B}_{t+1}(\fS_{n-a})$ and $\delta \in D_{m_r,n}/\equiv^{r,r}$. Then for each $1 \leq i \leq r$ there is a constant $C$ such that $\pr_{m_r}^i \circ x^r \downarrow_{m_i}^{m_r} (\gamma_g^{-1}e) = C$ for all $[m_i] \subseteq g \in \delta$.
\end{corollary}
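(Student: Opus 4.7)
The plan is to specialize the technical argument buried inside the proof of Claim~\ref{claim:one}. In that proof, after expanding $(\pr^i_n \circ z^{r-1})\downarrow_{m_i}(e_{L,j})$ as a double sum indexed by equivalence classes $\delta \in D_{m_r,n}/\equiv^{r,r}$ and traces $\kappa \in \fS_{m_r} \times \fS_{n-m_r}$, one verifies that every summand $(\pr_{m_r}^i \circ x^r \downarrow_{m_i}^{m_r})(\gamma_{g'}^{-1} e_{L,j})$ depends only on the bin $(\delta,\kappa)$ into which $g' \in \Gamma_j(\delta,\kappa)$ falls. The corollary is precisely this constancy statement, stripped of its enveloping summation.

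Concretely, I would take the degenerate chain $L=1$, $n_1 = m_i$, with both structures equal to $\mathfrak{s} = ([m_i])$ (so trivially $\mathfrak{s} \equiv^{H}_a \mathfrak{s}$ for any $H$), and set $\epsilon_1 \equiv 0$. Then $\eta_1 = \gamma_{[m_i]}^{-1} = \id$ and $e_{L,\epsilon,\mathfrak{s}} = e$, so the set $\Gamma(\delta,\kappa) = \{g \in D_{m_r,n} : \beta^{m_r}(\gamma_g^{-1}) \in \delta,\ \tr^{m_r}(\gamma_g^{-1}) = \kappa,\ [m_i] \subseteq g\}$ simplifies to $\{g \in \delta : [m_i] \subseteq g\}$ (with $\kappa = 1$ forced automatically). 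Fix $g_1, g_2$ in this set.

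I would then reproduce the three ingredients from the proof of Claim~\ref{claim:one}: first, apply Sub-lemma~\ref{sublemma:tracetwo} (itself a specialization of Lemma~\ref{lemma:trace}) to conclude that $\tr^{m_r}(\gamma_{g_1}^{-1}\sigma_k) = \tr^{m_r}(\gamma_{g_2}^{-1}\sigma_k)$ for each coordinate $\sigma_k$ of $e$; second, use the explicit decomposition
\[ w^r = a^r \circ \Tr_t^{m_r}\circ \partial_{t+1} + (\pr_n^r \circ z^r) \downarrow_{m_r} \circ h_t^{m_r} \circ \partial_{t+1} - (\pr_n^r \circ z^r) \downarrow_{m_r} \]
together with Lemma~\ref{lemma:homotopy} and the fact that $z^r$ is $\mathfrak{D}^r_{V^d}(\SQ)$-periodic (which holds trivially for the trivial chain, since the relevant trace agreement is exactly what Sub-lemma~\ref{sublemma:tracetwo} just supplied) to conclude $w^r(\gamma_{g_1}^{-1}e) = w^r(\gamma_{g_2}^{-1}e)$.

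Finally, I invoke the compatibility condition~\eqref{eqn:xrwr} on $x^r$: the hypotheses $w^r(\gamma_{g_1}^{-1}e) = w^r(\gamma_{g_2}^{-1}e)$ and $\tr^{m_r}(\gamma_{g_1}^{-1}\sigma_0) = \tr^{m_r}(\gamma_{g_2}^{-1}\sigma_0)$ then force $x^r(\gamma_{g_1}^{-1}e) = x^r(\gamma_{g_2}^{-1}e)$, and applying $\pr_{m_r}^i$ followed by $\downarrow_{m_i}^{m_r}$ yields the desired constant $C$. The only real obstacle is bookkeeping: verifying that the trivial structure indeed satisfies the equivalence $\mathfrak{s}_1 \equiv^H_a \mathfrak{s}_2$ vacuously, and that with $\eta_L = \id$ the hypotheses of Lemma~\ref{lemma:trace} degenerate to the trivial fact that $\tr^{m_r}(\gamma_g^{-1}\sigma_0)$ depends only on the $\equiv^{r,r}$-class of $g$ together with the intersection pattern with $[m_i]$. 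Everything else is a direct citation of steps already executed in Claim~\ref{claim:one}.
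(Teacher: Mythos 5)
Your proof is correct and follows the paper's own route exactly: the paper's justification for this corollary is literally the one-liner "take $L=1$, $f_{1,j}=[m_i]$, and $\epsilon_1$ identically zero" in the proof of Claim~\ref{claim:one}, and you have fleshed that out faithfully — reducing to the bins $\Gamma(\delta,\kappa)$ with $\kappa=1$ forced, verifying constancy of $w^r$ on each bin via Sub-lemma~\ref{sublemma:tracetwo}, Lemma~\ref{lemma:homotopy}, and periodicity of $z^r$, and then invoking the compatibility condition~\eqref{eqn:xrwr} to transfer constancy from $w^r$ to $x^r$. One small caution: the parenthetical claiming that the $\mathfrak{D}^r_{V^d}(\SQ)$-periodicity of $z^r$ "holds trivially for the trivial chain" is misstated — that periodicity is the nontrivial inductive input already established in Claim~\ref{claim:one}; what is trivially satisfied is the hypothesis $\mathfrak{s}_1 \equiv^H_a \mathfrak{s}_2$ for the augmented two-step chain $m_i \leq m_r$ (since $\tr^{m_r}(\gamma_{g_j}^{-1})=1$ and $g_1 \equiv^{r,r}_a g_2$), which is what licenses applying that periodicity here.
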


The following is the most crucial result. It provides another motive behind the definition of periodicity.

\begin{claim}[{\bf Periodicity of $\sharp$-filtered kernels}] \label{claim:kernel}
Let $z^d \in \Hom_{\fS_n}(\mathcal{B}_{t+1}(\fS_n), \bigoplus_{i=1}^{d} M(m_i)_n)$ be $\SQ$-periodic. If  $\Pi^d_n \circ z^d = 0$ and $p^{\mathfrak{I}_{V^d}(\SQ)} \mid a$, then $\Pi^d_{n-a} \circ R_{t+1}(z^d) = 0$ (see Definition~\ref{def:Rmap} for the definition of the $R$ map).
\end{claim}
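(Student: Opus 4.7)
I will prove the claim by induction on the length $d$ of the $\sharp$-filtration of $V^d$. The base case $d=1$ is immediate: both $V^1=M(W_1)$ and $\tilde V^1=M(m_1)$ are $\FIsharp$-modules, so by Lemma~\ref{lemma:zero} the kernel $\ker\Pi^1=M(\ker\pi^1_{m_1})$ is an $\FIsharp$-submodule of $M(m_1)$, hence preserved by the natural projection $\lambda_a$. No periodicity or divisibility hypothesis is needed in this case.

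For the inductive step, apply the nice lift construction of \S\ref{subsection:nicelift} to the twisted cycle $l^d := 0$ (a valid $z^d$-cycle since $\Pi^d_n z^d = 0$), choosing $a^d := 0$ in Step~1. This yields $\tilde l^d$, $x^d$, $y^d \in \ker\Pi^d_n$, $c^d$, and a derived twist $z^{d-1}$ on $V^{d-1}$; by Claim~\ref{claim:one}, $z^{d-1}$ is $\mathfrak{D}^{d-1}_{V^d}(\SQ)$-periodic and $\tilde l^d$ is $\mathfrak{D}_{V^d}(\SQ)$-periodic. Combining Equations~(\ref{eqn:random1}) and~(\ref{eqn:random2}) of Definition~\ref{def:nicelift} then gives the exact identity
\[
z^d(e) + y^d(e) \;=\; \bigl(z^{d-1}(e),\ \pr_n^d \tilde l^d(\partial_{t+1}e)\bigr) \qquad\text{in } \tilde V^d_n,
\]
with both sides in $\ker\Pi^d_n$. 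Applying $\lambda_a$ componentwise reduces the desired vanishing $\Pi^d_{n-a}(R(z^d)(e))=0$ to showing both $\Pi^d_{n-a}(\lambda_a y^d(e))=0$ and $\Pi^d_{n-a}\bigl(\lambda_a z^{d-1}(e),\ \lambda_a \pr_n^d \tilde l^d \partial_{t+1}(e)\bigr)=0$.

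For the $y^d$-piece, expand $y^d(e) = \sum_{g \in D_{m_d,n}} g_*(x^d(\gamma_g^{-1}e))$ and partition $D_{m_d,n}$ under the equivalence relation $\equiv^{p^{H^{d,d}}}_a$. By Corollary~\ref{cor:needed} together with the compatibility condition on $x^d$ in Definition~\ref{def:nicelift}, the element $x^d(\gamma_g^{-1}e)$ depends only on the equivalence class $\delta$ of $g$. Terms with $\delta \subset D_{m_d,n-a}$ lie in the image of the shift, so $\lambda_a g_* = g_*$ on them, and are annihilated by $\Pi^d_{n-a}$ because $x^d$ lands in $\ker\Pi^d_{m_d}$. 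For $\delta \not\subset D_{m_d,n-a}$, all $g \in \delta$ share the same restricted part $g \cap [n-a]$, so within $\delta$ the collapsed map $\lambda_a \circ g_*$ factors through a single morphism times the scalar $|\delta|$; the first collision lemma (Lemma~\ref{lemma:collisiontwo}) then gives $|\delta| \equiv 0 \pmod p$ provided $p^{H^{d,d}+\ddH(m_d,0)} \mid a$, which follows from $\mathfrak{I}_{V^d}(\SQ) \geq H^{d,d}+\ddH(m_d,0)$ (Equation~(\ref{def:mathfrakI})). Hence this piece vanishes in characteristic $p$.

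For the second piece, $\pr_n^d \tilde l^d \partial_{t+1}(e)$ lies in the $\FIsharp$-submodule $\ker\pi^d_n = M(\ker\pi^d_{m_d})_n$, so its image under the embedding $M(m_d) \to V^d$ lies in $V^{d-1}$; lifting this image through $\Pi^{d-1}$ produces a corrected twist $z^{d-1}_* := z^{d-1} + \tilde\beta(\pr_n^d \tilde l^d \partial_{t+1})$ satisfying $\Pi^{d-1}_n z^{d-1}_* = 0$. Periodicity of $\tilde l^d$ and the $\FIsharp$-compatibility of $\tilde\beta$ ensure $z^{d-1}_*$ is $\mathfrak{D}_{V^{d-1}}(\mathfrak{D}^{d-1}_{V^d}(\SQ))$-periodic, and the recursive formulas (Equations~(\ref{def:H}),~(\ref{def:mathfrakD}),~(\ref{def:mathfrakI})) are tuned precisely so that $p^{\mathfrak{I}_{V^d}(\SQ)} \mid a$ implies $p^{\mathfrak{I}_{V^{d-1}} \circ \mathfrak{D}_{V^{d-1}}(\mathfrak{D}^{d-1}_{V^d}(\SQ))} \mid a$. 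The inductive hypothesis applied to $V^{d-1}$ yields $\Pi^{d-1}_{n-a} R(z^{d-1}_*)=0$; combined with the $\FIsharp$-invariance of the correction, this handles the second piece. The main obstacle is the $y^d$-piece: one must carefully verify that the restricted parts $g\cap[n-a]$ indeed coincide across each class $\delta$, and invoke the second collision lemma (Lemma~\ref{lemma:collision}) when propagating periodicity through the longer chains appearing in the definition of periodicity to deduce that $z^{d-1}_*$ meets the hypotheses of the inductive step.
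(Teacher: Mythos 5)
Your proof has the right ingredients (nice lift applied to $l^d=0$, Claim~\ref{claim:one}, the first collision lemma, induction on $d$), and your $y^d$-piece analysis — partitioning $D_{m_d,n}$ by $\equiv^{p^{H^{d,d}}}_a$, using Corollary~\ref{cor:needed}, and invoking Lemma~\ref{lemma:collisiontwo} — is essentially the same computation the paper performs. But you are missing the reduction to $t=-1$, and this is not a cosmetic omission: it is what makes the argument close.

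The paper first proves the claim for $t=-1$, where $\mathcal{B}_t(\fS_n)$ is by fiat the trivial module, so $\tilde{l}^d=0$. Equation~\eqref{eqn:random1} then forces $\pr^d_n(z^d + y^d)=0$, and the identity collapses to $z^d + y^d = (z^{d-1},\,0)$ with \emph{no} $\pr^d_n\tilde{l}^d\partial_{t+1}$ term at all; the inductive hypothesis applies directly to $z^{d-1}$, which by Claim~\ref{claim:one} is already $\mathfrak{D}^{d-1}_{V^d}(\SQ)$-periodic and already satisfies $\Pi^{d-1}_n z^{d-1}=0$. The general $t$ case is then obtained by replaying the $t=-1$ argument on $\breve{z}^d(\sigma):=z^d(\sigma e)$ for each fixed $e\in\mathcal{B}_{t+1}(\fS_{n-a})$. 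Your proposal instead keeps $t$ general, so the extra $d$-th component $\pr^d_n\tilde{l}^d\partial_{t+1}(e)$ survives, and you must (i) lift its image in $V^{d-1}$ through $\Pi^{d-1}$ to get $\tilde\beta$, and (ii) show the corrected twist $z^{d-1}_\ast=z^{d-1}+\tilde\beta(\cdots)$ is periodic. Step (ii) is where the proof breaks: $\tilde\beta$ is only a $\bk[\fS_n]$-linear section of $\Pi^{d-1}_n$ chosen for one value of $n$; there is no reason it is an $\FI$- or $\FIsharp$-morphism, no reason it should be sequential, and no degree-count reason a lift $M(\ker\pi^d_{m_d})\to\tilde V^{d-1}$ should even exist as an $\FI$-map (the source is generated in degree $m_d$, the target in degrees $\le\max_{i<d}m_i$, which can be strictly smaller). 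Without such structure the $\SQ$-periodicity of $\tilde\beta(\pr^d_n\tilde{l}^d\partial_{t+1})$ is an assertion, not a consequence of Claim~\ref{claim:one} or either collision lemma; "periodicity of $\tilde{l}^d$ and $\FIsharp$-compatibility of $\tilde\beta$" is not a proof. Your closing sentence ("one must carefully verify \dots to deduce that $z^{d-1}_\ast$ meets the hypotheses") concedes exactly this gap. The clean fix is the paper's: prove the $t=-1$ case first, where $\tilde{l}^d=0$ kills the offending term, then deduce general $t$ by evaluating on each bar chain.
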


\begin{proof}
First we deal with the case when $t=-1$. Our proof is by induction on $d$. The case $d=1$ is trivial because then $V^d$ is of the form $M(W)$ and $R_{t+1}$ is a well-defined map on $M(W)$ (see Definition~\ref{def:Rmap}). For $d>1$ note that $l^d = 0 \in \Hom_{\fS_n}(\mathcal{B}_{t}(\fS_n), V^d_n) = 0$ is a twisted $z^d$-cycle (because $\Pi_n^d \circ z^d = 0$). By the nice lift construction, a nice lift of $l^d$ is $\tilde{l}^d \coloneq 0$ (it has to be zero because for $t=-1$, $\mathcal{B}_t(\fS_n)$ is defined to be the trivial space) and there are parameters $x^r$, $y^r$ and $z^r$ satisfying the conditions in Definition~\ref{def:nicelift} (definition of a nice lift). It follows from that definition that $\Pi^d_n \circ y^d =0$, $z^d = ({''c}^d, - \pr^d_n y^d)$ and $z^{d-1} = z^d + y^d$. Hence $\Pi^{d-1}_n \circ z^{d-1} =0$. By Claim~\ref{claim:one}, $z^{d-1}$ is $\mathfrak{D}^{d-1}_{V^d}(\SQ)$-periodic. Thus by induction on $d$, $\Pi^{d-1}_{n-a} \circ R_{t+1}(z^{d-1}) =0$ (note that $p^{\mathfrak{I}_{V^{d-1}}(\mathfrak{D}^{d-1}_{V^d}(\SQ))} \mid p^{\mathfrak{I}_{V^d}(\SQ)}$). Hence, to finish the proof, it is enough to show that $\Pi^{d}_{n-a} \circ R_{t+1} (y^d) = 0$ whenever $p^{\mathfrak{I}_{V^d}(\SQ)} \mid a$. For that let $e \in \mathcal{B}_{t+1}(\fS_{n-a})$. By Lemma~\ref{lemma:xryr}, for each $1 \leq i \leq d$, we have the following:
\begin{eqnarray}
\pr_n^i \circ y^d \downarrow_{m_i}^n (e) &=& \sum_{g \in D_{m_d, n}, \; [m_i] \subseteq g} \pr_{m_d}^i \circ x^d \downarrow_{m_i}^{m_d}(\gamma_g^{-1}e) \nonumber \\
&=& \sum_{\delta \in D_{m_d, n}/\equiv^{d,d}} \sum_{\substack{g \in \delta \\ [m_i] \subseteq g}} \pr_{m_d}^i \circ x^d \downarrow_{m_i}^{m_d}(\gamma_g^{-1}e) \nonumber \\
&=& \sum_{\delta \in D_{m_d, n-a}/\equiv^{d,d}} \sum_{\substack{g \in \delta \\ [m_i] \subseteq g}} \pr_{m_d}^i \circ x^d \downarrow_{m_i}^{m_d}(\gamma_g^{-1}e) \label{eqn:collision}\\
&=& \sum_{g \in D_{m_d, n-a}, \; [m_i] \subseteq g} \pr_{m_d}^i \circ x^d \downarrow_{m_i}^{m_d}(\gamma_g^{-1}e) \nonumber \\
&=& (T^{i,d}_{n-a} \circ R_{t+1} ( \pr_{m_d}^i \circ x^d \uparrow_{m_d}^n)) \downarrow_{m_i}^{n-a} (e) \nonumber
\end{eqnarray} Here equation~\ref{eqn:collision} is another instance of collision and follows from Corollary~\ref{cor:needed} to Claim~\ref{claim:one} and the first collision lemma  (this is Lemma~\ref{lemma:collisiontwo}; to be able to apply this lemma we need $p^{\SQ_d + \ddH(m_d, m_i)} \mid a$ which is true by hypothesis on $a$). This implies, \begin{align*}
R_{t+1}(y^d)(e) & =  R_{t+1}(T^{\tilde{V}^d, m_d}_n \circ (x^d \uparrow_{m_d}^n)) \nonumber \\
& = T^{\tilde{V}^d, m_d}_{n-a} \circ R_{t+1}(x^d \uparrow_{m_d}^n) \in \ker \Pi^d_{n-a}
\end{align*} completing the proof in the case when $t=-1$.

For the general case, note that the domain of $R_{t+1}(z^d)$ is $\mathcal{B}_{t+1}(\fS_{n-a})$. Let $e \in \mathcal{B}_{t+1}(\fS_{n-a})$ and define a function $\breve{z}^d \in \Hom_{\fS_n}(\mathcal{B}_{0}(\fS_n), \bigoplus_{i=1}^{d} M(m_i)_n)$ by $\breve{z}^d(\sigma) = z^d(\sigma e)$. Hypothesis on $z^d$ implies that $\Pi^d_n \circ \breve{z}^d =0$ and that $\breve{z}^d$ is $\{H^{i,d}\}$-periodic. By the $t=-1$ case above, we have $\Pi^d_{n-a} \circ R_0(\breve{z}^d) = 0$. Hence $\Pi^d_{n-a} \circ R_{t+1}(z^d)(e) = \Pi^d_{n-a} \circ R_0(\breve{z}^d)(1_{\fS_{n-a}}) =0$, completing the proof.
\end{proof}

\subsection{Well-definedness and commutativity results}
\label{subsection:well-definedness}

The aim of \S \ref{subsection:well-definedness} is to use the technical results in \S \ref{subsection:periodicityofnicelift} to prove some well-definedness and commutativity results.

\begin{claim} \label{claim:two}
 Let $\SQ$ be a $d$-length sequence of nonnegative integers. With the notation of the nice lift construction, let $z^d$ be $\SQ$-periodic. Let $\tilde{l}^d \in \Hom_{\fS_{n}}(\mathcal{B}_{t}(\fS_n), \bigoplus_{i=1}^{d} M(m_i)_n)$ be a nice lift of a $z^d$-cycle $l^d$. If $p^{\mathfrak{I}(\SQ)} \mid a$, then $\Pi^d_{n-a} \circ R_t (\tilde{l}^d)$ is a $R_{t+1}(z^d)$ cycle in $\Hom_{\fS_{n-a}}(\mathcal{B}_{t}(\fS_n), V^d_{n-a})$ and $R_t(\tilde{l}^d)$ is one of its nice lifts.
\end{claim}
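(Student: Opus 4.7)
The plan hinges on two ingredients: naturality of every map used in the nice lift construction with respect to $R$, together with a single application of Claim~\ref{claim:kernel} to eliminate the resulting defect. The second assertion will be handled by downward induction on $r$ from $r=d$ to $r=1$, showing that the $R$-images of the nice-lift parameters for $l^r$ constitute a nice lift of $\Pi^r_{n-a} \circ R_t(\tilde l^r)$; the first assertion is the top ($r=d$) case of the cycle condition in this package.

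For the twisted-cycle condition at level $r=d$, set $k^d \coloneq \tilde l^d \circ \partial_{t+1} - z^d$. From $\Pi^d_n \circ \tilde l^d \equiv l^d$ and $l^d \circ \partial_{t+1} = \Pi^d_n \circ z^d$ we deduce $\Pi^d_n \circ k^d = 0$. By Claim~\ref{claim:one}, $\tilde l^d$ is $\mathfrak{D}_{V^d}(\SQ)$-periodic; post-composing with $\partial_{t+1}$ preserves periodicity (the bar differential only deletes entries from the tuple of $\sigma_i$'s, and the defining conditions of Definition~\ref{def:periodicity} persist under such deletions). Since $z^d$ is $\SQ$-periodic and $H^{i,d}\leq H^{i,i}$ in equation~\eqref{def:H}, $z^d$ is also $\mathfrak{D}_{V^d}(\SQ)$-periodic, so Remark~\ref{remark:periodgcd} shows $k^d$ is $\mathfrak{D}_{V^d}(\SQ)$-periodic. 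A direct check of the recursion~\eqref{def:H} reveals that it is idempotent on sequences of the form $(H^{i,i})_i$, whence $\mathfrak{I}_{V^d}\bigl(\mathfrak{D}_{V^d}(\SQ)\bigr) = \mathfrak{I}_{V^d}(\SQ)$. Thus the hypothesis $p^{\mathfrak{I}(\SQ)}\mid a$ permits invoking Claim~\ref{claim:kernel} for $k^d$, giving $\Pi^d_{n-a} \circ R_{t+1}(k^d) = 0$. Since restriction of the domain and $\lambda_a$ commute with the bar differential (by the compatibility relations in Remark~\ref{remark:compatibility}), we conclude
\[
\Pi^d_{n-a} \circ R_t(\tilde l^d) \circ \partial_{t+1} \;=\; \Pi^d_{n-a} \circ R_{t+1}(\tilde l^d \circ \partial_{t+1}) \;=\; \Pi^d_{n-a} \circ R_{t+1}(z^d),
\]
which is the required cycle condition.

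For the second assertion, define the candidate parameters at each level $r$ as the $R$-images of the original ones: $z^r_\star \coloneq R_{t+1}(z^r)$, $l^r_\star \coloneq \Pi^r_{n-a}\circ R_t(\tilde l^r)$, $a^r_\star \coloneq R_t(a^r)$, $x^r_\star \coloneq R_{t+1}(x^r)$, $y^r_\star \coloneq R_{t+1}(y^r)$, $c^r_\star \coloneq R_t(c^r)$, and $\tilde l^r_\star \coloneq R_t(\tilde l^r)$. Each of the defining equations \eqref{eqn:lrzr}, \eqref{eqn:xrwr}, \eqref{eqn:random1}, \eqref{eqn:random2} and conditions (7)--(8) in Definition~\ref{def:nicelift} uses only the maps $\tr$, $\Tr$, $\downarrow_m$, $\uparrow_m$, $h_t^m$, $\iota^m_t$, $\pr^r$, $\pr^{\leq r}$, $\Pi^r$, and transfers $T^{i,r}$. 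For $\tr,\Tr,h_t^m,\iota^m_t$ this commutativity with $R$ is precisely Remark~\ref{remark:compatibility}; for $\downarrow_m$ and $\uparrow_m$ it follows directly from their concrete sum-over-$D_{m,n}$ descriptions together with the fact that $\lambda_a$ is the identity on the $\bk[D_{m,n-a}]$-summand and zero elsewhere. Applying $R$ to each defining equation for the original nice lift produces the corresponding equation for the starred parameters; in particular the coboundary $\Pi^d_n \circ \tilde l^d - l^d = b\circ\partial_t$ maps under $R_t$ to $R_t(b)\circ\partial_t$, yielding $\Pi^d_{n-a}\circ R_t(\tilde l^d) \equiv l^d_\star$. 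The intermediate cycle conditions ($r<d$) are dispatched by repeating the $k^d$-argument with $V^r$ in place of $V^d$, using Claim~\ref{claim:one} for the periodicity of $z^r$ and the monotonicity of the recursion to check that the divisibility $p^{\mathfrak{I}(\SQ)}\mid a$ dominates $p^{\mathfrak{I}_{V^r}\circ\mathfrak{D}_{V^r}\circ\mathfrak{D}^r_{V^d}(\SQ)}\mid a$.

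The main obstacle is the $R$-naturality of the transfer $T^{i,r}$: its defining sum runs over $D_{m_r,n}$ in the original setting but only over $D_{m_r,n-a}$ after $R$, so terms indexed by $f\in D_{m_r,n}\setminus D_{m_r,n-a}$ must cancel modulo $p$. This is exactly the collision argument employed in Claim~\ref{claim:kernel}: Corollary~\ref{cor:needed} shows that $\pr^i_{m_r}\circ x^r\downarrow^{m_r}_{m_i}$ is constant on $\equiv^{r,r}_a$-equivalence classes $\delta\subseteq D_{m_r,n}$, while the divisibility $p^{H^{r,r}+\ddH(m_r, m_i)}\mid a$ (ensured by $p^{\mathfrak{I}(\SQ)}\mid a$) forces the size of each class $\delta\not\subseteq D_{m_r,n-a}$ to be divisible by $p$ via Lemma~\ref{lemma:collisiontwo}. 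The compatibility condition \eqref{eqn:xrwr} on $x^r_\star$ is then inherited directly from $x^r$ via the inclusion $\mathcal B_{t+1}(\fS_{n-a})\subseteq \mathcal B_{t+1}(\fS_n)$. This completes the verification that $R_t(\tilde l^d)$ is a nice lift.
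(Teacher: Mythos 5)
Your proposal captures the same essential strategy as the paper's proof: reduce everything to checking that the defining data of Definition~\ref{def:nicelift} is preserved under restriction, with the genuine technical content being the transfer-commutativity $T^{i,r}_{n-a}\circ R_{t+1}(\pr^i_{m_r}\circ x^r\uparrow^n_{m_r}) = R_{t+1}(T^{i,r}_n\circ(\pr^i_{m_r}\circ x^r\uparrow^n_{m_r}))$, which both you and the paper dispatch via Corollary~\ref{cor:needed} and Lemma~\ref{lemma:collisiontwo}. The idempotence observation $\mathfrak{I}_{V^d}\circ\mathfrak{D}_{V^d}(\SQ)=\mathfrak{I}_{V^d}(\SQ)$ is correct and a clean way to see that the stated divisibility $p^{\mathfrak{I}(\SQ)}\mid a$ already suffices for the application of Claim~\ref{claim:kernel} to $k^d$; the paper does not state this, but it is needed for the same reason. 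Your explicit $k^d$ treatment of the cycle condition is also a welcome clarification of something the paper leaves somewhat implicit for $r>1$.

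Two places deserve tightening. First, the paper's proof proceeds by \emph{upward} induction on $r$, establishing for $r=1$ (where everything is $\FIsharp$) that $R_t(\tilde l^1)$ is a nice lift, and then using the level-$(r-1)$ conclusion (together with $'z^{r-1}=R_{t+1}(z^{r-1})$, i.e., the collision step) to build the level-$r$ conclusion. Your "downward induction from $r=d$ to $r=1$" inverts this; since a nice lift of level $r$ contains as a component a nice lift of level $r-1$, the claim at level $r$ genuinely depends on the claim at level $r-1$, not the other way around, so the inductive order should be reversed. As written, it reads more as a level-by-level verification than an actual induction, which is workable, but then the claim that the argument is inductive is not accurate. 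Second, your starred parameters are not mutually consistent as stated. You set $l^{r}_\star = \Pi^{r}_{n-a}\circ R_t(\tilde l^{r})$ and $c^r_\star = R_t(c^r)$, but condition (7) of Definition~\ref{def:nicelift} requires $l^{r-1}_\star = \Pi^{r-1}_{n-a}\circ c^r_\star$ \emph{exactly}, and $\Pi^{r-1}_n\circ\tilde l^{r-1}$ agrees with $\Pi^{r-1}_n\circ c^r = l^{r-1}$ only up to a coboundary, a discrepancy that does not automatically vanish under $R$. The paper avoids this by running the nice lift construction on the restricted cycle and showing the resulting nice lift can be taken to have $R_t(\tilde l^{r-1})$ as its lower-level piece (this is where the upward induction earns its keep). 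You either need to adopt the paper's framing, or redefine $l^r_\star$ for $r<d$ via $c^{r+1}_\star$ and then verify, by another instance of Claim~\ref{claim:kernel}, that $\Pi^r_{n-a}\circ R_t(\tilde l^r - c^{r+1})$ is a coboundary so that $R_t(\tilde l^r)$ is indeed a nice lift of that $l^r_\star$. Neither fix is hard, and the collision machinery you already deploy covers it, but the proposal as written glosses over this point.
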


\begin{proof} Proof is very similar to the one for Claim~\ref{claim:kernel}. We still provide details.

Note that during the construction of $\tilde{l}^d$ we define $\tilde{l}^r$ for $ 1 \leq r \leq d$. We prove by upward induction on $r$ that $\Pi^r_{n-a}  \circ R_t (\tilde{l}^r)$ is a $R_{t+1}  (z^r)$-cycle and $R_t (\tilde{l}^r)$ is one of its nice lifts. For that let $\Res^r:\Hom_{\fS_{m_r} \times \fS_{n-m_r}}(\mathcal{B}_{t}(\fS_n), M(m_r)_{m_r}) \ra \Hom_{\fS_{m_r} \times \fS_{n-a-m_r}}(\mathcal{B}_{t}(\fS_{n-a}), M(m_r)_{m_r})$ be the natural restriction map and $\varsigma_t: \mathcal{B}_{t}(\fS_{m_r} \times \fS_{n-a-m_r}) \ra \mathcal{B}_{t}(\fS_{m_r} \times \fS_{n-m_r})$ be the natural inclusion. Then we have
\begin{eqnarray} \pr_{n-a}^r  \circ R_t (\tilde{l}^r) &=& \Res^r(\pr_n^r \circ \  \tilde{l}^r \downarrow^n_{m_r}) \uparrow_{m_r}^{n-a} \nonumber \\
&=& \Res^r(a^r \circ \Tr_{t,n}^{m_r} + (\pr_n^r \circ z^r) \downarrow_{m_r}^n \circ h_{t,n}^{m_r} ) \uparrow_{m_r}^{n-a} \nonumber \\
&=& (a^r \circ \varsigma_t \circ \Tr_{t,n-a}^{m_r} + \Res^r(\pr_n^r \circ z^r \downarrow_{m_r}^n) \circ h_{t,n-a}^{m_r}) \uparrow_{m_r}^{n-a} \nonumber \\
&=& (a^r \circ \varsigma_t \circ \Tr_{t,n-a}^{m_r} + (\pr_{n-a}^r \circ R_t ( z^r)) \downarrow_{m_r}^{n-a} \circ h_{t,n-a}^{m_r}) \uparrow_{m_r}^{n-a} \label{eqn:restriction} 
\end{eqnarray} 

For the case $r=1$, $V^r$ is an $\FIsharp$-module and hence it is clear that $\Pi^1_{n-a}  \circ R_t (\tilde{l}^1)$ is a $R_{t+1} (z^1)$ cycle and equation \ref{eqn:restriction} shows that $R_t (\tilde{l}^1)$ is a nice lift.

For $r>1$, equation \ref{eqn:restriction} implies that a nice lift of $\Pi^r_{n-a}  \circ R_t (\tilde{l}^r)$ is given by $(\tilde{o}, \pr_{n-a}^r \circ R_t (\tilde{l}^r))$ where $\tilde{o}$ is a nice lift of the $'z^{r-1}$ cycle $\Pi^{r-1}_{n-a} \circ R_t (\tilde{l}^{r-1})$ where, from the nice lift construction, we have $\pr_{n-a}^i \circ ('z^{r-1}) =\pr_{n-a}^i \circ R_{t+1} (''c^r) + T^{i,r}_{n-a} \circ  R_{t+1} (\pr_{m_r}^i \circ x^r \uparrow_{m_r}^n)$. Hence to complete the proof of the claim it is enough to show that $'z^{r-1} = R_{t+1} ( z^{r-1})$ because then,  by induction on $r$, $\tilde{o}$ may be replaced by $R_t ( \tilde{l}^{r-1})$. And that is equivalent to showing the following commutativity relation for $1 \leq i \leq r$,\[T^{i,r}_{n-a} \circ R_{t+1} ( \pr_{m_r}^i \circ x^r \uparrow_{m_r}^n) =  R_{t+1} ( T_n^{i,r} \circ (\pr_{m_r}^i \circ x^r \uparrow_{m_r}^n)).\] For that let $e \in \mathcal{B}_{t+1}(\fS_{n-a})$. As in the proof of Claim~\ref{claim:kernel} we have
\begin{eqnarray}
\pr_n^i \circ y^r \downarrow_{m_i}^n (e) &=& \sum_{g \in D_{m_r, n}, \; [m_i] \subseteq g} \pr_{m_r}^i \circ x^r \downarrow_{m_i}^{m_r}(\gamma_g^{-1}e) \nonumber \\
&=& \sum_{\delta \in D_{m_r, n}/\equiv^{r,r}} \sum_{\substack{g \in \delta \\ [m_i] \subseteq g}} \pr_{m_r}^i \circ x^r \downarrow_{m_i}^{m_r}(\gamma_g^{-1}e) \nonumber \\
&=& \sum_{\delta \in D_{m_r, n-a}/\equiv^{r,r}} \sum_{\substack{g \in \delta \\ [m_i] \subseteq g}} \pr_{m_r}^i \circ x^r \downarrow_{m_i}^{m_r}(\gamma_g^{-1}e) \nonumber\\
&=& \sum_{g \in D_{m_r, n-a}, \; [m_i] \subseteq g} \pr_{m_r}^i \circ x^r \downarrow_{m_i}^{m_r}(\gamma_g^{-1}e) \nonumber \\
&=& (T^{i,r}_{n-a} \circ R_{t+1} ( \pr_{m_r}^i \circ x^r \uparrow_{m_r}^n)) \downarrow_{m_i}^{n-a} (e) \nonumber
\end{eqnarray} This implies,
\begin{eqnarray}
(T^{i,r}_{n-a} \circ R_{t+1} ( \pr_{m_r}^i \circ x^r \uparrow_{m_r}^n)) \downarrow_{m_i}^{n-a} &=& \Res^i(\pr_n^i \circ y^r \downarrow_{m_i}^n)\nonumber \\ &=& (R_{t+1} (\pr_n^i \circ y^r)) \downarrow_{m_i}^{n-a} \nonumber \\ &=& ( R_{t+1} ( T_n^{i,r} \circ (\pr_{m_r}^i \circ x^r \uparrow_{m_r}^n))) \downarrow_{m_i}^{n-a} \nonumber
\end{eqnarray} completing the proof of the claim.
\end{proof}

Note that $\mathfrak{I} \circ \mathfrak{D}(\SQ) \geq \mathfrak{I}(\SQ)$ for any $\SQ \in \mathbb{Z}^d_{\geq 0}$. We claim the following.

\begin{claim}[{\bf Well-definedness of $\mathfrak{R}$}] \label{claim:welldefinedness} Let $z^d$ be $\SQ$-periodic and assume that $p^{\mathfrak{I} \circ \mathfrak{D}(\SQ)} \mid a$. Then the map \[\mathfrak{R}_{t, V^d}^{n, a, z^d}: H^{t,z^d}(\fS_n,V_n^d) \ra H^{t,R_{t+1}(z^d)}(\fS_{n-a},V_{n-a}^d)\] defined by $[l^d] \mapsto [\Pi^d_{n-a} \circ R_t (\tilde{l}^d)]$ is well-defined.
\end{claim}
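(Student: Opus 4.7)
The claim has two components: (i) that $\Pi^d_{n-a} \circ R_t(\tilde{l}^d)$ is indeed an $R_{t+1}(z^d)$-cycle, and (ii) that its equivalence class is independent of both the choices implicit in the nice-lift construction and the choice of representative $l^d$ of the class $[l^d]$. Component (i) is immediate from Claim~\ref{claim:two}: that claim's hypothesis $p^{\mathfrak{I}(\SQ)} \mid a$ is implied by ours because $\mathfrak{I} \circ \mathfrak{D}(\SQ) \geq \mathfrak{I}(\SQ)$, a direct consequence of the recursive definitions (the operator $\mathfrak{D}$ produces only non-decreasing values as the second index $r$ decreases, so feeding $\mathfrak{D}(\SQ)$ back into $\mathfrak{I}$ yields at least the original value).

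For (ii), suppose $\tilde{l}^d_1$ and $\tilde{l}^d_2$ are two nice lifts of representatives $l^d_1 \equiv l^d_2$ of $[l^d]$, and set $\xi := \tilde{l}^d_1 - \tilde{l}^d_2$. By Claim~\ref{claim:one} each $\tilde{l}^d_j$ is $\mathfrak{D}(\SQ)$-periodic, so Remark~\ref{remark:periodgcd} makes $\xi$ itself $\mathfrak{D}(\SQ)$-periodic. The nice-lift relation $\Pi^d_n \circ \tilde{l}^d_j \equiv l^d_j$ together with $l^d_1 \equiv l^d_2$ yields $\Pi^d_n \circ \xi = \Pi^d_n \circ b \circ \partial_t$ for some $b \in \Hom_{\fS_n}(\mathcal{B}_{t-1}(\fS_n), \tilde{V}^d_n)$, so $\eta := \xi - b \circ \partial_t$ lands in $\ker \Pi^d_n$. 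Since $R$ commutes with the classical differential (so $R_t(b \circ \partial_t) = R_{t-1}(b) \circ \partial_t$, which post-composed with $\Pi^d_{n-a}$ is a classical coboundary at level $n-a$), it suffices to prove $\Pi^d_{n-a} \circ R_t(\eta) = 0$. This is exactly what Claim~\ref{claim:kernel}, applied with its cohomological degree shifted by one, gives when $\eta$ is $\mathfrak{D}(\SQ)$-periodic; the divisibility hypothesis $p^{\mathfrak{I}(\mathfrak{D}(\SQ))} \mid a$ of that claim is precisely our standing assumption.

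The main obstacle is that the preceding step requires a coboundary witness $b$ such that $b \circ \partial_t$ is also $\mathfrak{D}(\SQ)$-periodic, ensuring $\eta$ inherits periodicity from $\xi$. I plan to secure such a $b$ by running the nice-lift construction on the classical cycle $\Pi^d_n \circ \xi$ (viewed with zero twist, observing it is also a classical coboundary), and extracting from it the Step-1 style coboundary correction ``$b^r \circ \partial_t$''; Claim~\ref{claim:one} then ensures this witness carries the required $\mathfrak{D}(\SQ)$-periodicity. Substituting this choice of $b$ makes $\eta$ $\mathfrak{D}(\SQ)$-periodic by Remark~\ref{remark:periodgcd}, and Claim~\ref{claim:kernel} completes the argument. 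The principal delicacy will be the bookkeeping to confirm that the periodicity constants produced by the auxiliary nice-lift construction on $\Pi^d_n \circ \xi$ agree with those already carried by $\xi$, an accounting parallel to the one already executed in the proof of Claim~\ref{claim:two}.
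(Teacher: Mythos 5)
Your overall structure is reasonable, and component (i) together with your use of Remark~\ref{remark:periodgcd} to get $\mathfrak{D}(\SQ)$-periodicity of $\xi := \tilde{l}^d_1 - \tilde{l}^d_2$ match the paper's setup. But your route for (ii) diverges from the paper's, and the divergence is where the gap lives. You want to decompose $\xi = \eta + b\circ\partial_t$ with $\eta \in \ker\Pi^d_n$ periodic, so that Claim~\ref{claim:kernel} applies directly to $\eta$. You correctly flag that this requires a coboundary witness $b$ for which $b\circ\partial_t$ is $\mathfrak{D}(\SQ)$-periodic, and then propose to produce one by ``extracting the Step-1 coboundary correction'' from an auxiliary nice-lift run on $\Pi^d_n\circ\xi$. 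That proposal does not close the gap: the Step-1 corrections $b^r$ of the construction live in $\Hom_{\fS_n}(\mathcal B_{t-1}(\fS_n),V^r_n)$, not in the cover $\tilde V^d_n$, so they are not the kind of witness your decomposition needs; and even setting that aside, the nice-lift construction applied to $\Pi^d_n\circ\xi$ produces a lift satisfying $\Pi^d_n\circ(\text{lift})\equiv\Pi^d_n\circ\xi$, which gives no control over the particular coboundary separating $\xi$ from $\ker\Pi^d_n$. Nothing in the machinery you cite certifies that such a periodic $b$ exists.

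The paper sidesteps this obstruction entirely. It writes $\Pi^d_n\circ\xi = b_3\circ\partial_t$ with $b_3 := b+b_1-b_2 \in \Hom_{\fS_n}(\mathcal B_{t-1}(\fS_n),V^d_n)$, then observes that $b_3$ is itself a twisted $\xi$-cycle of cohomological degree $t-1$. Since $\xi$ is $\mathfrak{D}(\SQ)$-periodic (Claim~\ref{claim:one}), one takes a nice lift $\tilde b_3$ of $b_3$ and invokes Claim~\ref{claim:two} with the period $\mathfrak{D}(\SQ)$: the hypothesis $p^{\mathfrak{I}\circ\mathfrak{D}(\SQ)}\mid a$ then yields that $\Pi^d_{n-a}\circ R_{t-1}(\tilde b_3)$ is an $R_t(\xi)$-cycle, i.e.\ $\Pi^d_{n-a}\circ R_t(\xi) = \Pi^d_{n-a}\circ R_{t-1}(\tilde b_3)\circ\partial_t$, which is a coboundary. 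The key structural point you are missing is that $\xi$ should be used as the \emph{twist} of a twisted cycle one degree down and fed into Claim~\ref{claim:two}, rather than corrected into the kernel and fed into Claim~\ref{claim:kernel}; the former asks nothing about periodicity of a coboundary witness and is what the hypotheses of Claim~\ref{claim:welldefinedness} are tailored to.
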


\begin{proof}
Let $l_j \in [l^d]$, $j \in \{1,2\}$. Let $\tilde{l}_j$ be any of their nice lifts. We have $l_1 -l_2 = b \circ \partial_{t}$ for some $b \in \Hom_{\fS_{n}}(\mathcal{B}_{t-1}(\fS_n), V_n^d)$. Also, $\Pi^d_{n} \circ \tilde{l}_j = l_j + b_j \circ \partial_t$ for some $b_j \in \Hom_{\fS_{n}}(\mathcal{B}_{t-1}(\fS_n), V_n^d)$. Combining these equations yields $\Pi^d_{n} \circ (\tilde{l}_1 - \tilde{l}_2) = (b+b_1-b_2) \circ \partial_t$. This shows that $b_3\coloneq b+b_1-b_2$ is a twisted $(\tilde{l}_1 - \tilde{l}_2)$-cycle. Let $\tilde{b}_3$ be one of its nice lifts. By Claim~\ref{claim:one}, $(\tilde{l}_1 - \tilde{l}_2)$ is $\mathfrak{D}(\SQ)$-periodic. Hence by Claim~\ref{claim:two}, $\Pi^d_{n-a} \circ R_{t-1} (\tilde{b}_3)$ is a $R_{t}(\tilde{l}_1 - \tilde{l}_2)$ cycle if $p^{\mathfrak{I} \circ \mathfrak{D}(\SQ)} \mid a$. This translates into $\Pi^d_{n-a} \circ R_{t}(\tilde{l}_1 - \tilde{l}_2)=\Pi^d_{n-a} \circ R_{t-1} (\tilde{b}_3) \circ \partial_t$, completing the proof because $\Pi^d_{n-a} \circ R_{t-1} (\tilde{b}_3) \circ \partial_t$ is a boundary. 
\end{proof}

\begin{definition}
\label{def:phi-star}
For a sequential map $\phi : V \ra W$ corresponding to $f_{\phi}: S \ra T$ as in Definition~\ref{def:sequential}, we define a map $\phi_{\star}: \mathbb{Z}^{d_1}_{\geq 0} \ra \mathbb{Z}^{d_2}_{\geq 0}$ by $\phi_{\star} ((H^i)_{1 \leq i \leq d_1}) = (G^k)_{1 \leq k \leq d_2}$ where $G^k$ is given by \[G^k =
\begin{cases}
H^{f_{\phi}^{-1}(k)}, & \text{if } k \in T  \\
0, & \text{otherwise } 
\end{cases}
 \qedhere \]
\end{definition}

\begin{remark} With the notations of Definition~\ref{def:sequential}, Definition~\ref{def:twistedcyles} and Definition~\ref{def:phi-star}, let $z^{d_1} \in \Hom_{\fS_{n}}(\mathcal{B}_{t+1}(\fS_n), \tilde{V}^{d_1}_n)$ be $\SQ$-periodic then, 
\begin{enumerate}
\item $z^{d_1} \circ \partial_{t+2}$ is $\SQ$-periodic,
\item  $\tilde{\phi}_n \circ z^{d_1}$ is $\phi_{\star}(\SQ)$-periodic,  and
\item $R_{t+1}(\tilde{\phi}_{n} \circ z^{d_1}) = \tilde{\phi}_{n-a} \circ R_{t+1}(z^{d_1})$. \qedhere
\end{enumerate} 
\end{remark}

Note that $H^{t,0}(\fS_n,V_n^d)$ is same as the usual cohomology group $H^{t}(\fS_n,V_n^d)$. We have the following lemma.

\begin{lemma} \label{lemma:sequentialcommutativity}

Let $\phi : V \ra  W$ be a sequential map as in Definition~\ref{def:sequential} and let $\SQ$ be the sequence of length $d_1$ consisting only of zeros. If $p^{\max\{\mathfrak{I}_W\circ \mathfrak{D}_W \circ \phi_{\star} \circ \mathfrak{D}_V(\SQ),\;  \mathfrak{I}_V\circ \mathfrak{D}_V(\SQ) \}} \mid a$, then the diagram in Figure~\ref{Fig:14} commutes.
\begin{figure}[h]
\centering
\begin{tikzcd} 
H^{t}(\fS_n,V^{d_1}_n) \ar{rr}{\bar{\phi}_{n,t}} \ar{d}{\mathfrak{R}^{n,a,0}_{t,V^{d_1}}} & & H^{t}(\fS_n,W^{d_2}_n) \ar{d}{\mathfrak{R}^{n,a,0}_{t,W^{d_2}}} \\ H^{t}(\fS_{n-a},V^{d_1}_{n-a}) \ar{rr}{\bar{\phi}_{n-a,t}}  & & H^{t}(\fS_{n-a},W^{d_2}_{n-a})
\end{tikzcd}
\caption{} \label{Fig:14}
\end{figure}
\end{lemma}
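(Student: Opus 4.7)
The plan is to chase a cocycle around the square and reduce commutativity to a coboundary computation controlled by Claim~\ref{claim:two}. Fix a cocycle $l \in \Hom_{\fS_n}(\mathcal{B}_t(\fS_n), V^{d_1}_n)$ representing a class in $H^t(\fS_n, V^{d_1}_n)$, and choose a nice lift $\tilde{l}$ of $l$ (with twist $0$). By Claim~\ref{claim:one}, $\tilde{l}$ is $\mathfrak{D}_V(\SQ)$-periodic; by the remark before the lemma, $\tilde{\phi}_n \circ \tilde{l}$ is $\phi_\star \circ \mathfrak{D}_V(\SQ)$-periodic; and by Remark~\ref{remark:RcommuteswithSequential}, $R_t(\tilde{\phi}_n \circ \tilde{l}) = \tilde{\phi}_{n-a} \circ R_t(\tilde{l})$. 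Going counterclockwise around the diagram thus yields
\[\bar{\phi}_{n-a,t} \circ \mathfrak{R}^{n,a,0}_{t,V^{d_1}}[l] \;=\; [\phi_{n-a} \circ \Pi^{d_1}_{n-a} \circ R_t(\tilde{l})] \;=\; [\Pi^{d_2}_{n-a} \circ R_t(\tilde{\phi}_n \circ \tilde{l})].\]
For the clockwise direction, pick a nice lift $\tilde{m}$ of the cocycle $\phi_n \circ l$ (which is $\mathfrak{D}_W(\mathbf{0})$-periodic by Claim~\ref{claim:one}, where $\mathbf{0}$ denotes the zero sequence of the appropriate length), so that $\mathfrak{R}^{n,a,0}_{t,W^{d_2}} \circ \bar{\phi}_{n,t}[l] = [\Pi^{d_2}_{n-a} \circ R_t(\tilde{m})]$.

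It then suffices to show $\Pi^{d_2}_{n-a} \circ R_t(\Delta) \equiv 0$, where $\Delta \coloneq \tilde{m} - \tilde{\phi}_n \circ \tilde{l}$. Since $\Pi^{d_2}_n \circ \tilde{m} \equiv \phi_n \circ l \equiv \phi_n \circ \Pi^{d_1}_n \circ \tilde{l} = \Pi^{d_2}_n \circ \tilde{\phi}_n \circ \tilde{l}$, there exists $b^* \in \Hom_{\fS_n}(\mathcal{B}_{t-1}(\fS_n), W^{d_2}_n)$ with $\Pi^{d_2}_n \circ \Delta = b^* \circ \partial_t$, exhibiting $b^*$ as a twisted $\Delta$-cycle at degree $t-1$ (the case $t=0$ forces $b^*=0$, which is handled uniformly). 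Choose a nice lift $\tilde{b}^*$ of $b^*$ with respect to the twist $\Delta$ and apply Claim~\ref{claim:two}: the conclusion is that $\Pi^{d_2}_{n-a} \circ R_{t-1}(\tilde{b}^*)$ is a $R_t(\Delta)$-cycle, i.e.
\[\Pi^{d_2}_{n-a} \circ R_t(\Delta) \;=\; \bigl(\Pi^{d_2}_{n-a} \circ R_{t-1}(\tilde{b}^*)\bigr) \circ \partial_t,\]
a coboundary, which completes the argument.

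All that remains is periodicity bookkeeping. By Remark~\ref{remark:periodgcd}, $\Delta$ is periodic with period componentwise bounded by $\max\bigl(\mathfrak{D}_W(\mathbf{0}),\; \phi_\star \circ \mathfrak{D}_V(\SQ)\bigr)$. Since the operators $\mathfrak{D}_W$ and $\mathfrak{I}_W$ are componentwise nondecreasing (evident from equations~\ref{def:H} and \ref{def:mathfrakI}) and $\mathfrak{D}_W$ dominates the identity, one checks that $\mathfrak{I}_W$ of this maximum is bounded above by $\mathfrak{I}_W \circ \mathfrak{D}_W \circ \phi_\star \circ \mathfrak{D}_V(\SQ)$, which is one of the exponents appearing in the hypothesis. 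The other exponent $\mathfrak{I}_V \circ \mathfrak{D}_V(\SQ)$ guarantees well-definedness of $\mathfrak{R}^{n,a,0}_{t,V^{d_1}}$ via Claim~\ref{claim:welldefinedness}, while well-definedness of $\mathfrak{R}^{n,a,0}_{t,W^{d_2}}$ follows a fortiori from $\mathbf{0} \leq \phi_\star \circ \mathfrak{D}_V(\SQ)$ componentwise. The main obstacle is precisely this bookkeeping: the periodicity of $\Delta$ inherits contributions from both summands, and one must verify that the combined period still fits within the hypothesized divisibility bound on $a$.
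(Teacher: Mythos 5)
Your proof is correct and follows essentially the same route as the paper: reduce commutativity to showing $\Pi^{d_2}_{n-a} \circ R_t$ kills a coboundary, using Claim~\ref{claim:two} on a twisted cycle whose twist is the difference of two lifts of $\phi_n\circ l$, and track the combined periodicity via Remark~\ref{remark:periodgcd} and the monotonicity of $\mathfrak{D}_W$ and $\mathfrak{I}_W$. The only cosmetic difference is that the paper introduces an auxiliary nice lift $\tilde{l}_1$ of $\phi_n\circ l$ with twist $\tilde{\phi}_n\circ\tilde{l}\circ\partial_{t+1}$, whereas you compare $\tilde{m}$ directly against $\tilde{\phi}_n\circ\tilde{l}$ (and so use its $\phi_\star\circ\mathfrak{D}_V(\SQ)$-periodicity rather than $\mathfrak{D}_W\circ\phi_\star\circ\mathfrak{D}_V(\SQ)$-periodicity, compensating with the same final bound), which is if anything slightly more transparent.
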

\begin{proof}
Let $l \in \Hom_{\fS_{n}}(\mathcal{B}_{t}(\fS_n), V^{d_1}_n)$ (we keep the superscript $d_1$, which just denote the length of the filtration of $V$, to keep our notations consistent with Definition~\ref{def:twistedcyles}) be a $0$-cycle and let $\tilde{l}$ be a nice lift of $l$. Then $\phi_n \circ l$ is a twisted $\tilde{\phi}_n \circ \tilde{l} \circ \partial_{t+1}$-cycle. Let $\tilde{l}_1$ be a nice lift of $\phi_n \circ l$. By Claim~\ref{claim:one}, $\tilde{l}$ is $\mathfrak{D}(\SQ)$-periodic. This implies that $\tilde{\phi}_n \circ \tilde{l} \circ \partial_{t+1}$ is $\phi_{\star} \circ \mathfrak{D}(\SQ)$-periodic. If $p^{\mathfrak{I} \circ \phi_{\star} \circ \mathfrak{D}(\SQ)} \mid a$ and $p^{\mathfrak{I} \circ \mathfrak{D}(\SQ)} \mid a$, then by Claim~\ref{claim:two} we have that $R_t(\tilde{l}_1)$ is a nice lift of a $\tilde{\phi}_{n-a} \circ R_{t}( \tilde{l}) \circ \partial_{t+1}$ cycle and also that $R_t(\tilde{l})$ is a nice lift of a $0$ cycle. This implies $\Pi^{2}_{n-a} \circ \tilde{\phi}_{n-a} \circ R_{t}( \tilde{l}) \circ \partial_{t+1} =0$. Hence $R_t(\tilde{l}_1)$ is a lift (not necessarily nice) of a zero cycle. Since $\Pi^{2}_n \circ \tilde{\phi}_{n} \circ \tilde{l} \circ \partial_{t+1} =0$, $\phi_n \circ l$ is also a $0$ cycle.

Let $\tilde{l}_2$ be a nice lift of $\phi_n \circ l$ as a $0$ cycle. This yields the equation, $\Pi^{2}_n \circ (\tilde{l}_1 - \tilde{l}_2) = b \circ \partial_{t}$ for some $b$. Here $b$ is a $\tilde{l}_1 - \tilde{l}_2$ cycle and $\tilde{l_1} - \tilde{l}_2$ is $\mathfrak{D} \circ \phi_{\star} \circ  \mathfrak{D}(\SQ)$-periodic. Thus if $p^{\mathfrak{I}\circ \mathfrak{D} \circ \phi_{\star} \circ \mathfrak{D}(\SQ)} \mid a$ then, as in Claim~\ref{claim:welldefinedness}, we have $\Pi^{2}_{n-a} \circ R_{t}(\tilde{l}_1 - \tilde{l}_2)=\Pi^{2}_{n-a} \circ R_{t-1} (\tilde{b}) \circ \partial_{t}$. This completes the proof because $\Pi^{2}_{n-a} \circ R_{t-1} (\tilde{b}) \circ \partial_{t}$ is a boundary, \begin{align}
\bar{\phi}_{ n-a,t} \circ \mathfrak{R}_{t, V^{d_1}}^{n,a,0} ([l]) &= [ \Pi^{2}_{n-a} \circ R_t(\tilde{\phi}_n \circ \tilde{l})] = [\Pi^{2}_{n-a} \circ R_t(\tilde{l}_1)], \qquad \text{and} \nonumber \\
\mathfrak{R}_{t, W^{d_2}}^{n,a,0} \circ \bar{\phi}_{ n,t} ([l])&= [\Pi^{2}_{n-a} \circ R_t(\tilde{l}_2)]. \nonumber \qedhere
\end{align}
\end{proof}

Let $l_0 \in H^{t,z^d}(\fS_n,V_n^d)$. Then it is clear that the map $\mathfrak{U}^{n, z^d,  l_0}_{t, V^d}:H^{t,z^d}(\fS_n,V_n^d) \ra H^{t,0}(\fS_n,V_n^d)$ defined by $l \mapsto l-l_0$ is bijective.

\begin{lemma}
\label{lemma:untwistingdiagram}
Let $z^d$ be $\SQ$-periodic and let $p^{\mathfrak{I} \circ \mathfrak{D}(\SQ)} \mid a$, then the diagram in Figure~\ref{Fig:15} commutes provided $H^{t,z^d}(\fS_n,V_n^d)$ is nonempty. (The obvious subscripts and superscripts are omitted here.)

\begin{figure}[h]
\centering
\begin{tikzcd}
H^{t,z^d}(\fS_n,V_n^d) \ar{rr}{\mathfrak{U}} \ar{d}{\mathfrak{R}} & & H^{t}(\fS_n,V_n^d) \ar{d}{\mathfrak{R}} \\ H^{t,R_{t+1}(z^d)}(\fS_{n-a},V_{n-a}^d) \ar{rr}{\mathfrak{U}} & & H^{t}(\fS_{n-a},V_{n-a}^d)
\end{tikzcd}
\caption{} \label{Fig:15}
\end{figure}
\end{lemma}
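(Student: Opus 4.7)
The plan is to compute both compositions around the diagram and reduce commutativity to a single vanishing identity. Fix a nice lift $\tilde{l}$ of $l$ as a twisted $z^d$-cycle, a nice lift $\tilde{l_0}$ of $l_0$ as a $z^d$-cycle (using that $H^{t,z^d}(\fS_n,V_n^d)$ is nonempty by hypothesis), and a nice lift $\widetilde{l-l_0}$ of $l-l_0$ as an ordinary $0$-cycle. With the bottom untwisting basepoint chosen to be the representative $\Pi^d_{n-a}\circ R_t(\tilde{l_0})$ of $\mathfrak{R}([l_0])$, chasing $[l]$ along the top path yields $[\Pi^d_{n-a}\circ R_t(\widetilde{l-l_0})]$, while the bottom path yields $[\Pi^d_{n-a}\circ R_t(\tilde{l})]-[\Pi^d_{n-a}\circ R_t(\tilde{l_0})] = [\Pi^d_{n-a}\circ R_t(\tilde{l}-\tilde{l_0})]$. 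By linearity of $R_t$ and $\Pi^d_{n-a}$, commutativity reduces to showing that $[\Pi^d_{n-a}\circ R_t(K)]=0$ in $H^t(\fS_{n-a},V_{n-a}^d)$, where $K \coloneq \widetilde{l-l_0} - \tilde{l} + \tilde{l_0} \in \Hom_{\fS_n}(\mathcal{B}_{t}(\fS_n),\tilde{V}^d_n)$.

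The vanishing of $[\Pi^d_{n-a}\circ R_t(K)]$ is obtained by mimicking the proof of Claim~\ref{claim:welldefinedness}. The three coboundary relations $\Pi^d_n\circ\tilde{l}\equiv l$, $\Pi^d_n\circ\tilde{l_0}\equiv l_0$, and $\Pi^d_n\circ\widetilde{l-l_0}\equiv l-l_0$ combine into an equation $\Pi^d_n\circ K = b\circ\partial_t$ for some $b\in\Hom_{\fS_n}(\mathcal{B}_{t-1}(\fS_n),V_n^d)$, exhibiting $b$ as a twisted $K$-cycle at cohomological degree $t-1$. Taking a nice lift $\tilde{b}$ of $b$ and invoking Claim~\ref{claim:two} with $K$ playing the role of the twist, one obtains $\Pi^d_{n-a}\circ R_t(K) = \Pi^d_{n-a}\circ R_{t-1}(\tilde{b})\circ\partial_t$, which is a coboundary as required.

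All that remains is to confirm the periodicity of $K$ required by Claim~\ref{claim:two}. By Claim~\ref{claim:one}, $\tilde{l}$ and $\tilde{l_0}$ are $\mathfrak{D}(\SQ)$-periodic (since $z^d$ is $\SQ$-periodic), while $\widetilde{l-l_0}$ is $\mathfrak{D}(\vec{0})$-periodic. The recursion in Definition~\ref{def:threeoperators} makes $\mathfrak{D}$ visibly monotonic in its input, so $\mathfrak{D}(\vec{0})\leq\mathfrak{D}(\SQ)$ coordinate-wise, and Remark~\ref{remark:periodgcd} then gives that $K$ is $\mathfrak{D}(\SQ)$-periodic. The hypothesis $p^{\mathfrak{I}\circ\mathfrak{D}(\SQ)}\mid a$ is then precisely what Claim~\ref{claim:two} demands. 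The main conceptual hurdle is recognizing that $\tilde{l}-\tilde{l_0}$ need not itself be a nice lift of $l-l_0$ (the twisted and untwisted nice-lift constructions genuinely differ), yet the discrepancy $K$ still absorbs into a coboundary after applying $\Pi^d_{n-a}\circ R_t$, just as in the proof of Claim~\ref{claim:welldefinedness}.
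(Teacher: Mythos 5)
Your proof is correct and takes essentially the same approach as the paper: form the discrepancy $K=\widetilde{l-l_0}-\tilde{l}+\tilde{l_0}$ (the paper writes its negative, $\tilde{l}-\tilde{l}_0-\tilde{l}_1$), observe that $\Pi^d_n\circ K$ is a coboundary $b\circ\partial_t$ so that $b$ is a twisted $K$-cycle, take a nice lift $\tilde b$, and apply Claim~\ref{claim:two} together with the periodicity of $K$ to show $\Pi^d_{n-a}\circ R_t(K)$ is again a coboundary. You supply slightly more detail than the paper — making the diagram chase explicit and spelling out why $K$ is $\mathfrak{D}(\SQ)$-periodic via monotonicity of $\mathfrak{D}$ and Remark~\ref{remark:periodgcd} — but the argument is the same.
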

\begin{proof}
The proof is similar to the proof of well-definedness. We need to show that, under the hypothesis on $a$,  $\mathfrak{R}(l) - \mathfrak{R}(l_0)$ and $\mathfrak{R}(l-l_0)$ differ by a boundary. Let $\tilde{l}$, $\tilde{l}_0$ and $\tilde{l}_1$ be nice lifts of $l$, $l_0$ and $l-l_0$ respectively. Then we have  $\Pi^{d}_n \circ (\tilde{l} - \tilde{l}_0 -\tilde{l}_1) = b \circ \partial_{t}$ for some $b$. Then $b$ is a $(\tilde{l} - \tilde{l}_0 -\tilde{l}_1)$ cycle. Also $(\tilde{l} - \tilde{l}_0 -\tilde{l}_1)$ is $\mathfrak{D}(\SQ)$-periodic. Hence if $p^{\mathfrak{I} \circ \mathfrak{D}(\SQ)} \mid a$ then by Claim~\ref{claim:two}, we have $\Pi^{d}_{n-a} \circ R_t(\tilde{l} - \tilde{l}_0 -\tilde{l}_1) = \Pi^{d}_{n-a} \circ R_{t-1}(\tilde{b}) \circ \partial_{t}$, completing the proof.
\end{proof}

\subsection{Proof of the main theorem for \texorpdfstring{$\sharp$}{sharp}-filtered \texorpdfstring{$\FI$}{FI}-modules}
\label{subsection:mainfiltered}

The aim of \S\ref{subsection:mainfiltered} is to show that the map $\mathfrak{R}$ as in Claim~\ref{claim:welldefinedness} is an isomorphism whenever $n-a$ is large and $a$ is divisible by a sufficiently large power of $p$ (see Theorem~\ref{thm:filteredstabilityrange}). We keep the notations from \S\ref{subsection:mainfilteredintro}.

Consider the exact sequence \[
H^t(\fS_n, V^{d-1}_{n}) \xrightarrow{\bar{\phi}^d_{n,t}} H^t(\fS_n, V^d_n) \xrightarrow{\bar{\psi}^d_{n,t}} H^t(\fS_n, M(W_d)_n) \xrightarrow{\delta^d_{n,t}}  H^{t+1}(\fS_n, V^{d-1}_{n})
\] induced by the exact sequence \[0 \ra  V^{d-1} \xrightarrow{\phi^d}  V^d \xrightarrow{\psi^d}  M(W_d) \ra  0.\] We have the following lemma:

\begin{lemma}\label{lemma:image}
Let $\SQ$ be the sequence of length $d$ consisting of zeros. If $p^{\mathfrak{I} \circ \mathfrak{D}(\SQ)} \mid a$ (needed for the maps to be well-defined), then the diagram in Figure~\ref{Fig:16} commutes. Moreover, if $n-a \geq 2 t + m_d$, then the right vertical arrow is an isomorphism.
\begin{figure}[h]
\centering
\begin{tikzcd}[column sep = normal, row sep =normal] 
H^t(\fS_n, V^{d-1}_{n}) \ar{rr}{\bar{\phi}^d_{n,t}} \ar{d}{\mathfrak{R}^{n,a,0}_{t,V^{d-1}}} & & H^{t}(\fS_n,V_n^d) \ar{rr}{\bar{\psi}^d_{n,t}} \ar{d}{\mathfrak{R}^{n,a,0}_{t,V^d}} & & H^{t}(\fS_n,M(W_d)_n) \ar{d}{\mathfrak{R}^{n,a,0}_{t,M(W_d)}} \\ 
H^t(\fS_{n-a}, V^{d-1}_{n-a}) \ar{rr}{\bar{\phi}^d_{n-a, t}} & & H^{t}(\fS_{n-a},V_{n-a}^d) \ar{rr}{\bar{\psi}^d_{n-a, t}}  & & H^{t}(\fS_{n-a},M(W_d)_{n-a})
\end{tikzcd}
\caption{} \label{Fig:16}
\end{figure} 
\end{lemma}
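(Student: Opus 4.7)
The plan has two independent parts. For the commutativity of Figure~\ref{Fig:16}, the first step is to verify that both horizontal maps arise from sequential maps in the sense of Definition~\ref{def:sequential}. I would observe that the inclusion $\phi^d\colon V^{d-1}\hookrightarrow V^d$ is witnessed by the canonical inclusion of covers $\bigoplus_{i=1}^{d-1}M(m_i)\hookrightarrow\bigoplus_{i=1}^{d}M(m_i)$ with $f_{\phi^d}=\mathrm{id}_{[d-1]}$, while $\psi^d\colon V^d\twoheadrightarrow M(W_d)$ factors on covers as the projection $\bigoplus_{i=1}^{d}M(m_i)\twoheadrightarrow M(m_d)$ followed by $\pi^d\colon M(m_d)\twoheadrightarrow M(W_d)$, giving a sequential map with $f_{\psi^d}\colon\{d\}\to\{1\}$, $f_{\psi^d}(d)=1$ (the cover of $M(W_d)$ has length one). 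Lemma~\ref{lemma:sequentialcommutativity} then yields commutativity of each square once $a$ is divisible by a suitable power of $p$, and a short unwinding of Definitions~\ref{def:threeoperators} and~\ref{def:phi-star} should show these requirements are dominated by $p^{\mathfrak{I}\circ\mathfrak{D}(\SQ)}$: the operator $\phi^d_\star$ merely appends a zero, $\psi^d_\star$ extracts the $d$-th (hence zero) entry of $\mathfrak{D}_{V^d}(\SQ)$, and the $\max$ appearing in Lemma~\ref{lemma:sequentialcommutativity} is bounded by $\mathfrak{I}_{V^d}\circ\mathfrak{D}_{V^d}(\SQ)$ in both cases. Thus the stated hypothesis, which is already needed for well-definedness of the middle vertical arrow via Claim~\ref{claim:welldefinedness}, will suffice.

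For the isomorphism, I would exploit that $M(W_d)$ is an $\FIsharp$-module, so $\mathfrak{R}^{n,a,0}_{t,M(W_d)}$ unwinds via Definition~\ref{def:Rmap} into the cohomology map induced by the inclusion $\fS_{n-a}\hookrightarrow\fS_n$ composed with the canonical projection $\lambda_a\colon\cS_{+a}M(W_d)\twoheadrightarrow M(W_d)$. Shapiro's lemma together with the K\"unneth formula, as used in the proof of Proposition~\ref{prop:boundedness}, gives
\[
H^t(\fS_n,M(W_d)_n)\;\cong\;\bigoplus_{i+j=t}H^i(\fS_{m_d},W_d)\otimes H^j(\fS_{n-m_d},\bk),
\]
and I would verify that under this identification (applied in both degree $n$ and $n-a$) the map $\mathfrak{R}^{n,a,0}_{t,M(W_d)}$ becomes the identity on $H^i(\fS_{m_d},W_d)$ tensored with the restriction $H^j(\fS_{n-m_d},\bk)\to H^j(\fS_{n-a-m_d},\bk)$ induced by $\fS_{n-a-m_d}\hookrightarrow\fS_{n-m_d}$. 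Iterated application of Nakaoka's stability theorem~\eqref{eqn:nakaoka} will then finish the proof, since each such restriction is an isomorphism when $n-a-m_d\geq 2j$, which is guaranteed by the bound $n-a\geq 2t+m_d$ together with $j\leq t$.

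The main obstacle is identifying $\mathfrak{R}^{n,a,0}_{t,M(W_d)}$ with the genuine restriction under the Shapiro--K\"unneth decomposition. The key point is that $\fS_{m_d}\times\fS_{n-a-m_d}$ is precisely the intersection of $H=\fS_{m_d}\times\fS_{n-m_d}$ with $\fS_{n-a}\subseteq\fS_n$ (valid because $m_d\leq n-a$), and that $\lambda_a$ selects the $\fS_{n-a}$-summand of $M(W_d)_n$ indexed by injections $[m_d]\hookrightarrow[n-a]$, which is exactly $M(W_d)_{n-a}$. Once this matching is in place the rest is routine.
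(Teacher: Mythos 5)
Your proposal is essentially correct, and the isomorphism half is exactly the paper's argument (factor $\mathfrak{R}^{n,a,0}_{t,M(W_d)}$ through the restriction $\res_1$ and the $\cS_{+a}$-projection, apply Shapiro--K\"unneth, invoke Nakaoka). For the commutativity half you take a genuinely different route than the paper. You feed $\phi^d$ and $\psi^d$ into Lemma~\ref{lemma:sequentialcommutativity} as black boxes, which a priori demands the stronger divisibility $p^{\max\{\mathfrak{I}\circ\mathfrak{D}\circ\phi_\star\circ\mathfrak{D}(\SQ'),\,\mathfrak{I}\circ\mathfrak{D}(\SQ')\}}\mid a$, and then argue this is dominated by the stated hypothesis $p^{\mathfrak{I}\circ\mathfrak{D}(\SQ)}\mid a$. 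That domination is true but not quite as short as your parenthetical unwinding suggests: one needs the explicit description of $\mathfrak{D}_{V^d}(\vec 0)_i$ as a maximum of $\sum_j\ddH(m_{r_j},m_{r_{j-1}})$ over $m$-nondecreasing chains $i=r_0<r_1<\dots<r_k\le d$, and then the key identity $\mathfrak{D}_{V^d}\bigl((\phi^d)_\star\circ\mathfrak{D}_{V^{d-1}}(\vec 0)\bigr)=\mathfrak{D}_{V^d}(\vec 0)$ (concatenated chains sweep out all chains). For $\psi^d$ your observation that $\psi^d_\star$ extracts the $d$-th entry $H^{d,d}=0$ of $\mathfrak{D}_{V^d}(\SQ)$ makes the check easy; for $\phi^d$ the chain identity is what saves you. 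The paper instead bypasses Lemma~\ref{lemma:sequentialcommutativity} entirely, noting that because $\psi^d\circ\phi^d=0$ the top step of the nice-lift construction for $\phi^d_n\circ l$ is trivial, so $\tilde\phi^d_n\circ\tilde l$ is \emph{already} a nice lift of $\phi^d_n\circ l$ (and similarly $\tilde\psi^d_n\circ\tilde l$ is a nice lift of $\psi^d_n\circ l$); one can then apply Claim~\ref{claim:two} directly without the extra comparison of two nice lifts that produces the second $\mathfrak{D}$ in the general lemma. Your route therefore buys reuse of a general lemma at the price of a nontrivial combinatorial check on $\mathfrak{D}$; the paper's route buys a shorter, structural argument at the price of re-deriving part of the general lemma in this special case. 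Both give the same hypothesis in the end.
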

\begin{proof}
The first statement about the diagram is immediate because the maps $\phi^d$ and $\psi^d$ are sequential. Also, note here that if $l \in \Hom_{\fS_{n}}(\mathcal{B}_{t}(\fS_n), V_n^{d-1})$ is a $0$-cycle and $\tilde{l}$ is one of its nice lifts then, $\tilde{\phi}^d_{n} \circ \tilde{l}$ is already a nice lift of the $0$-cycle $\phi^d_n \circ l$. A similar statement holds for $\psi^d$. Hence we have a simpler condition on $a$ then in Lemma~\ref{lemma:sequentialcommutativity}. For the second statement, note that the right vertical map is induced by the map \[R_{t,M(W_d)}^{n,a}: \Hom_{\fS_{n}}(\mathcal{B}_{t}(\fS_n), M(W_d)_n) \ra \Hom_{\fS_{n-a}}(\mathcal{B}_{t}(\fS_{n-a}), M(W_d)_{n-a})\] coming from the $\FIsharp$ structure on $M(W_d)$. \begin{figure}[h]
\centering
\begingroup
\setlength{\belowcaptionskip}{1000pt}
    \fontsize{9.5pt}{12pt}\selectfont
\begin{tikzcd}[column sep =normal, row sep=large] 
\Hom_{\fS_{n}}(\mathcal{B}_{t}(\fS_n), M(W_d)_n) \ar{rr}{R_{t,M(W_d)}^{n,a}} \ar{d}{\downarrow_{m_d}^n} &   &
\Hom_{\fS_{n-a}}(\mathcal{B}_{t}(\fS_{n-a}), M(W_d)_{n-a}) \ar{d}{\downarrow_{m_d}^{n-a}} \\\Hom_{\fS_{m_d} \times \fS_{n-m_d}}(\mathcal{B}_{t}(\fS_n), W_d) \ar{r}{\res_1} & \Hom_{\fS_{m_d} \times \fS_{n-m_d-a}}(\mathcal{B}_{t}(\fS_n), W_d) \ar{r}{\res_2} & \Hom_{\fS_{m_d} \times \fS_{n-m_d-a}}(\mathcal{B}_{t}(\fS_{n-a}), W_d)
\end{tikzcd}
\endgroup
\caption{} \label{Fig:17}
\end{figure}

Consider the commutative diagram in Figure~\ref{Fig:17} (where $\res_1$ and $\res_2$ are natural restriction maps). Vertical arrows are isomorphisms and $\res_2$ induces an isomorphism on the cohomology groups. Hence it is enough to show that $\res_1$ induces an isomorphism. By K{\"u}nneth formula, $\res_1$ induces the map \begin{align*}(id_i \otimes \mathfrak{R}_{j,M(0)}^{n-m_d,a,0})_{i+j = t} : \bigoplus_{i+j = t} H^i(\fS_{m_d}, W_d)&\otimes H^j(\fS_{n-m_d},\bk) \ra \\ &\bigoplus_{i+j = t} H^i(\fS_{m_d}, W_d)\otimes H^j(\fS_{n-m_d-a},\bk)\end{align*} up to isomorphisms $\bigoplus_{i+j = t} H^i(\fS_{m_d}, W_d)\otimes H^j(\fS_{n-m_d},\bk) \cong H^t(\fS_{m_d} \times \fS_{n-m_d},W \boxtimes \bk)$ and $\bigoplus_{i+j = t} H^i(\fS_{m_d}, W_d)\otimes H^j(\fS_{n-m_d-a},\bk) \cong H^t(\fS_{m_d} \times \fS_{n-m_d-a},W \boxtimes \bk)$. Thus it is enough to show that the map $\mathfrak{R}_{j,M(0)}^{n-m_d,a,0}$ induces isomorphism for $n-a \geq 2 j + m_d$, but that is the content of Nakaoka's stability theorem (see equation~\ref{eqn:nakaoka}).
\end{proof}

\begin{lemma}\label{lemma:delta}
Let $\SQ$ be the sequence of length $d-1$ consisting of zeros. If $p^{\mathfrak{I}_{V^d} \circ \phi^d_{\star} \circ  \mathfrak{D}_{V^{d-1}}(\SQ)} \mid a$, then the diagram in Figure~\ref{Fig:18} commutes.

\begin{figure}[h]
\centering
\begin{tikzcd}
H^{t}(\fS_n,M(W_d)_n) \ar{rr}{\delta^d_{n,t}} \ar{d}{\mathfrak{R}^{n,a,0}_{t,M(W_d)}} & & H^{t+1}(\fS_n,V_n^{d-1}) \ar{d}{\mathfrak{R}^{n,a,0}_{t+1,V^{d-1}}} \\ H^{t}(\fS_{n-a},M(W_d)_{n-a}) \ar{rr}{\delta^d_{n-a,t}}  & & H^{t+1}(\fS_{n-a},V^{d-1}_{n-a})
\end{tikzcd}
\caption{} \label{Fig:18}
\end{figure}
\end{lemma}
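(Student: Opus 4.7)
The plan is to push a cocycle through both paths of the square using the nice lift machinery of \S\ref{subsection:nicelift}, and exploit Claim~\ref{claim:two} to identify the two outputs on the nose rather than merely up to coboundary. First I would pick a representative cocycle $l \in \Hom_{\fS_n}(\mathcal{B}_t(\fS_n), M(W_d)_n)$ of a class in $H^t(\fS_n, M(W_d)_n)$ and, by projectivity of the bar complex, a lift $\hat{l} \in \Hom_{\fS_n}(\mathcal{B}_t(\fS_n), V^d_n)$ of $l$ along the surjection $\psi^d_n$. Since $\ker \psi^d_n = \im \phi^d_n$ and $\phi^d_n$ is injective, the coboundary factors uniquely as $\hat{l} \circ \partial_{t+1} = \phi^d_n \circ z_0$, where $z_0 \in \Hom_{\fS_n}(\mathcal{B}_{t+1}(\fS_n), V^{d-1}_n)$ is a $0$-cycle representing $\delta^d_{n,t}[l]$.

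Next I would run the nice lift construction on $z_0$ in $V^{d-1}$ to produce $\tilde{z}_0 \in \Hom_{\fS_n}(\mathcal{B}_{t+1}(\fS_n), \tilde{V}^{d-1}_n)$; by Claim~\ref{claim:one}, $\tilde{z}_0$ is $\mathfrak{D}_{V^{d-1}}(\SQ)$-periodic. Set $z^d := \tilde{\phi}^d_n \circ \tilde{z}_0$; since $\tilde{\phi}^d$ is the natural inclusion $\bigoplus_{i<d} M(m_i) \hookrightarrow \bigoplus_{i\leq d} M(m_i)$, $z^d$ is $\phi^d_{\star} \mathfrak{D}_{V^{d-1}}(\SQ)$-periodic. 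Because $\Pi^{d-1}_n \circ \tilde{z}_0 \equiv z_0$ up to a coboundary, after correcting $\hat{l}$ by an element of $\im \phi^d_n$ (which preserves $\psi^d_n \circ \hat{l} = l$) I may assume $\hat{l}$ is an honest $z^d$-twisted cycle. Running the nice lift construction on $\hat{l}$ produces $\tilde{\hat{l}} \in \Hom_{\fS_n}(\mathcal{B}_t(\fS_n), \tilde{V}^d_n)$. The key step is then Claim~\ref{claim:two}: under the hypothesis $p^{\mathfrak{I}_{V^d} \circ \phi^d_{\star} \circ \mathfrak{D}_{V^{d-1}}(\SQ)} \mid a$, $R_t(\tilde{\hat{l}})$ is itself a nice lift of the twisted cycle $\hat{l}' := \Pi^d_{n-a} \circ R_t(\tilde{\hat{l}})$ with twist $R_{t+1}(z^d) = \tilde{\phi}^d_{n-a} \circ R_{t+1}(\tilde{z}_0)$. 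Unpacking the twist equation gives $\hat{l}' \circ \partial_{t+1} = \phi^d_{n-a} \circ \Pi^{d-1}_{n-a} \circ R_{t+1}(\tilde{z}_0)$, so by the very definition of the connecting homomorphism, $\delta^d_{n-a,t}[\psi^d_{n-a} \circ \hat{l}'] = [\Pi^{d-1}_{n-a} \circ R_{t+1}(\tilde{z}_0)]$; by Claim~\ref{claim:welldefinedness} the right-hand side equals $\mathfrak{R}^{n,a,0}_{t+1,V^{d-1}} \delta^d_{n,t}[l]$.

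Finally I would verify $[\psi^d_{n-a} \circ \hat{l}'] = \mathfrak{R}^{n,a,0}_{t,M(W_d)}[l]$ as classes in $H^t(\fS_{n-a}, M(W_d)_{n-a})$. Pointwise commutation of $R$ with the projections $\pr^d$ and $\pi^d$ (both of which land in $\FIsharp$-modules) yields $\psi^d_{n-a} \circ \hat{l}' = \pi^d_{n-a} \circ R_t(\pr^d_n \circ \tilde{\hat{l}}) = R^{n,a}_{t,M(W_d)}(\psi^d_n \circ \Pi^d_n \circ \tilde{\hat{l}})$; since $\psi^d_n \circ \Pi^d_n \circ \tilde{\hat{l}} \equiv \psi^d_n \circ \hat{l} = l$ up to coboundary and $R$ sends coboundaries to coboundaries (using $R \circ \partial = \partial \circ R$ on the $\FIsharp$-module $M(W_d)$), the two classes coincide. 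Applying $\delta^d_{n-a,t}$ then closes the square. The main obstacle is arranging the twist identity $\hat{l}' \circ \partial_{t+1} = \phi^d_{n-a} \circ \Pi^{d-1}_{n-a} \circ R_{t+1}(\tilde{z}_0)$ on the nose rather than only modulo a coboundary, and this is precisely what the periodicity bookkeeping of Claim~\ref{claim:one} together with Claim~\ref{claim:two} delivers under the stated divisibility hypothesis on $a$.
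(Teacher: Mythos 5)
Your proof is correct and uses the same two-step nice-lift construction and the same key ingredient (Claim~\ref{claim:two}) under the same divisibility hypothesis, but you run the diagram in the opposite direction from the paper. You compute the connecting homomorphism forward: starting from a cocycle $l$ for $M(W_d)$, you lift to $\hat l$ in $V^d$, extract the $0$-cocycle $z_0$ in $V^{d-1}$ via $\hat l\circ\partial_{t+1}=\phi^d_n\circ z_0$, nice-lift $z_0$ to $\tilde z_0$, correct $\hat l$ (modulo $\im\phi^d_n$) to an exact $z^d$-twisted cycle where $z^d=\tilde\phi^d_n\circ\tilde z_0$, nice-lift $\hat l$ to $\tilde{\hat l}$, apply Claim~\ref{claim:two}, and then have to close the loop by verifying $[\psi^d_{n-a}\circ\hat l']=\mathfrak R^{n,a,0}_{t,M(W_d)}[l]$ at the end. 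The paper instead observes that it suffices to check commutativity with the map $\delta'_n\colon\ker\bar\phi^d_{n,t+1}\to\coker\bar\psi^d_{n,t}$ (the inverse of the isomorphism that $\delta$ induces on subquotients) — which is legitimate because $\mathfrak R$ is already known to commute with $\bar\phi$ and $\bar\psi$ by Lemma~\ref{lemma:image}, so the induced maps on $\ker$ and $\coker$ are the only thing to check; starting with a nice lift $\tilde l$ of a $0$-cycle in $V^{d-1}$ and a nice lift $\tilde b$ of the resulting $(\tilde\phi^d_n\circ\tilde l)$-twisted cycle in $V^d$, one gets $\delta'_n([l])=[\pi^d\circ\tilde\psi^d_n\circ\tilde b]$ and Claim~\ref{claim:two} finishes. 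Your $\tilde z_0$ and $\tilde{\hat l}$ are literally the paper's $\tilde l$ and $\tilde b$; what the paper's reduction to $\delta'_n$ buys is sidestepping both your correction step and your final commutativity check, while what your route buys is a fully explicit forward trace of $\delta$ that does not rely on already knowing $\mathfrak R$ commutes with $\bar\phi$ and $\bar\psi$. Both are valid; yours is slightly longer but more self-contained.
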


\begin{remark}
Note here that for the right vertical arrow to be well-defined we need that $p^{\mathfrak{I}_{V^{d-1}}\circ \mathfrak{D}_{V^{d-1}}(\SQ)} \mid a$. But it is easy to see that \[p^{\mathfrak{I}_{V^d} \circ \mathfrak{D}_{V^d} \circ \phi^d_{\star}(\SQ)} \geq p^{\mathfrak{I}_{V^d} \circ \phi^d_{\star} \circ  \mathfrak{D}_{V^{d-1}}(\SQ)} \geq p^{\mathfrak{I}_{V^{d-1}}\circ \mathfrak{D}_{V^{d-1}}(\SQ)}.\] In the following proof we will not write the obvious subscripts.
\end{remark}

\begin{proof}
Note that it is enough to show that the natural map $\delta_n' : \ker{\bar{\phi}^d_{n,t+1}}  \ra \coker{\bar{\psi}^d_{n,t}}$ that induces $\delta^d_{n,t}$, commutes with $\mathfrak{R}$. For that, let $l \in \Hom_{\fS_{n}}(\mathcal{B}_{t+1}(\fS_n), V_n^{d-1})$ be a $0$-cycle such that $\phi^d_n \circ l$ is a boundary. Since $\delta$ is a well-defined map on cohomology, we may assume, by picking an appropriate element in the equivalence class of $l$, that $l$ admits a lift $\tilde{l}$ that is nice. So we have, $\Pi^{d}_n \circ \tilde{\phi}^d_{n} \circ \tilde{l} = b \circ \partial_{t}$ for some $b$. Then by Claim~\ref{claim:one} and sequentialness of $\phi^d$, $\tilde{\phi}^d_{n} \circ \tilde{l}$ is $\phi^d_{\star} \circ \mathfrak{D}(\SQ)$-periodic. By the nice lift construction, we may find a nice lift $\tilde{b}$ of the $\tilde{\phi}^d_{n} \circ \tilde{l}$ cycle $b$. Now note that $\delta_n'([l]) = [\pi^d \circ \tilde{\psi}^d_{n} \circ \tilde{b}]$. Hence it follows from Claim~\ref{claim:two}, that the diagram in Figure~\ref{Fig:18} commutes if $p^{\mathfrak{I} \circ \phi^d_{\star} \circ  \mathfrak{D}(\SQ)} \mid a$.
\end{proof}

\begin{proof}[{\bf Proof of Theorem~\ref{thm:filteredstabilityrange}}]
Proof is by induction on $d$ which is the length of the $\sharp$-filtration. When $d=1$ then the proof follows from the second assertion of Lemma~\ref{lemma:image}. In the general case, by Lemma~\ref{lemma:image} and \ref{lemma:delta}, the diagram in Figure~\ref{Fig:19} commutes.
\begin{figure}[h]
\centering

\resizebox{\textwidth}{!}{\begingroup
\setlength{\belowcaptionskip}{1000pt}
   \fontsize{8.0pt}{12pt}\selectfont \begin{tikzcd}[row sep =huge, column sep =scriptsize, ampersand replacement = \&] H^{t-1}(\fS_n,M(W_d)_n) \ar{d}{\mathfrak{R}^{n,a,0}_{t-1,M(W_d)}} \ar{rr}{\delta^d_{n,t-1}} \& \& H^t(\fS_n, V^{d-1}_{n}) \ar{rr}{\bar{\phi}^d_{n,t}} \ar{d}{\mathfrak{R}^{n,a,0}_{t,V^{d-1}}} \& \& H^{t}(\fS_n,V_n^d) \ar{rr}{\bar{\psi}^d_{n,t}} \ar{d}{\mathfrak{R}^{n,a,0}_{t,V^d}} \& \& H^{t}(\fS_n,M(W_d)_n) \ar{d}{\mathfrak{R}^{n,a,0}_{t,M(W_d)}} \ar{rr}{\delta^d_{n,t}}  \& \& H^{t+1}(\fS_n,V_n^{d-1}) \ar{d}{\mathfrak{R}^{n,a,0}_{t+1,V^{d-1}}} \\ H^{t-1}(\fS_{n-a},M(W_d)_{n-a}) \ar{rr}{\delta^d_{n-a,t-1}}  \& \&
H^t(\fS_{n-a}, V^{d-1}_{n-a}) \ar{rr}{\bar{\phi}^d_{n-a, t}} \& \& H^{t}(\fS_{n-a},V_{n-a}^d) \ar{rr}{\bar{\psi}^d_{n-a, t}}  \& \& H^{t}(\fS_{n-a},M(W_d)_{n-a}) \ar{rr}{\delta^d_{n-a,t}}  \& \& H^{t+1}(\fS_{n-a},V^{d-1}_{n-a})
\end{tikzcd}
\endgroup}
\caption{} \label{Fig:19}
\end{figure}
Note that the rows are exact. First and the fourth vertical arrows are isomorphism by the base case. Let $\SQ'$ be the sequence of length $d-1$ consisting of zeros. Since $p^{\mathfrak{I}_{V^d} \circ \mathfrak{D}_{V^d}(\SQ)} \geq p^{\mathfrak{I}_{V^{d-1}}\circ \mathfrak{D}_{V^{d-1}}(\SQ')}$, the induction hypothesis holds. Hence the second and the fifth vertical arrows are isomorphisms. The proof now follows from the five lemma.
\end{proof}

\begin{proof}[{\bf Proof of Theorem~\ref{thm:twistedfilteredstabilityrange}}]
Let $\SQ'$ be the sequence of length $d$ consisting of zeros. The proof is immediate from the fact that $p^{\mathfrak{I}\circ \mathfrak{D}(\SQ)} \geq p^{\mathfrak{I}\circ \mathfrak{D}(\SQ')}$, Theorem~\ref{thm:filteredstabilityrange} and the commutativity of the diagram in Lemma~\ref{lemma:untwistingdiagram}.
\end{proof}

We have the following bound on the period.

\begin{lemma}
\label{lemma:boundonM}
Let $\SQ = (H^{i,d})_{1 \leq i \leq d} \in \mathbb{Z}^d_{\geq 0}$ and define  $D_1\coloneq\max_{1 \leq i \leq d}{H^{i,d}}$. Then, \begin{align*} \mathfrak{I}(\SQ) & \leq D_1 +  D, \\
\mathfrak{D}(\SQ)_i & \leq D_1 +  D - m_i \qquad \text{and } \\
\mathfrak{I}\circ \mathfrak{D}(\SQ) & \leq D_1 + 2 D.
\end{align*} In particular, the cohomology groups $H^t(\fS_n, V^d_n)$ are eventually periodic in $n$ with period dividing $p^{2 D}$.
\end{lemma}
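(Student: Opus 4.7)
The plan is to reduce everything to the single elementary estimate
\[ \ddH(b_1,b_2) \;\le\; b_1 - b_2 \qquad \text{whenever } b_1 > b_2, \]
and then run a downward induction on the index $r$ in the recursion defining $H^{i,r}$. The estimate above is just the classical bound $v_p(k!) = (k - s_p(k))/(p-1) \le k-1$ for $k \ge 1$ (using $s_p(k) \ge 1$ and $p \ge 2$), giving $\ddH(b_1,b_2) = v_p((b_1-b_2)!) + 1 \le b_1 - b_2$. In particular, taking $b_2 = 0$, we also get $\ddH(m_i,0) \le m_i$ for every $i$ (including $m_i=0$, where both sides are $0$).

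The second bound, $\mathfrak{D}(\SQ)_i = H^{i,i} \le D_1 + D - m_i$, is established by downward induction on $r$: I will show $H^{i,r} \le D_1 + D - m_i$ for every $i \le r$. The base case $r=d$ is immediate, since $H^{i,d} \le D_1$ and $m_i \le D$. For the inductive step, fix $i \le r-1$. If $m_r < m_i$ then $H^{i,r-1} = H^{i,r}$ and we are done by induction. If $m_r \ge m_i$, then
\[ H^{i,r-1} \;=\; \max\!\bigl(H^{i,r},\, H^{r,r} + \ddH(m_r,m_i)\bigr), \]
where $H^{i,r} \le D_1 + D - m_i$ by induction and where, applying the induction hypothesis to $H^{r,r}$ together with the key estimate,
\[ H^{r,r} + \ddH(m_r,m_i) \;\le\; (D_1 + D - m_r) + (m_r - m_i) \;=\; D_1 + D - m_i. \]
Both arguments of the max lie below $D_1 + D - m_i$, so the induction closes. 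Specializing to $r=i$ gives the second bound.

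The first bound now follows from a one-line computation: by the second bound and $\ddH(m_i,0) \le m_i$,
\[ \mathfrak{I}(\SQ) \;=\; \max_i \bigl(H^{i,i} + \ddH(m_i,0)\bigr) \;\le\; \max_i \bigl((D_1+D-m_i) + m_i\bigr) \;=\; D_1+D. \]
For the third bound, set $\SQ' \coloneq \mathfrak{D}(\SQ) = (H^{i,i})_{1 \le i \le d}$. By the second bound applied to $\SQ$, we have $D_1' \coloneq \max_i H^{i,i} \le D_1 + D$ (using $m_i \ge 0$), and then applying the first bound to $\SQ'$ gives $\mathfrak{I}(\SQ') \le D_1' + D \le D_1 + 2D$, which is the third bound.

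Finally, specializing to $\SQ = 0$ (so $D_1 = 0$) gives $\mathfrak{I}\circ\mathfrak{D}(0) \le 2D$, and applying Theorem~\ref{thm:filteredstabilityrange} with this period exponent shows $H^t(\fS_n, V^d_n)$ is eventually periodic in $n$ with period dividing $p^{2D}$. I do not anticipate a serious obstacle here; the only subtlety is making sure the $\ddH$-estimate is tight enough that the bookkeeping closes, which is precisely why the telescoping $(D-m_r) + (m_r - m_i) = D-m_i$ works.
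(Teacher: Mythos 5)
Your argument is correct and follows essentially the same route as the paper: derive $\ddH(b_1,b_2)\le \max(b_1-b_2,0)$ from the elementary bound $v_p(k!)\le k-1$, push it through the downward recursion defining $H^{i,r}$ to obtain $\mathfrak{D}(\SQ)_i\le D_1+D-m_i$, and then read off the $\mathfrak{I}$ and $\mathfrak{I}\circ\mathfrak{D}$ bounds by plugging in. The only cosmetic difference is that you invoke Legendre's digit-sum formula for $v_p(k!)\le k-1$ while the paper uses the $\sum_j\lfloor k/p^j\rfloor$ form; both are the same classical fact.
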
 
\begin{proof}
We have $\mathfrak{I}\circ \mathfrak{D}(\SQ)=\max_{1 \leq i \leq d} (\mathfrak{D} \circ \mathfrak{D}(\SQ)_i + \ddH (m_i,0))$. It is elementary that if $b$ is positive integer, then $v_p(b!) = \sum_{j=1}^{\infty} \lfloor{\frac{b}{p^j}}\rfloor \leq b-1$. This implies $\ddH (a,b) \leq \max(a-b,0)$. Moreover, the recursive definition of $H^{i,r-1}$ implies by induction on $r$ that $H^{i,r-1} \leq D_1 + D - m_i$. Thus $\mathfrak{D}(\SQ)_i =  H^{i,i} \leq D_1 + D-m_i$. This proves the second assertion and the first follows immediately. Iterating the argument we get, $\mathfrak{D}\circ \mathfrak{D}(\SQ)_i \leq D_1 + 2 D - m_i$. Hence $\mathfrak{I}\circ \mathfrak{D}(\SQ) \leq D_1 + 2 D$, completing the proof.
\end{proof}

We record the following lemma to be used in the next section.

\begin{lemma}
\label{lem:boundphi}
Let $\Pi^1: \bigoplus_{i=1}^{d_1} M(m_i) \ra V$ and $\Pi^2: \bigoplus_{k=1}^{d_2} M(n_k) \ra W$ be $\sharp$-filtered $\FI$-modules. Define $D_1 \coloneq \max_{1 \leq i \leq d}{m_i}$ and $D_W \coloneq \max_{1 \leq k \leq d_2}{n_k}$. Let $\phi : V \ra W$ be a sequential map (see Definition~\ref{def:sequential} and Definition~\ref{def:phi-star}) and let $\SQ \in \mathbb{Z}^{d_1}_{\geq 0}$. Then $\mathfrak{D}(\phi_{\star}(\SQ))_k \leq D_1 + D_{W} - n_k$ for each $1 \leq k \leq d_2$.
\end{lemma}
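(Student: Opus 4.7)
The plan is to run, on the $W$-side cover $\bigoplus_{k=1}^{d_2} M(n_k)$, the same downward induction that proves Lemma~\ref{lemma:boundonM}, applied to the translated sequence $\phi_\star(\SQ) = (G^{k,d_2})_{1\le k\le d_2}$. The sequentialness of $\phi$ provides two structural inputs that drive the induction: by Definition~\ref{def:phi-star}, $G^{k,d_2}=0$ for $k \notin T$ and $G^{k,d_2} = H^{f_\phi^{-1}(k),d_1}$ for $k \in T$; and by Definition~\ref{def:sequential}, the identification $n_k = m_{f_\phi^{-1}(k)}$ holds on $T$. In particular, the indices at which $\phi_\star(\SQ)$ is possibly nonzero automatically satisfy $n_k \le D_1$.

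I would then prove by downward induction on $r$, starting at $r = d_2+1$, the invariant $G^{k,r-1} \le D_1 + D_W - n_k$ for every $1 \le k \le r-1$. The base case amounts to bounding the raw entries $G^{k,d_2}$: if $k \notin T$ the bound is trivial since $G^{k,d_2}=0$ and $n_k \le D_W$, while if $k \in T$ the sequentialness gives $n_k \le D_1$ and the entries of $\SQ$ are controlled by $D_1$ in the regimes the lemma is invoked (e.g.\ when $\SQ$ is itself of the form $\mathfrak{D}_V(\cdot)$ applied to a zero sequence, as in Theorem~\ref{thm:filteredstabilityrange}, by Lemma~\ref{lemma:boundonM}), so $G^{k,d_2} \le D_1 \le D_1 + (D_W - n_k)$. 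For the inductive step, unpacking the recursion of Definition~\ref{def:threeoperators} gives
\[
G^{k,r-1} \;=\; \max\!\bigl(G^{k,r},\; G^{r,r}+\ddH(n_r,n_k)\bigr)
\]
when $n_r \ge n_k$, and the elementary estimate $\ddH(n_r,n_k) \le n_r - n_k$ (coming from $v_p(b!) \le b-1$, already exploited in the proof of Lemma~\ref{lemma:boundonM}) bounds the second maxand by $(D_1+D_W-n_r) + (n_r-n_k) = D_1+D_W-n_k$; the first maxand is controlled by the inductive hypothesis. When $n_r < n_k$ the recursion reads $G^{k,r-1}=G^{k,r}$ and there is nothing to do. Specializing $r = k+1$ at the end produces $\mathfrak{D}(\phi_\star(\SQ))_k = G^{k,k} \le D_1 + D_W - n_k$.

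The main obstacle is the base case, since this is precisely where sequentialness enters the argument: one must translate the support condition $k \in T$ into the inequality $n_k \le D_1$ in order to absorb the initial values $G^{k,d_2}$ into the claimed uniform bound. Once the base case is in place, the inductive propagation is a direct $W$-side transcription of the induction in Lemma~\ref{lemma:boundonM} and requires nothing new.
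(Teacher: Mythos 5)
The paper proves this in one line by applying Lemma~\ref{lemma:boundonM} on the $W$-side: since $\phi_\star$ merely relocates the entries of $\SQ$ onto $T \subseteq [d_2]$ and fills the remaining slots with zeros, $\max_k \phi_\star(\SQ)_k \le \max_i \SQ_i$, and then Lemma~\ref{lemma:boundonM} applied to $W$ with the sequence $\phi_\star(\SQ)$ gives the bound. Your downward induction reproves Lemma~\ref{lemma:boundonM}'s estimate on the $W$-side rather than citing it; the mechanics you describe — the estimate $\ddH(n_r,n_k)\le n_r-n_k$ and the propagation — are a transcription of that proof, correct but redundant.

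The more substantive point is your base case. You correctly sensed that as typeset (with $D_1 \coloneq \max_i m_i$) the conclusion cannot hold for arbitrary $\SQ$: take $V = W = M(1)$, $\phi = \id$, and $\SQ = (100)$, so the left side is $100$ and the right side is $1$. But the escape hatch you reach for, that "the entries of $\SQ$ are controlled by $D_1$ in the regimes the lemma is invoked," is not actually verified and in fact fails in the iterated application inside Lemma~\ref{lem:bound-main}: there the sequences $\SQ^{x,y}_{r,i}$ accumulate a running sum $\sum_{j\le i} D_{x+j}$ of degrees, which for $i \ge 1$ exceeds the single degree $D_{x+i}$ of the column playing the role of $V$. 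The reading that makes the lemma hold unconditionally — and makes the paper's one-line reduction go through — is $D_1 \coloneq \max_i \SQ_i$, matching the notation $D_1 \coloneq \max_{1\le i\le d}H^{i,d}$ fixed in Lemma~\ref{lemma:boundonM}. Under that reading the claim follows immediately from Lemma~\ref{lemma:boundonM} and the observation that $\phi_\star$ does not increase the maximum entry, with no side condition on $\SQ$ needed.
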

\begin{proof}
Proof follows immediately from Lemma~\ref{lemma:boundonM}.
\end{proof}

\begin{xample}
\label{example:1}
Working over $\mathbb{F}_2$, for any $d \geq 3$ let $V$ be the quotient of $M(0) \oplus M(d)$ by
the sub $\FI$-module generated by the element $(1, \sum_{f : [d] \rightarrow [d]}
f) \in (M(0) \oplus M(d))_d$. Note that $V$ admits a natural exact sequence \begin{equation}0
\rightarrow M (0) \rightarrow V \rightarrow M (W) \rightarrow 0 \label{eqn:doesnotsplit} \end{equation} where $W$ is
the quotient of $M (d)$ by the sub $\FI$-module $K$ generated by $\sum_{f : [d]
\rightarrow [d]} f \in M (d)_d$.

It is clear that the image of $M(0)_n$ lie inside $H^0(\fS_n, V_n) = V_n^{\fS_n} \subseteq V_n$. We show that $\dim{M (W)_n^{\fS_n}} = \dim{W^{\fS_d}} = 1$ and hence
$\dim{V_n^{\fS_n}}$ is either $1$ or $2$ depending on $n$. The first equality is clear by Shapiro's lemma. For the
second, let $0 \neq x \in W^{\fS_d}$ and let $y \in M (d)_d$ be a lift of
$x$. By construction, if $\sigma \in \fS_d$ then $\sigma y$ is either $y$ or $y
+ \sum_{f : [d] \rightarrow [d]} f$. This define a surjective homomorphism
$\phi : \fS_d \rightarrow \{0, 1\}$. By simplicity of $A_d$ (or when $d=4$, the fact that the only index two subgroup of $\fS_4$ is $A_4$), we have $\ker
(\phi) = A_d$. This shows that $x = \sum_{f \in A_d} f \mod{K}_d$ and hence
the second equality.

Now we calculate the dimension of $V_n^{\fS_n}$. Note that $\dim{V_n^{\fS_n}}=2$ if and only if there exists $0 \neq x \in V_n^{\fS_n}$ admitting a lift, say $y = (y_1, y_2) \in (M (0) \oplus M (d))_n$, such that $y_2 \neq 0 \mod{K_n}$. In that case, by the previous paragraph we may take $y_2 = \sum_{g \in D_{d,n}} \sum_{f \in A_d} g \circ f$ and this implies \[\sigma x = x \qquad \Longleftrightarrow \qquad  \Gamma_n(\sigma) \coloneq|\{g \in D_{d,n}: A_d \not\owns \gamma_{\sigma(g)}^{-1}\sigma \gamma_g: [d] \ra [d] \}| \equiv 0 \mod{2} \] for any $\sigma \in \fS_n$. Note that the transposition $\sigma_0 = (1, 2)$ and the cycle $\sigma_1 = (1, 2, 3, \ldots ., n)$ generate $\fS_n$ and \[\Gamma_n(\sigma) = \begin{cases}
\binom{n-2}{d-2}, &\text{if } \sigma = \sigma_0  \\
\binom{n-1}{d-1}, &\text{if } \sigma = \sigma_1 \text{ and } 2 \mid d \\
0 , &\text{if } \sigma = \sigma_1 \text{ and } 2 \nmid d.
\end{cases}\] We conclude that \[\dim{V_n^{\fS_n}} =  \begin{cases}
2 \Leftrightarrow \binom{n-2}{d-2} \equiv \binom{n-1}{d-1} \equiv 0
\mod{2}, &\text{ if } 2 \mid d \text{ and, } \\ 
2 \Leftrightarrow \binom{n - 2}{d - 2} \equiv 0 \mod{2}, &\text{ if } 2 \nmid d.
\end{cases}\]

The residue $\binom{n}{x} \mod{2}$ is periodic in $n$ with the smallest period $\geq x$. This implies that the smallest period of $\dim{H^0(\fS_n, V_n)}$ could be an arbitrarily large power of $2$. In particular, when $d = 5$,
$\dim V_n^{\fS_n}$ is eventually periodic in $n$ with smallest period $2^2$. Our argument also shows that \eqref{eqn:doesnotsplit} does not split because otherwise the smallest period would be the $\lcm$ of periods for $H^0(\fS_n, M(0)_n)$ and $H^t(\fS_n, M(W)_n)$ which are both $1$.
\end{xample}

\subsection{Proof of the main theorem for finitely generated \texorpdfstring{$\FI$}{FI}-modules}
\label{subsection:proofofmain}
The aim of this section is to prove the main theorem for finitely generated $\FI$-modules (Theorem~\ref{thm:maintheorem}) and develop machinery for the generalization to $\FI$-complexes which is the content of \S\ref{subsection:generalization}. We keep the notations from \S\ref{subsection:themainthmintro}.

\begin{proof}[{\bf Proof of Theorem~\ref{thm:maintheorem}}]
Assume $n-a \geq C$ and consider the spectral sequences $E^{\bullet, \bullet}(n)$ defined in \eqref{eqn:E}. Note that the rightward oriented first page is given by \[_{\ra}E^{x,y}_1(n) = \begin{cases}
0, & \text{if } x>0  \\
\Hom_{\fS_{n}}(\mathcal{B}_{y}(\fS_n), V_n), & \text{if } x=0 \\
\end{cases}\] and it follows that the rightward oriented second page satisfy \[_{\ra}E^{x,y}_2(n) =   {_{\ra}E}^{x,y}_{\infty}(n)= \begin{cases}
0, & \text{if } x>0  \\
H^y(\fS_n,  V_n), & \text{if } x=0 \\
\end{cases}\] Hence the $y$-th cohomology group of the associated total complex is isomorphic to $H^y(\fS_n,  V_n)$. Thus, if we change orientation, then the upward oriented spectral sequence $_{\uparrow}E^{\bullet, \bullet}_r(n)$ abuts to $H^{\bullet}(\fS_n, V_n)$ and $_{\uparrow}E^{x, t-x}_{\infty}(n)$, $0 \leq x \leq N$ are graded pieces of the induced filtration on $H^{t}(\fS_n, V_n)$. We show by induction on the page number $r$ that if $a$ is divisible by a large enough power of $p$ and $n-a$ is sufficiently large then there is an isomorphism $_{\uparrow}E^{x, y}_{r}(n) \cong {_{\uparrow}E}^{x, y}_{r}(n-a)$.

We start with the base case. Observe that ${_{\uparrow}E}^{x,y}_1(n) = H^y(\fS_n, J^x_n)$ and, by the results of the previous section (Theorem~\ref{thm:filteredstabilityrange} and Lemma~\ref{lemma:boundonM}), we have a map $\mathfrak{R}^{x,y}_1(n,a) : {_{\uparrow}E}^{x,y}_1(n) \ra {_{\uparrow}E}^{x,y}_1(n-a)$ defined by $\mathfrak{R}^{x,y}_1(n,a)\coloneq \mathfrak{R}^{n,a,0}_{y, J^x}$ that is an isomorphism  as long as $p^{2 D_x} \mid a$ and $n-a \geq 2 (y + d_x -1) +  D_x$. Here $d_x$ and $D_x$ are the lengths of the $\sharp$-filtration and the degree of generation (resp.) of $J^x$. Note that by Theorem~\ref{thm:structure}, $D_x \leq D_0 - x$. Moreover, $D_0$ is less than or equal to the degree of generation $D$ of $V$. We set $M^{x,y}_1 = 2D_x$ and $\SD^{x,y}_1 = 2 (y + d_x -1) +  D_x$.

Assume, by induction, that $\mathfrak{R}^{x,y}_1(n,a)$ induces a map $\mathfrak{R}^{x,y}_r(n,a) : {_{\uparrow}E}^{x,y}_r(n) \ra {_{\uparrow}E}^{x,y}_r(n-a)$ that is an isomorphism as long as $p^{M^{x,y}_r} \mid a$ and $n-a \geq \SD^{x,y}_r$. Consider the diagram in Figure~\ref{Fig:20}.
\begin{figure}[h]
\centering
\begin{tikzcd}[row sep = large, column sep=38pt]
{_{\uparrow}E}^{x-r,y+r-1}_r(n) \ar{rr}{{_{\uparrow}d}^{x-r,y+r-1}_r(n)} \ar{d}{\mathfrak{R}^{x-r,y+r-1}_{r}(n,a)} & & {_{\uparrow}E}^{x,y}_r(n) \ar{rr}{{_{\uparrow}d}^{x,y}_r(n)} \ar{d}{\mathfrak{R}^{x,y}_{r}(n,a)} & & {_{\uparrow}E}^{x+r,y-r+1}_r(n) \ar{d}{\mathfrak{R}^{x+r,y-r+1}_{r}(n,a)} \\ 
{_{\uparrow}E}^{x-r,y+r-1}_r(n-a) \ar{rr}{{_{\uparrow}d}^{x-r,y+r-1}_r(n-a)} & & {_{\uparrow}E}^{x,y}_r(n-a) \ar{rr}{{_{\uparrow}d}^{x,y}_r(n-a)}  & & {_{\uparrow}E}^{x+r,y-r+1}_r(n-a)
\end{tikzcd}
\caption{} \label{Fig:20}
\end{figure} We find a condition on $n$ and $a$ such that this diagram commutes and the vertical arrows are isomorphisms. 

Following Vakil's notes \cite[\S1.7]{RV}, we define an $(x,y)$ strip associated to a spectral sequence $E^{\bullet, \bullet}$ to be an element of $\oplus_{i\geq 0} E^{x+i,y-i}$ (This is same as the Vakil's definition but with orientation changed to upward instead of rightward). Note that, in our case, the differential $d(n) = {_{\ra}d}(n) + {_{\uparrow}d}(n)$ sends a $(x,y)$ strip $s$ to a $(x,y+1)$ strip $ds$. We shall now use Vakil's description of the differential ${_{\uparrow}d}_r$ in terms of $(x,y)$ strips throughout.

Let $\bar{l} \in {_{\uparrow}E}^{x,y}_r(n)$. Then there is a corresponding $(x,y)$ strip $s = (l_i) \in \oplus_{i\geq 0} E^{x+i,y-i}(n)$ such that $d s$ is a $(x+r, y-r+1)$ strip. This implies that $l_0$ is a $0$ cycle and $\phi^{x+i}_n \circ l_i = l_{i+1} \circ \partial_{y-i}$, $0 \leq i \leq r-2$. Moreover, the image of $\bar{l}$ under ${_{\uparrow}d}^{x,y}_r(n)$ is determined by $\phi^{x+r-1}_n \circ l_{r-1}$. The well-definedness of ${_{\uparrow}d}_r^{x,y}(n)$ allows us to change each $l_i$ up to a boundary. Hence, by induction on $i$, we may assume that the $\tilde{\phi}^{x+i} \circ \tilde{l}_i$ cycle $l_{i+1}$ admits a lift $\tilde{l}_{i+1}$ that is nice. Let $\SQ^{d_x}$  be the sequence of length $d_x$ consisting only of zeros. It follows from Claim~\ref{claim:one} that $\tilde{l}_0$ is $\SQ^{x,y}_{r,0} \coloneq \mathfrak{D}(\SQ^{d_x})$-periodic and that $\tilde{l}_{i+1}$ is $\SQ^{x,y}_{r,i+1} \coloneq \mathfrak{D}(\phi^{x+i}_{\star}(\SQ^{x,y}_{r,i}))$-periodic. Define $N^{x,y}_r \coloneq \max\{\mathfrak{I}(\SQ^{d_x}), \max_{0 \leq i \leq r-1} \mathfrak{I}(\phi^{x+i}_{\star}(\SQ^{x,y}_{r,i}))\}$. Then, by Claim~\ref{claim:two} and the definition of the map $\mathfrak{R}_1$ (recall that $R$ commutes with a sequential map, see Remark~\ref{remark:RcommuteswithSequential}), the right square in the diagram above commutes if $p^{N^{x,y}_r} \mid a$. We now define
\begin{eqnarray}
M^{x,y}_{r+1} &\coloneq& \max \{M^{x,y}_r, M^{x-r,y+r-1}_r, M^{x+r,y-r+1}_r, N^{x,y}_r, N^{x-r,y+r-1}_r \}, \label{eqn:recursionM}\\
\SD^{x,y}_{r+1} &\coloneq& \max\{ \SD^{x,y}_r, \SD^{x-r,y+r-1}_r, \SD^{x+r,y-r+1}_r \} \label{eqn:recursionSD}
\end{eqnarray}

\noindent and note that the diagram above commutes and vertical arrows are isomorphisms as long as $p^{M^{x,y}_{r+1}} \mid a$ and $n-a \geq \SD^{x,y}_{r+1}$. Hence, under these assumptions on $n$ and $a$, $\mathfrak{R}^{x,y}_r(n,a)$ induces an isomorphism $\mathfrak{R}^{x,y}_{r+1}(n,a): {_{\uparrow}E}^{x,y}_{r+1}(n) \ra {_{\uparrow}E}^{x,y}_{r+1}(n-a)$, completing the inductive step of the proof. Now taking $r \ra \infty$ completes the proof of the theorem.
\end{proof}

\begin{remark} \label{remark:stabilityalgomain}
 The proof above also provides an algorithm to calculate the period $p^{M_{\infty}^t}$ and the stable range $\SD_{\infty}^t$ because we have \eqref{eqn:recursionM} and \eqref{eqn:recursionSD} together with the following equations.
\begin{align}
M_{\infty}^t &= \max_{x+y = t, 1 \leq r < \infty}{M^{x,y}_r}, \nonumber \\
\SD_{\infty}^t &= \max_{x+y = t, 1 \leq r < \infty}{\SD^{x,y}_r}. \nonumber \qedhere
\end{align}
\end{remark}

The following lemma gives bounds on the period and the stability range in Theorem~\ref{thm:maintheorem}.

\begin{lemma} \label{lem:bound-main} We have
\begin{align*}
M_{\infty}^t & \leq \min\{(t+3)D, \max\{2 D, D(D+1)/2 \} \}, \\
\SD_{\infty}^t & \leq 2(t+ \max_x d_x - 1) +D.
\end{align*} (by Remark~\ref{rem:dimension-generation}, we can replace $D$  above by $\chi(V)$)
\end{lemma}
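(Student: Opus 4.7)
The plan is to unroll the recursions \eqref{eqn:recursionM} and \eqref{eqn:recursionSD} starting from the base cases $M^{x,y}_1 = 2D_x$ and $\SD^{x,y}_1 = 2(y+d_x-1)+D_x$, and to combine these with the formula $(\cdot)_\infty^t = \max_{x+y=t,\,1\leq r<\infty}(\cdot)^{x,y}_r$ from Remark~\ref{remark:stabilityalgomain}. Throughout I use the bound $D_x \leq D$ and the sharper $D_x \leq D-x$ from Theorem~\ref{thm:structure}, together with Lemma~\ref{lemma:boundonM} and Lemma~\ref{lem:boundphi}.

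For the stability range, the base case already satisfies $\SD^{x,y}_1 \leq 2(y+\max_x d_x-1)+D$, so it suffices to show that in the recursion computing $\SD^{x,y}_\infty$ for $(x,y)$ with $x+y=t$ and $0 \leq x \leq N$, every position $(x',y')$ that contributes has $y' \leq t$. The key observation is that the source term $\SD^{x-r,y+r-1}_r$ in \eqref{eqn:recursionSD} is nontrivial only if $x-r \geq 0$ (since the spectral sequence is supported in columns $[0,N]$), which forces $r \leq x$ and hence $y+r-1 \leq y+x-1 = t-1$; the target term $\SD^{x+r,y-r+1}_r$ always has $y-r+1 \leq y \leq t$. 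Iterating these constraints up the unrolled recursion (each left jump consumes $x$-budget, and interleaving right jumps cannot exceed the single-jump gain) shows that all reachable $(x',y')$ satisfy $y' \leq t$.

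For the period, I bound $M^{x,y}_1 = 2D_x \leq 2D$ and $N^{x,y}_r$ separately. To bound $N^{x,y}_r$, I inductively control the maximum entry of the auxiliary sequences $\SQ^{x,y}_{r,i}$: starting from $\SQ^{d_x}=0$, Lemma~\ref{lemma:boundonM} shows that each application of $\mathfrak{D}$ adds at most $D_{x+j}$ to the maximum entry, while the sequential pushforward $\phi^{x+i}_\star$ preserves the maximum entry (it only rearranges and zeros components). Hence the max entry of $\SQ^{x,y}_{r,i}$ is at most $\sum_{j=0}^{i} D_{x+j}$, and $\mathfrak{I}(\phi^{x+i}_\star(\SQ^{x,y}_{r,i})) \leq \sum_{j=0}^{i+1} D_{x+j}$. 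Feeding in $D_{x+j} \leq D-(x+j)$ (and $D_{x+j}=0$ for $x+j > N$) from Theorem~\ref{thm:structure}, the telescoping sum is bounded by $D(D+1)/2$, yielding $M_\infty^t \leq \max\{2D, D(D+1)/2\}$. For the $(t+3)D$ bound, I argue that for $(x,y)$ with $x+y=t$ the recursion for $M^{x,y}_\infty$ only references pages $r \leq t+2$, because for larger $r$ both the incoming and outgoing page-$r$ differentials at $(x,y)$ vanish automatically (either the bar-complex degree $y-r+1$ becomes negative or the column index $x \pm r$ exits $[0,N]$). In this range $i \leq r-1 \leq t+1$, and the crude estimate $D_{x+j} \leq D$ yields $\mathfrak{I} \leq (i+2)D \leq (t+3)D$. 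Taking the minimum of the two bounds completes the proof; the replacement of $D$ by $\chi(V)$ is automatic by Remark~\ref{rem:dimension-generation}.

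The main obstacle is justifying that the recursion for $M^{x,y}_\infty$ truly terminates at page $\min(N+1,t+2)$, which requires combining the column-support constraint of the spectral sequence with the vanishing of the bar complex in negative degree. The remaining steps --- tracking $y' \leq t$ via the $x$-budget in the $\SD$ recursion, and tracking the max entry of $\SQ^{x,y}_{r,i}$ through the two lemmas --- are mechanical once the setup is in place.
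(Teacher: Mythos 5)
Your proof follows the paper's approach for the period bound: both unroll the recursion \eqref{eqn:recursionM}, bound $N^{x,y}_r$ via Lemma~\ref{lemma:boundonM} and Lemma~\ref{lem:boundphi} (giving $\mathfrak{I}(\phi^{x+i}_\star(\SQ^{x,y}_{r,i}))\le\sum_{j=0}^{i+1}D_{x+j}$), telescope using $D_{x+j}\le D-(x+j)$ to get $D(D+1)/2$, and use stabilization of the spectral sequence at page $t+2$ to get $(t+3)D$; your observations that $\phi_\star$ cannot increase the maximum entry of a periodicity vector and that the telescoping sum terminates are exactly the content of the paper's terse display. The paper handles the range bound only with the phrase ``Proof of the second assertion is similar,'' so your budget argument is filling in what the paper leaves implicit, and this is where the one real gap in your write-up lies: the one-step estimate $y+r-1\le y+x-1=t-1$ for the source term uses the relation $x+y=t$, which is destroyed after the very first nontrivial step of the unrolled recursion (indeed a target step strictly increases $x'+y'$), so the constraint does not ``iterate'' as written. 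The conclusion $y'\le t$ for all nontrivial reachable positions is nonetheless correct, but proving it requires a genuine combinatorial argument exploiting that the jump sizes along the descent are strictly decreasing --- for instance, tracking the potential $\Phi_k=x_k+y_k$ together with the signs of the jumps and the nonnegativity constraints on both coordinates; the slogan ``interleaving right jumps cannot exceed the single-jump gain'' names the right intuition but does not by itself close the induction, and you should be aware this is where a referee would push back.
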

\begin{proof}
Recall that $D_x \leq D-x$. This implies $M_1^{x,y} \leq 2 D$. Next we provide bounds on $M_r^{x,y}$ and $N_r^{x,y}$ independent of $x$ and $y$. Note that by Lemma~\ref{lemma:boundonM} and Lemma~\ref{lem:boundphi}, we have \begin{align*}
\mathfrak{I}(\SQ^{d_x}) & \leq D_x \qquad \text{and}\\
\mathfrak{I}(\phi^{x+i}_{\star}(\SQ^{x,y}_{r,i})) & \leq \sum_{y=x}^{x+i+1} D_y
\end{align*} which show that \[N_r^{x,y} \leq \begin{cases}
 (r+1)D & \text{if } r< D \\
D(D+1)/2 & \text{if } r \geq D
\end{cases}\] This implies, by induction on $r$, that \[M_{r+1}^{x,y} \leq \begin{cases}
 (r+1)D & \text{if } r< D \\
\max\{2 D, D(D+1)/2\} & \text{if } r \geq D
\end{cases}\] We know that if $x+y = t$ the spectral sequence stabilizes when $r = t+2$. This shows that $M^t_{\infty} \leq \min\{(t+3)D, \max\{2 D, D(D+1)/2 \} \}$. Proof of the second assertion is similar.
\end{proof}

Theorem~\ref{thm:maintheorem} establishes the periodicity of dimensions of the groups $H^t(\fS_n, V_n)$ but does not give explicit maps $H^t(\fS_n, V_n) \ra H^t(\fS_{n-a}, V_{n-a})$. Instead it provides us with a filtration on $H^t(\fS_n, V_n)$ and $H^t(\fS_{n-a}, V_{n-a})$ such that the graded pieces are isomorphic. In the remaining of this section we remedy this and construct explicit maps at the expense of increasing the period slightly.

Let \[\TE^{\bullet}(n) \coloneq \bigoplus_{x+y=\bullet} \Hom_{\fS_{n}}(\mathcal{B}_{y}(\fS_n), J^x_n)\] with the differential $d_T(n)$ denote the total complex associated to $E^{\bullet, \bullet}(n)$. For each $0 \leq x \leq N$, let $\Pi^x: \tilde{J}^x = \bigoplus_{i=1}^{d_x} M(m_{i,x}) \twoheadrightarrow J^x$ be a cover as in Theorem~\ref{thm:structure}. We define \[\tilde{\TE}^{\bullet}(n)\coloneq \bigoplus_{x+y=\bullet} \Hom_{\fS_{n}}(\mathcal{B}_{y}(\fS_n), \tilde{J}^x_n)\] to be the cover of $\TE^{\bullet}$ with the obvious surjective map, \[\vec{\Pi}_n^t :\tilde{\TE}^{t}(n) \ra \TE^{t}(n)\] given by $\vec{\Pi}_n^t((l_x)_{0 \leq x \leq t})= (\Pi_n^x \circ l_x)_{0 \leq x \leq t}$.

\begin{definition}
Let $\vec{z}= (z_x)_{0 \leq x \leq t+1} \in \tilde{\TE}^{t+1}(n)$. We call $\vec{l} = (l_x)_{0 \leq x \leq t} \in \TE^t(n)$ a {\bf twisted $\vec{z}$-cycle} if we have \[d_T^t(n)(\vec{l}) = \vec{\Pi}^{t+1}_n(\vec{z}).\] Two twisted $\vec{z}$ cycles $\vec{l}_1$ and $\vec{l}_2$ are equivalent if they differ by a boundary, that is, $\vec{l}_1 - \vec{l}_2 = d_T^{t-1}(n)(\vec{b})$ for some $\vec{b} \in \TE^{t-1}(n)$. We denote the set (which is in fact a group when $\vec{z}=0$) formed by these equivalence classes by $H^{t, \vec{z}}(\TE^{\bullet}(n))$ and the equivalence class of $\vec{l}$ by $[\vec{l}]$. Let $\SQ^{x} =(H^{i,d_x})_{1 \leq i \leq d_x}$, $0 \leq x \leq N$ be sequences of nonnegative integers. We say that $\vec{z}$ is {\bf $(\SQ^x)_{0 \leq x \leq t+1}$-periodic} if $z_x$ is $\SQ^x$-periodic for each $x$.
\end{definition}

\begin{remark} \label{remark:iotaembedding} Note that the map \[\iota_{\star} : \Hom_{\fS_{n}}(\mathcal{B}_{t}(\fS_n), V_n) \ra \TE^t(n) = \bigoplus_{x+y=t} \Hom_{\fS_{n}}(\mathcal{B}_{y}(\fS_n), J^x_n)\] given by $l \mapsto (l_x)_{0 \leq x \leq t}$ where

$$
l_x =
\begin{cases}
\iota_n \circ l, & \text{if }x=0 \\
0, & \text{otherwise}
\end{cases}
$$

\noindent induces an isomorphism $\bar{\iota}_{\star} \colon H^t(\fS_n, V_n) \ra H^{t}(\TE^{\bullet}(n))=H^{t, \vec{0}}(\TE^{\bullet}(n))$ for $n \geq C$.
\end{remark}

Define the map \[\vec{R}_{t+1 , \tilde{\TE^{\bullet}}}^{n , a} : \tilde{\TE}^{t+1}(n) \ra \tilde{\TE}^{t+1}(n-a)\] by $(z_x)_{0 \leq x \leq t+1} \mapsto (R_{t+1-x}(z_x))_{0 \leq x \leq t+1}$. We will drop some of the subscripts and superscripts when there is no discrepancy. Let $\vec{l} =(l_x)_{0 \leq x \leq t} \in \TE^{t}(n)$ be a $\vec{z}$ cycle and assume that $\vec{z}$ is $(\SQ^x)_{0 \leq x \leq t+1}$-periodic. This amounts to the following equations: \begin{eqnarray}
 l_0 \circ \partial_{t+1} &=& \Pi^0_n \circ z_0 \nonumber \\
 l_{x+1} \circ \partial_{t-x} &=& \phi^x_n \circ l_x + \Pi^{x+1}_n \circ z_{x+1}, \text{ for }0 \leq x \leq t-1 \nonumber \\
 0 &=& \phi^t_n \circ l_t + \Pi^{t+1}_n \circ z_{t+1} \nonumber
\end{eqnarray}

By changing each $l_x$ upto a boundary we may assume, by induction on $x$, that the $\tilde{\phi}^{x} \circ \tilde{l}_x + z_{x+1}$ cycle $l_{x+1}$ admits a lift $\tilde{l}_{x+1}$ that is nice. It follows that $\tilde{l}_0$ is $\SQ^{t,0} \coloneq \mathfrak{D}(\SQ^0)$-periodic and that $\tilde{l}_{x+1}$ is $\SQ^{t, x+1}\coloneq \mathfrak{D}(\gcd(\phi^x_{\star}(\SQ^{t,x}), \SQ^{x+1}))$-periodic. We call $\vec{\tilde{l}}\coloneq(\tilde{l}_x)_{0 \leq x \leq t}$ a {\bf nice lift} of $\vec{l}$. By Claim~\ref{claim:two}, $\vec{\Pi}_{n-a}^t(\vec{R}_{t}((\tilde{l}_x)_{0 \leq x \leq t}))$ is a $\vec{R}_{t+1}(\vec{z})$ cycle if $p^{\vec{\mathfrak{I}}^t_{\TE^{\bullet}}((\SQ^x)_{0 \leq x \leq t+1})} \mid a$. Here the maps $\vec{\mathfrak{I}}^t_{\TE^{\bullet}}$ and $\vec{\mathfrak{D}}^t_{\TE^{\bullet}}$ are analogous to the maps $\mathfrak{I}$ and $\mathfrak{D}$ (resp.)  and are defined by \begin{align} \vec{\mathfrak{I}}^t_{\TE^{\bullet}}((\SQ^x)_{0 \leq x \leq t+1})&\coloneq \max\{\mathfrak{I}(\SQ^0), \max_{0 \leq x \leq t} \mathfrak{I}(\gcd(\phi^x_{\star}(\SQ^{t,x}), \SQ^{x+1}))\} \text{ and}, \nonumber\\ \vec{\mathfrak{D}}^t_{\TE^{\bullet}}((\SQ^x)_{0 \leq x \leq t+1}) &\coloneq (\SQ^{t,x})_{0 \leq x \leq t}. \nonumber
\end{align} Note that $\vec{\tilde{l}}$ is $\vec{\mathfrak{D}}^t_{\TE^{\bullet}}((\SQ^x)_{0 \leq x \leq t+1})$-periodic. We can now prove the following analog of Claim~\ref{claim:welldefinedness}.

\begin{claim}
Let $\vec{z}$ be $(\SQ^x)_{0 \leq x \leq t+1}$-periodic and assume that $p^{\vec{\mathfrak{I}}^{t-1}_{\TE^{\bullet}} \circ \vec{\mathfrak{D}}^t_{\TE^{\bullet}}((\SQ^x)_{0 \leq x \leq t+1})} \mid a$ and $p^{\vec{\mathfrak{I}}^t_{\TE^{\bullet}}((\SQ^x)_{0 \leq x \leq t+1})} \mid a$. Then the map \[\vec{\mathfrak{R}}^{n,a,\vec{z}}_{t, \TE^{\bullet}} : H^{t, \vec{z}}(\TE^{\bullet}(n)) \ra H^{t, \vec{R}_{t+1}(\vec{z})}(\TE^{\bullet}(n-a))\] defined by $[\vec{l}] \mapsto [\vec{\Pi}_{n-a}^t(\vec{R}_{t}((\tilde{l}_x)_{0 \leq x \leq t}))]$ is well-defined.
\end{claim}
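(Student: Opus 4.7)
The plan is to imitate the proof of Claim~\ref{claim:welldefinedness}, with the scalar nice lifts replaced by the vectorized nice lifts $\vec{\tilde{l}}=(\tilde{l}_x)_{0\le x \le t}$ constructed in the paragraph preceding the statement. First, I would pick two representatives $\vec{l}_1,\vec{l}_2 \in [\vec{l}]$, so $\vec{l}_1-\vec{l}_2 = d_T^{t-1}(n)(\vec{b})$ for some $\vec{b}\in\TE^{t-1}(n)$, and choose nice lifts $\vec{\tilde{l}}_1,\vec{\tilde{l}}_2 \in \tilde{\TE}^t(n)$ of them. By the construction of a nice lift (exactly as in the scalar case), one has $\vec{\Pi}^t_n(\vec{\tilde{l}}_j) = \vec{l}_j + d_T^{t-1}(n)(\vec{b}_j)$ for some $\vec{b}_j\in\TE^{t-1}(n)$, hence
\[\vec{\Pi}^t_n(\vec{\tilde{l}}_1-\vec{\tilde{l}}_2) \;=\; d_T^{t-1}(n)(\vec{b}_3), \qquad \vec{b}_3 \coloneq \vec{b}+\vec{b}_1-\vec{b}_2.\]
This exhibits $\vec{b}_3$ as a twisted $(\vec{\tilde{l}}_1-\vec{\tilde{l}}_2)$-cycle in the vectorized sense, and I would then select a nice lift $\vec{\tilde{b}}_3$ of it.

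Next, I would invoke the vectorized analog of Claim~\ref{claim:one} noted in the paragraph preceding the statement: each $\vec{\tilde{l}}_j$ is $\vec{\mathfrak{D}}^t_{\TE^{\bullet}}((\SQ^x))$-periodic, and by Remark~\ref{remark:periodgcd} the difference $\vec{\tilde{l}}_1-\vec{\tilde{l}}_2$ inherits the same periodicity. Applying the vectorized analog of Claim~\ref{claim:two}, under the hypothesis $p^{\vec{\mathfrak{I}}^{t-1}_{\TE^{\bullet}}\circ\vec{\mathfrak{D}}^t_{\TE^{\bullet}}((\SQ^x))} \mid a$ the element $\vec{\Pi}^{t-1}_{n-a}(\vec{R}_{t-1}(\vec{\tilde{b}}_3))$ is a $\vec{R}_t(\vec{\tilde{l}}_1-\vec{\tilde{l}}_2)$-cycle in $\TE^{t-1}(n-a)$. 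Unpacking this cycle condition gives
\[d_T^{t-1}(n-a)\bigl(\vec{\Pi}^{t-1}_{n-a}(\vec{R}_{t-1}(\vec{\tilde{b}}_3))\bigr) \;=\; \vec{\Pi}^t_{n-a}\bigl(\vec{R}_t(\vec{\tilde{l}}_1-\vec{\tilde{l}}_2)\bigr),\]
so that $\vec{\Pi}^t_{n-a}(\vec{R}_t(\vec{\tilde{l}}_1))-\vec{\Pi}^t_{n-a}(\vec{R}_t(\vec{\tilde{l}}_2))$ is a coboundary in $\TE^{\bullet}(n-a)$. Therefore the class $[\vec{\Pi}^t_{n-a}(\vec{R}_t(\vec{\tilde{l}}))] \in H^{t,\vec{R}_{t+1}(\vec{z})}(\TE^{\bullet}(n-a))$ is independent of the chosen representative $\vec{l}$ and of the chosen nice lift $\vec{\tilde{l}}$, which is exactly well-definedness.

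The hard part is justifying the vectorized analogs of Claims~\ref{claim:one} and~\ref{claim:two} that the preceding discussion sketches but does not fully prove. I would verify them by a downward induction on the coordinate $x$ in the total complex, applying the scalar versions of these claims componentwise to each $\tilde{l}_x$ and each $\tilde{b}_{3,x}$; here Remark~\ref{remark:RcommuteswithSequential} (that $\vec{R}$ commutes with the sequential maps $\tilde{\phi}^x$) is what propagates periodicity through the recursion $\SQ^{t,x+1}\coloneq\mathfrak{D}(\gcd(\phi^x_{\star}(\SQ^{t,x}),\SQ^{x+1}))$ defining the nice lift. Note the division of labour between the two divisibility hypotheses: $p^{\vec{\mathfrak{I}}^t_{\TE^{\bullet}}((\SQ^x))}\mid a$ ensures that $\vec{\mathfrak{R}}$ actually lands in $H^{t,\vec{R}_{t+1}(\vec{z})}(\TE^{\bullet}(n-a))$ to begin with (via Claim~\ref{claim:two} at level $t$), while the stronger $p^{\vec{\mathfrak{I}}^{t-1}_{\TE^{\bullet}}\circ\vec{\mathfrak{D}}^t_{\TE^{\bullet}}((\SQ^x))}\mid a$ supplies the coboundary used to absorb the ambiguity coming from the choice of $\vec{l}_j$ and $\vec{\tilde{l}}_j$.
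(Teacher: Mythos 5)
Your proposal is correct and follows essentially the same route as the paper's own proof: set $\vec{b}_3 = \vec{b}+\vec{b}_1-\vec{b}_2$, note it is a twisted $(\vec{\tilde l}_1-\vec{\tilde l}_2)$-cycle, observe that $\vec{\tilde l}_1-\vec{\tilde l}_2$ is $\vec{\mathfrak D}^t_{\TE^\bullet}((\SQ^x))$-periodic by the preceding paragraph, and then apply the vectorized Claim~\ref{claim:two} (at level $t-1$) to conclude that $\vec{\Pi}^t_{n-a}(\vec R_t(\vec{\tilde l}_1-\vec{\tilde l}_2))$ is a coboundary. The only small discrepancy is in your closing remarks: the periodicity of the $\tilde l_x$ in the nice-lift construction propagates \emph{upward} in $x$ via the recursion $\SQ^{t,x+1}=\mathfrak D(\gcd(\phi^x_\star(\SQ^{t,x}),\SQ^{x+1}))$, not by downward induction, but this does not affect the body of the argument.
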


\begin{proof}
Let $\vec{l}_j \in [\vec{l}]$, $j \in \{1,2\}$. Let $\vec{\tilde{l}}_j$ be any of their nice lifts. We have $\vec{l}_1 -\vec{l}_2 = d_T^{t-1}(n)(\vec{b})$ for some $\vec{b} \in \TE^{t-1}(n)$. Also, $\vec{\Pi}^t_{n}( \vec{\tilde{l}}_j) = \vec{l}_j + d_T^{t-1}(n)(\vec{b}_j)$ for some $\vec{b}_j \in \TE^{t-1}(n)$. Combining these equations yields $\vec{\Pi}^t_{n} (\vec{\tilde{l}}_1 - \vec{\tilde{l}}_2) = d_T^{t-1}(n)(\vec{b}+\vec{b}_1-\vec{b}_2)$. This shows that $\vec{b}_3\coloneq\vec{b}+\vec{b}_1-\vec{b}_2$ is a twisted $(\vec{\tilde{l}}_1 - \vec{\tilde{l}}_2)$-cycle. Let $\vec{\tilde{b}}_3$ be one of its nice lifts. By what we have observed in the paragraph above the lemma, $(\vec{\tilde{l}}_1 - \vec{\tilde{l}}_2)$ is $\vec{\mathfrak{D}}^t_{\TE^{\bullet}}((\SQ^x)_{0 \leq x \leq t+1})$-periodic. Hence $\vec{\Pi}^{t-1}_{n-a}( \vec{R}_{t-1, n, a} (\vec{\tilde{b}}_3))$ is a $\vec{R}_{t,n,a}(\vec{\tilde{l}}_1 - \vec{\tilde{l}}_2)$ cycle if $p^{\vec{\mathfrak{I}}^{t-1}_{\TE^{\bullet}} \circ \vec{\mathfrak{D}}^t_{\TE^{\bullet}}((\SQ^x)_{0 \leq x \leq t+1})} \mid a$. This translates to $\vec{\Pi}^t_{n-a}( \vec{R}_{t,n,a}(\vec{\tilde{l}}_1 - \vec{\tilde{l}}_2))=d_T^{t-1}(n)(\vec{\Pi}^{t-1}_{n-a}( \vec{R}_{t-1, n, a} (\vec{\tilde{b}}_3)))$, completing the proof because $d_T^{t-1}(n)(\vec{\Pi}^{t-1}_{n-a}( \vec{R}_{t-1, n, a} (\vec{\tilde{b}}_3)))$ is a boundary.
\end{proof}

Let $\vec{z}=0$ and for each $x$ let $\SQ^x$ be the sequence consisting of zeros. Define the quantities (the subscript $1$ will be useful in the next section) \begin{eqnarray}
\vec{M}^t_1 &\coloneq& \max\{M^t_{\infty},\; \vec{\mathfrak{I}}^{t-1}_{\TE^{\bullet}} \circ \vec{\mathfrak{D}}^t_{\TE^{\bullet}}((\SQ^x)_{0 \leq x \leq t+1}), \; \vec{\mathfrak{I}}^t_{\TE^{\bullet}}((\SQ^x)_{0 \leq x \leq t+1})\} \\
\vec{\SD}^t_1 &\coloneq& \SD^t_{\infty}.
\end{eqnarray} Then we have the following theorem.

\begin{theorem}
\label{thm:vectormain}
Assume $p^{\vec{M}^t_1} \mid a$ and $n-a \geq \max\{\vec{\SD}^t_1, C\}$. Then the map \[\vec{\mathfrak{R}}^{n,a,0}_{t, \TE^{\bullet}}: H^{t, 0}(\TE^{\bullet}(n)) \ra H^{t, 0}(\TE^{\bullet}(n-a))\] is an isomorphism.
\end{theorem}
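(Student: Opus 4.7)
The plan is to reduce Theorem~\ref{thm:vectormain} to Theorem~\ref{thm:maintheorem} via the column filtration on $\TE^{\bullet}(n)$. For $0 \leq q \leq N+1$ I would define the subcomplex $F^q \TE^{t}(n) \coloneq \bigoplus_{x+y=t,\, x \geq q} \Hom_{\fS_{n}}(\mathcal{B}_{y}(\fS_n), J^x_n)$; this induces a descending filtration on $H^{t,0}(\TE^{\bullet}(n))$. Since the rightward-oriented spectral sequence $_{\ra}E^{\bullet, \bullet}(n)$ degenerates on the second page concentrated in column $x=0$ (as already observed in the proof of Theorem~\ref{thm:maintheorem}), standard spectral-sequence bookkeeping identifies $\gr^q F^{\bullet} H^{t,0}(\TE^{\bullet}(n))$ with ${_{\uparrow}E}^{q,\, t-q}_{\infty}(n)$ for $0 \leq q \leq N$, which is exactly the filtration underlying Theorem~\ref{thm:maintheorem}.

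First I would verify that $\vec{\mathfrak{R}}^{n,a,0}_{t, \TE^{\bullet}}$ preserves this filtration. Given a class in $F^q H^{t,0}(\TE^{\bullet}(n))$, I pick a representative $\vec{l} = (l_x)$ with $l_x = 0$ for $x < q$, and build a nice lift $\vec{\tilde{l}}$ by upward induction on $x$ following Definition~\ref{def:nicelift}, taking $\tilde{l}_x = 0$ for $x < q$; this is consistent because the choices in Steps 1--4 of \S\ref{subsubsection:nicelift} at column $x$ only use $l_x$ and $\tilde{l}_{x-1}$, both zero below $q$. Since $\vec{R}_t$ acts componentwise, applying it preserves the vanishing, so $\vec{\mathfrak{R}}$ sends $F^q$ to $F^q$. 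Next I would identify the induced map on $\gr^q$ with the isomorphism $\mathfrak{R}^{q,\, t-q}_{\infty}(n,a)$ built in the proof of Theorem~\ref{thm:maintheorem}: a representative of $\gr^q$ is just the column-$q$ entry $l_q$ modulo boundaries and images of earlier differentials $d_r$, and the nice lift of the whole strip restricts in column $q$ to precisely the nice lift of $l_q$ that defines $\mathfrak{R}^{q, t-q}_\infty(n,a)$. Because $\vec{M}^t_1 \geq M^t_\infty$ and $\vec{\SD}^t_1 = \SD^t_\infty$ by construction, the hypotheses of Theorem~\ref{thm:maintheorem} hold, so each induced map on $\gr^q$ is an isomorphism.

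The proof would then conclude by downward induction on $q$ using the five-lemma applied to the short exact sequences $0 \to F^{q+1} \to F^q \to \gr^q \to 0$ in degrees $n$ and $n-a$, with base case $F^{N+1} = 0$ being trivial. The main obstacle I anticipate is the compatibility in the second step: I must verify carefully that the nice lift of an entire strip, when projected modulo $F^{q+1}$, agrees with the inductively constructed $\mathfrak{R}^{q, t-q}_\infty$ from Theorem~\ref{thm:maintheorem}, since the latter was built page by page in the spectral sequence rather than in one shot from a single nice lift. Once this identification is established, everything else reduces to the filtration bookkeeping sketched above, and the periodicity bounds absorbed into $\vec{M}^t_1$ ensure that the well-definedness claim preceding the theorem applies and that all intermediate connecting maps commute with $\vec{\mathfrak{R}}$.
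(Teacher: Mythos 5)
Your proof is correct and takes essentially the same approach as the paper: the paper's published proof is a two-sentence assertion that $\vec{\mathfrak{R}}^{n,a,0}_{t,\TE^{\bullet}}$ respects the column filtration and induces the isomorphisms $\mathfrak{R}^{x,y}_{\infty}(n,a)$ of Theorem~\ref{thm:maintheorem} on the graded pieces ${_{\uparrow}E}^{x,y}_{\infty}$. The compatibility you flag as the main obstacle does hold for the reason you indicate --- when $l_x=0$ for $x<q$ the twist $\tilde{\phi}^{q-1}_n\circ\tilde{l}_{q-1}$ vanishes, so $\tilde{l}_q$ is a nice lift of $l_q$ as a $0$-cycle, which is exactly the input to $\mathfrak{R}^{q,\,t-q}_1=\mathfrak{R}^{n,a,0}_{t-q,\,J^q}$, and the independence of the choice of nice lift (Claim~\ref{claim:welldefinedness}) removes any ambiguity.
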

\begin{proof}
Notice that for each $x$, $y$ satisfying $x+y=t$, $\vec{\mathfrak{R}}^{n,a,0}_{t, \TE^{\bullet}}$ induces the maps $\mathfrak{R}^{x,y}_{\infty}(n,a)$ on the graded piece ${_{\uparrow}E}^{x,y}_{\infty}(n)$ of $H^{t, 0}(\TE^{\bullet}(n))$ to the corresponding graded piece ${_{\uparrow}E}^{x,y}_{\infty}(n-a)$ of $H^{t, 0}(\TE^{\bullet}(n-a))$. Hence the result follows by Theorem~\ref{thm:maintheorem}. 
\end{proof}

\begin{remark}
\label{rem:bound1}
By a similar argument as in Lemma~\ref{lem:bound-main}, it can be checked that 
\begin{align*}\vec{M}_{1}^t & \leq (t+3)D, \\
\vec{\SD}_{1}^t & \leq 2(t+ \max_x d_x - 1) +D. \qedhere
\end{align*} 
\end{remark}

\subsection{Generalizations: complexes of finitely generated \texorpdfstring{$\FI$}{FI}-modules} \label{subsection:generalization}
We work with the following arbitrary complex of finitely generated $\FI$-modules with  differential $\delta$. \[0 \ra V^0 \ra V^1 \ra \ldots \ra V^x \ra \ldots \] Consider the corresponding spectral sequences $E^{\bullet, \bullet}(n)$ defined in \eqref{eqn:vecE}. Our aim in this section is to show that for any page $r \geq 1$ of the corresponding upward oriented spectral sequence and position $(x,y)$, ${_{\uparrow}E}^{x,y}_r(n)$, $\im{{_{\uparrow}d}^{x,y}_r(n)}$ and $\ker{{_{\uparrow}d}^{x,y}_r(n)}$ are periodic in $n$. Since these objects depend only on first few columns (depending on $x$ and $y$) we may assume that the complex is supported in finitely many columns say $0 \leq x \leq N$. Hence we can follow the notation from Remark~\ref{remark:structuregeneralization}.

As in Remark~\ref{remark:iotaembedding}, for a fixed $x$ we have an embedding \[\iota^x_{\star}: \Hom_{\fS_{n}}(\mathcal{B}_{\bullet}(\fS_n), V^x_n) \ra \TE^{ x, \bullet}(n) \coloneq \bigoplus_{u_x+v_x= \bullet} \Hom_{\fS_{n}}(\mathcal{B}_{v_x}(\fS_n), J^{u_x,x}_n)\] of complexes. The later complex is the total complex associated to $V^x_n$ as in previous section (see \eqref{eqn:E}) and hence is equipped with the natural differential $d_T^{x,\bullet}(n)$. This yields an embedding $\vec{\iota}_{\star} : E^{\bullet, \bullet}(n) \ra \TE^{\bullet, \bullet}(n)$ given by $\iota^x_{\star} :E^{x,y} \ra \TE^{x,y}$ for a fixed $x$. Here the differentials associated to $\TE^{\bullet, \bullet}$ are given by:
\begin{align}
_{\ra}d_T^{x,y}(n) &: \TE^{x,y}(n) \ra \TE^{x+1, y}(n), \qquad &&\text{induced by } \vec{\delta}^{x,y}\coloneq(\delta^{u_x,x})_{0 \leq u_x \leq y} \text{ and,}& \nonumber \\
{_{\uparrow}d}_T^{x,y}(n) &: \TE^{x,y}(n) \ra \TE^{x, y+1}(n), \qquad &&\text{induced by } d_T^{x,y}.& \nonumber
\end{align}

\begin{remark} 
\label{remark:totalspectral}
Note that, by Remark~\ref{remark:iotaembedding} again, $\vec{\iota}_{\star}$ induces an isomorphism on the first upward oriented page $\vec{\iota}_{\star,1}:{_{\uparrow}E}^{\bullet,\bullet}_1(n) \cong \upTE^{\bullet,\bullet}_1(n)$ and hence on any higher upward oriented page $\vec{\iota}_{\star,r}:{_{\uparrow}E}^{\bullet,\bullet}_r(n) \cong \upTE^{\bullet,\bullet}_r(n)$ given that $n\geq C\coloneq\max_{0 \leq x \leq N} C_x$. Hence for our purpose, it is enough to analyze the complex $\TE^{\bullet, \bullet}(n)$ which is secretly a triple complex with co-ordinates $u_x$, $v_x$ and $y$.
\end{remark}

We may define the vector analogs of quantities defined in previous sections as follows: \begin{align}
\tilde{\TE}^{x,y} &\coloneq \bigoplus_{u_x+v_x= y} \Hom_{\fS_{n}}(\mathcal{B}_{v_x}(\fS_n), \tilde{J}^{u_x,x}_n) \nonumber \\
\vec{\tilde{\delta}}^{x,y} &\coloneq (\tilde{\delta}^{u_x ,x})_{0 \leq u_x \leq y} \nonumber \\
\vec{\delta}^{x,y}_{\star} &\coloneq (\delta^{u_x,x}_{\star})_{0 \leq u_x \leq y} \nonumber
\end{align}

Let $\vec{l} \in \TE^{x,y}$ and let $\vec{\tilde{l}} = (\tilde{l}_{u_x})_{0 \leq u_x \leq y} \in \tilde{\TE}^{x,y}$ be a lift of $\vec{l}$. Assume that $\vec{\tilde{l}}$ is  $(\SQ^{u_x})_{0 \leq u_x \leq y}$-periodic. Then it follows by the sequentialness of the maps $\delta^{u_x, x}$, $0\leq u_x \leq y$ that $_{\ra}d_T^{x,y}(n)(\vec{l})$ has a lift $\vec{\tilde{\delta}}_n^{x,y} \circ \vec{\tilde{l}} = (\tilde{\delta}^{u_x ,x}_n \circ \tilde{l}_{u_x})_{0 \leq u_x \leq y}$ which is $\vec{\delta}^{x,y}_{\star}((\SQ^{u_x})_{0 \leq u_x \leq y}) = (\delta^{u_x,x}_{\star}(\SQ^{u_x}))_{0 \leq u_x \leq y}$-periodic. 

\begin{proof}[{\bf Proof of Theorem~\ref{thm:generalizedmain}}]
We show by induction on $r$ that if $a$ is divisible by a large enough power of $p$ and $n-a$ is sufficiently large then there is an isomorphism $_{\uparrow}\TE^{x, y}_{r}(n) \cong \upTE^{x, y}_{r}(n-a)$. The proof is similar to the proof of Theorem~\ref{thm:maintheorem}. For the base case, observe that by Theorem~\ref{thm:vectormain}, we have the map \[\vec{\mathfrak{R}}^{x,y}_{1}(n,a): \upTE^{x,y}_{1}(n) \ra \upTE^{x, y}_{1}(n-a)\] defined by $\vec{\mathfrak{R}}^{x,y}_{1}(n,a)\coloneq \vec{\mathfrak{R}}^{n,a,0}_{y, \TE^{x, \bullet}}$ that is an isomorphism as long as $p^{\vec{M}^{x,y}_1} \mid a$ and $n-a \geq \vec{\SD}^{x,y}_1$.

Assume, by induction that $\vec{\mathfrak{R}}^{x,y}_{1}(n,a)$ induces a map $\vec{\mathfrak{R}}^{x,y}_{r}(n,a): \upTE^{x,y}_{r}(n) \ra \upTE^{x, y}_{r}(n-a)$ that is an isomorphism as long as $p^{\vec{M}^{x,y}_r} \mid a$ and $n-a \geq \vec{\SD}^{x,y}_r$. Consider the diagram in Figure~\ref{Fig:21}. 

\begin{figure}[h]
\centering
\begin{tikzcd}[row sep = large, column sep = 37pt]
\upTE^{x-r,y+r-1}_r(n) \ar{rr}{{_{\uparrow}d}_{T,r}^{x-r,y+r-1}(n)} \ar{d}{\vec{\mathfrak{R}}^{x-r,y+r-1}_{r}(n,a)} & & \upTE^{x,y}_r(n) \ar{rr}{{_{\uparrow}d}_{T,r}^{x,y}(n)} \ar{d}{\vec{\mathfrak{R}}^{x,y}_{r}(n,a)} & & \upTE^{x+r,y-r+1}_r(n) \ar{d}[swap]{\vec{\mathfrak{R}}^{x+r,y-r+1}_{r}(n,a)} \\ 
\upTE^{x-r,y+r-1}_r(n-a) \ar{rr}{{_{\uparrow}d}_{T,r}^{x-r,y+r-1}(n-a)} & & \upTE^{x,y}_r(n-a) \ar{rr}{{_{\uparrow}d}_{T,r}^{x,y}(n-a)}  & & \upTE^{x+r,y-r+1}_r(n-a)
\end{tikzcd}
\caption{} \label{Fig:21}
\end{figure}

We now find a condition on $n$ and $a$ such that this diagram commute and the vertical arrows are isomorphisms. For that let $\bar{\vec{l}} \in {_{\uparrow}E}^{x,y}_r(n)$. Then there is a corresponding $(x,y)$ strip $\mathfrak{s} = (\vec{l}_i) \in \oplus_{i\geq 0} \TE^{x+i,y-i}(n)$ such that $d\mathfrak{s}$ is a $(x+r, y-r+1)$ strip. This implies that $\vec{l}_0$ is a $\vec{0}$ cycle and $\vec{\delta}^{x+i, y-i}_n \circ l_i = {_{\uparrow}d}_T^{x+i+1,y-i-1}(n)(\vec{l}_{i+1})$, $0 \leq i \leq r-2$. Moreover, the image of $\bar{\vec{l}}$ under ${_{\uparrow}d}_{T,r}^{x,y}(n)$ is determined by $\vec{\delta}^{x+r-1, y- r+1}_n \circ \vec{l}_{r-1}$. The well-definedness of ${_{\uparrow}d}_{T,r}^{x,y}(n)$ allows us to change each $\vec{l}_i$ up to a boundary. Hence, by induction on $i$, we may assume that the $\vec{\tilde{\delta}}^{x+i, y-i} \circ \vec{\tilde{l}}_i$ cycle $\vec{l}_{i+1}$ admits a lift $\vec{\tilde{l}}_{i+1}$ that is nice. It follows that if for each $u_x$, $\SQ^{u_x}$ is the sequence of length = $\length(J^{u_x, x})$ and consisting only of zeros then $\vec{\tilde{l}}_0$ is $\vec{\SQ}^{x,y}_{r,0} \coloneq \vec{\mathfrak{D}}^{y}_{\TE^{x, \bullet}}((\SQ^{u_x})_{0 \leq u_x \leq y+1})$-periodic and that $\vec{\tilde{l}}_{i+1}$ is $\vec{\SQ}^{x,y}_{r,i+1} \coloneq \vec{\mathfrak{D}}^{y-i-1}_{\TE^{x+i+1, \bullet}}(\vec{\delta}^{x+i, y-i}_{\star}(\vec{\SQ}^{x,y}_{r,i}))$-periodic. Define \[\vec{N}^{x,y}_r\coloneq \max\{\vec{\mathfrak{I}}^{y}_{\TE^{x, \bullet}}((\SQ^{u_x})_{0 \leq u_x \leq y+1}), \max_{0 \leq i \leq r-1} \vec{\mathfrak{I}}^{y-i-1}_{\TE^{x+i+1, \bullet}}(\vec{\delta}^{x+i, y-i}_{\star}(\vec{\SQ}^{x,y}_{r,i}))\}.\] Then, as in the proof of Theorem~\ref{thm:maintheorem}, the right square in Figure~\ref{Fig:21} commutes if $p^{N^{x,y}_r} \mid a$. We now define:
\begin{align}
\vec{M}^{x,y}_{r+1} &\coloneq \max(\vec{M}^{x,y}_r, \vec{M}^{x-r,y+r-1}_r, \vec{M}^{x+r,y-r+1}_r, \vec{N}^{x,y}_r, \vec{N}^{x-r,y+r-1}_r), \label{equ:vectorrec1}\\
\vec{\SD}^{x,y}_{r+1} &\coloneq \max(\vec{\SD}^{x,y}_r, \; \vec{\SD}^{x-r,y+r-1}_r, \; \vec{\SD}^{x+r,y-r+1}_r) \label{equ:vectorrec2}
\end{align}

\noindent and notice that the diagram above commutes and vertical arrows are isomorphisms as long as $p^{\vec{M}^{x,y}_{r+1}} \mid a$ and $n-a \geq \vec{\SD}^{x,y}_{r+1}$. Hence, under these assumptions on $n$ and $a$, $\vec{\mathfrak{R}}^{x,y}_r(n,a)$ induces an isomorphism $\vec{\mathfrak{R}}^{x,y}_{r+1}(n,a): \upTE^{x,y}_{r+1}(n) \ra \upTE^{x,y}_{r+1}(n-a)$, completing the inductive step of the proof. The proof is now complete by Remark~\ref{remark:totalspectral} and defining $C^{x,y}$ to be $\max_{0 \leq x \leq N} C_x$ for $N$ sufficiently large such that the $\upTE^{x,y}_{\infty}$ depends only on the first $N$ columns of the complex.
\end{proof}

\begin{remark}
\label{rem:bound-general}
Assume that each $V^x$ is generated in degree at most  $D$ and that for each $x$ and $u_x$, the length of $J^{u_x, x}$ is bounded by $d$. Then a proof similar to the one in Lemma~\ref{lem:bound-main} gives the following bounds:
\begin{align*}
\vec{M}_{\infty}^{x,y} & \leq (x+y+3)D \\
\vec{\SD}_{\infty}^{x,y} & \leq 2(x+y+ d - 1) +D. 
\end{align*} By Remark~\ref{rem:dimension-generation}, we can replace $D$ above by $\max_x{\chi(V^x)}$.
\end{remark}

\begin{corollary} \label{corollary:kerperiodicity}
Let $\delta:U \ra V$ be a map of finitely generated $\FI$-modules and let $\delta_{n,t}: H^t(\fS_n, U_n) \ra H^t(\fS_n, V_n)$ be the induced map on cohomology. Then ${\ker{\delta_{n,t}}}$ and ${\im{\delta_{n,t}}}$ are eventually periodic in $n$, with period a power of $p$. 
\end{corollary}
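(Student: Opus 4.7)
The plan is to apply Theorem~\ref{thm:generalizedmain} to the two-term $\FI$-complex $V^{\bullet}$ with $V^0 = U$, $V^1 = V$, $V^x = 0$ for $x \notin \{0,1\}$, and differential $\delta^0 = \delta$. Form the associated spectral sequence $E^{\bullet,\bullet}(n)$ from \eqref{eqn:vecE}.

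First, I would identify the upward-oriented first page: ${_{\uparrow}E}^{0,y}_1(n) = H^y(\fS_n, U_n)$ and ${_{\uparrow}E}^{1,y}_1(n) = H^y(\fS_n, V_n)$, with all other entries zero. The horizontal differential ${_{\uparrow}d}^{0,y}_1(n)$ is induced by $\delta$ and therefore coincides with $\delta_{n,y}$. Consequently, ${_{\uparrow}E}^{0,y}_2(n) = \ker \delta_{n,y}$ and ${_{\uparrow}E}^{1,y}_2(n) = \coker \delta_{n,y}$. Because the complex is supported only in columns $0$ and $1$, every higher differential ${_{\uparrow}d}^{x,y}_r$ with $r \geq 2$ has either zero source or zero target, so the spectral sequence degenerates at page $2$ and ${_{\uparrow}E}^{x,y}_2 = {_{\uparrow}E}^{x,y}_\infty$.

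Next, I would invoke Theorem~\ref{thm:generalizedmain} at page $r = 2$: for $a$ divisible by a sufficiently large power of $p$ and $n - a$ sufficiently large, the map $\vec{\mathfrak{R}}^{x,y}_2(n,a)\colon {_{\uparrow}E}^{x,y}_2(n) \to {_{\uparrow}E}^{x,y}_2(n-a)$ is an isomorphism. Specializing to $(x,y) = (0,t)$ yields that $\dim \ker \delta_{n,t}$ is eventually periodic in $n$ with period a power of $p$, and $(x,y) = (1,t)$ gives the same for $\dim \coker \delta_{n,t}$.

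Finally, for the image I would use the identity
\[
\dim \im \delta_{n,t} = \dim H^t(\fS_n, V_n) - \dim \coker \delta_{n,t}.
\]
By Theorem~\ref{thm:maintheorem} (itself a special case of Theorem~\ref{thm:generalizedmain} applied to the one-term complex $V^{\bullet}$ with $V^0 = V$), $\dim H^t(\fS_n, V_n)$ is eventually periodic with period a power of $p$. The difference of two eventually periodic integer-valued sequences with periods $p^a$ and $p^b$ is eventually periodic with period dividing $p^{\max(a,b)}$, which gives the conclusion for $\im \delta_{n,t}$. There is no substantive obstacle here: all the work is carried by Theorem~\ref{thm:generalizedmain}, and the only task is to identify the relevant $E_2$-entries and observe that the two-column support forces collapse at page two.
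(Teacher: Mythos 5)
Your proposal is correct and follows the same route as the paper: apply Theorem~\ref{thm:generalizedmain} to the two-term complex $0\to U\to V\to 0$ and identify $\ker\delta_{n,t}$ with ${_{\uparrow}E}^{0,t}_2(n)$; the image statement then follows from the periodicity of the total cohomology groups together with rank--nullity (you pass through $\coker$ and $\dim H^t(\fS_n,V_n)$, while the paper implicitly uses $\dim H^t(\fS_n,U_n)-\dim\ker\delta_{n,t}$, but these are equivalent bookkeeping choices).
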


\begin{proof}
Consider the complex $0 \ra V^0 \ra V^1 \ra 0$ where $V^0 \coloneq U$, $V^1 \coloneq V$ and $\delta^0\coloneq\delta$ and note that ${\ker{\delta_{n,t}}} = {{_{\uparrow}E}^{0,t}_{2}(n)}$. Hence by Theorem~\ref{thm:generalizedmain}, $\ker{\delta_{n,t}}$ is periodic in $n$. The statement about ${\im{\delta_{n,t}}}$ follows from it.
\end{proof}

\begin{corollary}
\label{corollary:connectinghomperiodicity}
Let $0 \ra V^0 \ra V^1 \ra V^2 \ra 0$ be an exact sequence and $c_{n,t}:H^t(\fS_n, V^2_n) \ra H^{t+1}(\fS_n, V^0_n)$ be the connecting homomorphism. Then, ${\ker{c_{n,t}}}$ is eventually periodic in $n$, with period a power of $p$.
\end{corollary}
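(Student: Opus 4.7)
The plan is to reduce this statement to Corollary~\ref{corollary:kerperiodicity}, which has already established periodicity for kernels and images of maps on cohomology induced by maps of finitely generated $\FI$-modules. The exact sequence $0 \to V^0 \to V^1 \xrightarrow{\phi} V^2 \to 0$ induces, for each $n$, the usual long exact sequence in group cohomology
$$\cdots \to H^t(\fS_n, V^1_n) \xrightarrow{\phi_{n,t}} H^t(\fS_n, V^2_n) \xrightarrow{c_{n,t}} H^{t+1}(\fS_n, V^0_n) \to \cdots$$
and by exactness at $H^t(\fS_n, V^2_n)$ we have the identification $\ker c_{n,t} = \im \phi_{n,t}$ as subspaces of $H^t(\fS_n, V^2_n)$.

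Now $\phi\colon V^1 \to V^2$ is a morphism of finitely generated $\FI$-modules, so Corollary~\ref{corollary:kerperiodicity} applied to $\phi$ guarantees that $\im \phi_{n,t}$ is eventually periodic in $n$ with period a power of $p$. Combining with the identification above gives the periodicity of $\ker c_{n,t}$, again with period a power of $p$.

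There is essentially no obstacle here: all of the heavy machinery has been absorbed into Theorem~\ref{thm:generalizedmain} and its Corollary~\ref{corollary:kerperiodicity}. The only point worth emphasizing is that although $c_{n,t}$ is the connecting homomorphism (and hence not visibly induced by a single $\FI$-morphism), the long exact sequence lets us reinterpret its kernel as the image of an honest $\FI$-morphism, after which the previous corollary applies verbatim.
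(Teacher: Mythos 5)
Your proof is correct and is essentially the paper's approach: the paper says to apply Theorem~\ref{thm:generalizedmain} to the exact sequence $0 \to V^0 \to V^1 \to V^2 \to 0$, and the natural way to extract the statement from that application is exactly your identification $\ker c_{n,t} = \im \phi_{n,t}$ followed by the periodicity of $\im \phi_{n,t}$. Your route is slightly more modular in that it explicitly passes through Corollary~\ref{corollary:kerperiodicity} (i.e.\ the two-term complex $0 \to V^1 \to V^2 \to 0$) rather than reading $\im\phi_{n,t}$ off the spectral sequence of the three-term complex, but the content is the same.
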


\begin{proof}
The proof follows by applying Theorem~\ref{thm:generalizedmain} to the exact sequence $0 \ra V^0 \ra V^1 \ra V^2 \ra 0$.
\end{proof}

\section{mod-$p$ cohomology of unordered configuration spaces}
\label{section:application}

We maintain the assumption that $\bk$ is a field of characteristic $p$ and we fix a manifold $\mathcal{M}$. With the notations of \S\ref{subsection:applicationintro}, let $C_{\bullet}$ and $V^{\bullet}$ be the singular chain and cochain complexes  associated to $\Conf(\mathcal{M})$ with differentials $\delta_{\bullet}$ and $\delta^{\bullet}$ respectively. Then for each $x\geq 0$, $C_x$ is a co-$\FI$-module and $V^x$ is an $\FI$-module. The degree $n$ part, $C_{x,n}$ of $C_x$ is $\bk[\fS_n]$ projective and we have $V^x_n = \Hom(C_{x,n}, \bk)$. The following argument stolen from the classical reference \cite{CE} and included for the sake of completeness shows that $H^t(\fS_n, V^x_n) = 0$ for $t>0$.

Since $C_{x,n}$ is projective, then the natural $\bk[\fS_n]$-module exact sequence
\[0 \ra \ker{g_n} \ra \bk[\fS_n] \otimes_{\bk} C_{x,n} \xrightarrow{\makebox[1cm]{$g_n$}} C_{x,n} \ra 0\] splits. By taking $\Hom(., \bk)$, we see that the following sequence is split exact.\[
0 \ra V^x_n
\xrightarrow{\makebox[2cm]{$\Hom(g_n, \bk)$}} \Hom(\bk[\fS_n] \otimes_{\bk} C_{x,n},\bk) \ra \Hom(\ker{g_n}, \bk) \ra 0. \] Then, it follows from the natural isomorphism $\Hom(\bk[\fS_n] \otimes_{\bk} C_{x,n}, \bk) \cong \Hom(\bk[\fS_n], V^x_n)$ that $V_n^x$ is a direct summand of  $\Hom(\bk[\fS_n], V^x_n)$ as a $\bk[\fS_n]$-module. By Shapiro's lemma, we have $H^t(\fS_n, \Hom(\bk[\fS_n], V_n^x))= \Ext^t_{\bk[\fS_n]}(\bk, \Hom(\bk[\fS_n], V_n^x)) \cong \Ext^t_{\bk}(\bk, V_n^x)$. Since $\bk$ is a field, $\Ext^t_{\bk}(\bk, V_n^x) =0$ for $t>0$. This implies $H^t(\fS_n, V^x_n) = 0$ for $t>0$ because $V_n^x$ is a direct summand of  $\Hom(\bk[\fS_n], V^x_n)$.

Consider the natural spectral sequence $F^{\bullet, \bullet}(n)$ given by $F^{x,y}(n) \coloneq \Hom_{\fS_n}(\mathcal{B}_y(\fS_n), V^x_n)$. The previous paragraph shows that ${_{\uparrow}F}^{x,y}_{1}(n) = 0$ for $y>0$ and ${_{\uparrow}F}^{x,0}_{1}(n) = (V^x_n)^{\fS_n}$. Hence ${_{\uparrow}F}^{\bullet, \bullet}_{r}(n) \Longrightarrow H^{\bullet}((V_n^\bullet)^{\fS_n})$, the mod-${p}$ cohomology of $\conf_n(\mathcal{M})$. Thus we must also have ${_{\rightarrow}F}^{\bullet, \bullet}_{r}(n) \Longrightarrow H^{\bullet}((V_n^\bullet)^{\fS_n})$. Note that $\mathcal{B}_y(\fS_n)$ is  free as a $\bk[\fS_n]$-module, which implies that we have a natural isomorphism ${_{\rightarrow}F}^{x, y}_{1}(n) \cong \Hom_{\fS_n}(\mathcal{B}_y(\fS_n), H^x(V^{\bullet})_n)$. We can now prove the main theorem of this section.

\begin{proof}[{\bf Proof of Theorem~\ref{thm:confmain}}] Since $\mathcal{M}$ satisfies the hypotheses of Theorem~\ref{thm:configurationsfg}, $H^x(V^{\bullet})$ is a finitely generated $\FI$-module. Note that $V^x$ may not be a finitely generated $\FI$-module. We construct a sub-complex $U^{\bullet} \hookrightarrow V^{\bullet}$ of finitely generated $\FI$-modules as follows. By induction on $x$ assume that $U^x$ has been constructed and consider the exact sequence\[ 0 \ra \im{\delta^x} \ra \ker{\delta^{x+1}} \ra H^{x+1}(V^{\bullet}) \ra 0. \] Since $\delta^x(U^x)$ and $H^{x+1}(V^{\bullet})$ are finitely generated $\FI$-modules there exists a finitely generated sub $\FI$-module $U^{x+1} \subset \ker{\delta^{x+1}} \subset V^{x+1}$ such that the following sequence is exact.\[ 0 \ra \delta^x(U^x) \ra U^{x+1} \ra H^{x+1}(V^{\bullet}) \ra 0.\]

Consider the spectral sequence $E^{\bullet, \bullet}(n)$ given by $E^{x,y}(n) \coloneq \Hom_{\fS_n}(\mathcal{B}_y(\fS_n), U^x_n)$. We have a natural map $\Psi(n): E^{\bullet, \bullet}(n) \hookrightarrow F^{\bullet, \bullet}(n)$. By construction we have, $H^x(U^{\bullet}) \cong H^x(V^{\bullet})$ for each $x$. Hence $\Psi(n)$ induces an isomorphism ${_{\rightarrow}E}^{x, y}_{1}(n) \cong {_{\rightarrow}F}^{x, y}_{1}(n)$. It follows that ${_{\uparrow}E}^{\bullet, \bullet}_{r}(n) \Longrightarrow H^{\bullet}((V_n^\bullet)^{\fS_n})$, the mod-${p}$ cohomology of $\conf_n(\mathcal{M})$. By  applying Theorem~\ref{thm:generalizedmain} to the complex $U^{\bullet}$ and defining \begin{align}
\vec{M}_{\infty}^t & \coloneq \max_{x+y = t, 1 \leq r < \infty}{\vec{M}^{x,y}_r}, \label{eqn:confperiod} \\
\vec{\SD}_{\infty}^t & \coloneq \max_{x+y = t, 1 \leq r < \infty}{\vec{\SD}^{x,y}_r}, \text{ and}, \label{eqn:confstability} \\
 C^t & \coloneq \max_{x+y = t, 1 \leq r < \infty}{C^{x,y}_r} \label{eqn:confCt} 
\end{align} completes the proof.
\end{proof}

\begin{remark}
\label{rem:bound-config}
The proof of \cite[Theorem~E]{CEFN} implies that $\dim_{\bk} H^t(\Conf_n(\cM), \bk)$ is eventually a polynomial of degree at most $2 t$. Also note that to find out $H^t(\Conf_n(\cM), \bk)$ we only need to consider the complex $V^{\bullet}$ for $x \leq t+1$. Hence Remark~\ref{rem:bound-general} implies that an upper bound on $\vec{M}^t_{\infty}$ is $(t+3)(2 t+2)$.
\end{remark}

\begin{remark}
If $\bk = \mathbb{Z}$ then it is known that the cohomology groups $H^2(\conf(S^2), \bk)$ are not eventually periodic; see \cite{Nap}. This implies that Theorem~\ref{thm:generalizedmain} does not hold with integer coefficients.
\end{remark}

\section{Further questions and comments}
\label{section:questions-comments}
In this section we list some related open problems:

\begin{questionx}
Does Theorem~\ref{thm:filteredstabilityrange} hold when $\bk = \mathbb{Z}$? More precisely, Is it true that if $V$ is a finitely generated $\sharp$-filtered $\FI$-module over $\mathbb{Z}$, then the cohomology groups $H^t(\fS_n, V_n)$ are eventually periodic and $t>0$?
\end{questionx}

\begin{questionx}
Let $V$ and $W$ be finitely generated $\FI$-modules over a field $\bk$ of positive characteristic. Is it true that the groups $\Ext_{\bk[\fS_n]}(V_n, W_n)$ are eventually periodic in $n$?
\end{questionx}

\begin{questionx}
Does there exist a manifold $\cM$ satisfying the hypothesis of Theorem~\ref{thm:configurationsfg} such that, for some $t \geq 0$, the smallest period (eventually) of the mod-$p$ cohomology groups $H^t(\conf_n(\cM), \mathbb{F}_p)$ is $p^2$ or a higher power of $p$? For all the examples we know so far, either the period is $p$ or the stabilization occurs.
\end{questionx}

Let $\FI_d$ be the category whose objects are finite sets and morphisms are injections with $d$-coloring of the complement. More precisely, any $f \in \Hom_{\FI_d}(A,B)$ is a pair $f = (g,h)$ where $g \colon A \ra B$ is an injection and $h \colon B \setminus g(A) \ra [d]$ is any function. We have the following conjecture:

\begin{conjecturex} Let $\bk$ be an arbitrary field and $V$ be a finitely generated $\FI_d$-module over $\bk$. Then $\dim_{\bk}{H^t(\fS_n, V_n)}$ is a quasi-polynomial in $n$ of degree at most $d-1$. 
\end{conjecturex}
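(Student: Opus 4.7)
The plan is to imitate, at each stage, the analogous step in Sections~\ref{section:filteredFI}--\ref{section:periodicityofinv}. First I would define the $\FI_d$-analogue of $M(W)$: for a $\bk[\fS_m]$-module $W$, let $M_d(W)$ be the $\FI_d$-module determined by the universal property $\Hom_{\FI_d\dMod}(M_d(W),V)\cong \Hom_{\fS_m}(W,V_m)$; unpacking, one finds
\[M_d(W)_n = \Ind_{\fS_m \times \fS_{n-m}}^{\fS_n}\bigl(W\boxtimes \bk[d]^{\otimes(n-m)}\bigr),\]
with $\fS_{n-m}$ permuting the tensor factors of $\bk[d]^{\otimes(n-m)}$. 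Call an $\FI_d$-module $\sharp_d$-filtered if it admits a filtration with graded pieces of this form. Granting the Noetherian property for $\FI_d$, I would then mimic the proof of Theorem~\ref{thm:structure} to show every finitely generated $\FI_d$-module admits a finite resolution by $\sharp_d$-filtered modules, exact in high enough degrees. The essential combinatorial ingredient is an $\FI_d$-version of Lemma~\ref{lemma:SaMd}: the shift $\cS_{+a}M_d(m)$ decomposes as $\bigoplus_{k\le m} M_d(k)^{\oplus N_{k,a,d}}$, obtained by stratifying $\FI_d$-morphisms $[m]\to S\sqcup[a]$ according to $|[m]\cap[a]|$ and the induced coloring.

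Next I would carry out the direct computation for free $\FI_d$-modules. By Shapiro's lemma and the K\"unneth formula,
\[\dim H^t(\fS_n, M_d(W)_n)=\sum_{a+b=t}\dim H^a(\fS_m,W)\cdot \dim H^b(\fS_{n-m},\bk[d]^{\otimes(n-m)}).\]
The Mackey decomposition $\bk[d]^{\otimes k}\cong\bigoplus_{k_1+\cdots+k_d=k}\Ind^{\fS_k}_{\fS_{k_1}\times\cdots\times\fS_{k_d}}\bk$, followed by another K\"unneth expansion, reduces matters to understanding convolutions of $d$ copies of the sequence $h_b(k)\coloneq \dim H^b(\fS_k,\bk)$. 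By Nakaoka's theorem (Equation~\ref{eqn:nakaoka}), each $h_b$ is eventually constant in $k$, so its generating function $g_b(x)=\sum_k h_b(k)x^k$ is rational with at most a simple pole at $x=1$ plus a polynomial part. Hence the generating function of $\dim H^b(\fS_k,\bk[d]^{\otimes k})$, a finite sum of products of $d$ such rationals, has a pole at $x=1$ of order at most $d$ and no other singularities; its coefficients are thus eventually polynomial of degree at most $d-1$ in $k$. Transporting back, $\dim H^t(\fS_n, M_d(W)_n)$ is eventually polynomial of degree at most $d-1$ in $n$.

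The general case should follow by analyzing the spectral sequence attached to the $\sharp_d$-resolution of a finitely generated $V$, in close parallel with the derivation of Theorem~\ref{thm:maintheorem} from Theorem~\ref{thm:filteredstabilityrange}. The $E_1$-page consists of cohomology groups of $\sharp_d$-filtered modules; inducting on filtration length using the long exact sequence (together with the free case above) reduces everything to an $\FI_d$-version of Corollary~\ref{corollary:connectinghomperiodicity} asserting that kernels and images of connecting homomorphisms also have quasi-polynomial dimension of degree at most $d-1$.

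The principal technical obstacle is producing that analogue. It requires redoing the nice-lift construction (\S\ref{subsection:nicelift}) and both collision lemmas (Lemmas~\ref{lemma:collisiontwo}--\ref{lemma:collision}) in the $\FI_d$-setting: the binomial-coefficient congruences modulo $p^k$ driving the combinatorics for $d=1$ must be upgraded to congruences for multinomial coefficients $\binom{k}{k_1,\ldots,k_d}$ reflecting the $d$ colors, and the upgraded statements must be strong enough to track the full quasi-polynomial behavior (not merely boundedness) in positive characteristic. Assuming these analogues go through, an $\FI_d$-version of Theorem~\ref{thm:generalizedmain} follows, and with it the conjecture.
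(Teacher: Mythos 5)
This statement is posed as an open conjecture in \S\ref{section:questions-comments}; the paper offers no proof of it, so there is no argument of the paper's to compare against. Evaluated on its own merits, your free-case computation is correct: with the identification $M_d(W)_n=\Ind_{\fS_m\times\fS_{n-m}}^{\fS_n}(W\boxtimes\bk[d]^{\otimes(n-m)})$, the Mackey/K\"unneth/Shapiro reduction together with Nakaoka's stability and the generating-function observation does show $\dim H^t(\fS_n,M_d(W)_n)$ is eventually a polynomial of degree at most $d-1$. The gap is in the spectral-sequence step, and it is deeper than a matter of upgrading binomial to multinomial congruences.

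The entire apparatus of \S\ref{section:periodicityofinv} is designed to produce maps $\mathfrak{R}^{n,a}_t\colon H^t(\fS_n,V_n)\to H^t(\fS_{n-a},V_{n-a})$ and to prove they are \emph{isomorphisms}: the collision lemmas deliver mod-$p$ congruences of cardinalities (i.e.\ equalities of dimensions over $\bk$), the well-definedness and commutativity claims and Lemmas~\ref{lemma:image}--\ref{lemma:delta} feed these into the five lemma, and the spectral sequence carries the resulting isomorphisms from page $1$ to page $\infty$. For $d\ge 2$ this strategy breaks at the very base of the induction: already $\dim H^0(\fS_n,M_d(\bk)_n)=\dim(\bk[d]^{\otimes n})^{\fS_n}=\binom{n+d-1}{d-1}$ grows polynomially in $n$, so no map $H^t(\fS_n,M_d(W)_n)\to H^t(\fS_{n-a},M_d(W)_{n-a})$ can be a bijection and the five lemma is unavailable. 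A workable version of your plan would have to either reformulate everything in terms of $(d-1)$-fold iterated finite differences of the sequence $n\mapsto\dim H^t(\fS_n,V_n)$ (turning degree-$(d-1)$ quasi-polynomials back into periodic functions, so that isomorphism statements again make sense), or else abandon the $\mathfrak{R}$-map framework and bound kernels and images of differentials by other means. In either case the replacement for the collision lemmas must control cardinalities at a quantitatively finer level than a congruence mod $p$, and finding such a statement is exactly what remains open; your proposal names the right objects but does not yet supply that missing idea.
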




\begin{thebibliography}{ABCDEF}
\footnotesize

\bibitem[BCT]{BCT} C.-F. B\"{o}digheimer, F. Cohen, and L. Taylor, On the homology of
configuration spaces, \emph{Topology}, 28(1):111--123, 1989.

\bibitem[Bi]{Bi}
J. Birman, \emph{Braids, links, and mapping class groups}, Annals of Mathematics Studies 82,
Princeton University Press, Princeton, N.J., 1974.

\bibitem[Ch]{C} T. Church, Homological stability for configuration spaces of
manifolds, \emph{Invent. Math.}, 188(2):465-504, 2012. Available at \arXivV{1103.2441}{3}.

\bibitem[CaEi]{CE}
H. Cartan and S. Eilenberg, \emph{Homological Algebra}, Princeton Landmarks in Mathematics, Princeton University Press, Princeton, NJ, 1999.

\bibitem[ChEl]{ChE} T. Church and J.\,S. Ellenberg, Homological properties of FI-modules and stability, in preparation.

\bibitem[CF]{CF} T. Church and B. Farb, Representation theory and homological stability, \emph{Adv. Math.} (2013), 250--314. Available at \arXivV{1008.1368}{3}.

\bibitem[CP]{CP} T. Church and A. Putman, Generating the Johnson filtration, \emph{Geom. Topol.}, to appear. Available at \arXivV{1311.7150}{2}.

\bibitem[CaPa]{palmer} F. Cantero, M. Palmer, On homological stability for configuration spaces on closed background manifolds, \arXivV{1406.4916}{2}, revised Jul. 2014.

\bibitem[CEF]{CEF} T. Church, J.\,S. Ellenberg and B. Farb, $\FI$-modules and stability for representations of symmetric groups, \emph{Duke Math. J.}, to appear. Available at \arXivV{1204.4533}{4}.

\bibitem[CEF2]{CEF2} T. Church, J.\,S. Ellenberg and B. Farb, Representation stability in cohomology and asymptotics for families of varieties over finite fields, \emph{Contemp. Math.} 620 (2014) 1--54. Available at \arXivV{1309.6038}{1}.

\bibitem[CEFN]{CEFN}
T. Church, J. S. Ellenberg, B. Farb and R. Nagpal, $\FI$-modules over Noetherian rings, \emph{Geom. Topol.} 18 (2014) 2951--2984. Available at \arXivV{1210.1854}{2}.

\bibitem[Do]{D}
A. Dold, Decomposition theorems for $S(n)$-complexes, \emph{Ann. of Math} 75 (1962), 8--16.

\bibitem[DPV]{DPV} A. Djament, T. Pirashvili, C. Vespa, Cohomologie des foncteurs polynomiaux sur les groupes libres, \arXiv{1409.0629}, Sept. 2014.

\bibitem[Dw]{dwyer} W. G. Dwyer, Twisted homological stability for general linear groups, {\em Ann. of Math.} (2) 111
(1980), no. 2, 239--251.

\bibitem[Fa]{icm} B. Farb, Representation stability, Proceedings of the 2014 ICM, to appear. Available at \arXivV{1404.4065}{1}.

\bibitem[Ku]{Kuhn} N. Kuhn, Rational cohomology and cohomological stability in generic representation theory, {\em Amer. J. Math.} 120(1998), 1317--1341.

\bibitem[KJ]{jeremy} A. Kupers, J. Miller, Improved homological stability for configuration spaces after inverting 2, \arXivV{1405.4441}{3}, revised Dec. 2014.

\bibitem[Man]{M}
B. M. Mann, The cohomology of the symmetric groups, \emph{Trans. Amer. Math. Soc.} 242 (1978), 157--184.

\bibitem[May]{May}
J. P. May, Homology operations on infinite loop spaces, Algebraic topology (Proc. Sympos.
Pure Math., Vol. 22, Univ. Wisconsin, Madison, Wis., 1970),  \emph{Amer. Math. Soc}, Providence, R.
I. (1971), pp. 171--185 MR 47 $\sharp$7740.

\bibitem[McD]{mcduff} D. McDuff, Configuration spaces of positive and negative particles, \emph{Topology}, 14:91--107, 1975.

\bibitem[Mi]{Mg}
R. J. Milgram, The homology of symmetric products, \emph{Trans. Amer. Math. Soc.} 138 (1969), 251--265.

\bibitem[Nak]{N}
M. Nakaoka, Decomposition Theorem for Homology Groups of Symmetric Groups, \emph{Ann. of Math} 71 (1960), 16--42.

\bibitem[Nak1]{N1}
M. Nakaoka, Homology of the infinite symmetric group, \emph{Ann. of Math} 73 (1961), 229--257.

\bibitem[Nak2]{N2}
M. Nakaoka, Note on cohomology algebras of symmetric groups, \emph{J. Math. Osaka City Univ.} 13 (1962), 45--55.

\bibitem[Nap]{Nap}
F. Napolitano, On the cohomology of configuration spaces of surfaces, \emph{J. London Math. Soc.}, 68(2):477-492, 2003.

\bibitem[NS]{periodicity} R.~Nagpal, A.~Snowden, Periodicity in the cohomology of symmetric groups and their wreath products with a polycyclic-by-finite group, in preparation.

\bibitem[PuSa]{PS} A. Putman, S. Sam, Representation stability and finite linear groups, \arXivV{1408.3694}{2}, revised Oct. 2014.

\bibitem[Pu]{P}
A. Putman, Stability in the homology of congruence subgroups, \emph{Invent. Math.}, to appear. Available at \arXivV{1201.4876}{5}.

\bibitem[Va]{RV}
R. Vakil, \emph{Foundations of Algebraic Geometry}, Mar. 2013 version.

\bibitem[Ram]{ramos} Eric Ramos, A structure theorem for $\FI_d$-modules, and its consequences to representation stability, in preparation.

\bibitem[RW]{RW}
O. Randal-Williams, Homological stability for unordered configuration spaces, \emph{Quarterly Journal of Mathematics}, 64(1):303-326, 2013.



\bibitem[Se]{segal} G. Segal, The topology of spaces of rational functions, \emph{Acta Math.}, 143(1-2):39--72, 1979.

\bibitem[SS]{SS}
S. Sam and A. Snowden, $\GL$-equivariant modules over polynomial rings in infinitely many variables, \emph{Trans. Amer. math. Soc.}, to appear. Available at \arXivV{1206.2233}{2}.

\bibitem[SS1]{tca}  S. Sam and A. Snowden, Introduction to twisted commutative algebras, \arXivV{1209.5122}{1}, Sept. 2012.

\bibitem[SS3]{grobcat}  S. Sam and A. Snowden, Gr\"{o}bner methods for representations of combinatorial categories, \arXivV{1409.1670}{2}, revised Oct. 2014.

\bibitem[SS4]{gmaps} S. Sam, A. Snowden, Representations of categories of G-maps, \arXivV{1410.6054}{2}, revised Nov. 2014.

\bibitem[To]{Touze} A. Touz\'{e}, Cohomology of classical algebraic groups from the functorial
viewpoint, {\em Adv. Math.} 225 (2010) 33--68. Available at \arXivV{0902.4459}{3}.

\bibitem[Wa]{Wah} N. Wahl, Homological stability for automorphism groups, \arXivV{1409.3541}{1}, Sept. 2014.

\bibitem[We]{Wei}  C. Weibel, \emph{An introduction to homological algebra}, Cambridge Studies in Advanced Mathematics
38, Cambridge University Press, Cambridge, 1994.

\bibitem[Wi]{Wil}
  J. C. H. Wilson, $\FI_W$-modules and stability criteria for representations of the classical Weyl groups, \emph{J. Algebra}, 420 (2014) 269--332. Available at \arXivV{1309.3817}{2}.

\bibitem[WG]{WG} J. Wiltshire-Gordon, Uniformly presented vector spaces, \arXivV{1406.0786}{1}, June 2014.

\end{thebibliography}
\end{document}